\numberwithin{equation}{section}
\numberwithin{figure}{section}
\renewcommand*{\thefootnote}{\fnsymbol{footnote}}
\theoremstyle{plain}
\newtheorem{theorem}{Theorem}[section]
\newtheorem{lemma}[theorem]{Lemma}
\newtheorem{proposition}[theorem]{Proposition}
\newtheorem{corollary}[theorem]{Corollary}
\newtheorem{definition}[theorem]{Definition}
\newtheorem{remark}[theorem]{Remark}
\newtheorem{assumption}[theorem]{Assumption}
\newcommand{\cB}{\mathcal{B}}
\newcommand{\cF}{\mathcal{F}}
\newcommand{\cK}{\mathcal{K}}
\newcommand{\cL}{\mathcal{L}}
\newcommand{\cM}{\mathcal{M}}
\newcommand{\cP}{\mathcal{P}}
\newcommand{\cS}{\mathcal{S}}
\newcommand{\cU}{\mathcal{U}}
\newcommand{\AAA}{\mathbb{A}}
\newcommand{\CC}{\mathbb{C}}
\newcommand{\DD}{\mathbb{D}}
\newcommand{\EE}{\mathbb{E}}
\newcommand{\FF}{\mathbb{F}}
\newcommand{\II}{\mathbb{I}}
\newcommand{\NN}{\mathbb{N}}
\newcommand{\PP}{\mathbb{P}}
\newcommand{\RR}{\mathbb{R}}
\newcommand{\ZZ}{\mathbb{Z}}
\DeclareMathOperator*{\argmax}{arg\,max}
\newsavebox{\@brx}
\newcommand{\llangle}[1][]{\savebox{\@brx}{\(\m@th{#1\langle}\)}%
  \mathopen{\copy\@brx\mkern2mu\kern-0.9\wd\@brx\usebox{\@brx}}}
\newcommand{\rrangle}[1][]{\savebox{\@brx}{\(\m@th{#1\rangle}\)}%
  \mathclose{\copy\@brx\mkern2mu\kern-0.9\wd\@brx\usebox{\@brx}}}
\newcommand{\toP}{\stackrel{p}{\rightarrow}}
\definecolor{darkgreen}{rgb}{0,0.35,0}
\title{Many-Server Queueing Systems with Heterogeneous Strategic Servers in Heavy Traffic}
\author{Burak B\"{u}ke\textit{$^{a}$}, Gon\c calo dos Reis\textit{$^{a,b}$} and Vadim Platonov\textit{$^{a}$}}
 \date{%
    \footnotesize 
    $^{a}$~School of Mathematics, University of Edinburgh, The King's Buildings, Edinburgh, UK
    \\
    $^{b}$~Centro de Matem\'atica e Aplica\c c$\tilde{\text{o}}$es (CMA), FCT, UNL, Portugal
    \\[2ex]
    % \\
    % \normalsize
    % \today
    % \ddmmyyyydate\today 
    \longdate \today \ (\currenttime)
    % \vspace{-0.8cm}
}
\begin{document}

\selectlanguage{english}

\maketitle
%%%%%%%%%%%%%%%%%%%%%%%%%%%%%%%%%%%%%%%%%%%%%%%%%%%%%%%%%%%%%%%%%
%%%%%%%%%%%%%%%%%%%%%%%%%%%%%%%%%%%%%%%%%%%%%%%%%%%%%%%%%%%%%%%%%
%%%%%%%%%%%%%%%%%%%%%%%%%%%%%%%%%%%%%%%%%%%%%%%%%%%%%%%%%%%%%%%%%
%%%%%%%%%%%%%%%%%%%%%%%%%%%%%%%%%%%%%%%%%%%%%%%%%%%%%%%%%%%%%%%%%
%%%%%%%%%%%%%%%%%%%%%%%%%%%%%%%%%%%%%%%%%%%%%%%%%%%%%%%%%%%%%%%%%
\renewcommand*{\thefootnote}{\arabic{footnote}}

%%%%%%%%%%%%%%%%%%%%%%%%%%%%%%%%%
\vspace{-1cm}
\begin{abstract} 
In most service systems, the servers are humans who desire to experience a certain level of idleness. In call centers, this manifests itself as the call avoidance behavior, where servers strategically adjust their service rate to strike a balance between the idleness they receive and effort to work harder. Moreover, being humans, each server values this trade-off differently and has different capabilities. Drawing ideas from mean-field game theory, we develop a novel framework, relying on measure-valued processes,  to simultaneously address strategic server behavior and inherent server heterogeneity in service systems. 
This framework enables us to extend the literature on strategic servers in four new directions by: (i) incorporating individual choices of servers, (ii) incorporating individual abilities of servers, (iii) modeling the discomfort experienced by servers due to low levels of idleness, and (iv) considering more general routing policies. Using our framework, we are able to asymptotically characterize asymmetric Nash equilibria for many-server systems with strategic servers. 

In simpler cases, it has been shown that the purely quality-driven regime is asymptotically optimal when the servers are strategic. However, we show that if the discomfort increases fast enough as the idleness approaches zero, the quality-and-efficiency-driven regime and other quality driven regimes can be optimal. This is the first time this conclusion appears in the literature. 
\end{abstract}
%%%%%%%%%%%%%%%%%%%%%%%%%%%%%%%%%
{\bf Keywords:} 
Many-server queues, strategic servers, measure-valued processes, mean field games

\vspace{0.3cm}

\noindent
{\bf 2010 AMS subject classifications:}\\
Primary: 90B22, 60F17, 91A15
Secondary: 60K25

\footnotesize
\setcounter{tocdepth}{2}
\tableofcontents
\normalsize

\section{Introduction}

% The design and optimization of many-server service systems has been the subject of growing interest in the last two decades, due to the economic importance of these systems and the challenging problems they pose. With their worldwide market size of \$339.4 billion in 2020 and an expected market size around \$500 billion in 2027, call centers constitute a perfect example of these systems~\citep{statista2020}. Around 76\% of these call centers serve institutions which generate revenue more than \$250 million per year and more than 13\% serve institutions with an annual revenue of \$25 billion~\citep{deloitte2017}. Hence, it is no surprise that even an improvement of a second in service time can yield saving millions of dollars annually~\citep{Gans2010}. 
The design and optimization of many-server service systems has been the subject of vigorous research in the last two decades due to the economic importance of these systems and the challenging problems underpinning them. With a worldwide market size of \$339.4 billion in 2020 and an expected market size of \$500 billion by 2027, call centers constitute a perfect example of these systems~\cite{statista2020}.  Around 76\% of these call centers serve institutions that generate revenues above \$250 million/year and more than 13\% serve institutions with an annual revenue of \$25 billion~\cite{deloitte2017}. Hence, it is no surprise that a small improvement in the service time can lead to millions of dollars in savings~\cite{Gans2010}. 

% Despite the rapid development of technology, 82\% of the US and 74\% of the non-US customers want more human interaction \citep[see][]{pwc2018} and the vast majority of the interactions in call centers are carried out by human agents. The presence of this human factor  poses interesting modeling challenges for researchers. For example, the agents should be able to experience a certain level of idleness in between calls in order to avoid burnout and keep their motivation higher. Failure to provide these idle times will yield agents strategically slowing down the service in order to create additional idle time, which is generally referred as \emph{call avoidance behavior}~\citep{go4cust21}. For example, agents might choose  to delay pressing the terminate call button when a call ends or is transferred, and in systems where the call is routed to the longest available agent, agents can become unavailable for a short period of time to put themselves at the end of the list~\citep[see e.g.][]{CallCentreHelper17}. As a more extreme example of call avoidance behavior,~\cite{Gans2003} observe that a significant number of calls last nearly zero seconds, which indicates that some agents are strategically hanging up on the customers to create extra idle time~\citep[see also][]{Gopalakrishnan_etal2016}. 
Despite the rapid technological developments, 82\% of the US and 74\% of the non-US customers want more human interaction \cite{pwc2018} and the vast majority of the interactions in call centers are carried out by human agents (servers). The presence of this human factor poses interesting modeling challenges. One such challenge involves ensuring that agents experience adequate idle time between calls to prevent burnout and maintain high motivation levels. Failure to provide these idle times yields in servers strategically slowing down their service in order to create additional idle time -- this is generally referred as \emph{call avoidance behavior}~\cite{go4cust21}. 
For example, servers might choose to delay pressing the terminate call button when a call ends or is transferred, and in systems where the call is routed to the server that has been available for longest, servers can become unavailable for a short period of time to put themselves at the end of the list see e.g.~\cite{CallCentreHelper17}. As a more extreme example of this behavior,~\cite{Gans2003} observe that a significant number of calls last nearly zero seconds, which indicates that some servers are strategically hanging up on the customers to create extra idle time.

% Starting with~\cite{Gopalakrishnan_etal2016}, there has been an increasing research interest in the design of many-server systems with strategic servers to create incentives for agents to work faster. The main stream of work concentrates on the analysis of Markovian many-server queues and aims to analyze equilibria using the stationary probabilities for such systems. Hence, the characterization of the stationary probabilities constitutes a key step in this analysis. To the best of our knowledge, the only existing result in this direction is in~\cite{Gumbel1960}, where the stationary probabilities for many-server queues with heterogeneous servers is provided under a random routing policy where the customers are routed to one of the idle agents uniformly at random. Unfortunately, even for this special case, the equations for the stationary probabilities are extremely complicated, rendering an exact analysis of a general case intractable. Hence, the recent work on strategic servers mainly concentrates on identifying symmetric equilibria by setting all service rates except for the rate for a tagged server to be equal.

Starting with~\cite{Gopalakrishnan_etal2016}, there has been an increasing research interest in modeling many-server systems with strategic servers to determine incentives for servers to work faster. The main stream of work concentrates on the analysis of Markovian queues and aims to analyze equilibria using the stationary probabilities. Hence, the characterization of the stationary probabilities constitutes a key step in this analysis. To the best of our knowledge, the only existing result in this direction is in~\cite{Gumbel1960}, where the stationary probabilities for systems with heterogeneous servers is provided under a random routing policy where the customers are routed to an idle server chosen uniformly at random. Unfortunately, even for this special case, the equations for the stationary probabilities are extremely involved, rendering an exact analysis of the general case intractable. As a result, the recent work on strategic servers mainly concentrates on identifying symmetric equilibria by setting all service rates equal except for the rate of a tagged server.

% In addition to the strategic behavior, another consequence of servers being humans is agents having inherent individual differences. Each agent has a different capability to serve customers, values idle time differently and  one expects each agent to serve with a different rate in equilibrium if it exists. Hence, it is crucial to analyze asymmetric equilibria in which agents serve with heterogeneous rates to capture the human aspect of the problem better. 
Furthermore, beyond strategic behavior, another consequence of servers being humans is the presence of inherent individual differences and preferences among servers. Each server possesses unique capabilities in serving customers, values idle time differently and, at equilibria, one expects each server to serve with a different rate if such equilibrium exists. It is then crucial for the next generation of models to capture these behaviors and analyze the so-called \textit{asymmetric equilibria} in which servers serve with heterogeneous rates. %to capture the human aspect of the problem better. 

% The key step in the analysis of asymmetric equilibria is identifying how idleness is distributed among agents with heterogeneous rates, which heavily depends on the routing policy. In a recent paper,~\cite{bukeqin2019} propose the use of a measure-valued process which they refer as the \emph{fairness process} to capture the individual differences in service rates. The fairness process indicates the distribution of idleness under a given routing policy. \cite{bukeqin2019} prove that in a modified version of Halfin-Whitt scaling~\citep{halfinwhitt1981}, the sequence of fairness processes are tight and  converge in probability to a constant deterministic process for some common routing policies. Then, they use the fairness process to prove diffusion limits for many-server systems with heterogeneous servers. In this work, we use the fairness process as our main technical tool to analyze many-server systems with strategic heterogeneous servers. 
The key step in the analysis of asymmetric equilibria is identifying how idleness is distributed among servers featuring heterogeneous service rates, which heavily depends on the routing policy used by the call center. In a recent paper,~\cite{bukeqin2019} propose the use of a measure-valued process, referred to as the \emph{fairness process}, to capture the individual differences in service rates as a means to obtain access to the distribution of server idleness under a given routing policy. \cite{bukeqin2019} prove that in a modified version of Halfin-Whitt scaling~\cite{halfinwhitt1981}, the sequence of fairness processes is tight and converges in probability to a constant deterministic process for some common routing policies. Then, they use the fairness process to prove diffusion limits for many-server systems with heterogeneous servers. In this work, we use the fairness process as our main technical tool to analyze many-server systems with strategic heterogeneous servers. 

The literature from human resource management indicates that the perception of fairness has a significant effect on employee performance~\cites{colquittetal2001,cohencharash2001}. The distribution of idleness among servers is generally perceived as a measure of fairness. An interesting question regarding the distribution of idleness is whether one can design routing policies in order to distribute the idleness among servers according to servers' service rates in a pre-specified manner. For example, it is well-known that if the fastest-server-first policy is employed, i.e., the customers are routed to the idle server with the highest service rate, only the slowest servers experience idleness~\cites{Armony2005, Atar2008, bukeqin2019}. As another example,~\cite{bukeqin2019} show that if customers are routed using a class of policies called totally blind policies, where the server to which a customer is routed is asymptotically independent of the service rate of that server, then the cumulative idleness servers receive is proportional to their service rate. Totally blind policies include longest-idle-server-first, where customers are routed to the server that has been available the longest, and random routing as studied in~\cite{Gumbel1960}. Several routing policies have been designed in the literature for service systems with pools of servers to ensure that each pool asymptotically receives a certain proportion of idleness~\cites{Armony2005, atarschwartzshaki11, armward10, gurwhitt09, reedshaki2015, wararm13}. However, to the best of our knowledge, the question of how to design policies to distribute idleness on an individual server level remains an open problem.

%{We need to talk about the quality-driven regime vs qed regime}

\subsection{Our Contributions}\label{sec:contributions}

Our aim is to push the frontier for the analysis of service systems with strategic servers. Our contributions can be described under two major headings.

\subsubsection*{Technical Contributions}
\begin{enumerate}
\item \emph{A General Framework to Study Heterogeneous Strategic Servers.} In this paper, we introduce a novel framework to study systems with strategic servers and extend the results in four major directions. First, we suggest a utility function to model the individual preferences of servers regarding the trade-off between utility of idleness and cost of working faster. Second, we address the inherent differences between the abilities of servers using random individual maximum and minimum attainable service rates. Third, we model the contribution of percentage of idleness as a concave increasing function of idleness, which we refer as the \emph{utility of idleness function} to better reflect the marginal benefit of idleness and allows us to model the discomfort experienced by the servers. Finally, we characterize the Nash equilibria for service rate distribution under a more general class of routing policies.

\item \emph{Service Systems with Heterogeneity in Quality-Driven Regimes.} In Section~\ref{sec:convergence_system}, we provide fluid limits for many-server queues with heterogeneous service rates in quality-driven regimes. A key question regarding fluid and diffusion approximations is whether these approximations can be used to study the stationary behavior of the systems, which require an interchange of limits argument. We prove that this is indeed the case for both quality- and quality-and-efficiency driven regimes. 

\item \emph{Analysis of Generalized Random Routing Policies.} The random routing policy, where an incoming job is assigned to one of the idle servers uniformly at random, is well-understood. However, to the best of our knowledge, there are no results in the literature where incoming customers are randomly assigned to servers with probabilities that depend on the service rates.  
In Section~\ref{sec:grr_policy}, we present a detailed asymptotical analysis of generalized random routing policies where the routing probabilities can be a function of the service rate and characterize the distribution of idleness among servers with heterogeneous rates under these policies in a quality-driven system. %As we show in Section~\ref{sec:convergence_system}, this result can be used to analyze the stationary behavior of these many-server systems. 
Moreover, using another interchange of limits argument, we also show that this result can also be used to characterize the long run percentage a server with a given service rate is idle. 
\end{enumerate}
\noindent\subsubsection*{Practical Contributions and Managerial Insights}
\begin{enumerate}
    \item \emph{Fair Distribution of Idleness Among Individual Servers in a Quality-Driven Regime.} 
    In the quality-and-efficiency-driven regime, it is possible to design routing policies to attain any pre-specified distribution of idleness among server pools~\cites{atarschwartzshaki11, armward10, reedshaki2015, wararm13}. 
    We show that in the quality-driven regime, where the safety staffing scales linearly with the offered load, this result no longer holds and it may not be possible to design routing policies to attain certain idleness distributions. We characterize necessary conditions for an idleness distribution to be attainable and also prove that the `strict inequality' versions of these conditions are also sufficient by designing general random routing policies to attain the desired distribution of idleness. Moreover, we show that one can use these policies to distribute idleness at an individual server level.

    \item \emph{The Relationship Between the Optimal Staffing Regime and Servers' Sensitivity to Idleness.} When the servers are homogeneous and idleness contributes to the utility as itself, i.e., the utility of the idleness function is identity, \cite{Gopalakrishnan_etal2016} prove that it is optimal to scale the safety staffing linearly with the offered load. In Theorem~\ref{thm:best_response_scaling}, we prove that the main factor in establishing this result is the sensitivity of servers to low levels of idleness, which can be expressed in terms of the derivative of the utility of idleness function near zero and extend the optimality result to a more general class of functions. Moreover, we show that if the servers are sufficiently sensitive to low levels of idleness, then it is asymptotically optimal to adopt a quality-and-efficiency-driven regime.

    \item \emph{Characterization of the Equilibrium Service Rate Distribution.} When servers are heterogeneous and strategic, a central practical question concerns how the equilibrium service rate distribution is influenced by primitive model assumptions. 
    In Section~\ref{sec:strategic_grr}, we use our framework to derive an equation that characterizes the equilibrium service rate distribution under any generalized random routing policy. 
    Through numerical experiments, we demonstrate that our general framework is very effective in estimating the equilibrium service rate for large many-server systems. 
    Moreover, we present an extensive numerical study to understand how various model parameters affect the shape and scale of the equilibrium service rate distribution.

\end{enumerate}

\subsection{Notation}\label{sec:notation}

In this work, we assume that all the stochastic processes and the random variables lie in the probability space $(\Omega, \cF, \PP)$. We use the shorthand notation `w.p.~1' instead of `with probability 1' with respect to $\PP$. We denote the set of real numbers, set of positive real numbers and the positive integers as $\RR, \RR_+$ and $\NN$, respectively and use $\cB(\RR_+)$ to denote the Borel $\sigma$-algebra on $\RR_+$. We use $\to$, $\toP$ and $\Rightarrow$ to denote convergence in $\RR$, convergence  in probability and convergence in measure, respectively. To emphasize that a certain system parameter $x$ is random, we use the '$\sim$' notation as $\tilde{x}$. Also, we define $(x)^{+}:=\max\{x,0\}$ and $(x)^-:=\max\{0,-x\}$. We define $\CC_{[\alpha,\beta]}^b[0,\infty)$ to be the set of continuous bounded functions defined on the interval $[\alpha,\beta]$, $\iota(\cdot)$ to be the identity function and $\II$ to be the indicator function where $\II(A)$ is 1 if $A$ holds and $0$ otherwise. The spaces of right-continuous functions and left-continuous functions are endowed with the Skorokhod-$J_1$ topology. For any measure $\eta$, $\langle f,\eta\rangle:=\int f(x)d\eta(x)$, and hence, if $\eta$ is a probability measure $\langle \iota,\eta\rangle$ denotes the first moment of $\eta$. With a slight abuse of notation, we use $F$ to denote both the law and the cumulative distribution of random variable, i.e., $F(A)$ denotes the probability of set $A$ and $F(a)=F((-\infty, a])$ for any real argument $a$. We also denote the Dirac measure which assigns a unit measure to point $x$ as $\delta_x(\cdot)$. Finally, for any stochastic process $X(t)$, we use $X(\infty)$ to denote a random variable distributed with its stationary distribution. For measure-valued processes, we find that it is more appropriate to use $\eta_t$ and $\eta_\infty$ notation instead of $\eta(t)$ and $\eta(\infty)$.

\section{Literature on Strategic Behavior in Queues}

The vast majority of the literature on queues with strategic players concentrates on the strategic behavior of customers while joining a queue. We refer the reader to the excellent monographs \cite{hassin_haviv_2003} and \cite{hassin_2020} for an extensive review of the literature on strategic customers. Motivated by the competition between two firms, two-server queues with strategic servers has also received attention in the literature see, e.g. \cites{Kalai1992, Gilbert1998, Cachon2002, Cachon2007}. More recently, \cite{Allon2010} extend this line of research to cover multiple firms each of which are operating as $M/M/N$ queues. 

The study of  servers who are behaving strategically within the same organization is relatively recent and to the best of our knowledge started with the seminal work of \cite{Gopalakrishnan_etal2016}. They assume that the servers (agents) are identically sensitive to the idleness they receive and there is a non-cooperative game between servers to set their service rates in order to maximize their utility which is defined as a combination of the server's idle time and effort to speed up the service. The system operator routes the customers to the servers with the aim of incentivizing servers using the idleness experienced -- they refer to this as \emph{incentive-aware routing}. In an $M/M/N$ queue setting, they show the existence of symmetric equilibrium under policies which route customers to servers according to their idle time. To solve the problem, \cite{Gopalakrishnan_etal2016} explicitly assume that the servers need to choose service rates in a way that ensures system stability. \cite{armony2021capacity} show that this assumption is not necessary using Tarski's intersection theorem. \cite{gopalakrishnan2021} studies the above results comparing queueing systems with pooled and dedicated queues under \emph{r-routing policies}, a class of randomized policies which routes the customers to servers randomly with probabilities proportional to the $r$th power of the service rate. He provides analytical results for systems with dedicated queues, but due to their complexity he only studies the pooled queues numerically. One of the major contributions of our work is to provide analytical tools for the asymptotic study of pooled queues. In addition to incentive-aware routing, one can also provide monetary incentives based on the service speed. \cite{zhan2019staffing} characterize a joint routing and payment policy to optimize performance under the assumption that there is a trade-off between service speed and service quality. \cite{zhong2021} analyze equilibrium in loss systems with finite buffer capacity when both incentive-aware routing and payment incentives are used. 
An interesting recent work by \cite{Bayraktar2019} uses mean-field games ideas to study strategic servers with dedicated queues and designs a dynamic arrival rate control policy. 
% \hl{should we mention their use of mean-field games? loosely relating to what we do?}. 
To the best of our knowledge, the approach we offer is novel, in particular, for the application of mean-field games to queueing systems. 

\section{A Many-Server Queueing Model with Strategic and Heterogeneous Servers}\label{sec:setting}

We consider a sequence of queueing systems where arrivals at the $n$th system follow a Poisson process with rate $\lambda^n$. We have the following standard assumption on arrival rates.
\begin{assumption}\label{asm:lambdascaling}
There exists a $0<\bar{\lambda}<\infty$ such that $n^{-1/2}(\lambda^n-n \bar{\lambda})\to 0$ as $n\to \infty$. 
\end{assumption}
If there are idle servers upon the arrival of a customer, the customer is routed to one of these servers according to a prespecified routing policy $\pi$. All servers in the system have the skills to serve any arriving customer, albeit with different rates. If all servers are busy, the customer waits in a queue. Customers are impatient with exponential($\gamma$) patience times, independent of other customers. If the patience time of a customer expires before the service commences, the customer abandons the system. Once the service commences, the customer only departs when the service finishes. We assume that customers are served on a first-come-first-served basis. 

For the system outlined above, the system operator decides on the routing policy $\pi$ and the staffing level. The routing policy determines how idleness is distributed among servers with different rates. At this point, we only assume that the routing policy is non-idling, i.e., there can only be customers in the queue if all the servers in the system are busy. 

Due to the non-cooperative game between servers, there is a circular relationship between staffing level and individual service rates. 
The staffing level is determined by the expected service rate, and the distribution of service rates is determined by the idleness experienced by each server with different rate, the latter being directly influenced by the staffing level and the routing policy. 
Our main goal in this work is to identify whether equilibria exist and characterize any equilibrium given the system operator's staffing and routing policies. As the servers differ individually in terms of their capabilities and preferences, the resulting equilibria will be asymmetric, in general. 

The system operator aims to set the staffing level to be the offered load (the minimum number of staff required to stabilize the system if there were no abandonments), and a  safety staffing proportional to a power $\alpha$ of the offered load, where $1/2\leq \alpha\leq 1$, to achieve a certain quality of service. Suppose that the server $k$ in the $n$th system serves with rate $\tilde{\mu}_k^n$. If the system operator knows the distribution of these service rates to be $F$ with an expected value $\bar{\mu}_F$, she sets the staffing level for the $n$th system to be
\begin{equation}
    N_\alpha^n=\frac{\lambda^n}{\bar{\mu}_F}+\beta\left(\frac{\lambda^n}{\bar{\mu}_F}\right)^{\alpha},
    \label{eq:staffing_level}
\end{equation}
where $\beta>0$ see, e.g., \cite{Gans2003}. Following the classification in \cite{garnettetal02}, we say that the system operates in a \textit{quality-driven} (QD) regime when $1/2<\alpha\leq 1$ and in a quality-and-efficiency-driven (QED) regime when $\alpha=1/2$. In the literature, it is quite common to restrict the quality-driven regime to the case with $\alpha=1$ and this case also plays a major role in our study. Hence, to differentiate, we say that the system operates in a \emph{purely quality-driven regime} if $\alpha=1$. The system operator sets the staffing parameters, $\alpha,\beta$, and the routing policy $\pi$ in order to minimize an operating cost, $C_O(\alpha, \beta, \pi)$, which can be expressed as 
\begin{equation}\label{eq:operator_cost}
     C_O^n(\alpha, \beta, \pi) = c_SN_\alpha^n + c_W\lambda^n\EE[W^n(\infty)] + c_A\EE[\bar{R}^n(\infty)]
\end{equation}
where $c_S, c_W$  and $c_A$ are  the unit staffing cost, unit queueing time cost per customer and unit abandonment cost, respectively, $\EE[W^n(\infty)]$ is the long run average queueing time per customer and $\EE[\bar{R}^n(\infty)]$ is the long run abandonment rate from the system. 

On the other hand, each server in the system aims to set her service rate to maximize her own utility, where utility is defined as a trade-off between the expected long run proportion of idleness she experiences and an \emph{effort cost} she incurs for working faster. This setting is similar to \cite{Gopalakrishnan_etal2016} with two fundamental differences. First, we model the contribution of idleness to the utility as a concave increasing function of the long run percentage idleness experienced rather than the long run percentage idleness itself. In addition to modeling the decreasing marginal return of idleness, this approach also helps us model the situations where working without any breaks is unacceptable to the servers. Second, we model the trade-off between `benefits of idleness' and `effort cost' to be server specific by introducing a multiplicative random coefficient to the effort cost function. The idleness each server experiences depends ultimately on the service rates of all servers in the system. 
Letting $I_k^n(t)$ to be the idleness process of server $k$, where $I_k^n(t)$ takes the value $1$ if the server $k$ in system $n$ is idle  at time $t$ and is $0$ otherwise, we define the utility function of server $k$ in the $n$th system, serving with rate $\mu$, to be 
\begin{equation}
    U_k^n(\mu, F) = u_I\big(\EE[I_k^n(\infty)|\tilde{\mu}_k^n=\mu]\big)-\tilde{a}_k^n c(\mu),\label{eq:utility}
\end{equation}
% We refer to the function $f(\cdot)$ as the utility of idleness function and assume it to be concave and increasing. The effort cost function, $c(\cdot)$, is assumed to be convex and increasing. We also assume that both the utility of idleness function and the effort cost function are twice continuously differentiable. The coefficient $\tilde{a}_k^n$ is a positive random variable with distribution $F_a(\cdot)$ and density function $f_a(\cdot)$, and determines the personal preference of server $k$ in regards to the trade-off between utility of idleness and the effort cost. 
 where $F$ is the distribution of individual service rates and determines the expected stationary idleness of the given server. We refer to $u_I(\cdot)$ as the \textit{utility of idleness} function and assume it to be concave and increasing. The \textit{effort cost} function, $c(\cdot)$, is assumed to be convex and increasing. We also assume that both $u_I(\cdot)$ and $c(\cdot)$ are twice continuously differentiable. The coefficient $\tilde{a}_k^n$ is a positive random variable with distribution $F_a(\cdot)$ and determines the personal preference of server $k$ in regards to the trade-off between utility of idleness and the effort cost and use $f_a(\cdot)$ to denote the density function of $\tilde{a}_k^n$ when it is a continuous random variable.  We assume that each server $k$ deterministically knows her trade-off parameter $\tilde{a}_k^n$.

Each server $k$ in the $n$th system has an inherent minimum service rate, $\tilde{\mu}_{{\min},k}^n$, which the  server can achieve with minimal effort and an inherent maximum achievable service rate, $\tilde{\mu}_{{\max},k}^n$, which the server cannot improve upon by working faster regardless of how much effort she puts into her work. The system operator does not have prior information about these individual minimum and maximum rates and hence, they are modeled as random variables following a common joint distribution $F_{\min,\max}$ with  marginals $F_{\min}$ and $F_{\max}$. We assume that these individual minimum and maximum service rates are uniformly bounded, i.e., there exists $\mu_{\min}$ and $\mu_{\max}$ such that 
 \begin{equation}
     0<\mu_{\min}\leq \tilde{\mu}_{\min,k}^n\leq \tilde{\mu}_k^n\leq \tilde{\mu}_{\max,k}^n\leq \mu_{\max}<\infty, \quad \mbox{ for all }k,n\in \NN \quad \mbox{ w.p. }1.\label{eq:boundedrates}
 \end{equation}
We also assume independence among vectors $(\tilde{a}_k^n, \tilde{\mu}_{\min,k}^n, \tilde{\mu}_{\max,k}^n)$ for each $k$. 
% Without loss of generality, we assume that $\mu_{\min}=1$ by changing the time units as necessary.

\label{par:fixed_point_discussion}Given a distribution $F^{(0)}$ for the service rates and the system operator's decisions on $\alpha, \beta$ and $\pi$, each server $k$ in the $n$th system serves with the service rate $\tilde{\mu}_k^n$ that maximizes her utility, i.e.,  
\begin{equation}
\tilde{\mu}_k^n\in \argmax_{\mu\in[\tilde{\mu}_{{\min},k}^n, \tilde{\mu}_{{\max},k}^n]}U_k^n(\mu, F^{(0)}).\label{eq:mu_update_equation}
\end{equation}
As $\tilde{a}_k^n, \tilde{\mu}_{{\min},k}^n$ and $\tilde{\mu}_{{\max},k}^n$ are random and only known by server $k$, the resulting service rate is random and the distributions of these parameters determine the distribution $F^{(1)}$ of the  optimal service rates. Our goal in this work is to study the equilibrium service rate distribution $F$ by characterizing the fixed point where $F^{(0)}=F^{(1)}=F$, based on the given values of $\alpha, \beta$ and $\pi$. Our analysis also provides important insights on the optimal regime parameter $\alpha$ for the system operator. 

\subsection{Dynamics of the System Processes}\label{sec:systemdynamics}

In this section, we define the stochastic processes in our model. We denote the number of customers in the $n$th system at time $t$ as $X^n(t)$. For the $n$th system, the arrival process is a Poisson process with rate $\lambda^n$ and $A^n(t)$ denotes the number of arrivals by time $t$. The service time of a customer depends on the server and takes an exponential time with rate $\tilde{\mu}_k^n$ if the customer is served by server $k$. In this section, we assume that the service rates $\tilde{\mu}_k^n, 1\leq k\leq N_\alpha^n$, are i.i.d. random variables with a common known distribution $F$ having mean $\bar{\mu}_F$ and variance $\sigma_F^2$. %For the $n$th system, we denote the vector of service rates as $\boldsymbol{\mu}^n=(\tilde{\mu}_1^n,\ldots, \tilde{\mu_{N_\alpha^n}^n})$. 
In Section~\ref{sec:strategic}, we analyze in detail how this distribution is determined as a result of the strategic decisions of the servers. The departure process for server $k$  and the number of abandonments by time $t$ is denoted $D_k^n(t)$ and  $R_0^n(t)$, respectively. We have the following balance equation for the system length process $X^n(t)$, 
\begin{equation}
X^n(t) = X^n(0) + A^n(t) - \sum_{k=1}^{N_\alpha^n}D_k^n(t)-R_0^n(t), \quad \mbox{ for all }t\geq 0,\ n \in \NN.
\label{eq:basic_system_length}
\end{equation}
Taking $S_k^n(t)$ and $R^n(t)$ as independent unit rate Poisson processes for all $k$ and $n$, we can equivalently define the departure and the abandonment processes as 
\[
D_k^n(t)
:=S_k^n\left(\tilde{\mu}_k^n\int_0^t\big(1-I_k^n(s)\big)ds\right)
\quad \mbox{and}\quad 
R_0^n(t):=R^n\left(\gamma\int_0^t\big(X^n(s)-N^n_\alpha\big)^+ds\right),
\]
 and can write \eqref{eq:basic_system_length} as 
 \begin{equation}
 \label{eq:auxiliareqSystemLengthProcess}
 X^n(t) = X^n(0) + A^n(t) - \sum_{k=1}^{N_\alpha^n}S_k^n\left(\tilde{\mu}_k^n\int_0^t\big(1-I_k^n(s)\big)ds\right)-R^n\left(\gamma\int_0^t\big(X^n(s)-N^n_\alpha\big)^+ds\right),  
 \end{equation}
 for all $t\geq 0, n \in \NN$. The non-idling property of the routing policy implies 
 \[
 \left(X^n(t)-N^n_\alpha\right)^-=\sum_{k=1}^{N_\alpha^n}I^n_k(t), \mbox{ for all }t\geq 0,\  n\in \NN.
%   \qquad\boxed{\textrm{should it be }I^n_k?}
 \]
 For all $1/2\leq \alpha\leq 1$, we define the scaled processes $
  \hat{X}_\alpha^n(t):=n^{-\alpha}(X^n(t)-N_\alpha^n)$ and $\hat{I}_{k,\alpha}^n(t):=n^{-\alpha}I_k^n(t)$.
We also use the shorthand notation $\hat{I}_{\alpha}^n(t):=\sum_{k=1}^{N_\alpha^n}\hat{I}_{k,\alpha}^n(t)$ for the total scaled idleness in the system. 
Regarding the initial system length, we make the following assumption.

\begin{assumption}\label{asm:initialcondition}
The initial system lengths $\hat{X}_\alpha^n(0)$ are uniformly integrable and $\hat{X}_\alpha^n(0)\Rightarrow \xi_0$ as $n\to\infty$, where $\xi_0$ is an integrable random variable. 
\end{assumption}

We also have the following assumption on the routing policy.
\begin{assumption}\label{asm:non-discrimating}
Under the routing policy $\pi$, for any $0\leq k,j\leq N_\alpha^n$ and $\mu\in[\mu_{\min},\mu_{\max}]$,
\[
\EE[I_k^n(\infty)|\tilde{\mu}_k^n=\mu]=\EE[I_j^n(\infty)|\tilde{\mu}_j^n=\mu].
\]
\end{assumption}

Assumption~\ref{asm:non-discrimating} ensures that the routing policy does not discriminate against any server and all servers working with the same rate are expected to receive the same proportional idleness in the long run. Using symmetry arguments, it can be easily seen that any routing policy which does not explicitly use the server index $k$ satisfies this assumption. 

\subsection{The Fairness Process}\label{sec:fairnessprocess}

The utility of a server is defined based on the long run proportion of time the server stays idle. This quantity is difficult to calculate and manipulate, especially when servers are heterogeneous. To be able to analyze this quantity in the heterogeneous setting, we use the  \emph{fairness process} introduced by \cite{bukeqin2019}. In this section, we provide the definition of the fairness process and how it can be used to analyze strategic server behavior.

Defining the empirical distribution of service rates for the $n$th system as
$F^n[\mu,\mu+\Delta) :=(N_\alpha^n)^{-1}\sum_{j=1}^{N_\alpha^n}\delta_{\mu_j^n}[\mu,\mu+\Delta) \mbox{ for all }\mu\in [\mu_{\min},\mu_{\max}]\mbox{ and }\Delta>0,$
we can write
\begin{align}
\nonumber
&\EE[I_k^n(\infty)|\tilde{\mu}_k^n=\mu]\\& \nonumber\quad= \lim_{t\to\infty}\frac{1}{t}\EE\left[\int_0^tI_k^n(s)ds|\tilde{\mu}_k^n=\mu\right]\\
    \nonumber&\quad=\lim_{\Delta\to 0}\lim_{t\to\infty}\frac{1}{t}\EE\left[\frac{\sum_{j=1}^{N_\alpha^n}\delta_{\tilde{\mu}_j^n}[\mu,\mu+\Delta)\int_0^tI_j^n(s)ds}{N_\alpha^nF^n[\mu,\mu+\Delta)} \Big| \tilde{\mu}_k^n=\mu\right]
    \\
    &\quad=\lim_{\Delta\to 0}\lim_{t\to\infty}\frac{1}{t}\EE\left[\frac{n^\alpha}{N_\alpha^nF^n[\mu,\mu+\Delta)}    \times\frac{\sum_{j=1}^{N_\alpha^n}\delta_{\tilde{\mu}_j^n}[\mu,\mu+\Delta) 
    \int_0^t\hat{I}_{j,\alpha}^n(s)ds}{\int_0^t \hat{I}_\alpha^{n}(s)ds}
    \times
    \int_0^t \hat{I}_\alpha^{n}(s)ds \Big| \tilde{\mu}_k^n=\mu\right].
    \label{eq:idleness_decomposition}
\end{align}

The second term inside the expectation represents the proportion of total cumulative idleness experienced by servers whose service rates lie between $\mu$ and $\mu + \Delta$. \cite{bukeqin2019} represent the distribution of cumulative idleness among servers with different service rates using a measure-valued process and they coin this process as the \emph{fairness process}. 
The name is motivated by the idea that the distribution of idleness is generally viewed as a measure of fairness towards servers see, e.g., \cites{armward10, wararm13}. By selecting a specific routing policy, the system operator determines the fairness process, i.e.,  how idleness is distributed among servers as a function of service effort. For example, a routing policy might aim to distribute idleness equally, reflecting an egalitarian notion of fairness, or reward faster servers with more idleness, representing a performance-based perspective on fairness. In response, servers decide their service rates to maximize their utility as defined in \eqref{eq:utility}. Thus, the fairness process corresponding to a routing policy plays a key role in our analysis by shaping the structure of the utility function.

Setting 
 $\tau_0^n:=\inf\{t>0:\int_0^t\hat{I}_\alpha^n(s)ds>0\}$  as the first time idleness is experienced by the system, the fairness process for the $n$th system is defined as
\begin{equation}
    \eta_{\alpha,t}^{\pi,n}(\AAA):=\left\{\begin{array}{ll}
    \displaystyle\frac{\sum_{k=1}^{N_\alpha^n}\delta_{\mu_k^n}(\AAA)\int_0^t\hat{I}_{k,\alpha}^n(s)ds}{\int_0^t\hat{I}_{\alpha}^n(s)ds} &, \mbox{if }t>\tau_0^n,
    \\
    \delta_0(\AAA) &, \mbox{if }t\leq \tau_0^n,
    \end{array}\right.
\end{equation}
for all $\AAA \in \cB(\RR_+)$, i.e., $\eta_{\alpha,t}^{\pi, n}(\AAA)$ represents the proportion of total cumulative idleness received up to time $t$  by the servers whose service rates lie in set $\AAA$. To make this quantity well-defined and avoid division by 0, it is set to $\delta_0(\AAA)$ as a placeholder until the total cumulative idleness in the system is positive. Hence, \eqref{eq:idleness_decomposition} can be written more compactly using the fairness process as 
\begin{align}
    \EE[I_k^n(\infty)|\tilde{\mu}_k^n=\mu]& =\lim_{\Delta\to 0}\lim_{t\to\infty}\frac{1}{t}\EE\left[\frac{n^\alpha}{N_\alpha^n}    \times\frac{\eta_{\alpha,t}^{\pi,n}[\mu,\mu+\Delta)}{F^n[\mu,\mu+\Delta)}
    \times
    \int_0^t \hat{I}_\alpha^{n}(s)ds|\tilde{\mu}_k^n=\mu\right]. \label{eq:idleness_decomposition_fairness}
\end{align}
Hence, the long run proportion of time a server is idle is determined by the total idleness the system experiences and how this idleness is distributed among servers as a consequence of the routing policy. In the QED regime ($\alpha = 1/2$), \cite{bukeqin2019} show that if the limiting fairness process can be characterized for a routing policy, then the limiting behavior of the total cumulative idleness can also be determined, which we extend to QD regimes ($1/2<\alpha< 1$) in Theorem~\ref{thm:system_convergence}.  Unfortunately, the fairness processes do not converge in measure in any of the four Skorokhod topologies as $n\to \infty$ and the limiting process should be defined based on shifted versions of the fairness processes. The formal definition of this limit is presented in the Appendix~\ref{app:fairness_definition}. 
%As we show in Section~\ref{sec:convergence_system}, the total idleness is relatively easier to analyze once the distribution of idleness is characterized. However, the distribution of idleness is more difficult to analyze and  constitutes the major challenge in identifying the utility of servers. %Hence, the recent literature \citep[e.g.,][]{Gopalakrishnan_etal2016, armony2021capacity} has concentrated on servers with identical preferences and has aimed to identify a symmetric equilibrium by considering a system where all the servers, except the one under consideration, have the same rate.  

\cite{bukeqin2019} observe that under most stationary policies the fairness processes converge on a faster scale than the scaled system length processes. Hence the limiting fairness process does not depend on $t$ after $\tau_0:=\lim_{n\to\infty}\tau_0^n$ and is deterministic, i.e., $\eta_{t,\alpha}^\pi=\eta_\alpha^\pi$ for all $t>\tau_0$. For these cases, we refer to $\eta_\alpha^\pi$ as \textit{the limiting fairness measure}, slightly abusing the terminology. Proposition~\ref{thm:specialpolicies} summarizes several findings of~\cite{bukeqin2019}, extending from $\alpha=1/2$ to $1/2\leq \alpha <1$. 
\begin{proposition}\label{thm:specialpolicies}
Suppose $1/2\leq \alpha <1$ and let   $\eta_\alpha^{SSF}, \eta_\alpha^{FSF}, \eta_\alpha^{LISF}$ and $\eta_\alpha^{RR}$ be the limiting fairness processes corresponding to slowest-server-first, fastest-server-first, longest-idle-server-first and uniformly random routing, respectively. Then, for all $t>\tau_0$ and $\AAA\in \cB(\RR_+)$
\begin{align*}
     &\eta_{\alpha,t}^{SSF} = \delta_{\mu_{\max}}(\AAA), \;
     \eta_{\alpha,t}^{FSF}= \delta_{\mu_{\min}}(\AAA) \; \textrm{ and }\eta_{\alpha,t}^{LISF}( \AAA)=\eta_{\alpha,t}^{RR}( \AAA)=
     \frac{\int_\AAA\mu F(d\mu)}{\int_{\RR_+}\mu F(d\mu)}
     .
\end{align*}
\end{proposition}

Theorem 1 indicates that under the slowest-server-first and fastest-server-first policies, only the fastest and slowest servers, respectively, experience non-negligible idleness and in general cannot be considered fair. However, under both the longest-idle-server-first and uniformly random routing policies, the idleness a server experiences is proportional to her service rate and can be considered fair for some definitions of fairness. Unfortunately, the case with $\alpha=1$ is far more involved and similar results do not hold as we show in Section~\ref{sec:grr_policy}.

\section{Convergence of the Scaled System Length Processes}\label{sec:convergence_system}

In this section, we derive fluid and diffusion approximations as the weak limit of the scaled system length processes $\{\hat{X}_\alpha^n\}_{n\in\NN}$ when $1/2\leq \alpha\leq 1$ using the fairness process defined in Section~\ref{sec:fairnessprocess}. Recall that  $\hat{X}_\alpha^n(0)\Rightarrow \xi_0$ and $\langle \iota, \eta_{\alpha,t}^\pi\rangle$ corresponds to the first moment of the measure $\eta_{\alpha,t}^\pi$. 

\begin{theorem}\label{thm:system_convergence}
    Suppose that the limiting fairness process under the adopted routing policy $\pi$ is $\eta_{\alpha,t}^\pi$. Then, $\hat{X}_\alpha^n\Rightarrow \xi_\alpha$ as $n\to\infty$, where
    \begin{enumerate}
        \item if $\alpha=1/2$, $\xi_{1/2}$ is the strong solution to the stochastic differential equation
    \begin{equation}\xi_{1/2}(t)=\xi_0+(2\bar{\lambda})^{1/2} W(t)-\left(\beta_F(\bar{\lambda}\bar{\mu}_F)^{1/2}+\zeta\right) t +\langle \iota, \eta_{1/2,t}^\pi \rangle\int_0^t(\xi_{1/2}(s))^-ds-\gamma \int_0^t(\xi_{1/2}(s))^+ds, 
    \label{eq:diffusionlimit}\end{equation}
    for all $t\geq 0$ where $\zeta\sim$ $\textrm{Normal}(0,\sigma_F^2\bar{\lambda}^{\alpha}\bar{\mu}_F^{-\alpha})$ and $W$ is a standard Brownian motion.
    
    \item if $1/2<\alpha\leq 1$, the process $\xi_\alpha$ is the solution to the ordinary differential equation
    \begin{equation}
    \xi_\alpha(t)=\xi_0-\beta\bar{\lambda}^\alpha\bar{\mu}_F^{1-\alpha}t +\langle \iota, \eta_{\alpha,t}^\pi \rangle\int_0^t(\xi_\alpha(s))^-ds - \gamma\int_0^t(\xi_\alpha(s))^+ds, \text{ for all } t\geq 0.
    \label{eq:fluid_limit}
    \end{equation}
    \end{enumerate} 
\end{theorem}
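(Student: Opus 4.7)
The plan is to extract a semimartingale decomposition of $\hat X_\alpha^n$ from \eqref{eq:auxiliareqSystemLengthProcess}, establish joint tightness with $\eta_{\alpha,\cdot}^n$, pass to the limit term by term, and close the argument by pathwise uniqueness. Writing each Poisson process as compensator plus $L^2$-martingale (call these $M_A^n$, $M_{S,k}^n$, $M_R^n$), using \eqref{eq:staffing_level} to rewrite $\lambda^n-N_\alpha^n\bar\mu_F=-\beta\bar\mu_F^{1-\alpha}(\lambda^n)^\alpha$, and combining the non-idling identity $\sum_k I_k^n(s)=n^\alpha(\hat X_\alpha^n(s))^-$ with the very definition of the fairness measure to obtain $n^{-\alpha}\sum_{k}\tilde\mu_k^n\int_0^t I_k^n(s)\,ds=\langle\iota,\eta_{\alpha,t}^n\rangle\int_0^t(\hat X_\alpha^n(s))^-\,ds$, the scaled balance equation becomes
$$
\hat X_\alpha^n(t)=\hat X_\alpha^n(0)+\hat M^n(t)-\Big[\beta\bar\mu_F^{1-\alpha}(\lambda^n/n)^\alpha+n^{-\alpha}\sum_{k=1}^{N_\alpha^n}(\tilde\mu_k^n-\bar\mu_F)\Big]t+\langle\iota,\eta_{\alpha,t}^n\rangle\int_0^t(\hat X_\alpha^n(s))^-ds-\gamma\int_0^t(\hat X_\alpha^n(s))^+ds,
$$
with $\hat M^n:=n^{-\alpha}(M_A^n-\sum_k M_{S,k}^n-M_R^n)$. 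This exhibits precisely the structure of \eqref{eq:diffusionlimit}--\eqref{eq:fluid_limit}.

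Next I would pin down the asymptotics of each ingredient. The deterministic drift tends to $-\beta\bar\lambda^\alpha\bar\mu_F^{1-\alpha}$ by Assumption~\ref{asm:lambdascaling}. The random correction $n^{-\alpha}\sum_k(\tilde\mu_k^n-\bar\mu_F)$ is of order $n^{1/2-\alpha}$, so it vanishes for $\alpha>1/2$ while for $\alpha=1/2$ the classical i.i.d.~CLT delivers a centred Gaussian $\zeta_1$. For $\hat M^n$, Doob's $L^2$ inequality together with \eqref{eq:boundedrates} and Lemma~\ref{lem:pos_conv} controls the predictable quadratic variations: for $\alpha>1/2$ one obtains $\hat M^n\to 0$ uniformly on compacts in probability; for $\alpha=1/2$ the martingale FCLT (Aldous--Rebolledo) yields convergence to a Brownian motion of variance $2\bar\lambda$, the arrival and pooled service pieces each contributing $\bar\lambda$ via the LLNs $\sum_k\tilde\mu_k^n/N_\alpha^n\to\bar\mu_F$ and $N_\alpha^n/n\to\bar\lambda/\bar\mu_F$, while the abandonment part is annihilated by the stochastic boundedness of $(\hat X_\alpha^n)^+$ coming from Lemma~\ref{lem:pos_conv}. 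Proposition~\ref{prop:tightness_fairness} then upgrades all of the above to joint tightness of $(\hat X_\alpha^n,\eta_{\alpha,\cdot}^n,\hat M^n)$ by Aldous' criterion applied componentwise.

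Finally, pass to any subsequential weak limit and identify the equation via the continuous mapping theorem. The delicate step is the product $\langle\iota,\eta_{\alpha,t}^n\rangle\int_0^t(\hat X_\alpha^n(s))^-ds$: since $\eta_{\alpha}^n$ converges only through its $\epsilon$-shifts, split the integral at $\tau_\epsilon^n$, bound the pre-$\tau_\epsilon^n$ contribution by the very definition $\int_0^{\tau_\epsilon^n}(\hat X_\alpha^n)^-ds\leq\epsilon$, apply continuous mapping on the remainder using $\cS_\epsilon\eta_\alpha^n\Rightarrow\cS_\epsilon\eta_\alpha$, and then send $\epsilon\downarrow 0$. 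For $\alpha>1/2$ the second part of Lemma~\ref{lem:pos_conv} together with the standing convention $\hat X_\alpha^n(0)\leq 0$ kills $\int_0^t(\hat X_\alpha^n(s))^+ds$ in the limit, leaving the ODE \eqref{eq:fluid_limit}; for $\alpha=1/2$ every term survives and recovers \eqref{eq:diffusionlimit}. Pathwise uniqueness of both equations follows from Gronwall combined with the $1$-Lipschitz property of $(\cdot)^\pm$ and the bound $\langle\iota,\eta_{\alpha,t}\rangle\leq\mu_{\max}$ from \eqref{eq:boundedrates}, so subsequential convergence promotes to full weak convergence. The principal obstacle is precisely the coupling just described between the measure-valued $\eta_\alpha^n$ (which degenerates at $t=0$) and the drift integral $\int_0^t(\hat X_\alpha^n)^-ds$: neither factor admits a direct continuous-mapping argument, and only the $\tau_\epsilon^n$-shift device inherited from \cite{bukeqin2019}, combined with the stochastic boundedness supplied by Lemma~\ref{lem:pos_conv}, reconciles the two topologies.
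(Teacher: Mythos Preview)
Your proposal is correct and follows essentially the same route as the paper: the identical semimartingale decomposition of $\hat X_\alpha^n$, the same martingale CLT / LLN identification of each piece, and crucially the same $\tau_\epsilon^n$-shift device from \cite{bukeqin2019} to reconcile the fairness process with the drift integral. The only cosmetic difference is in the closing step: the paper invokes a Skorokhod-representation lemma to work almost surely and then runs a direct Gronwall comparison $|\hat X_\alpha^n(t)-\xi_\alpha(\Lambda^n(t))|$, whereas you argue via joint tightness, subsequential identification, and pathwise uniqueness---both are standard and interchangeable here.
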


 A key question regarding the stochastic process limits above is whether it is possible to approximate the stationary behavior of the many-server queues using the diffusion and fluid limits given above, i.e., whether the many-server limit and the limit as $t\to \infty$ is interchangeable.  As the next step, we prove the interchangeability of limits in Theorem~\ref{thm:interchangibility_stationary_n}. 
\begin{theorem}
\label{thm:interchangibility_stationary_n}
For many-server systems with random and heterogeneous service rates, for any $1/2\leq \alpha\leq 1$ the following convergence results hold as $n\to \infty$, (i) $\hat{X}_\alpha^n(\infty)\Rightarrow \xi_\alpha(\infty)$, (ii) $\EE[\hat{X}_\alpha^n(\infty)]\to \EE[\xi_\alpha(\infty)]$,  and (iii)  $\eta_{\alpha,\infty}^{\pi,n}\Rightarrow \eta_{\alpha, \infty}^\pi$ 
where $\xi_{\alpha}(t)\Rightarrow \xi_{\alpha}(\infty)$ and $\eta_{\alpha, t}^\pi\Rightarrow \eta_{\alpha, \infty}^\pi$ as $t\to \infty$.
\end{theorem}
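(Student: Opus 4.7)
The plan is a standard interchange-of-limits programme split into three phases: uniform tightness of the prelimit stationary laws, ergodicity of the limiting dynamics of Theorem~\ref{thm:system_convergence}, and identification of any weak subsequential limit point with the unique invariant measure of the limit. For fixed $n$, the joint process $(X^n,(I_k^n)_{k\leq N_\alpha^n})$ is an irreducible continuous-time Markov chain on a countable state space with bounded arrival and service rates and strictly positive abandonment rate $\gamma$. A Foster--Lyapunov test with $V(x)=x$ yields drift bounded above by $\lambda^n-\gamma(x-N_\alpha^n)^+$ whenever $x>N_\alpha^n$, so the chain is positive recurrent and admits a unique stationary distribution $\hat X_\alpha^n(\infty)$ together with a stationary fairness measure $\eta_\infty^n$.

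For uniform tightness of $\{\hat X_\alpha^n(\infty)\}_{n\in\NN}$ I would use the sharper exponential Lyapunov function $V_\theta(x)=\exp\{\theta n^{-\alpha}(x-N_\alpha^n)^+\}$. Because of the abandonment term, the infinitesimal drift of $V_\theta$ in the overload region is bounded, for $\theta>0$ small enough, by $-c\,\theta\,n^\alpha\,V_\theta$ uniformly in $n$, which delivers uniform exponential moments for $(\hat X_\alpha^n(\infty))^+$. Tightness of the negative part $(\hat X_\alpha^n(\infty))^-=\hat I_\alpha^n(\infty)$ follows from Lemma~\ref{lem:pos_conv} combined with the deterministic bound $\hat I_\alpha^n(\infty)\leq n^{-\alpha}N_\alpha^n$ and a stationary-averaging argument applied to the Poisson martingales in~\eqref{eq:auxiliareqSystemLengthProcess}. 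The same exponential tail estimate supplies the uniform integrability needed for claim~(2).

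For ergodicity of the limit, when $\alpha>1/2$ the ODE~\eqref{eq:fluid_limit} driven by the stationary measure $\eta_{\alpha,\infty}$ is linear on the negative half-line with globally attracting fixed point $-\beta\bar\lambda^\alpha\bar\mu_F^{1-\alpha}/\langle\iota,\eta_{\alpha,\infty}\rangle$, to which $\xi_\alpha(t)$ converges exponentially by direct integration. When $\alpha=1/2$ the SDE~\eqref{eq:diffusionlimit} in the stationary regime has piecewise-linear drift with slope $-\langle\iota,\eta_{1/2,\infty}\rangle$ on the negative half-line and $-\gamma$ on the positive half-line; strict dissipativity combined with the non-degenerate diffusion coefficient $\sqrt{2\bar\lambda}$ gives geometric ergodicity via a Khasminskii-type criterion, producing a unique invariant law $\xi_\alpha(\infty)$. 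To identify the limit I then start each $n$th system from its stationary distribution so that $\hat X_\alpha^n(\cdot)$ is itself stationary, extract along a subsequence $\hat X_\alpha^n(\infty)\Rightarrow \xi^\star$, and apply Theorem~\ref{thm:system_convergence} (invoking the uniform-integrability relaxation of Assumption~\ref{asm:initialcondition} discussed in the excerpt) to obtain $\hat X_\alpha^n(\cdot)\Rightarrow \xi_\alpha(\cdot)$ started from $\xi^\star$; since every marginal $\hat X_\alpha^n(t)$ still has law $\xi^\star$, it follows that $\xi_\alpha(t)\stackrel{d}{=}\xi^\star$ for all $t$, forcing $\xi^\star\stackrel{d}{=}\xi_\alpha(\infty)$ by ergodicity. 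This proves~(1), claim~(2) follows from Phase~(a), and~(3) is obtained by the same identification scheme at the measure level using the tightness supplied by Proposition~\ref{prop:tightness_fairness}.

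The hard part is the uniform-in-$n$ exponential moment bound, since the drift of $X^n$ above $N_\alpha^n$ involves $\sum_k\tilde\mu_k^n$, whose behaviour depends on the random realisations of the individual rates rather than on $\bar\mu_F$ alone. I expect to handle this by combining the law-of-large-numbers concentration of $\sum_k\tilde\mu_k^n$ around $\bar\mu_F N_\alpha^n$ with the uniform bounds~\eqref{eq:boundedrates} on the support of $F$, so that on an event of probability tending to one in $n$ the drift in the overload region is dominated by $-\tfrac{1}{2}\beta\bar\mu_F n^\alpha-\gamma(x-N_\alpha^n)$, which suffices for the Lyapunov argument. A secondary subtle point is ruling out loss of mass at infinity in the fairness identification; this is handled by~\eqref{eq:boundedrates}, which confines the support of $\cS_\epsilon\eta_\alpha^n$ to the fixed compact interval $[\mu_{\min},\mu_{\max}]$, so tightness at the measure level is automatic.
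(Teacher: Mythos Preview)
Your overall architecture---tightness/uniform integrability of the prelimit stationary laws, process-level convergence from stationary initial data via Theorem~\ref{thm:system_convergence}, and identification through ergodicity of the limit---is exactly the paper's programme, and your treatment of the ergodicity of $\xi_\alpha$ is more explicit than what the paper writes.

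The substantive gap is in the uniform integrability of the \emph{negative} part $(\hat X_\alpha^n(\infty))^-=\hat I_\alpha^n(\infty)$. The three ingredients you cite do not add up: Lemma~\ref{lem:pos_conv} is proved under Assumption~\ref{asm:initialcondition} (bounded initial data) and does not transfer to the stationary version without circularity; the deterministic bound $\hat I_\alpha^n(\infty)\le n^{-\alpha}N_\alpha^n$ diverges like $n^{1-\alpha}$ for $\alpha<1$; and the stationary balance you allude to from~\eqref{eq:auxiliareqSystemLengthProcess} yields only a uniform \emph{first}-moment bound on $\hat I_\alpha^n(\infty)$, which gives tightness but not the uniform integrability you need both for claim~(2) and to invoke the UI relaxation of Assumption~\ref{asm:initialcondition} in the identification step. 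An exponential Lyapunov function on the negative side is also problematic because the departure rate depends on \emph{which} servers are busy, not just on $X^n$, so the idleness count is not Markov on its own and the generator bound is not immediate. You flag the random $\sum_k\tilde\mu_k^n$ as ``the hard part'' on the positive side, but in fact the paper spends almost all of its effort (Lemma~\ref{lem:uniform_integrability_system}) on the negative side: it builds a chain of pathwise couplings $(\hat X_\alpha^n)^-\le \hat Y_1^n\le \hat Y_2^n\le M_1/n^\alpha+\hat Y_3^n$, where $Y_3^n$ is an explicit $M/M/1$-type birth--death process, and then bounds $\sup_n\EE[(n^{-\alpha}(M_1+Y_3^n(\infty)))^2]$ directly. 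That coupling is the missing idea in your proposal; once you have it, the rest of your scheme goes through exactly as you describe.
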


A simple fixed point analysis using part (iii) of Theorem~\ref{thm:interchangibility_stationary_n} implies that for any $1/2<\alpha\leq 1$, 
%if $\eta_{\alpha,t}=\eta_\alpha$ for all $t\geq\tau_0$, then the solution to \eqref{eq:fluid_limit} can be written as 
% \[
% \xi(t)=-\frac{\beta\bar{\lambda}^\alpha\bar{\mu}_F^{1-\alpha}}{\langle \iota, \eta_\alpha \rangle}+\left(\xi_0+\frac{\beta\bar{\lambda}^\alpha\bar{\mu}_F^{1-\alpha}}{\langle \iota, \eta_\alpha \rangle}\right)e^{-\langle \iota, \eta_\alpha \rangle t},
% \]
% and as $t\to \infty$ we have $ 
% \xi(t)\to\beta\bar{\lambda}^\alpha\bar{\mu}_F^{1-\alpha}\langle \iota, \eta_\alpha \rangle^{-1}$. 
% Using a similar reasoning, 
we have $$\xi_{\alpha}(t)\to \xi_\alpha(\infty):=-\beta\bar{\lambda}^\alpha\bar{\mu}_F^{1-\alpha}\langle \iota, \eta_{\alpha,\infty}^\pi \rangle^{-1},\qquad \textrm{ as $t\to \infty$.}$$ Part (ii) of Theorem~\ref{thm:system_convergence} implies $ \EE[\hat{X}_\alpha^n(\infty)]\approx \xi_\alpha(\infty)$ and $\EE[(\hat{X}_\alpha^n(\infty))^+]\approx 0$ for sufficiently large $n$. Finally, using the relation that $\hat{I}^n_\alpha(\infty)=(\hat{X}_\alpha^n(\infty))^-$, we can approximate the expected long run number of idle servers as 
\begin{equation}
    \EE[\hat{I}_\alpha^n(\infty)]\approx\beta\bar{\lambda}^\alpha\bar{\mu}_F^{1-\alpha}\langle \iota, \eta_{\alpha,\infty}^\pi\rangle^{-1}.\label{eq:stationary_idle_length}
\end{equation}
This argument underpins much of the success mean-field games theory has collected in recent years, see e.g. \cite{Bayraktar2019} and references therein.

\begin{remark}\label{rem:specialpolicies}
Equations \eqref{eq:diffusionlimit} and \eqref{eq:fluid_limit} imply that $\PP(\tau_0<\infty)=1$ for all $1/2\leq \alpha< 1$. Proposition~\ref{thm:specialpolicies} and Theorem \ref{thm:interchangibility_stationary_n} imply
$\langle\iota,\eta_{\alpha,\infty}^{SSF}\rangle = \mu_{\max}$, $
     \langle\iota,\eta_{\alpha,\infty}^{S
     FSF}\rangle= \mu_{\min}$ and $\langle\iota,\eta_{\alpha,\infty}^{LISF}\rangle=\langle\iota,\eta_{\alpha,\infty}^{RR}\rangle=
     \EE[(\tilde{\mu}_k^n)^2]/\EE[\tilde{\mu}_k^n]$. These generalize the transient convergence results in \cite{Atar2008} and \cite{bukeqin2019}.
\end{remark}

Theorems~\ref{thm:system_convergence} and \ref{thm:interchangibility_stationary_n} demonstrate the importance of characterizing the limiting fairness process under a suggested routing policy. In Proposition~\ref{thm:specialpolicies}, we presented some results in this direction when $1/2\leq \alpha<1$. In the next section, we concentrate on the purely quality-driven regime ($\alpha = 1$) and characterize the limiting fairness processes for a general class of randomized policies.  
 
\section{A Generalized Random Routing Policy in the Purely Quality-Driven Regime}
\label{sec:grr_policy}
In this section, we address the following two questions: (i) Given a service rate distribution $F$, which limiting fairness processes are attainable in a purely quality-driven regime? (ii) How should one design a routing policy to attain a specified fairness process? In Proposition~\ref{thm:specialpolicies}, we have seen that for $1/2\leq \alpha <1$, the limiting fairness process can be quite general and even measures with a mass concentrating at a certain service rate are attainable. However, when $\alpha=1$, the dynamics differ substantially. To understand this difference, consider any $\AAA\in\cB(\RR_+)$ with $F(\AAA)>0$. The number of agents with service rates in set $\AAA$ scales with $n$ and Theorem~\ref{thm:system_convergence} implies that the total idleness observed in the system scales with $n^\alpha$. Hence, when $\alpha<1$, it is possible for all the idleness to concentrate on the agents of set $\AAA$ for $n$ large enough and have point masses in the limiting fairness measures as seen in Proposition~\ref{thm:specialpolicies}. On the other hand, when $\alpha=1$, both the total idleness and the number of agents in $\AAA$ scale with $n$, and the number of agents in $\AAA$ can potentially be less than the total idleness experienced in the system for any $n$. As we always need to have the total proportional idleness experienced by the agents in set $\AAA$ less than the number of agents in that set, for any $n\in \NN$ and $T>0$, 
\begin{align}
    % \nonumber
    \frac{\sum_{k=1}^{N_\alpha^n}\delta_{\tilde{\mu}_k^n}(\AAA)\int_0^T I_k^n(t)dt}{T}&\leq \sum_{k=1}^{N_\alpha^n}\delta_{\tilde{\mu}_k^n}(\AAA),
     \mbox{ which implies } 
    \frac{\eta_{\alpha,\infty}^{\pi,n}(\AAA)}{F^n(\AAA)}\leq \frac{N_\alpha^n T}{\sum_{k=1}^{N_\alpha^n}\int_0^TI_k^n(t)dt}.\label{eq:boundnumberservers}
\end{align}
When $1/2\leq \alpha<1$, Theorem~\ref{thm:system_convergence} implies that the right-hand side diverges to infinity as $n\to\infty$, and hence, \eqref{eq:boundnumberservers} does not pose any restrictions on the limiting fairness measure.  However, when $\alpha=1$, taking the limit and replacing from \eqref{eq:stationary_idle_length}, we get the following necessary condition for any probability measure to be the stationary limiting fairness measure under a routing policy.
\begin{proposition}\label{thm:necessary_idleness_distribution}
In the purely quality-driven regime, a necessary condition for  a probability measure $\eta_{1,\infty}^\pi$ to be the stationary limiting fairness measure under a routing policy $\pi$ is  
\begin{equation}
    0\leq \frac{\eta_{1,\infty}^\pi(\AAA)}{F(\AAA)}\leq \frac{(1+\beta)\langle \iota, \eta_{1,\infty}^\pi\rangle}{\beta \bar{\mu}_F} \mbox{ for all }\AAA\in \cB(\RR_+).\label{eq:boundqdregime}
\end{equation}
\end{proposition}

Equation~\eqref{eq:boundqdregime} implies that, in the purely quality-driven regime, the stationary limiting fairness measure is absolutely continuous, and hence, has a density $g(\mu)$ with respect to $F$. Considering sets $[\mu,\mu+\Delta)$ and taking the limit as $\Delta\to 0$, \eqref{eq:boundqdregime} then takes the form
\begin{equation}
        0\leq g(\mu)\leq \frac{(1+\beta)\langle \iota, \eta_{1,\infty}^\pi\rangle}{\beta \bar{\mu}_F}.\label{eq:boundqddensity}
\end{equation}

We naturally ask the reverse implication, concretely, is the necessary condition provided in Proposition~\ref{thm:necessary_idleness_distribution} also sufficient for the existence of a routing policy that attains the provided stationary limiting fairness measure. In this section, we provide a class of generalized random routing policies that can attain any stationary limiting fairness measure with a density that satisfies~\eqref{eq:boundqddensity} strictly in the purely quality-driven regime. 
We first formally define our routing policy. 
\begin{definition}
 Given a real-valued function $h(\mu)$ such that $h(\mu)>0$ for all $\mu_{\min}\leq \mu\leq \mu_{\max}$,  a routing policy is called $h$-random if the probability of an incoming job arriving at $t$ to be routed upon arrival to the idle server $k$ having service rate $\tilde{\mu}_k^n$ is proportional to $h(\tilde{\mu}_k^n)$ and is given by 
 \[
 \frac{h(\tilde{\mu}_k^n)I_k^n(t-)}{\sum_{l=1}^{N_1^n}h(\tilde{\mu}_l^n)I_l^n(t-)},
 \]
 where 0/0 is interpreted as 0.
 \end{definition}

To be able to analyze $h$-random policies, we define the finite measure-valued instantaneous allocation of idleness processes as $\psi_t^n(\AAA):=\sum_{k=1}^{N_1^n}\delta_{\tilde{\mu}_k^n}(\AAA)I_k^n(t)$, and this process denotes the number of servers with service rate in set $\AAA$ who are idle at time $t$ at the $n$th system. 
We also define the scaled instantaneous allocation as $\bar{\psi}_t^n(\AAA):=n^{-1}\psi_t^n(\AAA)$.
\begin{lemma}\label{lem:integral_equation}
The set of scaled instantaneous allocations, $\{\bar{\psi}_t^n\}_{n\in\NN}$, is tight. Moreover, if the subsequence $\bar{\psi}_t^{n_k}\Rightarrow \bar{\psi}_t$ as $n_k\to\infty$, then, $\bar{\psi}_t$ satisfies 
\begin{align}
        \nonumber\langle f,\bar{\psi}_{t}\rangle &=\langle f,\bar{\psi}_{0}\rangle + \frac{\bar{\lambda}}{\bar{\mu}}(1+\beta)\langle f\times \iota, F\rangle \int_0^t\II(\xi_{\alpha}(s)\leq 0)ds - \int_0^t\langle f\times \iota, \bar{\psi}_{s-}\rangle \II(\xi_{\alpha}(s)\leq 0)ds
        \\
        &\quad 
        - \bar{\lambda}\int_0^t \frac{\langle f\times h, \bar{\psi}_{s-}\rangle }{\langle h, \bar{\psi}_{s-}\rangle}\II(\xi_{\alpha}(s)\leq 0)ds \mbox{ for all }t\geq 0.
      \label{eq:limitrandom1}
    \end{align}
\end{lemma}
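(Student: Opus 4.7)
The plan is to pass to the limit along the convergent subsequence in a pre-limit semimartingale decomposition of $\langle f,\psi^n_{\cdot}\rangle$ for an arbitrary bounded continuous test function $f$ on $[\mu_{\min},\mu_{\max}]$. Viewing $\psi^n_t=\sum_{k=1}^{N^n_\alpha}\delta_{\tilde\mu^n_k}I^n_k(t)$ as a pure-jump process driven by the Poisson arrival process and the per-server completion clocks, exactly two types of events move $\langle f,\psi^n\rangle$: a busy server $k$ completes a service and turns idle (possible only when no customer is waiting, i.e.\ $X^n(s)\leq N^n_\alpha$), contributing $+f(\tilde\mu^n_k)$ at instantaneous rate $\tilde\mu^n_k(1-I^n_k(s))$; or an arrival is routed to an idle server $k$ under the $h$-random policy (possible only when $X^n(s-)<N^n_\alpha$), contributing $-f(\tilde\mu^n_k)$ at rate $\lambda^n h(\tilde\mu^n_k)I^n_k(s-)/\langle h,\psi^n_{s-}\rangle$. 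Summing over $k$ and using $\sum_k f(\tilde\mu^n_k)\tilde\mu^n_k(1-I^n_k(s))=\sum_k f(\tilde\mu^n_k)\tilde\mu^n_k-\langle f\times\iota,\psi^n_s\rangle$ yields
\begin{align*}
\langle f,\psi^n_t\rangle &= \langle f,\psi^n_0\rangle + M^n_t(f) +\int_0^t\Big(\sum_{k=1}^{N^n_\alpha}f(\tilde\mu^n_k)\tilde\mu^n_k - \langle f\times\iota,\psi^n_s\rangle\Big)\II(X^n(s)\leq N^n_\alpha)\,ds \\
&\quad -\int_0^t\lambda^n\,\II(X^n(s-)<N^n_\alpha)\,\frac{\langle f\times h,\psi^n_{s-}\rangle}{\langle h,\psi^n_{s-}\rangle}\,ds,
\end{align*}
where $M^n_{\cdot}(f)$ is a zero-mean martingale with predictable quadratic variation of order $O(n)$.

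Dividing by $n$, I pass to the limit along $n_k$ using four ingredients: (a)~the strong LLN applied to the i.i.d.\ rates $\tilde\mu^n_k\sim F$, combined with $N^n_\alpha/n\to (1+\beta)\bar\lambda/\bar\mu_F$ (specialising \eqref{eq:staffing_level} to $\alpha=1$), which gives $n^{-1}\sum_{k=1}^{N^n_\alpha}f(\tilde\mu^n_k)\tilde\mu^n_k\to (1+\beta)(\bar\lambda/\bar\mu_F)\langle f\times\iota,F\rangle$; (b)~the subsequence hypothesis $\bar\psi^{n_k}\Rightarrow \bar\psi$; (c)~Theorem~\ref{thm:system_convergence} and Lemma~\ref{lem:pos_conv}, which yield $\hat X^{n_k}_\alpha\Rightarrow \xi_\alpha$ with $\xi_\alpha$ a deterministic $C^1$ solution of \eqref{eq:fluid_limit}, so that $\II(X^{n_k}(s)\lessgtr N^{n_k}_\alpha)\to \II(\xi_{\alpha,s}\leq 0)$ at Lebesgue-a.e.\ $s$; and (d)~Doob's $L^2$ maximal inequality, which forces $M^n_{\cdot}(f)/n\Rightarrow 0$ uniformly on compacts.

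The main obstacle is the joint passage to the limit of the discontinuous indicator $\II(X^n(s)\leq N^n_\alpha)$ together with the nonlinear ratio $\langle f\times h,\bar\psi^n_s\rangle/\langle h,\bar\psi^n_s\rangle$ inside the time integrals. The indicator is discontinuous at $\hat X^n_\alpha(s)=0$, and the ratio $\nu\mapsto \langle f\times h,\nu\rangle/\langle h,\nu\rangle$ is continuous only on the set of measures with $\langle 1,\nu\rangle>0$; both pathologies coincide on $\{s:\xi_{\alpha,s}=0\}$. I will handle this by arguing that the $C^1$ trajectory $\xi_\alpha$ either approaches its attracting equilibrium monotonically or stays strictly negative after a transient (Lemma~\ref{lem:pos_conv} and \eqref{eq:fluid_limit}), so that $\{s\in[0,t]:\xi_{\alpha,s}=0\}$ has Lebesgue measure zero; on the complement both mappings are continuous and the continuous mapping theorem applies. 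The bounds $h\geq \min_{[\mu_{\min},\mu_{\max}]}h>0$, $|\langle f\times\iota,\bar\psi^n_s\rangle|\leq \|f\|_\infty\mu_{\max}\,\hat X^n_\alpha(s)^-$, and the stochastic boundedness of $\hat X^n_\alpha(s)^-$ (Lemma~\ref{lem:pos_conv}) supply the uniform integrability needed to interchange the limit with the time integrals by dominated convergence. Collecting (a)--(d) with these continuity and integrability arguments yields \eqref{eq:limitrandom1}.
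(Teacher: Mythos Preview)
Your approach is essentially the same as the paper's: both write the pre-limit balance/semimartingale decomposition for $\langle f,\bar\psi^n_t\rangle$, kill the martingale part via its quadratic variation (the paper bounds $[n^{-1}M^n_j]_t$ directly, you invoke Doob), use the strong law for the empirical sum $n^{-1}\sum_k f(\tilde\mu^n_k)\tilde\mu^n_k$, and pass to the limit in the time integrals by dominated convergence after bounding the integrands via $h_{\min}\leq h\leq h_{\max}$ and $|f|\leq K_f$.

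There is one point to tighten in step~(c). You appeal to Theorem~\ref{thm:system_convergence} to obtain $\hat X^{n_k}_\alpha\Rightarrow\xi_\alpha$ satisfying \eqref{eq:fluid_limit}, but that theorem's hypothesis is precisely that the limiting fairness process $\eta_{\alpha,\cdot}$ exists, which for $h$-random policies in the purely quality-driven regime is only established \emph{after} the present lemma (via Theorem~\ref{thm:stationary_fairness1}). To avoid this circularity, obtain the convergence of $\hat X^{n_k}_\alpha$ directly from your other ingredients: since $\alpha=1$ and the policy is non-idling, $(\hat X^{n}_\alpha)^-=\hat I^{n}_\alpha=\langle 1,\bar\psi^{n}\rangle$, so the subsequence hypothesis already gives $(\hat X^{n_k}_\alpha)^-\Rightarrow\langle 1,\bar\psi\rangle$; combined with $(\hat X^{n}_\alpha)^+\toP 0$ from Lemma~\ref{lem:pos_conv} you get $\hat X^{n_k}_\alpha\Rightarrow-\langle 1,\bar\psi\rangle\leq 0$. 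This also makes the indicator limit trivial ($\II(\xi_{\alpha,s}\leq 0)\equiv 1$) and sidesteps the zero-set argument you outline, which is the route the paper implicitly takes.
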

% \label{lem:tightness_allocations}
% \end{lemma}

% The above tightness result implies that any subsequence of $\bar{\psi}_t^n$ has a further subsequence that converges. In Lemma~\ref{lem:integral_equation}, we provide an equation that the limits of all subsequences must satisfy. 
% \begin{lemma}\label{lem:integral_equation}
% If the subsequence $\bar{\psi}_t^{n_k}\Rightarrow \bar{\psi}_t$ as $n_k\to\infty$. Then, $\bar{\psi}_t$ satisfies 
% \begin{align}
%         \nonumber\langle f,\bar{\psi}_{t}\rangle &=\langle f,\bar{\psi}_{0}\rangle + \frac{\bar{\lambda}}{\bar{\mu}}(1+\beta)\langle f\times \iota, F\rangle \int_0^t\II(\xi_{\alpha,s}\leq 0)ds - \int_0^t\langle f\times \iota, \bar{\psi}_{s-}\rangle \II(\xi_{\alpha,s}\leq 0)ds
%         \\
%         &\quad 
%         - \bar{\lambda}\int_0^t \frac{\langle f\times h, \bar{\psi}_{s-}\rangle }{\langle h, \bar{\psi}_{s-}\rangle}\II(\xi_{\alpha,s}\leq 0)ds \mbox{ for all }t\geq 0.
%       \label{eq:limitrandom1}
%     \end{align}
% \end{lemma}

% \begin{lemma}\label{lem:boundedness_instantaneous_idleness}
% For any $f\in \CC_{[\mu_{\min},\mu_{\max}]}^b[0,\infty)$, any solution to equation~\eqref{eq:limitrandom1} is bounded. 
% \end{lemma}
Equation \eqref{eq:limitrandom1} enables us to study the transient behavior of scaled instantaneous allocation processes. Theorem~\ref{thm:stationary_fairness1} shows how \eqref{eq:limitrandom1} can be used to obtain the stationary limiting fairness process.

\begin{theorem}
When $\alpha=1$, the stationary limiting fairness measure $\eta_{1,\infty}^\pi$ under an $h$-random policy is absolutely continuous w.r.t. the service rate distribution $F$ with density 
\begin{equation}
    g(\mu)=
    \big( 1+L_F\tilde{h}(\mu) \big)^{-1}
    \left(\int_{\mu_{\min}}^{\mu_{\max}}(1+L_F\tilde{h}(\mu))^{-1}dF(\mu)\right)^{-1},\quad \mbox{ for all }\mu\in[\mu_{\min},\mu_{\max}],\label{eq:stationaryfairness_hrandom}
\end{equation}
where $\tilde{h}(\mu)=h(\mu)/\mu$ and $L_F$ is the unique solution of
\begin{equation}
    \int_{\mu_{\min}}^{\mu_{\max}}\mu\frac{1+\beta}{\bar{\mu}_F(1+L_F\tilde{h}(\mu))}dF(\mu)=\beta.\label{eq:g_equation}
\end{equation}
\label{thm:stationary_fairness1}
\end{theorem}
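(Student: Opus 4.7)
The plan is to reduce the problem to a pointwise fixed-point equation for the density of the stationary scaled allocation $\bar\psi_{1,\infty}$, solve it in closed form, and identify the single unknown constant via a throughput balance.

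First, I would pass to stationarity in~\eqref{eq:limitrandom1}. By Lemma~\ref{lem:pos_conv}, in the purely quality-driven regime ($\alpha=1$) the scaled positive part of the system length vanishes past a fixed time, so at stationarity $\II(\xi_{1,\infty}\le 0)=1$ almost surely. Combining Lemma~\ref{lem:tightness_allocations}, Lemma~\ref{lem:integral_equation} and Theorem~\ref{thm:interchangibility_stationary_n} provides a stationary limit $\bar\psi_{1,\infty}$; since the $h$-random policy is stationary, this limit is deterministic. Specialising $\bar\psi_{1,s-}\equiv\bar\psi_{1,\infty}$ in~\eqref{eq:limitrandom1} and observing that the left-hand side is constant in $t$ while the right-hand side is affine in $t$, the coefficient of $t$ must vanish, yielding
\[
\frac{\bar\lambda(1+\beta)}{\bar\mu_F}\langle f\cdot\iota,F\rangle \;=\; \langle f\cdot\iota,\bar\psi_{1,\infty}\rangle \;+\; \bar\lambda\,\frac{\langle f\cdot h,\bar\psi_{1,\infty}\rangle}{\langle h,\bar\psi_{1,\infty}\rangle}
\]
for every bounded measurable $f$ on $[\mu_{\min},\mu_{\max}]$.

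Second, Theorem~\ref{thm:necessary_idleness_distribution} implies that $\bar\psi_{1,\infty}$ is absolutely continuous with respect to $F$; call its density $p(\mu)$. Introducing $K:=\int h\,p\,dF$, $L_F:=\bar\lambda/K$ and $\tilde h(\mu):=h(\mu)/\mu$, the arbitrariness of $f$ above yields the pointwise identity $\bar\lambda(1+\beta)\mu/\bar\mu_F = \mu(1+L_F\tilde h(\mu))p(\mu)$, i.e.
\[
p(\mu) \;=\; \frac{\bar\lambda(1+\beta)}{\bar\mu_F}\cdot\frac{1}{1+L_F\tilde h(\mu)}.
\]
Since $\eta_{1,\infty}(\AAA)=\bar\psi_{1,\infty}(\AAA)/\bar\psi_{1,\infty}([\mu_{\min},\mu_{\max}])$, the prefactor cancels and~\eqref{eq:stationaryfairness_hrandom} follows once $L_F$ is pinned down. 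To determine $L_F$, I would use stationary flow balance: because there are no abandonments in stationarity ($\xi_{1,\infty}\le 0$), total service throughput equals the arrival rate. The scaled busy density at rate $\mu$ is $(1+\beta)\bar\lambda/\bar\mu_F - p(\mu)$, which leads to $\int\mu\,p(\mu)\,dF(\mu)=\beta\bar\lambda$; substituting $p$ recovers~\eqref{eq:g_equation} exactly. Uniqueness is immediate because the left-hand side of~\eqref{eq:g_equation} is continuous, strictly decreasing in $L_F\in[0,\infty)$ (since $\tilde h>0$ on $[\mu_{\min},\mu_{\max}]$), equals $1+\beta>\beta$ at $L_F=0$, and tends to $0$ as $L_F\to\infty$ by dominated convergence.

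The main obstacle is the first step: rigorously pushing $\bar\psi_{1,s-}$ through the \emph{nonlinear} ratio $\langle f\cdot h,\bar\psi\rangle/\langle h,\bar\psi\rangle$ when taking the stationary limit. One needs a uniform positive lower bound $\langle h,\bar\psi_{1,t}\rangle\ge c>0$ so that the ratio is continuous at $\bar\psi_{1,\infty}$; this follows from $\inf h>0$ on $[\mu_{\min},\mu_{\max}]$ combined with the density bound~\eqref{eq:boundqddensity} provided by Theorem~\ref{thm:necessary_idleness_distribution}, but the interchange of the $n\to\infty$ and $t\to\infty$ limits guaranteed by Theorem~\ref{thm:interchangibility_stationary_n} must be invoked carefully to justify that the stationary measure so obtained is in fact the unique solution of the fixed-point equation above.
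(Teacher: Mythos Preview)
Your proposal is correct and follows essentially the same route as the paper: pass to the stationary fixed-point of~\eqref{eq:limitrandom1}, use the absolute continuity from Theorem~\ref{thm:necessary_idleness_distribution} to reduce to a pointwise equation for the density, solve to obtain $p(\mu)=\frac{\bar\lambda(1+\beta)}{\bar\mu_F}(1+L_F\tilde h(\mu))^{-1}$ with $L_F=\bar\lambda/\langle h,\bar\psi_{1,\infty}\rangle$, and pin down $L_F$ via the identity $\int\mu\,p(\mu)\,dF(\mu)=\beta\bar\lambda$ (the paper obtains this by taking $f\equiv 1$ in the stationary equation, which is exactly your flow-balance computation). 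The only cosmetic difference is that the paper argues determinism of the limit a posteriori from uniqueness of $L_F$ rather than from stationarity of the policy, and it gives a terser uniqueness argument (monotonicity of the integrand in $L_F$) than your endpoint check; your caveat about the nonlinear ratio is well taken and the paper handles it implicitly via the bounds $0<\epsilon_h\le h\le K_h$ established in the proof of Lemma~\ref{lem:integral_equation}.
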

\vspace{-0.4in}
The following lemma provides bounds on $L_F$, the solution of \eqref{eq:g_equation}.
\begin{lemma}\label{lem:L_bound}
Define $h_{\min}=\min_{\mu_{\min} \leq \mu\leq \mu_{\max} }\tilde{h}(\mu)$ and $h_{\max}=\max_{\mu_{\min}\leq \mu\leq \mu_{\max}}\tilde{h}(\mu)$ and let $L_F$ be the solution of \eqref{eq:g_equation} for some service rate distribution $F$. Then, $(\beta h_{\max})^{-1}\leq L_F\leq (\beta h_{\min})^{-1}$.
\end{lemma}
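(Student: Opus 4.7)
The plan is to leverage the strict monotonicity of the left-hand side of \eqref{eq:g_equation} together with the uniform pointwise control $h_{\min}\leq h(\mu)\leq h_{\max}$. Define the functional $G(L):=\int_{\mu_{\min}}^{\mu_{\max}}\mu\frac{1+\beta}{\bar{\mu}_F(1+L\tilde{h}(\mu))}dF(\mu)$ for $L\geq 0$. First I would verify that $G$ is continuous and strictly decreasing on $[0,\infty)$: the integrand is pointwise strictly decreasing in $L$ because $\tilde{h}(\mu)=h(\mu)/\mu>0$ on $[\mu_{\min},\mu_{\max}]$, and continuity follows from dominated convergence with integrable majorant $\mu(1+\beta)/\bar{\mu}_F$. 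Combined with the boundary values $G(0)=1+\beta>\beta$ and $G(L)\downarrow 0$ as $L\to\infty$, the intermediate value theorem delivers the existence and uniqueness of $L_F\in(0,\infty)$ satisfying $G(L_F)=\beta$, a fact implicitly used in the statement of Theorem~\ref{thm:stationary_fairness1}.

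For the two-sided estimate, I would rewrite $\mu/(1+L\tilde{h}(\mu))=\mu^2/(\mu+Lh(\mu))$ and apply the pointwise sandwich coming from $h_{\min}\leq h(\mu)\leq h_{\max}$ to produce
\begin{equation*}
\frac{1+\beta}{\bar{\mu}_F}\int_{\mu_{\min}}^{\mu_{\max}}\frac{\mu^2}{\mu+L h_{\max}}dF(\mu)\leq G(L)\leq \frac{1+\beta}{\bar{\mu}_F}\int_{\mu_{\min}}^{\mu_{\max}}\frac{\mu^2}{\mu+L h_{\min}}dF(\mu).
\end{equation*}
Evaluating these bracketing functions at the candidate endpoints $L=\beta/h_{\max}$ and $L=\beta/h_{\min}$ and using strict monotonicity of $G$ should then yield $L_F\geq \beta/h_{\max}$ from $G(\beta/h_{\max})\geq \beta$, and $L_F\leq \beta/h_{\min}$ from $G(\beta/h_{\min})\leq \beta$.

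The main technical hurdle I expect lies in the two scalar inequalities of the form $(1+\beta)\int \mu^2/(\mu+\beta)\,dF(\mu) \leq \beta\bar{\mu}_F$ and its reverse, which emerge after substituting the candidate endpoints into the sandwich. Using the algebraic identity $\mu^2/(\mu+\beta)=\mu-\beta+\beta^2/(\mu+\beta)$ together with $\int \mu\,dF(\mu)=\bar{\mu}_F$, each inequality reduces to a condition on the moment $\beta^2\int dF(\mu)/(\mu+\beta)$, which can be controlled via Jensen's inequality applied to the convex function $x\mapsto 1/(x+\beta)$ combined with the normalization $\mu_{\min}=1$ from \eqref{eq:boundedrates} guaranteeing $\bar{\mu}_F\geq 1$. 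Completing this elementary but delicate algebraic step simultaneously closes both directions.
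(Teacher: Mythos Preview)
Your approach has a genuine gap: the ``technical hurdle'' you flag at the end cannot be closed, because the lemma as printed is not correct. Take $h\equiv 1$ and $F=\delta_{1}$ (recall $\mu_{\min}=1$); then \eqref{eq:g_equation} reads $(1+\beta)/(1+L_F)=\beta$, so $L_F=1/\beta$, whereas the stated bounds collapse to $L_F=\beta$. The two scalar inequalities you isolate, namely $(1+\beta)\int \mu^2/(\mu+\beta)\,dF \lessgtr \beta\bar\mu_F$, therefore fail in general (in the $\delta_1$ example they become $1\lessgtr\beta$), and no amount of Jensen or algebraic identities will rescue them. The source of the trouble is that the printed statement contains two typos: the bounds should read
\[
\frac{1}{\beta\,\tilde h_{\max}}\;\le\;L_F\;\le\;\frac{1}{\beta\,\tilde h_{\min}},
\qquad \tilde h_{\min}:=\min_\mu \tilde h(\mu),\ \ \tilde h_{\max}:=\max_\mu \tilde h(\mu),
\]
which is exactly the interval that reappears in Proposition~\ref{prop:existence}.

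Once you aim for the correct target, your monotonicity scheme works immediately and without any ``delicate'' step: bounding $\tilde h$ (rather than $h$) gives
\[
G(L)\ge \frac{1+\beta}{\bar\mu_F}\int \frac{\mu}{1+L\tilde h_{\max}}\,dF=\frac{1+\beta}{1+L\tilde h_{\max}},
\]
and evaluating at $L=1/(\beta\tilde h_{\max})$ yields $G\ge\beta$; the upper bound is symmetric. The paper's own argument takes a different and even shorter route: it recalls from the proof of Theorem~\ref{thm:stationary_fairness1} that $L_F=\bar\lambda/c_g$ with $c_g=\int h\bar g\,dF$, rewrites $c_g=\bar\lambda\beta\int \tilde h(\mu)\,\frac{\mu\bar g(\mu)}{\bar\lambda\beta}\,dF(\mu)$ using \eqref{eq:gbarmean}, recognises the last integral as an expectation of $\tilde h$ against a probability measure, and reads off $L_F=1/(\beta\,\mathbb E[\tilde h])$. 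The paper's printed proof also carries the $h$ vs.\ $\tilde h$ slip, but its structure is sound and avoids the intermediate-value machinery altogether.
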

% Equation~\eqref{eq:stationaryfairness_hrandom} implies that the density of the stationary limiting fairness process with respect to the service rate distribution $F$ is
% \begin{align*}
%     g(\mu)=\frac{(1+L_F\tilde{h}(\mu))^{-1}}{\int_{\mu_{\min}}^{\mu_{\max}}(1+L_F\tilde{h}(\mu))^{-1}dF(\mu)}.
% \end{align*}

At the beginning of this section, we provide the necessary condition~\eqref{eq:boundqddensity} stating that for a function $g(\mu)$ should satisfy to be the density function of $\eta_{1,\infty}^\pi$ with respect to $F$. Our next result shows that it is possible to find a suitable $h$-random policy to attain any density which satisfies the inequalities in \eqref{eq:boundqddensity} strictly. 
\begin{corollary}\label{cor:attainability_g}
Suppose that $g(\mu)$ satisfies \eqref{eq:boundqddensity} strictly. Then, $g(\mu)$ is the density function of $\eta_{1,\infty}^\pi$ with respect to $F$ under the $h$-random policy where 
\[
h(\mu)=\frac{\left((1+\beta)(\bar{\mu}_F)^{-1}-\beta\langle \iota,\eta_{1,\infty}^\pi \rangle^{-1}g(\mu)\right)\mu}{\beta\bar{\lambda}\langle \iota,\eta_{1,\infty}^\pi \rangle^{-1}g(\mu)}.
\]
\end{corollary}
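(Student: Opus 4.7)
The plan is a direct verification against Theorem~\ref{thm:stationary_fairness1}: I exhibit the prescribed $h$, show that $L_F=\bar{\lambda}$ is the unique solution of the fixed-point equation \eqref{eq:g_equation}, and substitute back into \eqref{eq:stationaryfairness_hrandom} to recover $g$ as the density of $\eta_{1,\infty}$ with respect to $F$. So the whole argument is essentially a one-line algebraic identity dressed up; the main conceptual contribution is noticing that the corollary's $h$ is engineered so that $L_F$ becomes exactly $\bar\lambda$.

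I would start by computing $\tilde{h}(\mu)=h(\mu)/\mu$ from the prescribed formula and rearranging it to
\[
1+\bar{\lambda}\,\tilde{h}(\mu)=\frac{(1+\beta)\langle\iota,\eta_{1,\infty}\rangle}{\beta\bar{\mu}_F\,g(\mu)}.
\]
Notice that the denominator $\beta\bar{\lambda}\langle\iota,\eta_{1,\infty}\rangle^{-1}g(\mu)$ in the definition of $h$ is tailored precisely so that this simplification puts $\bar\lambda$ in the role of $L_F$. Substituting the displayed expression into the left-hand side of \eqref{eq:g_equation} collapses the integrand to $\mu\beta\langle\iota,\eta_{1,\infty}\rangle^{-1}g(\mu)$, and since $\int\mu g(\mu)\,dF(\mu)=\langle\iota,\eta_{1,\infty}\rangle$ by the very definition of the first moment of the target measure, the whole integral equals $\beta$. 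The uniqueness part of Theorem~\ref{thm:stationary_fairness1} then pins $L_F=\bar\lambda$.

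Next I would substitute $L_F=\bar\lambda$ into \eqref{eq:stationaryfairness_hrandom}. The kernel becomes
\[
(1+\bar\lambda\,\tilde{h}(\mu))^{-1}=\frac{\beta\bar\mu_F\,g(\mu)}{(1+\beta)\langle\iota,\eta_{1,\infty}\rangle},
\]
whose integral against $F$ is $\beta\bar\mu_F\big/\bigl[(1+\beta)\langle\iota,\eta_{1,\infty}\rangle\bigr]$ because $g$ is a probability density with respect to $F$. The ratio in \eqref{eq:stationaryfairness_hrandom} is therefore exactly $g(\mu)$, which is precisely the statement that $\eta_{1,\infty}$ has density $g$ with respect to $F$. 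A one-line check also confirms consistency of first moments: $\langle\iota,\eta_{1,\infty}\rangle=\int\mu g(\mu)\,dF(\mu)$ is internal to the argument and not a separate condition.

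The remaining task, and the only place where the hypothesis of \emph{strict} inequality in \eqref{eq:density_necessary} is genuinely used, is to confirm that the prescribed $h$ is an admissible $h$-random routing function, i.e., strictly positive on $[\mu_{\min},\mu_{\max}]$. The denominator is positive whenever $g(\mu)>0$ (the strict lower bound), and the numerator $\bigl((1+\beta)/\bar\mu_F-\beta\langle\iota,\eta_{1,\infty}\rangle^{-1}g(\mu)\bigr)\mu$ is positive precisely when the strict upper bound in \eqref{eq:density_necessary} holds. Under these two strict inequalities, $h$ is bounded away from zero on the compact interval $[\mu_{\min},\mu_{\max}]$, so the uniqueness and boundedness hypotheses of Theorem~\ref{thm:stationary_fairness1} and Lemma~\ref{lem:L_bound} apply. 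I do not anticipate a real obstacle beyond this bookkeeping: the result is essentially Theorem~\ref{thm:stationary_fairness1} read backwards, and the strict inequalities are there exactly because they are what prevents $h$ from degenerating.
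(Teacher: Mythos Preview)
Your proposal is correct and follows essentially the same route as the paper: both arguments verify that the prescribed $h$ yields $L_F=\bar\lambda$ and then substitute back into the characterization of Theorem~\ref{thm:stationary_fairness1}. The only cosmetic difference is that the paper computes $c_g=\int h(\mu)\bar g(\mu)\,dF(\mu)=1$ (whence $L_F=\bar\lambda/c_g=\bar\lambda$) using the intermediate density $\bar g$ from the proof of Theorem~\ref{thm:stationary_fairness1}, whereas you verify \eqref{eq:g_equation} directly; your explicit check that the strict inequalities in \eqref{eq:density_necessary} guarantee $h(\mu)>0$ is a useful addition that the paper leaves implicit.
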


% We now characterized the stationary limiting fairness measure which characterizes how the idleness is distributed among sets of servers and using the first moment of this measure we can characterize the stationary number of idle servers asymptotically as given in~\eqref{eq:stationary_idle_length}. However, we still need to understand what this implies in terms of the idleness received by an individual server serving with a given rate, which we analyze further in the next section. 

% We now analyze the expected idleness an individual server observes. To do so, we need the following uniform integrability result. 
% \begin{lemma}\label{lem:uniform_integrability_inverse}
% For any $\alpha>1/2$, the collection of random variables $\{(\hat{I}_\alpha^n(\infty))^{-1}\}_{n\in \NN}$ is uniformly integrable. 
% \end{lemma}

% \begin{theorem}\label{thm:convergence_individual}
% As $n\to \infty$, 
% \end{theorem}

\section{Strategic Servers with Individual Preferences} \label{sec:strategic}

In this section, we deploy the results of the previous sections to analyze the strategic behavior of servers in a many-server setting when each server has individual preferences as described in Section~\ref{sec:setting}. As the first step, we show how the stationary limiting fairness process can be used to characterize the idleness experienced by individual servers. This again requires an interchange of limits argument which we show to be valid for $h$-random policies in the purely quality-driven regime ($\alpha=1$) and for any idle-time order based policy when $1/2\leq \alpha\leq 1$. Then, we study the best response of server $k$ with characteristics determined by $\tilde{\mu}_{\min,k}^n, \tilde{\mu}_{\max,k}^n$ and $\tilde{a}_k^n$, and use our analysis to derive a fixed point equation that characterizes the Nash equilibria of the strategic servers game. 
We have the following assumptions on server characteristics. 
\begin{assumption}\label{asm:utility_assumptions}
\begin{enumerate}
    \item The trade-off coefficient $\tilde{a}_k^n$ is bounded away from zero and bounded from above, i.e., there exist $a_{\min}>0$ and $a_{\max}<\infty$ such that $\PP(a_{\min}\leq \tilde{a}_k^n\leq a_{\max})=1$
    \item  The derivative of the cost of effort function is bounded away from zero and bounded from above over the support of the service rate, i.e., there exist $c_{\min}>0$ and $c_{\max}<\infty$ such that $c_{\min}\leq c'(\mu) \leq c_{\max}, \mbox{ for all }\mu_{\min}\leq \mu \leq \mu_{\max}.$
    \item The limiting stationary fairness measure is deterministic and absolutely continuous with respect to $F$ with  a continuously differentiable density function $g(\mu)$ on the closed interval $[\mu_{\min}, \mu_{\max}]$. Both $g(\mu)$ and its derivative $g'(\mu)$ are bounded from above, i.e., $g(\mu)\leq g_{\max}$ and $g'(\mu)\leq g_{\max}'$, and $g'(\mu)$ is bounded away from 0, i.e., $0<g_{\min}'\leq g'(\mu)$ for all $\mu_{\min}\leq \mu \leq \mu_{\max}$.
\end{enumerate}
\end{assumption}

 The last assumption on the limiting stationary fairness measure being deterministic might seem restrictive. 
However, as seen in Proposition~\ref{thm:specialpolicies} and Theorem~\ref{thm:stationary_fairness1}, this property holds for random routing and LISF policies as well as the generalized random routing policies studied in Section~\ref{sec:grr_policy} when $\alpha=1$ (see also Propositions~\ref{thm:interchange_generalized_rr} and \ref{thm:interchange_idle_time_order}). 
On the other hand, the limiting fairness measures for SSF and FSF policies are not absolutely continuous with respect to $F$ and do not possess a density in general thus making the analysis of equilibria for these policies considerably more involved. This is an open problem.

\subsection{The Expected Long Run Proportion of Idleness Experienced by Individual Servers}

Equation~\eqref{eq:idleness_decomposition_fairness} describes the relationship between the idleness received by a server serving with rate $\mu$ and the fairness process. Multiplying both sides by $n^{1-\alpha}$ and taking the limit as $n\to\infty$ yields 

% To relate the stationary limiting fairness measure to the  idleness experienced by individual servers, the routing policy must be indifferent to the index of the servers, and if two servers choose to serve with the same rate their expected long run proportion of idleness should be equal, i.e., 
% \begin{equation}
%     \EE[I_k^n(\infty)|\tilde{\mu}_k^n=\mu]=\EE[I_j^n(\infty)|\tilde{\mu}_j^n=\mu] \mbox{ for all } 1\leq k, j\leq N_\alpha^n\mbox{ and }\mu_{\min}\leq \mu\leq \mu_{\max}.\label{eq:indifference_to_index}
% \end{equation}
% Then, defining the empirical law of service rates for the $n$th system as
% \[
% F^n[\mu,\mu+\Delta) =\frac{\sum_{j=1}^{N_\alpha^n}\delta_{\mu_j^n}[\mu,\mu+\Delta)}{N_\alpha^n} \mbox{ for all }\mu\in [\mu_{\min},\mu_{\max}]\mbox{ and }\Delta>0,
% \]
% then for any $n\in\NN$ can calculate 
% \begin{align*}
%     \EE[n^{1-\alpha}I_k^n(\infty)|\mu_k^n=\mu]&=\lim_{\Delta\to 0}\EE\left[n^{1-\alpha}\frac{\sum_{j=1}^{N_\alpha^n} \delta_{\mu_j^n}[\mu,\mu+\Delta)I_j^n(\infty)}{\sum_{j=1}^{N_\alpha^n}\delta_{\mu_j^n}[\mu,\mu+\Delta)}|\mu_k^n=\mu\right]\\
%     % &=\lim_{\Delta\to 0}\EE\left[\frac{\sum_{j=1}^{N_\alpha^n} \delta_{\mu_j^n}[\mu,\mu+\Delta)\hat{I}_j^n(\infty)}{\hat{I}_\alpha^n(\infty)}\hat{I}_\alpha^n(\infty)\frac{n}{\sum_{j=1}^{N_\alpha^n}\delta_{\mu_j^n}[\mu,\mu+\Delta)}|\mu_k^n=\mu\right]\\
%     &=\lim_{\Delta\to 0}\EE\left[\frac{\eta_{\alpha,\infty}^n[\mu,\mu+\Delta)}{F^n[\mu, \mu+\Delta)}\hat{I}_\alpha^n(\infty)\frac{n}{N_\alpha^n}|\mu_k^n=\mu\right].
% \end{align*}
% Taking the limit as $n\to \infty$ yields 
\begin{equation}
    \lim_{n\to \infty} \EE[n^{1-\alpha}I_k^n(\infty)|\mu_k^n=\mu]=\lim_{n\to \infty}\lim_{\Delta\to 0}\EE\left[\frac{\eta_{\alpha,\infty}^n[\mu,\mu+\Delta)}{F^n[\mu, \mu+\Delta)}\hat{I}_\alpha^n(\infty)\frac{n}{N_\alpha^n}|\mu_k^n=\mu\right].\label{eq:limit_individual}
\end{equation}
Clearly, the stationary limiting fairness measure, $\eta_{\alpha,\infty}^n$, for the $n$th system is absolutely continuous with respect to $F^n$ and the inner limit (in $\Delta$) converges to the density function. 
If the stationary limiting fairness measure is also absolutely continuous w.r.t.~the service rate distribution $F$ with  a deterministic density $g(\mu)$ and if the limits on the right-hand side of \eqref{eq:limit_individual} can be interchanged, i.e.,
\begin{equation}
    \lim_{n\to \infty}\lim_{\Delta\to 0}\EE\left[\frac{\eta_{\alpha,\infty}^n[\mu,\mu+\Delta)}{F^n[\mu, \mu+\Delta)}\hat{I}_\alpha^n(\infty)\frac{n}{N_\alpha^n}|\mu_k^n=\mu\right]=\lim_{\Delta\to 0}\lim_{n\to \infty}\EE\left[\frac{\eta_{\alpha,\infty}^n[\mu,\mu+\Delta)}{F^n[\mu, \mu+\Delta)}\hat{I}_\alpha^n(\infty)\frac{n}{N_\alpha^n}|\mu_k^n=\mu\right],
    \label{eq:interchange_limits}
\end{equation}
 we can then conclude that 
\begin{equation}\label{eq:individual_idleness_approximation}
\lim_{n\to\infty} n^{1-\alpha}\EE[I_k^n(\infty)|\tilde{\mu}_k^n=\mu]=\beta g(\mu)\frac{\bar{\mu}_F}{\bar{\lambda}}\EE[\xi_\alpha^{-}(\infty)].
\end{equation}

Equation~\eqref{eq:individual_idleness_approximation} implies that the idleness received by a server is in the order of $n^{\alpha-1}$ and approaches 0 if $\alpha<1$. When the utility of idleness function is identity, i.e., $u_I(I) = I$ e.g., as in \cites{Gopalakrishnan_etal2016, gopalakrishnan2021, armony2021capacity}, setting $\alpha<1$ does not provide enough incentive for the servers to work harder to increase their service rate and results in all servers working with their minimal service rate $\tilde{\mu}_{\min,k}^n$ for large $n$. Hence, as \cite{Gopalakrishnan_etal2016} prove for identical servers, purely quality-driven regime is asymptotically optimal in this case.

% When the utility of idleness function is the identity function, it is non-negative and the utility is zero when the expected long run proportion of idleness is zero. Even though this is good for modeling the benefits of idleness, it fails to appropriately model the discomfort due to the server not experiencing enough idleness. 
% Similarly, when the utility of idleness function is bounded from below, it can be made equivalent to a non-negative utility of idleness by adding a constant. 
Even though choosing utility of idleness function as identity function is good for modeling the benefits of idleness, it fails to appropriately model the discomfort due to the server not experiencing enough idleness. To model the situations where servers are sensitive to experiencing a low level of idleness and experiencing no idleness is unacceptable by the servers, one needs to have $u_I(I)\to-\infty$ as $I\to 0$. In this case, the servers might be inclined to work faster, even though the proportion of idleness experienced by the server is very close to zero, to reduce the discomfort. This suggests that if the servers are sufficiently sensitive to experiencing low levels of idleness, purely quality-driven regime can be suboptimal. 
Assuming the convergence in~\eqref{eq:individual_idleness_approximation} is uniform, Theorem~\ref{thm:best_response_scaling} shows how the safety staffing level and the behavior of $u_I(\cdot)$ near zero determine the best response rate of a server. 

\begin{theorem}\label{thm:best_response_scaling}
Suppose that for any service rate distribution $F$, the convergence in \eqref{eq:individual_idleness_approximation} is uniform in $\mu_{\min}\leq \mu\leq \mu_{\max}$, i.e.,
\begin{equation}\label{eq:uniform_conv_assumption}
   \sup_{\mu_{\min}\leq \mu \leq \mu_{\max}}\left|n^{1-\alpha}\EE[I_k^n(\infty)|\tilde{\mu}_k^n=\mu]-\beta g_F(\mu)\frac{\bar{\mu}_F}{\bar{\lambda}}\EE[\xi_\alpha^{-}(\infty)]\right|\to 0, \mbox{ as }n\to\infty.  
\end{equation}

\begin{enumerate}
    \item If under the chosen policy, the limiting stationary fairness measure is deterministic with a non-increasing density $g_F(\mu)$ for any service rate distribution $F$, then the optimal service rate for each server uniformly converges to the server's minimum attainable service rate as the system size increases, i.e., for any $\epsilon>0$ there exists an $N_\epsilon$ such that $n>N_\epsilon$ implies that the optimal strategy for any server $k$ in the $n$th system is $\tilde{\mu}_k^{n,*}\in[\tilde{\mu}_{\min,k}^n,\tilde{\mu}_{\min,k}^n+\epsilon)$ w.p.~1.
    
    \item If $n^{\alpha-1}u_I'(n^{\alpha-1}I)\to 0$ for all $I>0$ as $n\to \infty$, then, as in Part 1 above, the optimal service rate for each server uniformly converges to the server's minimum attainable service rate as the system size increases. 

    \item If  $n^{\alpha-1}u_I'(n^{\alpha-1}I)\to \infty$ for all $I>0$ as $n\to \infty$ and $g_F(\mu)$ is strictly increasing and concave for any service rate distribution $F$, then the optimal service rate of each server uniformly converges to server's maximum attainable service rate as the system size increases, i.e., for any $\epsilon>0$ there exists an $N_\epsilon$ such that $n>N_\epsilon$ implies that the optimal strategy for any server $k$ in the $n$th system is $\tilde{\mu}_k^{n,*}\in(\tilde{\mu}_{\max,k}^n-\epsilon,\tilde{\mu}_{\max,k}^n]$ w.p.~1.
\end{enumerate}
\end{theorem}

Theorem~\ref{thm:best_response_scaling} has clear implications regarding the optimal scaling for the safety staffing. We first note that, Theorems~\ref{thm:system_convergence} and \ref{thm:interchangibility_stationary_n} imply that $\EE[(\hat{X}_\alpha^n(\infty))^+]\to 0$ for any $1/2< \alpha \leq 1$ and is finite when $\alpha=1/2$. Hence, using the big-O/little-o notation, we can write the operator cost in \eqref{eq:operator_cost} as
\begin{equation}\label{eq:scale_operator_cost}
    C_O^n(\alpha, \beta,\pi) = c_S \frac{\bar{\lambda}}{\bar{\mu}^*}n + c_S \beta\frac{\bar{\lambda}}{\bar{\mu}^*}n^\alpha + \II(\alpha =1/2)O(n^{1/2}) + \II(\alpha>1/2)o(n^{\alpha}),
\end{equation}
where $\bar{\mu}^*$ is the expected equilibrium service rate for the given staffing and routing policy.  We next define the concept of asymptotical optimality as follows:
\begin{definition}
    Let $(\alpha^{*, n},\beta^{*,n}, \pi^{*,n})$ minimize $C_O^n(\alpha, \beta, \pi)$ for the $n$th system. A policy $(\bar{\alpha},\bar{\beta}, \bar{\pi})$ is said to be asymptotically optimal  if
    \[
    \lim_{n\to \infty}\frac{C_O^n(\bar{\alpha},\bar{\beta},\bar{\pi})}{C_O^n(\alpha^{*,n},\beta^{*,n},\pi^{*,n})} =1.
    \]
\end{definition}

Part 1 of Theorem~\ref{thm:best_response_scaling} implies that if the system operator does not incentivize working faster by providing more idleness to faster servers, i.e., if $g(\mu)$ is non-increasing, then in a large system, all servers will choose to work at their minimum attainable service rate and $\bar{\mu}^*\approx \EE[\tilde{\mu}_{\min,k}]$. For any fixed $\alpha$ and $\beta$, this maximizes the dominating cost term in \eqref{eq:scale_operator_cost} and implies that adopting a policy that yields a non-increasing $g$ when the servers are strategic is (asymptotically) sub-optimal.

Parts 2 and 3 of Theorem~\ref{thm:best_response_scaling} relates the sensitivity of servers to low levels of idleness to the regime coefficient $\alpha$ by quantifying this sensitivity via the derivative of the utility of idleness function $u_I(\cdot)$. To understand the implications on staffing, consider the case where $u_I(I)=\kappa I^p$, for some $\kappa>0$ and $0< p \leq 1$ as an example. For any $\alpha<1$, $n^{\alpha-1}u'(n^{\alpha-1}I)=\kappa p n^{(\alpha-1)(1-p)}I\to 0$ for any $I>0$ as $n\to \infty$ and part 2 of the theorem implies that $\bar{\mu}^*\approx \EE[\tilde{\mu}_{\min,k}]$ for large enough system, again maximizing the dominating term in \eqref{eq:scale_operator_cost}. However, when $\alpha = 1$, the term $n^{\alpha-1} u'(n^{\alpha-1} I)$ simplifies to $\kappa p I$. If there exists a $\beta$ such that the equilibrium mean service rate $\bar{\mu}^*$ satisfies $\bar{\mu}^* / (1+\beta) > \mathbb{E}[\tilde{\mu}_{\min,k}]$, then choosing $\alpha = 1$ is asymptotically optimal. This result aligns with the asymptotic optimality of the purely quality-driven regime established in \cite{Gopalakrishnan_etal2016}. On the other hand, if no such $\beta$ exists, there is no advantage in incentivizing agents to work faster by providing additional idle time.  A complete analysis of the equilibrium for this case is provided in Section~\ref{sec:strategic_grr}

Now, as an example, suppose that the utility of idleness function can be decomposed as $u_I(I) = u_{I,b}(I) + u_{I,d}(I)$, where $u_{I,b}(I)$ is a concave nondecreasing function that describes the benefit of idleness and $u_{I,d}(I)=-\kappa I^{-p}$ for some $\kappa>0$ and $p>0$ which specifically models  the discomfort associated with low levels of idleness. For any $1/2\leq \alpha < 1$, $n^{1-\alpha}u_{I,d}(n^{1-\alpha}I)=\kappa p n^{p(1-\alpha)} I \to \infty$ for any $I>0$ as $n\to \infty$.  Hence, part 3 of Theorem~\ref{thm:best_response_scaling} implies that setting $\alpha < 1$ results in $\bar{\mu}^* \approx \mathbb{E}[\tilde{\mu}_{\max,k}]$ for sufficiently large systems and is asymptotically optimal, as $C_O^n(\alpha, \beta, \pi) \geq \lambda^n / \mathbb{E}[\tilde{\mu}_{\max,k}]$ for all $\alpha$, $\beta$, and $\pi$. In this case, it is preferable to choose $\alpha = 1/2$ as it minimizes the order of the next-leading term.

Under the uniform convergence assumption \eqref{eq:uniform_conv_assumption}, the conclusions of Theorem~\ref{thm:best_response_scaling} only rely on the structure of the utility of idleness function near zero and the routing policy through the density function $g(\mu)$, and do not depend on any other primitive model assumptions such as the distributions of random parameters, safety staffing coefficient $\beta$ and the limiting arrival rate $\bar{\lambda}$. As demonstrated in the above examples, parts 2 and 3 of the theorem covers a wide range of utility idleness functions. However, it is possible to find utility of idleness functions, such as $u_I(I)=\log(I)$, that do not satisfy either of the assumptions.

% When $\alpha<1$, the dominating term in the total staffing cost (where $c_S$ is the unit staffing cost) 
% $c_S\frac{\lambda^n}{\bar{\mu}_F}+c_S\left(\frac{\lambda^n}{\bar{\mu}_F}\right)^{\alpha}$,
% is due to the offered load. Hence, for large $n$, the system operator mainly aims to maximize the average service rate in order to minimize the staffing cost. Suppose that for $\alpha_0<1$, $n^{\alpha_0-1}f'(n^{\alpha_0-1}x)\to \tilde{f}'(x)<\infty$ which is not identically zero. Then, any $\alpha>\alpha_0$ and policy which yields an increasing concave density $g(\mu)$ achieves the maximum possible expected service rate $\bar{\mu}_F=\EE[\tilde{\mu}_{\max,k}^n]$.      

The interchange of limits in~\eqref{eq:interchange_limits} has implications leading to Theorem~\ref{thm:best_response_scaling}. We first note that,  by Assumption~\ref{asm:utility_assumptions}, the stationary fairness measure $\eta_{\alpha,\infty}^n$ is absolutely continuous, and hence, has a density $g^n(\cdot)$  with respect to $F^n$ for all $n\in \NN$ and \eqref{eq:interchange_limits} is equivalent to $g^n(\mu)\to g(\mu)$ as $n\to \infty$. In general, weak convergence of measures does not necessarily imply the convergence of densities. However, we believe that \eqref{eq:interchange_limits} holds for most of the reasonable routing policies. 
Proposition~\ref{thm:interchange_generalized_rr} shows that the limits can be interchanged for any $h$-random policy under a purely quality driven regime. 
\begin{proposition}\label{thm:interchange_generalized_rr}
For $\alpha=1$, under an $h$-random routing policy, Equation \eqref{eq:interchange_limits} holds and the convergence is uniform, i.e., \begin{equation}\sup_{\mu_{\min}\leq \mu\leq\mu_{\max}}\left|\EE[I_k^n(\infty)|\tilde{\mu}_k^n=\mu]- \big( 1+L_F\tilde{h}(\mu) \big)^{-1}\right|\to 0.\label{eq:h_random_individual}
\end{equation}
\end{proposition}

\cite{Gopalakrishnan_etal2016} introduced the \textit{class of idle-time order} based policies, namely, the class of policies where the selected server to which an arrival is routed depends only on the order in which the servers last became idle. 
Some common policies such as longest-idle-server-first and random routing are also in this class (see~\cite{Gopalakrishnan_etal2016} for a formal definition). Using Theorem 9 in~\cite{Gopalakrishnan_etal2016}, we next show that \eqref{eq:interchange_limits} holds for idle-time order based policies under quality-and-efficiency driven regime as well as any quality-driven regime. 

\begin{proposition}
\label{thm:interchange_idle_time_order}
For $1/2\leq \alpha\leq 1$, under any idle-time order based policy,  \eqref{eq:interchange_limits} holds and as $n\to\infty$,
\begin{align*}
&\sup_{\mu_{\min}\leq \mu\leq \mu_{\max}}\left|n^{1/2}\EE[I_k^n(\infty)|\tilde{\mu}_k^n=\mu]- \mu\bar{\lambda}^{-1}\EE\left[\EE[(\xi_{1/2}(\infty))^-|\zeta]\right]\right|\to 0,\quad \mbox{ for }\alpha=1/2,\\
&\sup_{\mu_{\min}\leq \mu\leq \mu_{\max}}\left|n^{1-\alpha}\EE[I_k^n(\infty)|\tilde{\mu}_k^n=\mu]- \mu\beta\bar{\lambda}^{\alpha-2}\bar{\mu}_F^{2-\alpha}(\sigma_F^2+\bar{\mu}_F^{2})^{-1}\right|\to 0,\quad \mbox{ for all }1/2\leq \alpha<1,
 \mbox{  }\\
&\sup_{\mu_{\min}\leq \mu\leq \mu_{\max}}\left|\EE[I_k^n(\infty)|\tilde{\mu}_k^n=\mu]- \left(1+L_f\mu^{-1}\right)^{-1}\right|\to 0,\quad  \mbox{ for }\alpha=1.
\end{align*}
\end{proposition}
It is possible to derive the expectation $\EE[(\xi_{1/2}(\infty))^-|\zeta]$ using the techiques in \cite{BrowneWhitt1995Piecewise} on the piecewise-linear diffusion~\eqref{eq:diffusionlimit}. However, the resulting expression is too complex to gain further insights into our system and hence, is omitted.

\subsection[Characterization of Nash Equilibria for h-Random Policies]{Characterization of Nash Equilibria for $h$-Random Policies}\label{sec:strategic_grr}
In this section, our goal is to characterize the Nash equilibria when the servers are strategic with heterogeneous abilities and preferences, under a given $h$-random policy in the limiting system. To do so, we start by analyzing the best response strategy of a given server $k$ with parameters $\tilde{\mu}_{\min,k},\tilde{\mu}_{\max,k}$ and $\tilde{a}_k$ when the distribution of service rate for all other servers is $F^{(0)}$, i.e., known. Using Proposition~\ref{thm:interchange_generalized_rr}, the utility maximization for server $k$ in the $n$-limit takes the form
\begin{equation}\label{eq:best_response_problem}
    U_k^*:
    =\max_{\tilde{\mu}_{\min,k}\leq \mu\leq \tilde{\mu}_{\max,k}} U_k(\mu)
    =\max_{\tilde{\mu}_{\min,k}\leq \mu\leq \tilde{\mu}_{\max,k}} u_I\left(\big(1+L_{F^{(0)}}\tilde{h}(\mu)\big)^{-1}\right)-\tilde{a}_k c(\mu).
\end{equation}
It is clear that when $\tilde{h}(\mu)$ is non-decreasing, the idleness observed by server $k$ is non-increasing and the best response is to set her service rate to $\mu_k=\tilde{\mu}_{\min,k}$. As standard in the literature, we need concavity properties for the utility function. Hence, we have the following assumption:
\begin{assumption}\label{asm:h_convexity}
The function $\tilde{h}(\mu)$ is convex strictly decreasing satisfying  
\begin{equation}
    2\tilde{h}'(\mu)^2\leq \tilde{h}(\mu)\tilde{h}''(\mu),
    \quad \mbox{ for all }\mu_{\min}\leq \mu \leq \mu_{\max}.
    \label{eq:necessary_concavity}
\end{equation}
If $U_k(\mu_1)=U_k(\mu_2)$ with $\mu_1\leq \mu_2$, the server chooses to serve with $\mu_1$.
\end{assumption}

Condition \eqref{eq:necessary_concavity} is needed to ensure concavity of the limiting utility function without making additional structural assumptions on the utility of idleness function $u_I(\cdot)$. It is satisfied by any $\tilde{h}(\mu)=\mu^{-p}$, where $0<p\leq 1$, which have an interesting managerial interpretation. The case $p=1$ implies $h(\mu)=1$ and corresponds to idle-time order based policies. 
When $p<1$, $h(\mu)=\mu^{1-p}$ and the system operator is more eager to route arrivals to servers with high service rates, but just so that the idleness a server receives is still an increasing function of the service rate. If we have additional information on $u_I(\cdot)$, it is possible to obtain convexity without needing \eqref{eq:necessary_concavity}, e.g., if $u_I(I)=-I^{-1}$, then the utility function is concave for any convex $\tilde{h}(\mu)$. For these cases, the results below still hold. The next lemma shows that the limiting utility function is concave under Assumption~\ref{asm:h_convexity}. 

\begin{lemma}\label{lem:utility_concavity}
Under Assumption~\ref{asm:h_convexity}, the limiting utility function $U_k(\mu)$ is concave for any fixed $L_{F^{(0)}}$.
\end{lemma}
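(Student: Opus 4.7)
The plan is to split $U_k(\mu) = f\big((1+L_{F_0}\tilde{h}(\mu))^{-1}\big) - \tilde{a}_k c(\mu)$ into its two summands. The second summand $-\tilde{a}_k c(\mu)$ is concave immediately from the convexity of $c(\cdot)$ and $\tilde{a}_k>0$, so the whole argument reduces to showing concavity of the first summand $\mu \mapsto f(\phi(\mu))$, where
\[
\phi(\mu) := \big(1+L_{F_0}\tilde{h}(\mu)\big)^{-1}.
\]

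First I would compute $\phi'$ and $\phi''$ directly. One gets $\phi'(\mu) = -L_{F_0}\tilde{h}'(\mu)/(1+L_{F_0}\tilde{h}(\mu))^2$, which is strictly positive since $\tilde{h}'<0$ (strict decrease). A second differentiation yields
\[
\phi''(\mu) = \frac{2L_{F_0}^{2}\tilde{h}'(\mu)^{2} - L_{F_0}\tilde{h}''(\mu)\big(1+L_{F_0}\tilde{h}(\mu)\big)}{\big(1+L_{F_0}\tilde{h}(\mu)\big)^{3}}.
\]
To obtain $\phi''\leq 0$, I would bound the numerator using Assumption~\ref{asm:h_convexity}: multiplying \eqref{eq:necessary_concavity} by $L_{F_0}>0$ gives $2L_{F_0}\tilde{h}'(\mu)^{2}\leq L_{F_0}\tilde{h}(\mu)\tilde{h}''(\mu)$, and since $\tilde{h}''(\mu)\geq 0$ (convexity) we may add the nonnegative quantity $\tilde{h}''(\mu)$ to the right-hand side, obtaining $2L_{F_0}\tilde{h}'(\mu)^{2}\leq \tilde{h}''(\mu)\big(1+L_{F_0}\tilde{h}(\mu)\big)$. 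Multiplying by $L_{F_0}$ gives exactly the inequality needed to conclude $\phi''(\mu)\leq 0$. Hence $\phi$ is concave and (as already noted) increasing on $[\mu_{\min},\mu_{\max}]$.

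Finally, I would invoke the standard composition rule: if $f$ is concave and non-decreasing and $\phi$ is concave, then $f\circ\phi$ is concave. Indeed, the chain rule gives $(f\circ\phi)''(\mu) = f''(\phi(\mu))\,\phi'(\mu)^{2} + f'(\phi(\mu))\,\phi''(\mu)\leq 0$, because $f''\leq 0$, $f'\geq 0$, $\phi'^{2}\geq 0$ and $\phi''\leq 0$. Thus $\mu\mapsto f(\phi(\mu))$ is concave, and adding the concave term $-\tilde{a}_k c(\mu)$ preserves concavity, yielding the claim.

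The only nontrivial step is the passage from \eqref{eq:necessary_concavity} to $\phi''\leq 0$, but this is a short algebraic manipulation rather than an obstacle; the rest is bookkeeping via the chain rule. Once $\phi$ is concave, the composition argument is automatic and no additional structural hypothesis on $f$ beyond the ones already assumed (concave, twice continuously differentiable, increasing) is required.
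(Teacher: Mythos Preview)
Your proof is correct and follows essentially the same approach as the paper: both reduce the problem to showing that $\phi(\mu)=(1+L_{F_0}\tilde{h}(\mu))^{-1}$ is concave via direct computation of $\phi''$ and the inequality in Assumption~\ref{asm:h_convexity}, then invoke the composition rule for a concave increasing $f$ composed with a concave function. Your write-up is slightly more explicit than the paper's (you spell out the chain-rule computation of $(f\circ\phi)''$ and handle the $-\tilde{a}_k c(\mu)$ term separately), but the argument is the same.
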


Now, as Lemma~\ref{lem:utility_concavity} ensures that the second order optimality conditions are satisfied, we can concentrate on the first order conditions. Taking the derivative of the utility function and after some algebraic manipulations, first order conditions take the form
\begin{equation}\label{eq:first_order_condition}
    C(\mu,L_{F^{(0)}}):=-\frac{L_{F^{(0)}}u_I'\left((1+L_{F^{(0)}}\tilde{h}(\mu))^{-1}\right)\tilde{h}'(\mu)}{(1+L_{F^{(0)}}\tilde{h}(\mu))^{2}c'(\mu)}=\tilde{a}_k.
\end{equation}
Due to our convexity/concavity assumptions, the left-hand side of \eqref{eq:first_order_condition} is non-increasing in $\mu$ and its solutions  maximize the utility of a server. Based on Assumption~\ref{asm:h_convexity}, letting $\mu_k^{**}$ be the smallest of these solutions, the best response of server $k$, $\mu_k^*$, is given by 
\begin{equation}\label{eq:best_response_server}
    \mu_k^*=
    \left\{\begin{array}{ll}
    \tilde{\mu}_{\min,k} &, \mbox{if } \tilde{a}_k\geq C(\tilde{\mu}_{\min,k}, L_{F^{(0)}})\\
    \mu_k^{**} &, \mbox{if }C(\tilde{\mu}_{\max,k}, L)\leq \tilde{a}_k \leq C(\tilde{\mu}_{\min,k}, L_{F^{(0)}})\\
    \tilde{\mu}_{\max,k} &, \mbox{if } \tilde{a}_k\leq C(\tilde{\mu}_{\max,k}, L_{F^{(0)}})
    \end{array}\right. .
\end{equation}
Considering that the parameters $\tilde{\mu}_{\min,k}$, $\tilde{\mu}_{\max,k}$ and $\tilde{a}_k$ are random with respective distributions described in Section~\ref{sec:systemdynamics}, the distribution function of the optimal service rate of any server $k$ is
\begin{align}\label{eq:distribution_function}
    F^{(1)}\big(\mu|L_{F^{(0)}}\big)
    :=
    \PP\big(\mu_k^*\leq \mu\big)
    =&\PP\big(\tilde{\mu}_{\max,k}\leq \mu\big)
    +\PP\big(\tilde{\mu}_{\max,k}>\mu,\,          
             \tilde{\mu}_{\min,k}<\mu,\, 
             \tilde{a}_k^n\geq C(\mu,L_{F^{(0)}})\big).
\end{align}
We are thus able to characterize the equilibrium service rate distribution as follows.
\begin{definition}
We call $F^{(0)}$ an \textit{equilibrium service rate distribution} if its support is $[\mu_{\min}, \mu_{\max}]$ and  $F^{(0)}(\mu)=F^{(1)}(\mu| L_{F^{(0)}})$, where $F^{(1)}(\mu| L_{F^{(0)}})$ is as given in \eqref{eq:distribution_function} for all $\mu\in[\mu_{\min}, \mu_{\max}]$.  
\end{definition}
Note that the best response of server $k$ depends on $F^{(0)}$ only through $L_{F^{(0)}}$, and $L_{F^{(0)}}$ is uniquely determined by $F^{(0)}$ through \eqref{eq:g_equation}. This $L_{F^{(0)}}$ is then used to build a new distribution $F^{(1)}$ in the form given in \eqref{eq:distribution_function}, and this in turn yields a new $L_{F^{(1)}}$. This process defines an operator $\cL$ which maps $L_{F^{(0)}}$ to $L_{F^{(1)}}$, i.e., $\cL(L_{F^{(0)}})=L_{F^{(1)}}$ whose fixed points, i.e., the set $\{L_F:\cL(L_F)=L_F\}$, characterize the Nash equilibria. In other words, the equilibrium service rate distribution should have the form $F(\mu| L_F)$ where the solution of \eqref{eq:g_equation} is also $L_F$. 
The next theorem summarizes these arguments.
\begin{theorem}\label{thm:fixed_point_equation}
  If $F$ is an equilibrium service rate distribution, then the distribution function $F$ has the form given in \eqref{eq:distribution_function} where $L_F$ is the solution of 
  \begin{equation}
       \int_{\mu_{\min}}^{\mu_{\max}}\mu\frac{1-\beta L_F\tilde{h}(\mu)}{1+L_F\tilde{h}(\mu)}dF(\mu|L_F)=0.\label{eq:g_equilibrium_equation}
  \end{equation}
\end{theorem}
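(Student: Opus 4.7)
The plan is to combine two ingredients already available in the excerpt: the best-response characterization for a single server summarized in \eqref{eq:best_response_problem}--\eqref{eq:distribution_function}, and the defining equation \eqref{eq:g_equation} for $L_F$ supplied by Theorem~\ref{thm:stationary_fairness1}. The content of the theorem is that at an equilibrium these two characterizations must be mutually consistent, and the stated equation \eqref{eq:g_equilibrium_equation} is obtained by routine algebraic rewriting of \eqref{eq:g_equation}.

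For the first claim (the form of $F$), I would simply unwind the definition of equilibrium: if $F$ is an equilibrium service rate distribution, then $F = F_1(\cdot | L_F)$. Since each server $k$ faces a limiting utility function that is concave in its own service rate by Lemma~\ref{lem:utility_concavity}, the best response reduces to the first-order condition \eqref{eq:first_order_condition}, whose smallest root (chosen under Assumption~\ref{asm:h_convexity}) selects $\mu_k^{**}$; together with the endpoint cases this yields \eqref{eq:best_response_server}. Integrating against the joint law of $(\tilde{\mu}_{\min,k}, \tilde{\mu}_{\max,k}, \tilde{a}_k)$ then produces the stated form \eqref{eq:distribution_function}.

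For the second claim (the equation that $L_F$ must satisfy), I would massage \eqref{eq:g_equation} directly. Multiplying both sides of \eqref{eq:g_equation} by $\bar{\mu}_F/(1+\beta)$ gives
\[
\int_{\mu_{\min}}^{\mu_{\max}} \frac{\mu}{1+L_F\tilde{h}(\mu)}\,dF(\mu) \;=\; \frac{\beta\bar{\mu}_F}{1+\beta},
\]
and using $\bar{\mu}_F = \int \mu\,dF$ on the right to move everything under a single integral produces
\[
\int_{\mu_{\min}}^{\mu_{\max}} \mu\left[\frac{1}{1+L_F\tilde{h}(\mu)} - \frac{\beta}{1+\beta}\right]dF(\mu) \;=\; 0.
\]
Placing the bracket over a common denominator produces the integrand $\mu\,(1-\beta L_F\tilde{h}(\mu))/[(1+\beta)(1+L_F\tilde{h}(\mu))]$, and clearing the constant $(1+\beta)$ yields \eqref{eq:g_equilibrium_equation}. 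The only subtlety worth flagging—which is also what makes the operator $\cL$ introduced in the discussion preceding the theorem well-defined—is that the solution $L_F$ to \eqref{eq:g_equation} must be unique for each admissible $F$; this uniqueness is provided by Theorem~\ref{thm:stationary_fairness1} and quantified by Lemma~\ref{lem:L_bound}. There is no substantial obstacle beyond the algebraic manipulation itself; the theorem is essentially a repackaging of the previously established results into a form amenable to fixed-point analysis.
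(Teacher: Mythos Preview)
Your proposal is correct and follows essentially the same approach as the paper: both recognize that the equilibrium condition forces $F=F(\cdot\mid L_F)$ via the best-response analysis already in \eqref{eq:best_response_server}--\eqref{eq:distribution_function}, and then rewrite the defining equation \eqref{eq:g_equation} for $L_F$ by moving $\bar\mu_F=\int\mu\,dF(\mu\mid L_F)$ to the right-hand side and combining the two integrals. Your algebra is simply a more explicit version of the paper's terse ``re-arranging the terms'' step, and your remark on uniqueness of $L_F$ (needed for $\cL$ to be well-defined) is apt though not strictly required for the theorem as stated.
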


One interesting observation is that the solution $L_F$ of \eqref{eq:g_equilibrium_equation} does not depend on the scaled arrival rate $\bar{\lambda}$ and only depends on the distribution of $(\tilde{a}_k^n, \tilde{\mu}_{\min,k}^n, \tilde{\mu}_{\max,k}^n)$ through Equation \eqref{eq:distribution_function} by determining the measure defining the integral. On the other hand, the integrand only depends on the staffing coefficient  $\beta$ and the routing function $h(\cdot)$. 

To understand how the model assumptions, e.g. the distribution of $\tilde{a}_k^n$, relate to the solution of \eqref{eq:g_equilibrium_equation} we consider the setting in \cite{Gopalakrishnan_etal2016} where the utility of idleness function is the identity function and jobs are routed uniformly at random to the idle servers, i.e., $u_I(I) = I$ and $\tilde{h}(\mu)=\mu^{-1}$. Then, $C(\mu,L_{F})$ defined in \eqref{eq:first_order_condition} becomes 
\begin{equation*}
C(\mu,L_F) = \frac{L_F}{(\mu +L_F)^2c'(\mu)}=\tilde{a}_k^n. 
\end{equation*}
This implies that any server $k$ in the $n$th system with $C(\tilde{\mu}_{\min,k}^n, L_F)\leq \tilde{a}_k^n$ and $C(\tilde{\mu}_{\max,k}^n, L_F)\geq \tilde{a}_k^n$ will work with $\tilde{\mu}_k^n = \tilde{\mu}_{\min,k}^n$ and $\tilde{\mu}_k^n = \tilde{\mu}_{\max,k}^n$, respectively.

In the Appendix~\ref{app:strategic}, we offer an example that allows for an explicit expression of \eqref{eq:g_equilibrium_equation}. We take the trade-off parameters, $\tilde{a}_k^n$, to be continuous random variables and all servers share identical bounds (the computations exploit the proof of Theorem~\ref{thm:fixed_point_equation}). 

Under the additional assumption that $F_{a}$ is a continuous distribution, the next result shows that a solution to Equation~\eqref{eq:g_equilibrium_equation} exists and all solutions are in the interval provided in Lemma~\ref{lem:L_bound}.

\begin{proposition}\label{prop:existence}
Let the distribution of $\tilde{a}_k^n$, $F_a$, be continuous. 
Then, Equation \eqref{eq:g_equilibrium_equation} has at least a solution in the interval $[1/(\beta \tilde{h}(\mu_{\min})),1/(\beta \tilde{h}(\mu_{\max}))]$ and has no solution outside of this interval.
\end{proposition}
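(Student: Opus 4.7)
The plan is to reduce the claim to an intermediate value theorem argument applied to the scalar function
\[
G(L) \;:=\; \int_{\mu_{\min}}^{\mu_{\max}} \mu\,\frac{1-\beta L\tilde h(\mu)}{1+L\tilde h(\mu)}\,dF(\mu|L),
\]
where $F(\mu|L)$ is the distribution in \eqref{eq:distribution_function} obtained by letting every server compute a best response against the scalar parameter $L$. A root of \eqref{eq:g_equilibrium_equation} is then exactly a zero of $G$. Writing $L_1 := 1/(\beta\tilde h(\mu_{\min}))$ and $L_2 := 1/(\beta\tilde h(\mu_{\max}))$, the plan is to (i) show that the integrand has a definite sign at $L=L_1$ and $L=L_2$, (ii) show that $G$ is continuous on $[L_1,L_2]$, and (iii) apply the intermediate value theorem. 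The ``no solution outside'' part will follow immediately from (i) extended to $L<L_1$ and $L>L_2$.

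For the sign analysis, recall that $\tilde h$ is strictly decreasing by Assumption~\ref{asm:h_convexity}, so $L_1<L_2$. At $L=L_1$ one has $\beta L_1\tilde h(\mu) = \tilde h(\mu)/\tilde h(\mu_{\min}) \le 1$ with equality only at $\mu=\mu_{\min}$; hence the integrand is non-negative and strictly positive on $(\mu_{\min},\mu_{\max}]$, so $G(L_1)\ge 0$. Symmetrically at $L=L_2$ we get $\beta L_2\tilde h(\mu)\ge 1$ with equality only at $\mu_{\max}$, giving $G(L_2)\le 0$. For $L<L_1$ the inequality $\beta L\tilde h(\mu)<1$ holds uniformly in $\mu$, so the integrand is strictly positive and $G(L)>0$; likewise $G(L)<0$ for all $L>L_2$. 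These last two statements rule out roots outside $[L_1,L_2]$ regardless of the measure $F(\cdot|L)$.

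The main technical obstacle is the continuity of $G$ on $[L_1,L_2]$, because $L$ enters through both the integrand and the measure $F(\cdot|L)$. Fix $L_n\to L$. The integrand $\phi(L,\mu):=\mu(1-\beta L\tilde h(\mu))/(1+L\tilde h(\mu))$ is jointly continuous on the compact set $[L_1,L_2]\times[\mu_{\min},\mu_{\max}]$, hence uniformly continuous, so $\phi(L_n,\cdot)\to\phi(L,\cdot)$ uniformly in $\mu$. For the measure, from \eqref{eq:distribution_function} we have
\[
F(\mu|L) \;=\; F_{\max}(\mu)\;+\;\PP\bigl(\tilde\mu_{\max,k}>\mu,\;\tilde\mu_{\min,k}<\mu,\;\tilde a_k\ge C(\mu,L)\bigr),
\]
and $C(\mu,L)$ is continuous in $L$ (it is an explicit composition of $\tilde h$, $\tilde h'$, $f'$ and $c'$, all continuous and bounded away from degeneracy by the standing assumptions and Assumption~\ref{asm:h_convexity}). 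Since $F_a$ is assumed continuous, the above probability is continuous in $L$ for each fixed $\mu$, and $F(\cdot|L)$ is itself continuous in $\mu$. Pointwise convergence $F(\mu|L_n)\to F(\mu|L)$ to a continuous limit distribution on a compact interval is equivalent to weak convergence of the associated measures, and splitting
\[
G(L_n)-G(L) \;=\; \int\bigl[\phi(L_n,\mu)-\phi(L,\mu)\bigr]\,dF(\mu|L_n) \;+\; \int\phi(L,\mu)\,d\bigl[F(\mu|L_n)-F(\mu|L)\bigr]
\]
lets the first piece vanish by uniform convergence of $\phi$ and the second by weak convergence against the bounded continuous function $\phi(L,\cdot)$.

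Combining the three ingredients, $G$ is continuous on $[L_1,L_2]$ with $G(L_1)\ge 0\ge G(L_2)$, so the intermediate value theorem yields at least one $L_F\in[L_1,L_2]$ with $G(L_F)=0$, and the sign analysis rules out any zero outside this interval. The only delicate point I anticipate is making the weak-convergence step above fully rigorous when $F_{\min,\max}$ places atoms near $\mu$; this is handled by noting that the ``boundary'' $\{\tilde\mu_{\max,k}=\mu\}\cup\{\tilde\mu_{\min,k}=\mu\}$ contributes to the jump of $F(\cdot|L)$ in $\mu$ but not in $L$, so the continuity-in-$L$ argument goes through unchanged.
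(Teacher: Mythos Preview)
Your proposal is correct and follows essentially the same approach as the paper: establish continuity of the map $L\mapsto G(L)$ via the same two-term splitting (change of integrand against a fixed measure, plus change of measure against a fixed continuous integrand, the latter handled by weak convergence coming from the continuity of $F_a$ and of $C(\mu,L)$), then verify the sign of the integrand at the two endpoints $1/(\beta\tilde h(\mu_{\min}))$ and $1/(\beta\tilde h(\mu_{\max}))$ using the strict monotonicity of $\tilde h$, and conclude by the intermediate value theorem. Your treatment is in fact slightly more careful than the paper's in that you explicitly note uniform continuity of $\phi$ on the compact rectangle and address the possibility of atoms in $F_{\min,\max}$; the paper glosses over these points.
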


Proposition~\ref{prop:existence} does not imply anything about the uniqueness of the solution $L_F$. In fact, there might exist multiple equilibria when the servers are strategic as shown in \cite{gopalakrishnan2021} which is a special case of our model.  While the detailed analysis of systems where there are multiple equilibria remains an important open problem, Proposition~\ref{prop:uniqueness} provides some sufficient conditions for the uniqueness of the solution to \eqref{eq:g_equilibrium_equation}. The proof of the proposition is provided in the Appendix~\ref{app:strategic} along with an example application of the conditions.

\begin{proposition}
    \label{prop:uniqueness}
    Let the distribution of $\tilde{a}_k^n$, $F_a$, be continuous and suppose for any $\mu\in [\mu_{\min},\mu_{\max}]$ and $L_F\in [1/(\beta \tilde{h}(\mu_{\min})),1/(\beta \tilde{h}(\mu_{\max}))]$, the following two conditions hold:
    \begin{enumerate}
        \item The integrand $\mu\frac{1-\beta L_F\tilde{h}(\mu)}{1+L_F\tilde{h}(\mu)}$ is a non-decreasing function of $\mu$.
        \item $C(\mu,L_F)$ is a non-increasing function of $L_F$.
    \end{enumerate}
    Then, the solution of \eqref{eq:g_equilibrium_equation} is unique. 
\end{proposition}

% \begin{assumption}\label{asm:OptimControl}

% We assume $f$ and $c$ to be twice continuously differentiable. Additionally $f$ is concave increasing function ($f'>0$ and $f''<0$) and $c$ is convex and increasing. To guarantee concavity of the objective, we assume that $g(\mu)$ is concave. We also assume that $g(\mu)$ is bounded away from 0, i.e., there exists an $\epsilon_0>0$ such that $g(\mu)>\epsilon_0$ for all $\mu_{\min}\leq \mu \leq \mu_{\max}$

\subsection{Numerical Experiments}\label{sec:numerical}
 
In this section, we numerically analyze the equilibria to gain further insight into the problem.  In
order to gain better insights, we assume that (i) all servers share the same deterministic  minimum and maximum achievable service rates, i.e., $(\tilde{\mu}_{\min,k}^n,\tilde{\mu}_{\max,k}^n)=(\mu_{\min},\mu_{\max})$ w.p.~1 for all $k, n\in  \NN$, (ii) the trade-off parameters, $\tilde{a}_k^n$s, follow a discrete uniform distribution with 1001 scenarios with equally spaced outcomes between lower bound $a_{\min}$ and $a_{\max}$ (additional results for different $\tilde{a}_k^n$ distributions are provided in the Appendix~\ref{sec:additional_numerics}) and (iii) the utility of idleness, cost of effort and the routing functions are power functions, i.e., $u_I(\mu)=\mu^r, c(\mu)=\mu^{q}$ and $\tilde{h}(\mu)=\mu^p$. Table~\ref{tab:basecase} presents the data we use as the base-case of our numerical experiments.  

\begin{table}[h!tb]
    
    \begin{center}
    \caption{The base-case scenario for experiments}
    \begin{tabular}{cccccccccc}\hline
    %\up\down
    $\bar{\lambda}$ &$\beta$ &$\mu_{\min}$ &$\mu_{\max}$ &$\gamma$ &$a_{\min}$ &$a_{\max}$ &$p$ &$q$ &$r$\\
    100 &0.4 &0.5 &1.5 &0.1   &0.05 &0.15 &-1 & 2 &0.5\\\hline
     \end{tabular}
     \label{tab:basecase}
     \end{center}
\end{table}

Theorem~\ref{thm:fixed_point_equation} characterizes the mean-field equilibrium for the fluid limit and Propositions~\ref{prop:existence} and \ref{prop:uniqueness} proves that under mild conditions an equilibrium exists and is unique. We provide further numerical results in this direction in the Appendix~\ref{sec:additional_numerics}. The question of whether this mean-field equilibrium  accurately characterizes the equilibrium for a system with finitely many servers in the pre-limit remains an open problem. For the base-case scenario in Table~\ref{tab:basecase}, we design experiments where all the servers initially assume the service rate distribution to be $F^{(0)}$ and update their optimal service rate iteratively using \eqref{eq:mu_update_equation}, i.e., at iteration $i$, servers set their service rates by replacing $F^{(0)}$ with $F^{(i-1)}$ in \eqref{eq:mu_update_equation}.

To measure the discrepancy between $F^{(i)}$ at each iteration and the mean-field equilibrium service rate distribution in Theorem~\ref{thm:fixed_point_equation}, we use the $L_1$-distance between these two distributions which we calculate as follows. First, we find the equilibrium solution $L_F$ of \eqref{eq:g_equilibrium_equation}, and calculate the mean-field equilibrium service rate $\mu^*(a)$ for any trade-off parameter $\tilde{a}_k^n=a$ by solving \eqref{eq:first_order_condition}. Then, using an accurate representation for the expected idleness to be experienced as a function of $\mu$, $\EE[I_k^n(\infty)|\tilde{\mu}_k^n=\mu]$ given the service rate distribution $F^{(i)}$, we find the maximizer $\mu_i^{*,n}(a)$ of the utility function $U_{k}^n(\mu,F^{(i)})$ in \eqref{eq:utility} when $\tilde{a}_k^n=a$ at iteration $i$. We then calculate the desired $L_1$-distance as
\[
\int_{a_{\min}}^{a_{\max}}|\mu_i^{*,n}(a)-\mu^*(a)|dF_a(a).
\]
To accurately calculate $\EE[I_k^n(\infty)|\tilde{\mu}_k^n=\mu]$ for any $\mu\in[\mu_{\min}, \mu_{\max}]$ at iteration $i$, we propose two approaches. The first approach uses the fluid approximation in Proposition~\ref{thm:interchange_generalized_rr} to approximate the expected idleness for a given service rate $\mu$ replacing $F$ to be $F^{(i)}$, while the second approach estimates this quantity through extensive simulations.

Using three initial distributions, $F^{(0)} = \delta_{\mu_{\max}}$, $F^{(0)} = \delta_{\mu_{\min}}$, and $F^{(0)} = \text{Uniform}(\mu_{\min}, \mu_{\max})$, Figures~\ref{fg:fluid_convergence} and \ref{fg:simulation_convergence} illustrate the discrepancy between the best response rates at each iteration and the mean-field equilibrium for the base-case scenario, based on the fluid and simulation approximations described above, respectively. We see that the service rate distribution converges to the equilibrium in very few iterations for all three initial distributions in both sets of experiments. In Figure~\ref{fg:lambda_convergence}, we compare the equilibrium service rate distributions for finite-server systems, derived from the simulation approximation, with the mean-field equilibrium distribution. The figure demonstrates that the equilibrium service rate distributions converge as $\lambda^n \to \infty$.

 \begin{figure}
\caption{The convergence of best response rates to the equilibrium}
    \centering
    \begin{tabular}{ccc}
    \subfloat[Fluid Approximation \label{fg:fluid_convergence}]{%
  \includegraphics[width=0.3\textwidth]{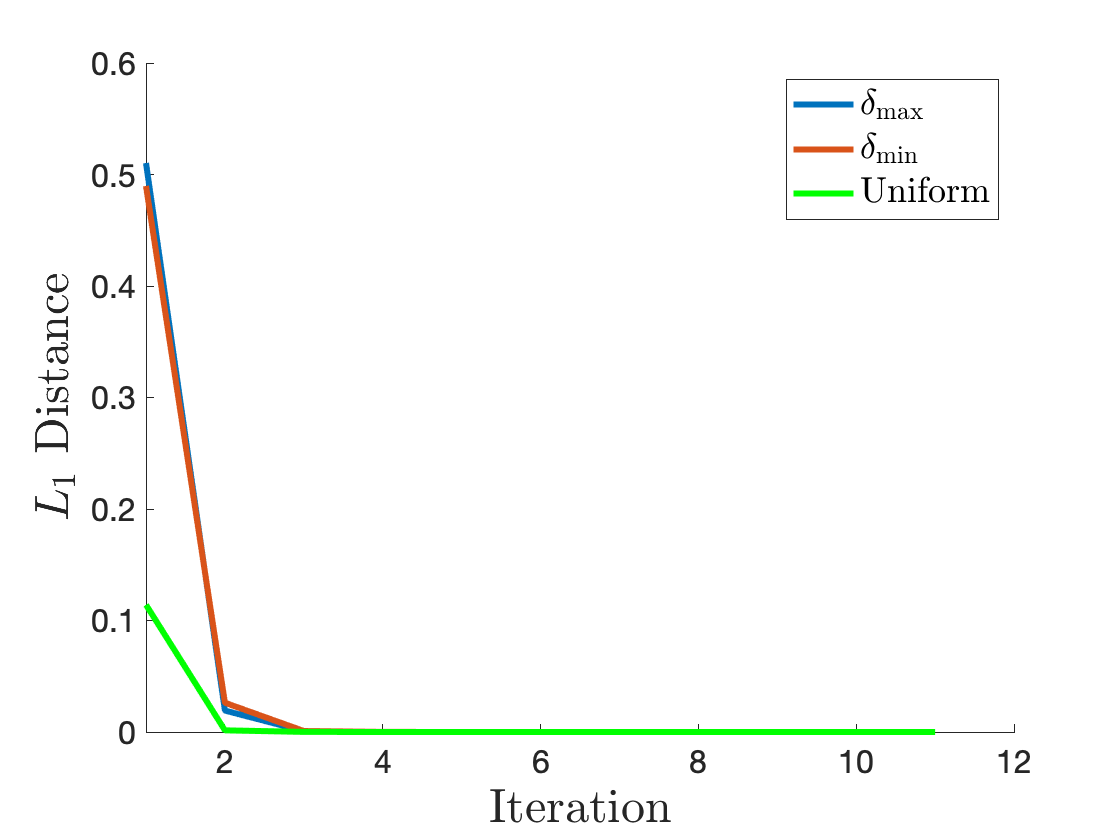}}&
  \subfloat[Simulation Approximation \label{fg:simulation_convergence}]{%
  \includegraphics[width=0.3\textwidth]{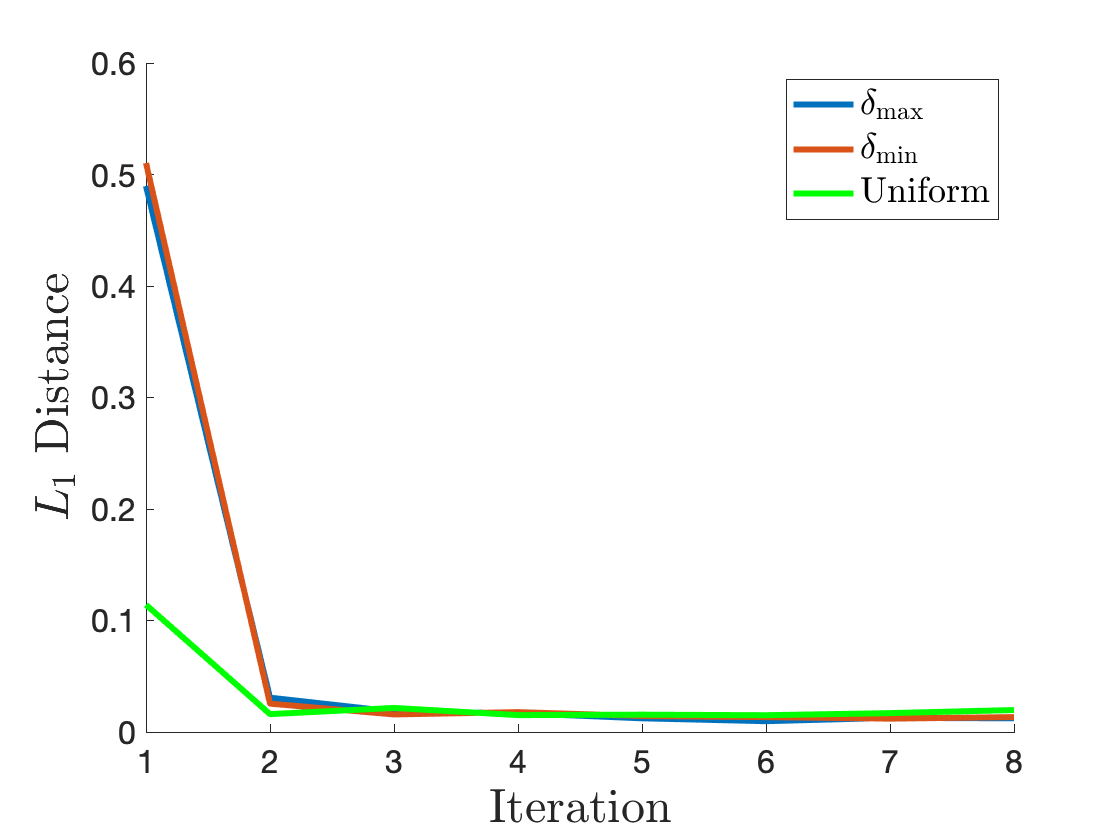}}
  \subfloat[Convergence as $\lambda^n$ increases \label{fg:lambda_convergence}]{%
  \includegraphics[width=0.3\textwidth]{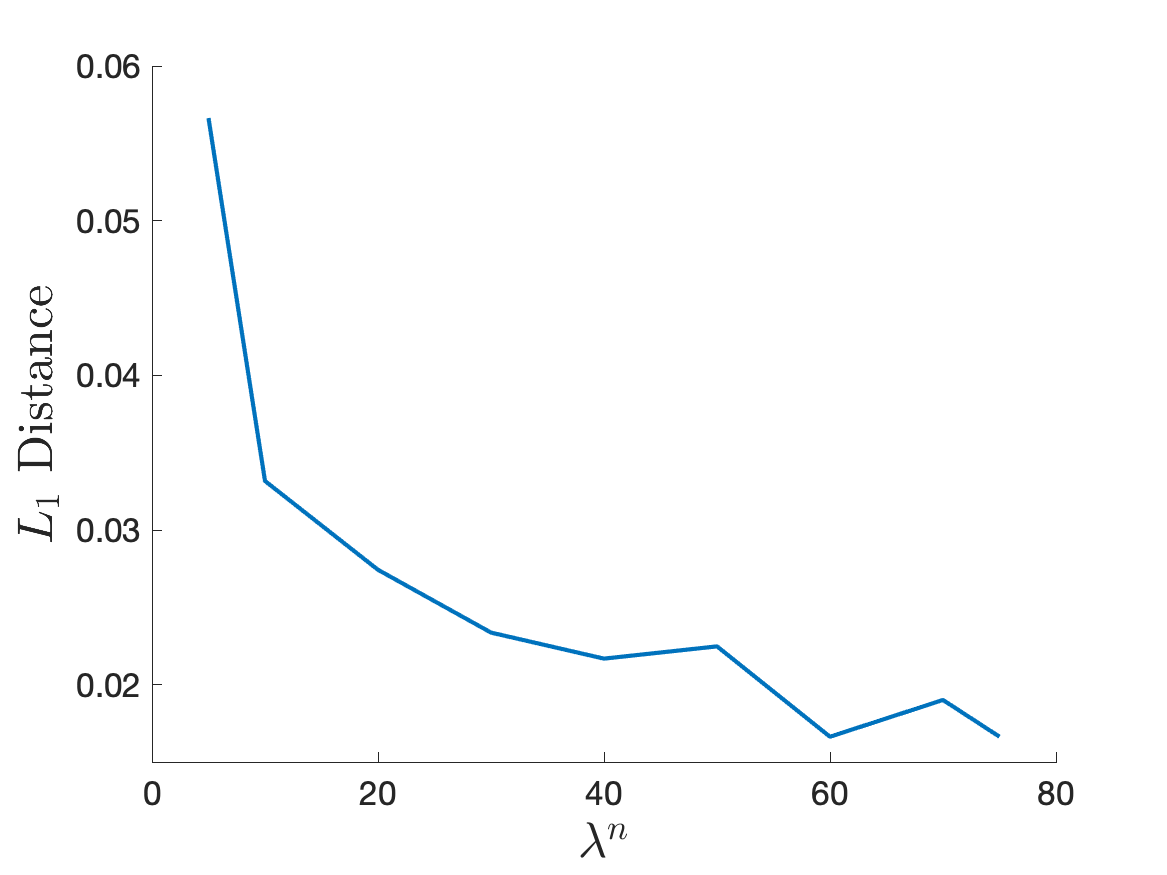}}
  \end{tabular}
  \label{fg:equilibrium_convergence}
 \end{figure}
 %%%%%%%%%%%%%%%%%%%%%%%%%%%%%%%%%%%%%%%%%%%%%%%%%%%%%%%%%%%%%%%%%%

%  \begin{figure}[htp]
% \centering
% % \subfloat[$\rho^2-\theta_1$ and $\rho^2-\theta_2$]{%
% %   \includegraphics[width=0.45\textwidth]{example-image-a}%
% % }%
% % \subfloat[$\delta_1-\theta_1$ and $\delta_1-\theta_2$]{%
% %   \includegraphics[width=0.45\textwidth]{example-image-b}%
% % }
% \caption{Integral on the left-hand side of \eqref{eq:g_equilibrium_equation} as function of $L_F$ for various values of $r$ and $\beta$.}
% \subfloat[Integral vs. $L_F$ varying $r$ in the base-case\label{fg:integral-r}]{
%           \includegraphics[width=0.4\textwidth]{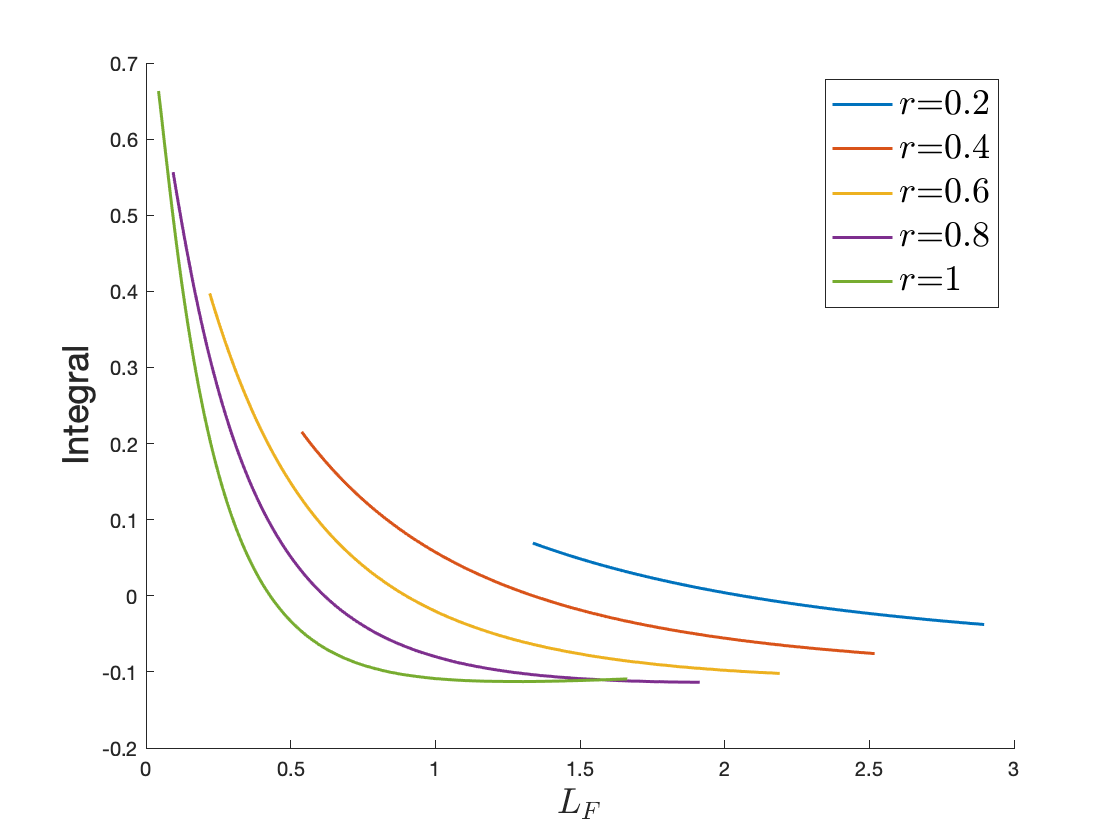}
%         }
% \subfloat[\label{fg:L_var}Integral vs. $L_F$ varying $\beta$ in the base-case]{
%           \includegraphics[width=0.4\textwidth]{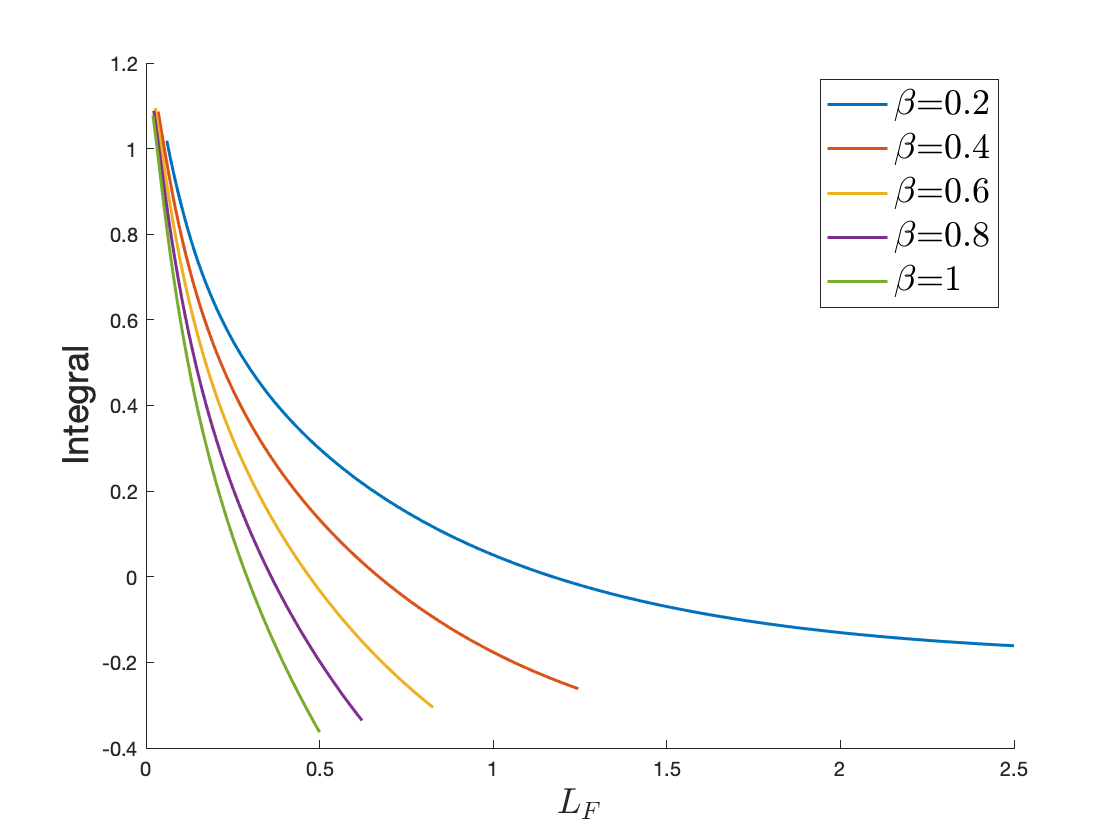}
%         }
% \label{fg:Fig1}
% \end{figure}
 %%%%%%%%%%%%%%%%%%%%%%%%%%%%%%%%%%%%%%%%%%%%%%%%%%%%%%%%%%%%%%%%%%%%

%  \begin{figure}[htb]
%     \centering
%     \caption{Integral on the left-hand side of \eqref{eq:g_equilibrium_equation} as function of $L_F$ for various values of $r$ and $\beta$.}
% 	\begin{subfigure}{\textwidth}
% 	    \centering
% 	     \includegraphics[width=0.5\textwidth]{L_integral_var_q.png}\\
% 		\caption{Integral vs. $L_F$ varying $r$ values in the base-case}
% 		\label{fg:integral-r}
% 	\end{subfigure}	
% 	\begin{subfigure}{\textwidth}
% 		\centering
% 	  \includegraphics[width=0.5\textwidth]{L_integral_var_beta.png}\\
% 		\caption{Integral vs. $L_F$ varying $\beta$ values in the base-case}
% 		\label{fg:L_var}
% 	\end{subfigure}
% \end{figure}

We now examine how various functional assumptions, namely, the routing policy, cost of effort function, and utility of idleness function, affect the equilibrium mean service rate, $\bar{\mu}^*$, as shown in Figure~\ref{fg:param_vs_N_mu}. When the random routing parameter $p$ approaches 0, the idleness distribution becomes more egalitarian, reducing the incentive for servers to work faster. Consequently, the equilibrium mean service rate $\bar{\mu}^*$ decreases. Similarly, as $q$ increases, the marginal cost of increasing the service rate rises, leading to another decrease in $\bar{\mu}^*$.

In contrast, Figure~\ref{fg:r_vs_mu} shows that the relationship between $\bar{\mu}^*$, $N$, and $r$ is non-monotonic: $\bar{\mu}^*$ initially increases with $r$ up to a certain point before declining. This behavior can be understood by analyzing the derivative of the utility of idleness function, $u_I'(I) = r I^{r-1}$, which first increases and then decreases as $r$ changes. To balance this dynamic and ensure that the marginal benefit of idleness matches the marginal cost of working harder, servers initially aim to experience more idleness as $r$ increases, but beyond a certain threshold, they begin to reduce the idleness they seek.

Figure~\ref{fg:beta_vs_N_mu} illustrates how $\bar{\mu}^*$ and $N$ vary with the safety staffing level, $\beta$. We observe that the equilibrium service rate $\bar{\mu}^*$ initially increases with $\beta$ before decreasing. As a result, the optimal staffing level first decreases and then increases, indicating the existence of a non-trivial optimal staffing level, $\beta$, that minimizes the staffing cost.                
\begin{figure}
\caption{The sensitivity of the equilibrium mean service rate and the number of servers to various parameters}
\centering
\begin{tabular}{ccc}
  \subfloat[$p$ vs $\bar{\mu}^*$ \label{fg:p_vs_mu}]{%
    \includegraphics[width=0.3\textwidth]{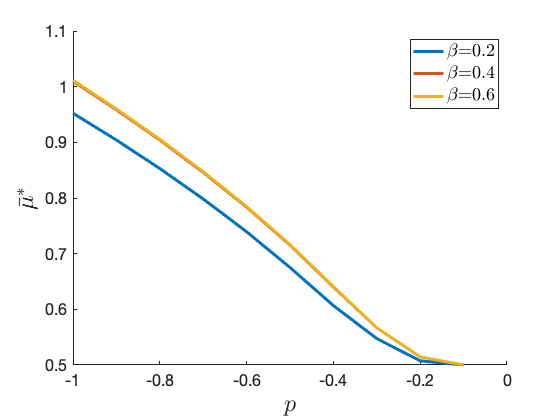}} &
  \subfloat[$q$ vs $\bar{\mu}^*$ \label{fg:q_vs_mu}]{%
    \includegraphics[width=0.3\textwidth]{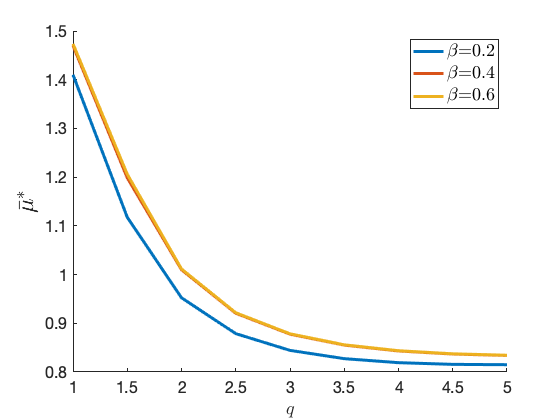}} &
  \subfloat[$r$ vs $\bar{\mu}^*$ \label{fg:r_vs_mu}]{%
    \includegraphics[width=0.3\textwidth]{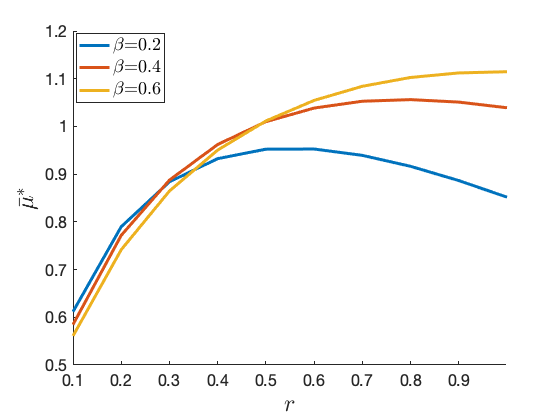}} \\
  % \subfloat[$p$ vs $N$ \label{fg:p_vs_N}]{%
  %   \includegraphics[width=0.3\textwidth]{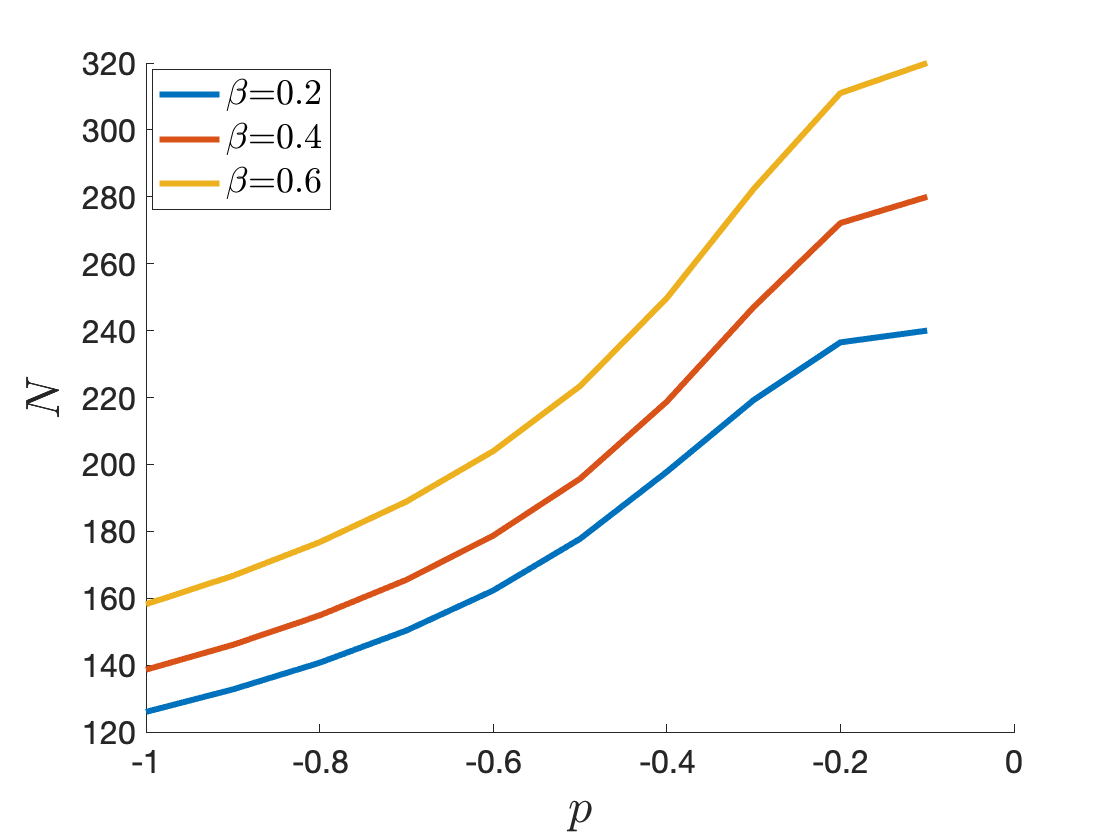}} &
  % \subfloat[$q$ vs $N$ \label{fg:q_vs_N}]{%
  %   \includegraphics[width=0.3\textwidth]{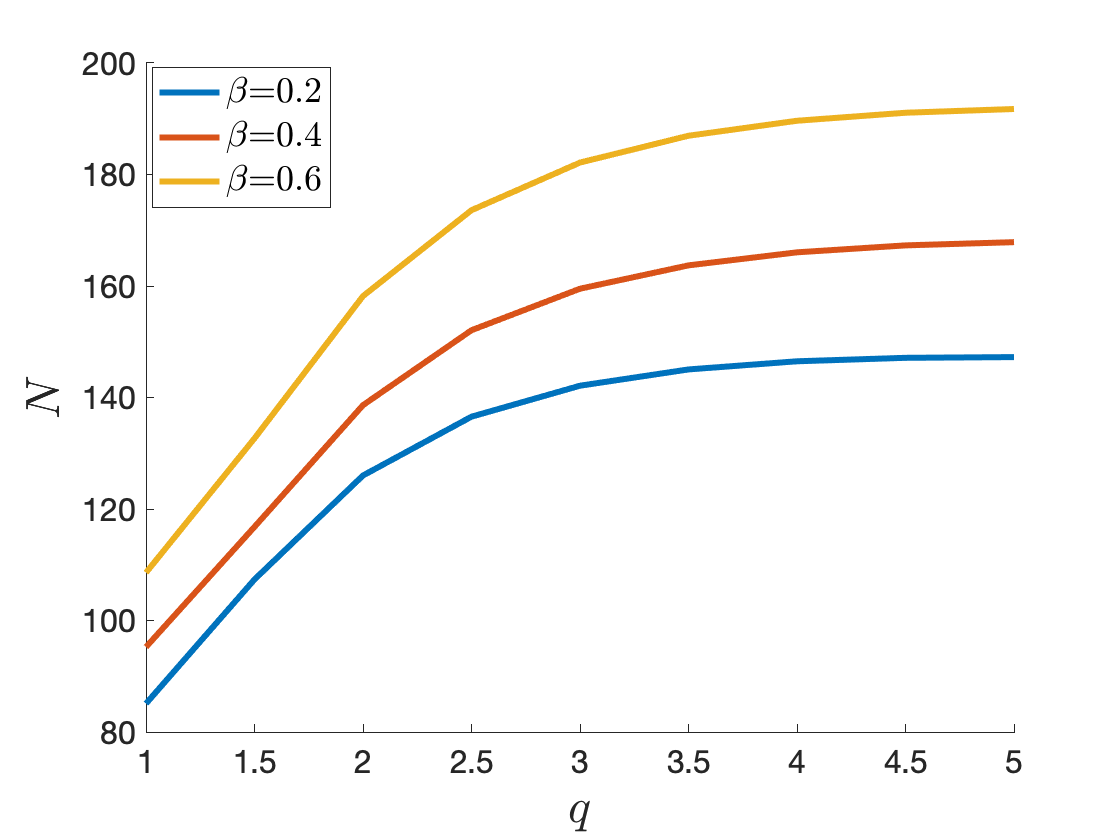}} &
  % \subfloat[$r$ vs $N$ \label{fg:r_vs_N}]{%
  %   \includegraphics[width=0.3\textwidth]{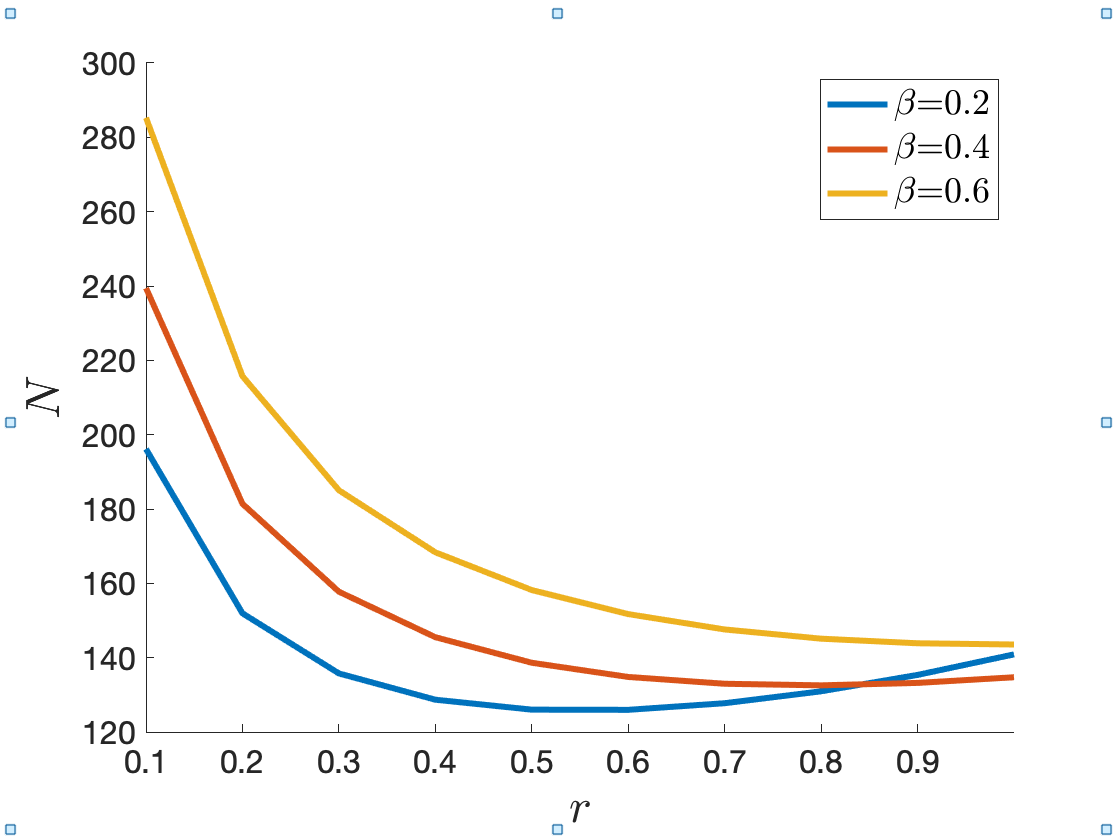}}
\end{tabular}
\label{fg:param_vs_N_mu}
\end{figure}

  \begin{figure}
   
\caption{The sensitivity of the equilibrium mean service rate and the number of servers to staffing level $\beta$}
    \centering
    \begin{tabular}{cc}
    \subfloat[$\beta$ vs $\bar{\mu}^*$ \label{fg:beta_vs_mu}]{%
  \includegraphics[width=0.3\textwidth]{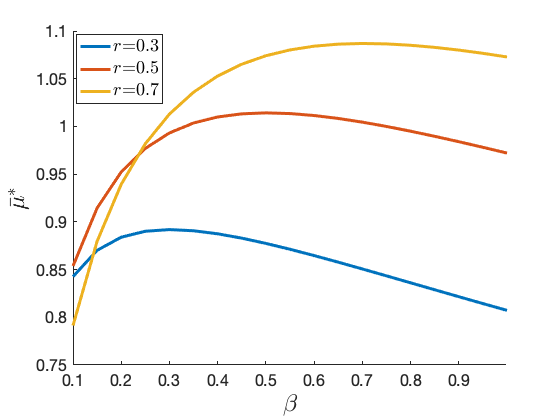}}&
  \subfloat[$\beta$ vs $N$ \label{fg:beta_vs_N}]{%
  \includegraphics[width=0.3\textwidth]{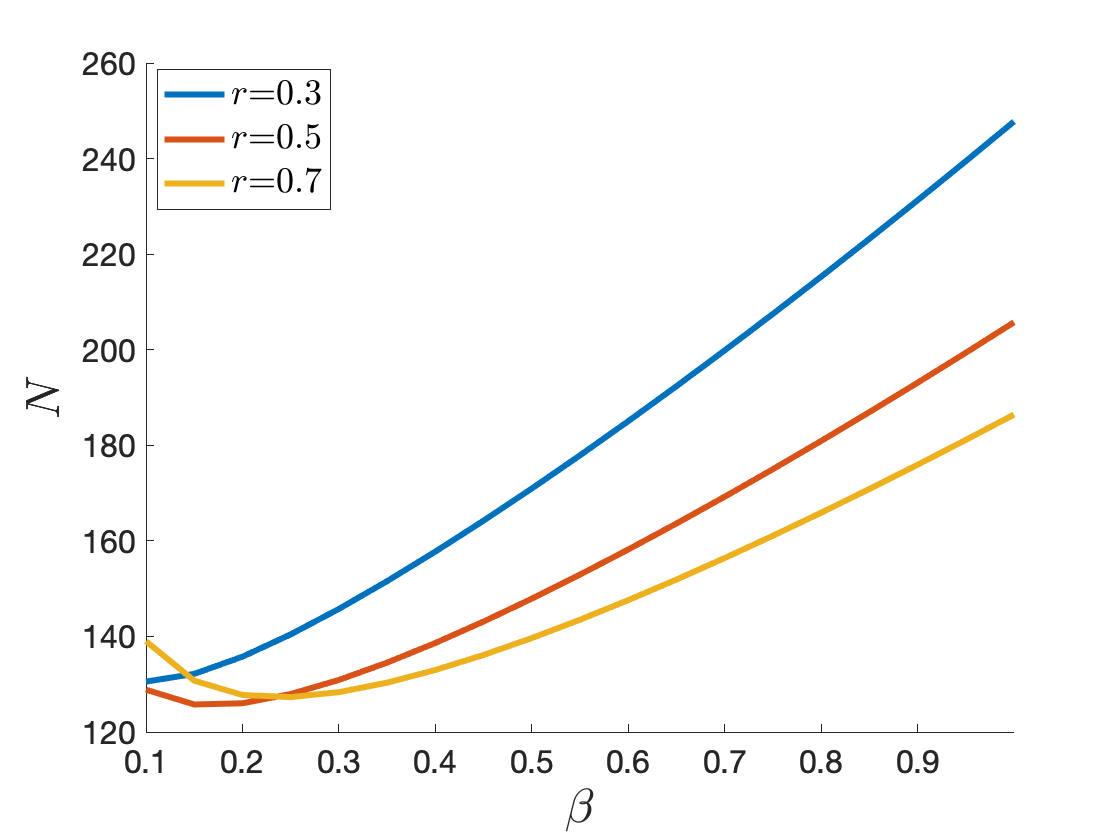}}
  \end{tabular}
   \label{fg:beta_vs_N_mu}
 \end{figure}

As our final experiment, we analyze how the equilibrium distribution is affected by system parameters. Figure~\ref{fg:mu_distributions} illustrates the equilibrium distributions for various values of parameters as histograms. For all our distributions, we observe a sudden spike around the $\mu$ with $C(\mu, L_F) = a_{\max}$ and a decreasing density afterwards. This is a consequence of $C(\mu, L_F)$ having a decreasing derivative with respect to $\mu$ and the distribution of $\tilde{a}_k^n$ being uniform. To see this consider $\mu_1<\mu_2$ in the support of the service rate and take $\Delta>0$. Then, we can find $a_1, a_2, \Delta_1$ and $\Delta_2$ such that 
\begin{align*}
    C(\mu_1,L_F) = a_1, 
    \qquad C(\mu_1 + \Delta) = a_1-\Delta_1, C(\mu_2)=a_2 
    \quad \mbox{ and }\quad 
    C(\mu_2 + \Delta) = a_2-\Delta_2.
\end{align*}
Equation \eqref{eq:first_order_condition} implies $\PP(\tilde{\mu}_{k}^n\in[\mu_i, \mu_i + \Delta]) = \PP(\tilde{a}_k^n\in [a_i-\Delta_i], a_i)$ for $i=1,2$. Hence, the derivative of $C(\mu, L_F)$ being decreasing implies that $\Delta_1>\Delta_2$ and using the uniformity of $\tilde{a}_k^n$, we can conclude that the density of $\tilde{\mu}_k^n$ should be decreasing. Note that the uniformity assumption plays a key role in this conclusion and we present examples where this monotonicity property does not hold when $\tilde{a}_k^n$ follows certain triangular distributions.

As noted above, the routing policy becomes more egalitarian as $p$ approaches 0, reducing the incentive for servers to work faster. Consequently, in Figure~\ref{fg:mu_distributions}, we observe not only a leftward shift in the distribution but also a reduction in its range  indicating that the distribution spreads out more for smaller values of $p$.
When the cost of effort parameter $q$ increases, the rate of change in the cost of effort rises significantly for faster service rates compared to slower ones. 
This results in a leftward shift of the distribution and a less spread out distribution. 
For the utility of idleness parameter $r$, an increase raises the value of idleness, prompting all servers to work faster and causing the distribution to shift left. Additionally, the distribution becomes more spread out, reflecting a greater impact of $r$ on faster servers.

Finally, the staffing parameter $\beta$ does not explicitly appear in \eqref{eq:first_order_condition}, influencing the equilibrium distribution indirectly through changes in the equilibrium $L_F$ that solves \eqref{eq:g_equilibrium_equation}. Hence, the effect of $\beta$ on the distribution's shape is relatively minor compared to other parameters. Nevertheless, as shown in Figure~\ref{fg:beta_vs_mu}, the distribution initially shifts right with increasing $\beta$, indicating that when overall idleness in the system is low due to high safety staffing, servers are motivated to work faster to achieve reasonable levels of idleness. However, as $\beta$ becomes sufficiently large, the idleness provided by high safety staffing becomes adequate, reducing the incentive for servers to work faster.

 \begin{figure}
\caption{The equilibrium distributions for different parametric setups}
    \centering
    \begin{tabular}{ccc}
       \subfloat[$p=-1, q =  2, r = 0.5, \beta = 0.4$ \label{fg:p100q200r050}]{%
  \includegraphics[width=0.3\textwidth]{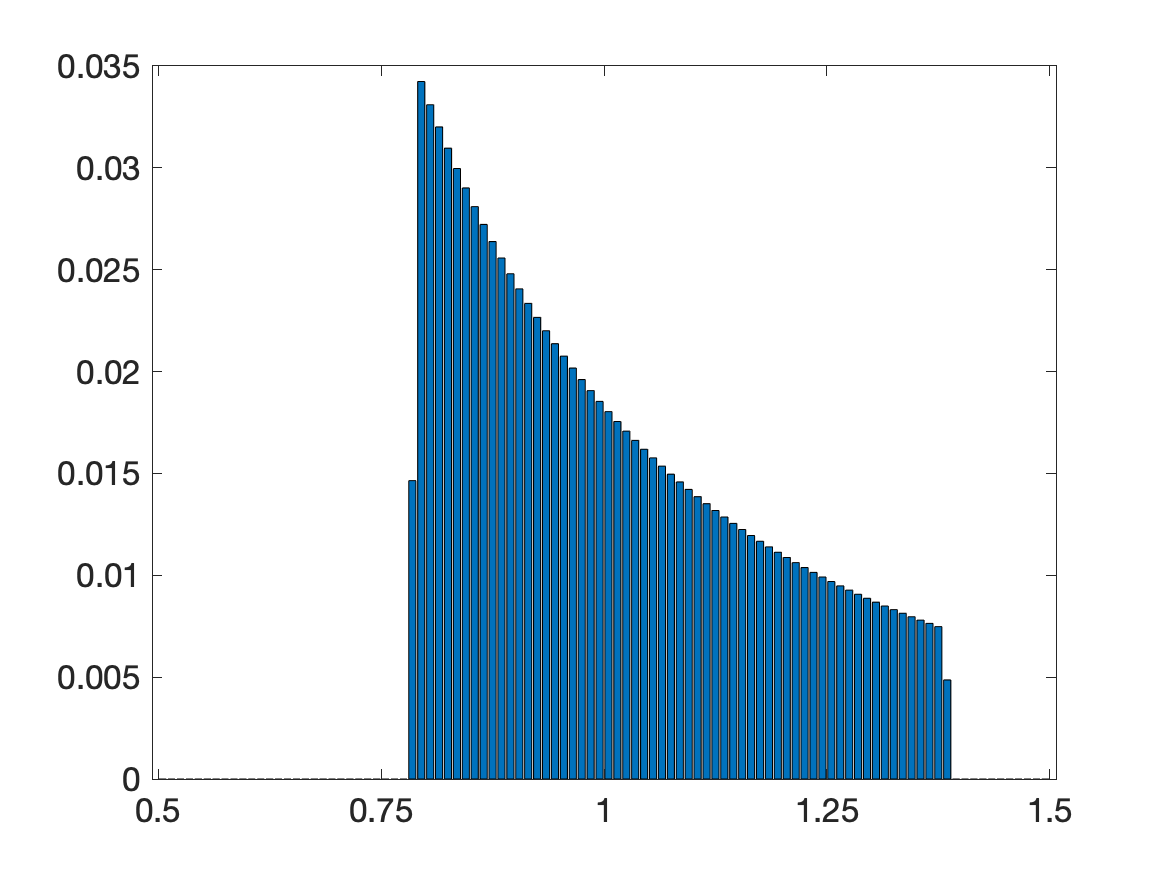}}&
  \subfloat[$p=-0.6, q =  2,r = 0.5, \beta = 0.4$ \label{fg:p060q200r050}]{%
  \includegraphics[width=0.3\textwidth]{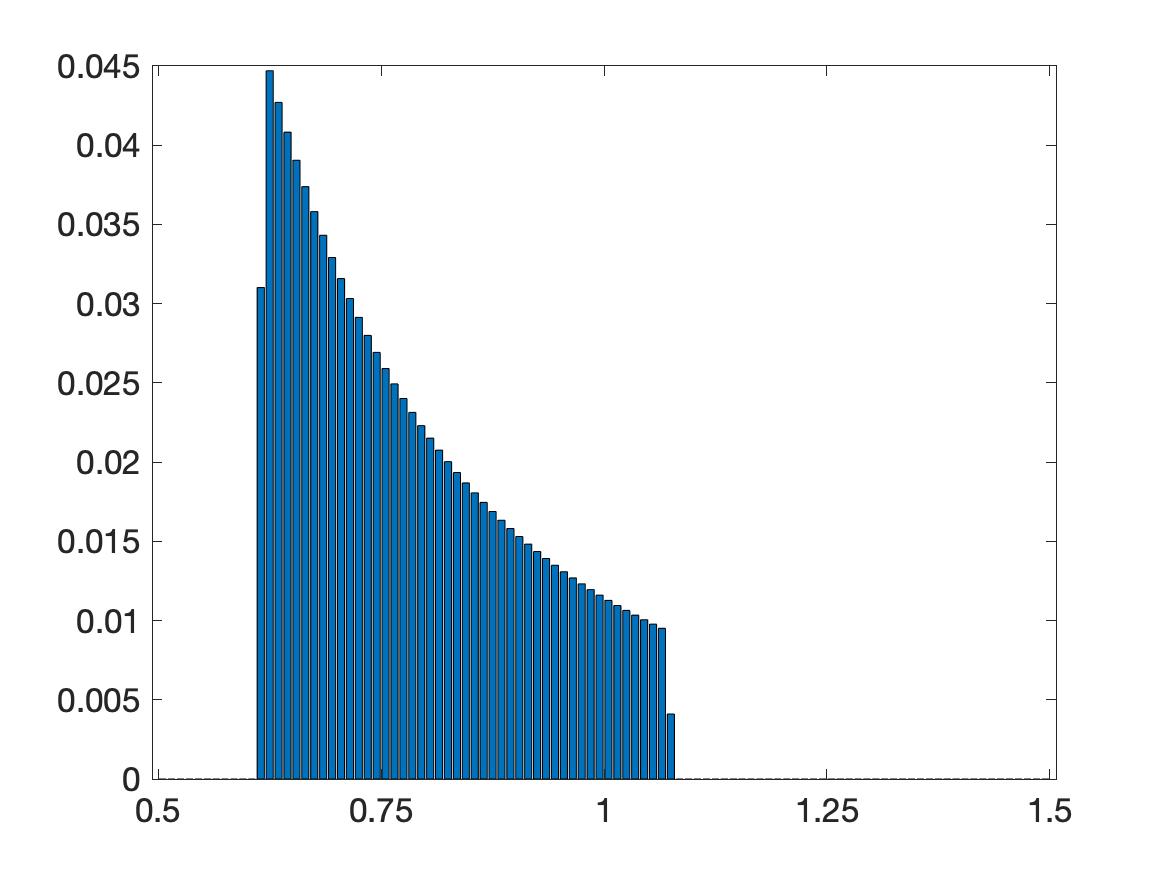}} &
  \subfloat[$p=-0.2, q =  2,r = 0.5, \beta = 0.4$ \label{fg:p020q200r050}]{%
  \includegraphics[width=0.3\textwidth]{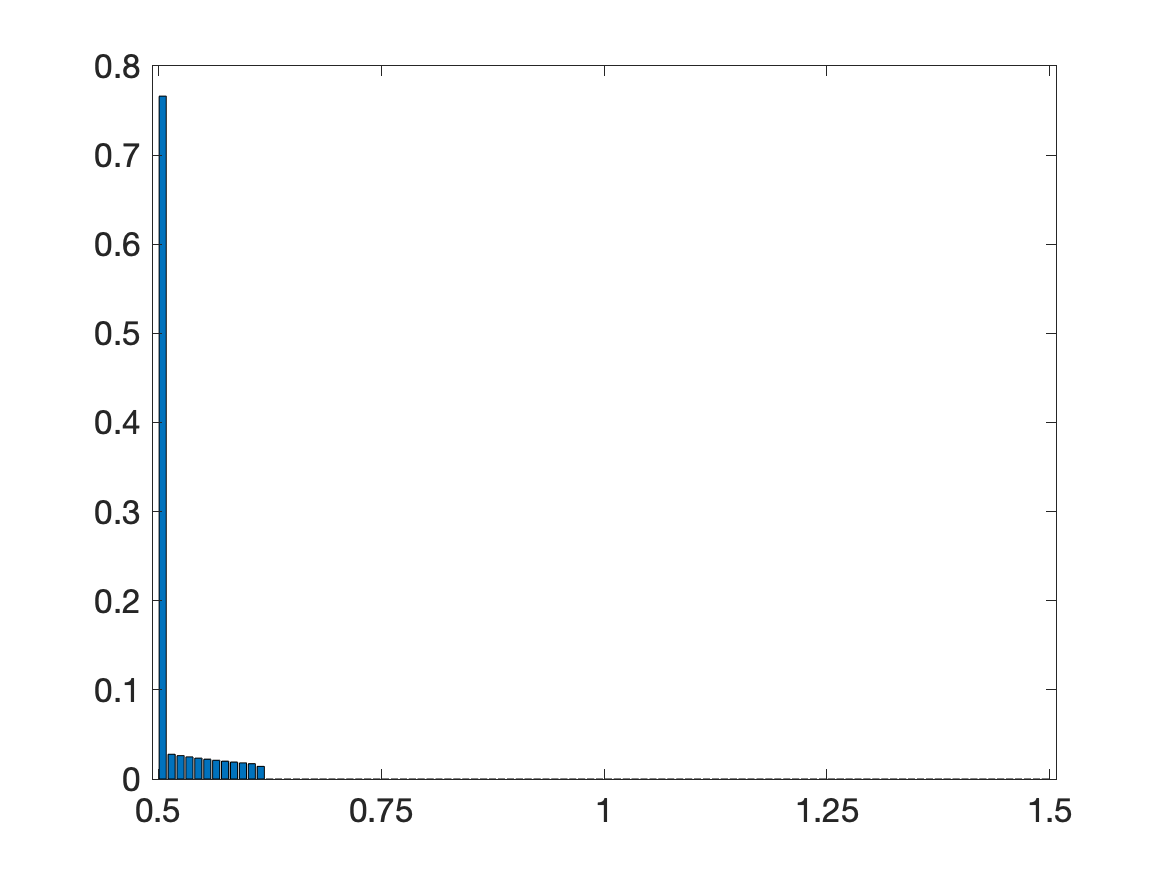}}\\
    \subfloat[$p=-1, q =  3, r = 0.5, \beta = 0.4$\label{fg:p100q300r050}]{%
  \includegraphics[width=0.3\textwidth]{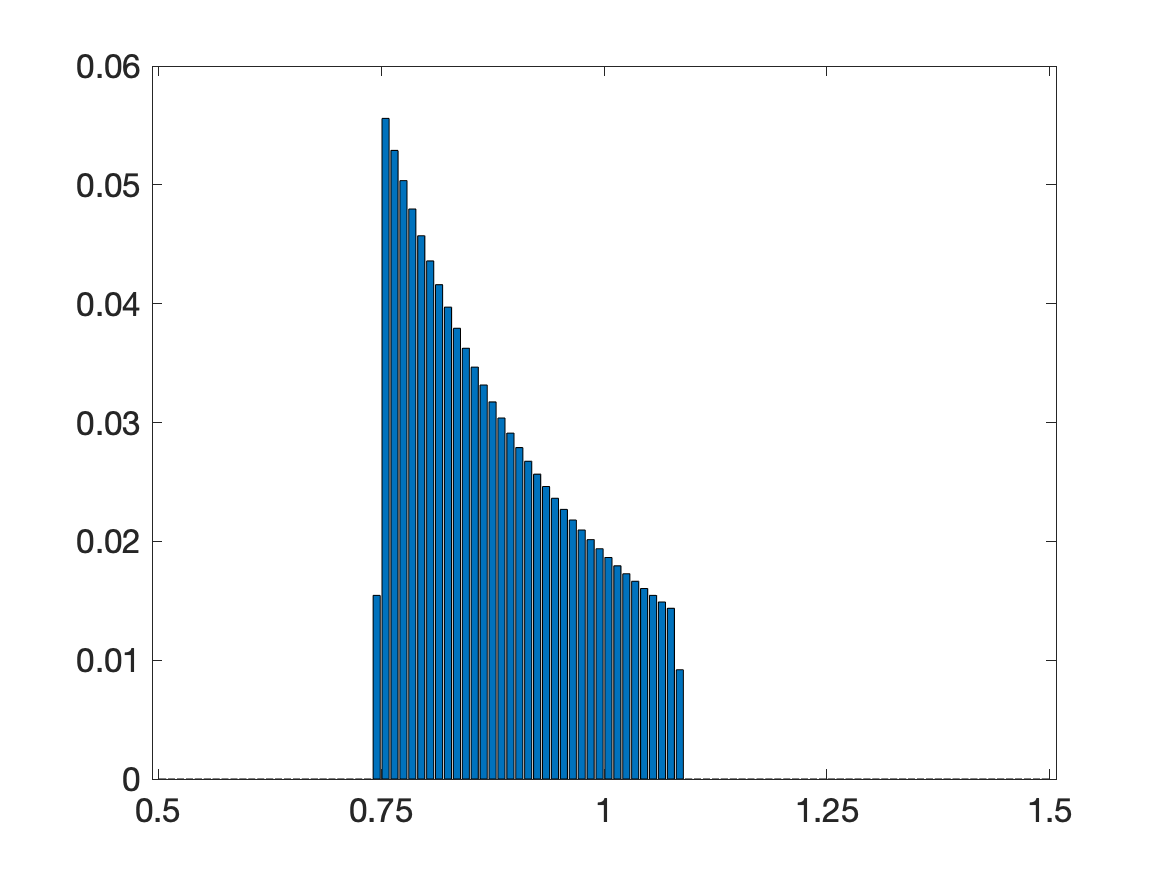}}&
  \subfloat[$p=-1, q =  4, r = 0.5, \beta = 0.4$ \label{fg:p100q400r050}]{%
  \includegraphics[width=0.3\textwidth]{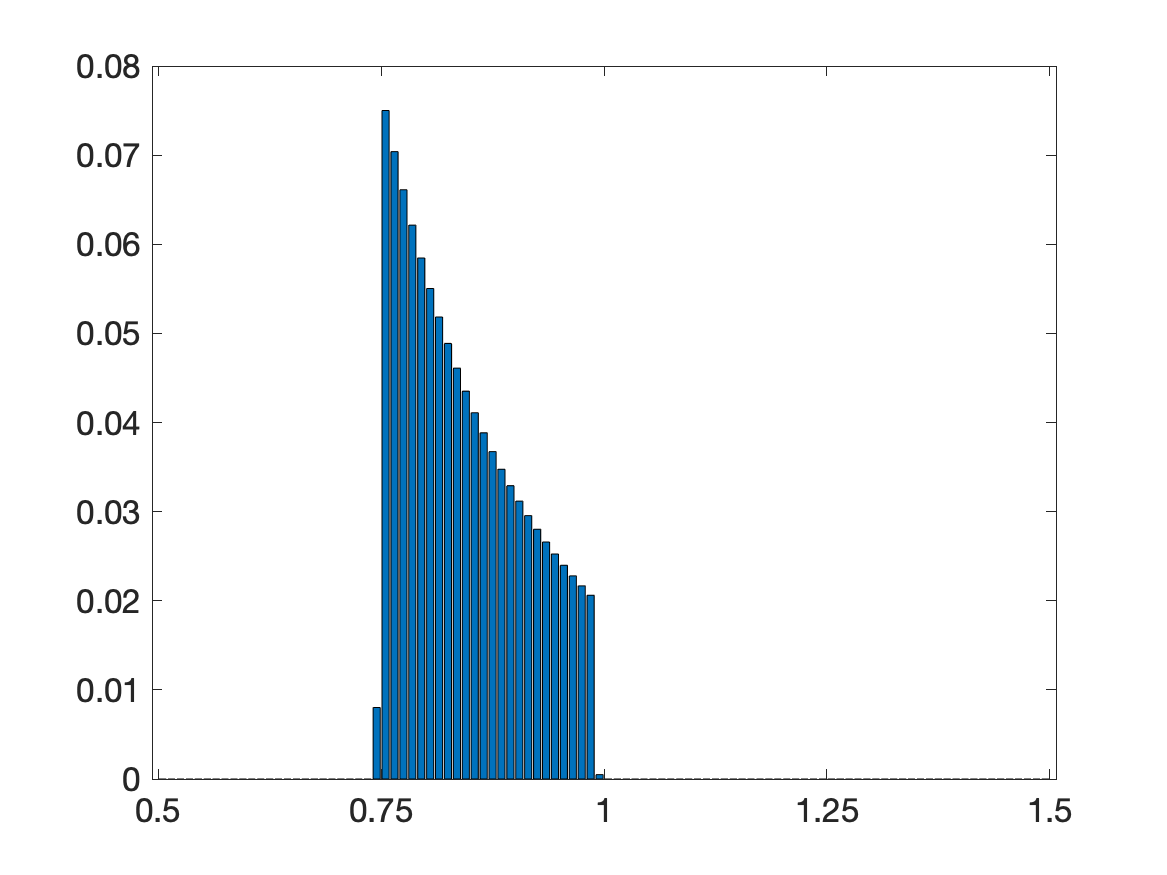}} &
   \subfloat[$p=-1, q =  5, r = 0.5, \beta = 0.4$ \label{fg:p100q500r050}]{%
  \includegraphics[width=0.3\textwidth]{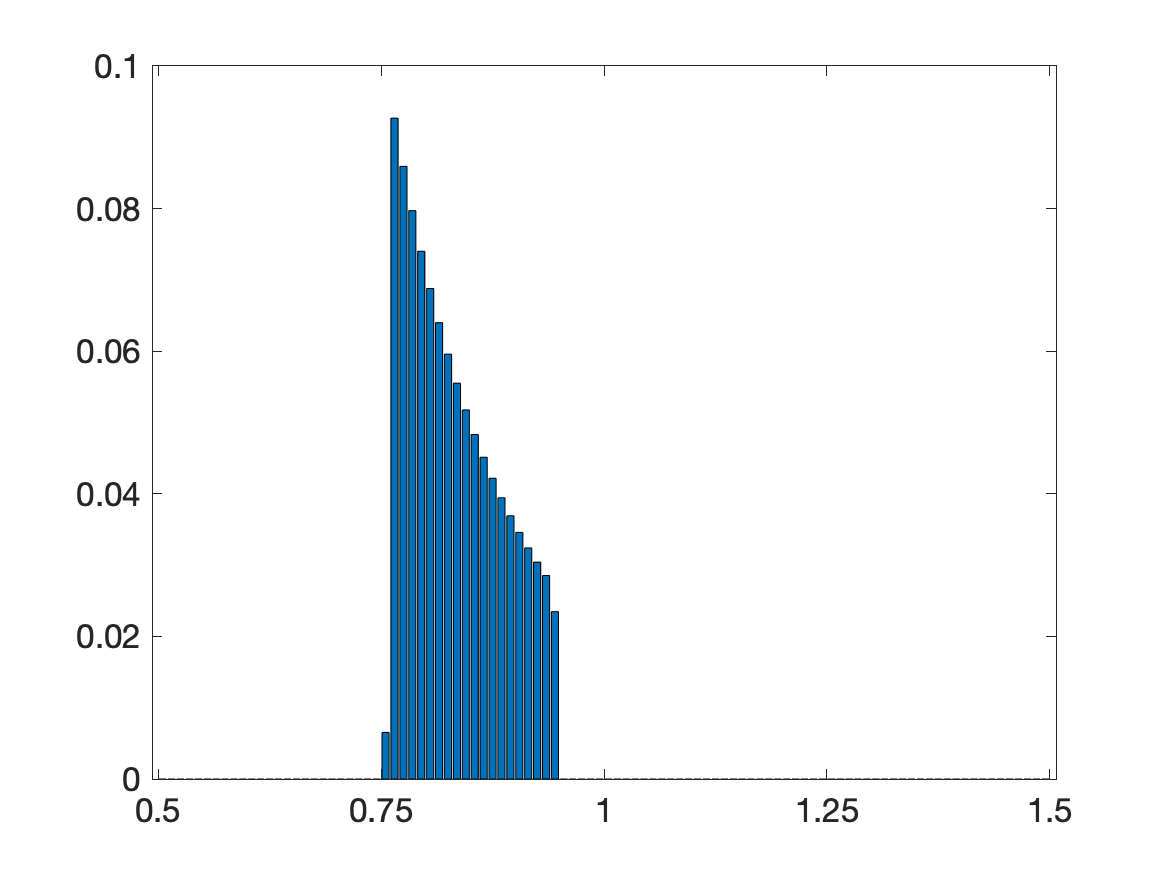}}\\
    \subfloat[$p=-1, q =  2, r = 0.25, \beta = 0.4$ \label{fg:p100q200r025}]{%
  \includegraphics[width=0.3\textwidth]{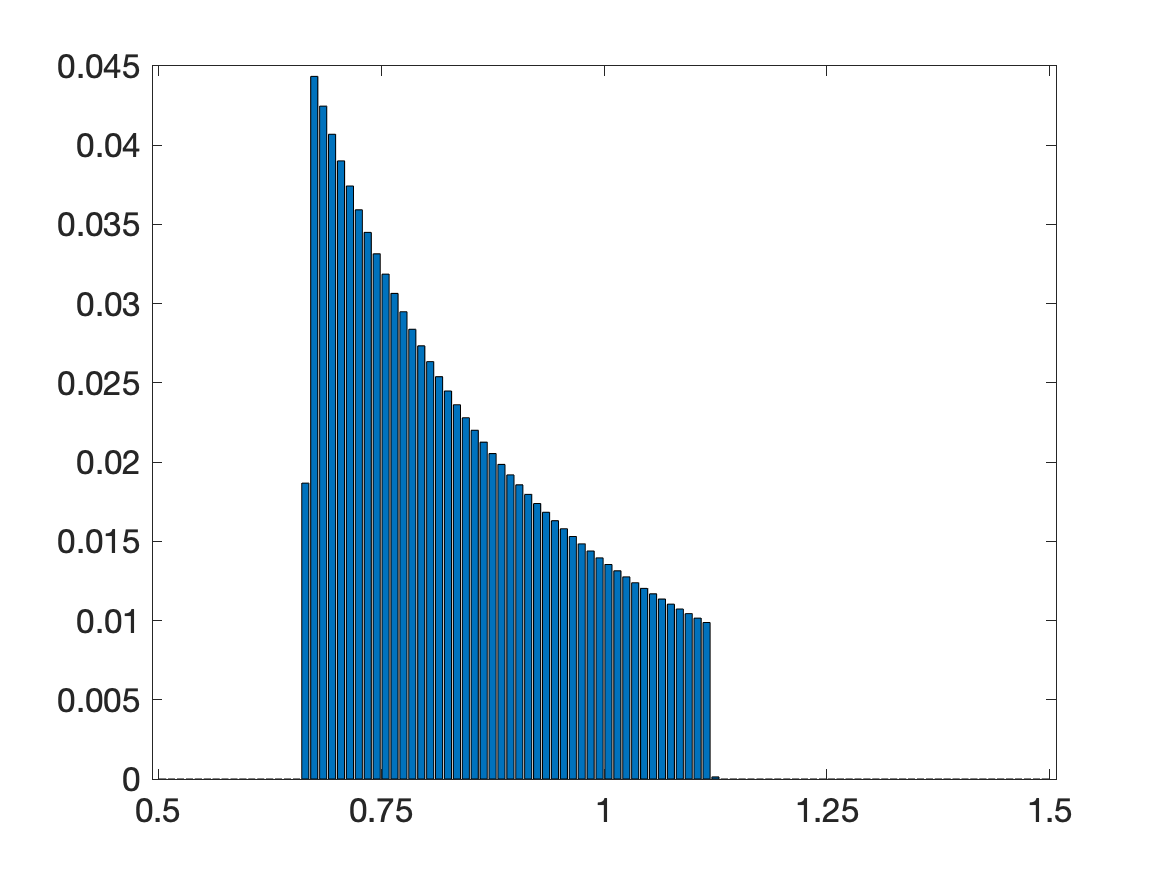}}&
  \subfloat[$p=-1, q =  2, r = 0.75, \beta = 0.4$ \label{fg:p100q200r075}]{%
  \includegraphics[width=0.3\textwidth]{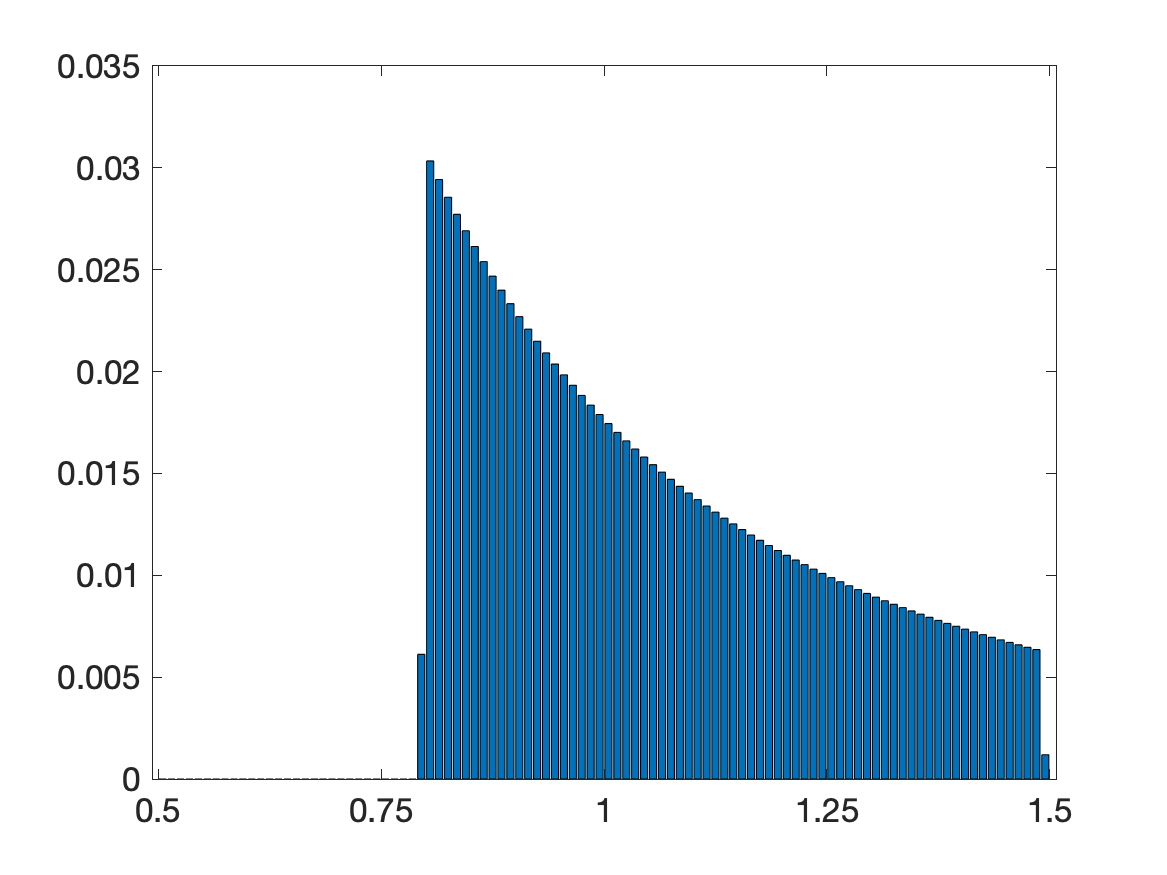}} &
   \subfloat[$p=-1, q =  2, r = 1, \beta = 0.4$ \label{fg:p100q200r100}]{%
  \includegraphics[width=0.3\textwidth]{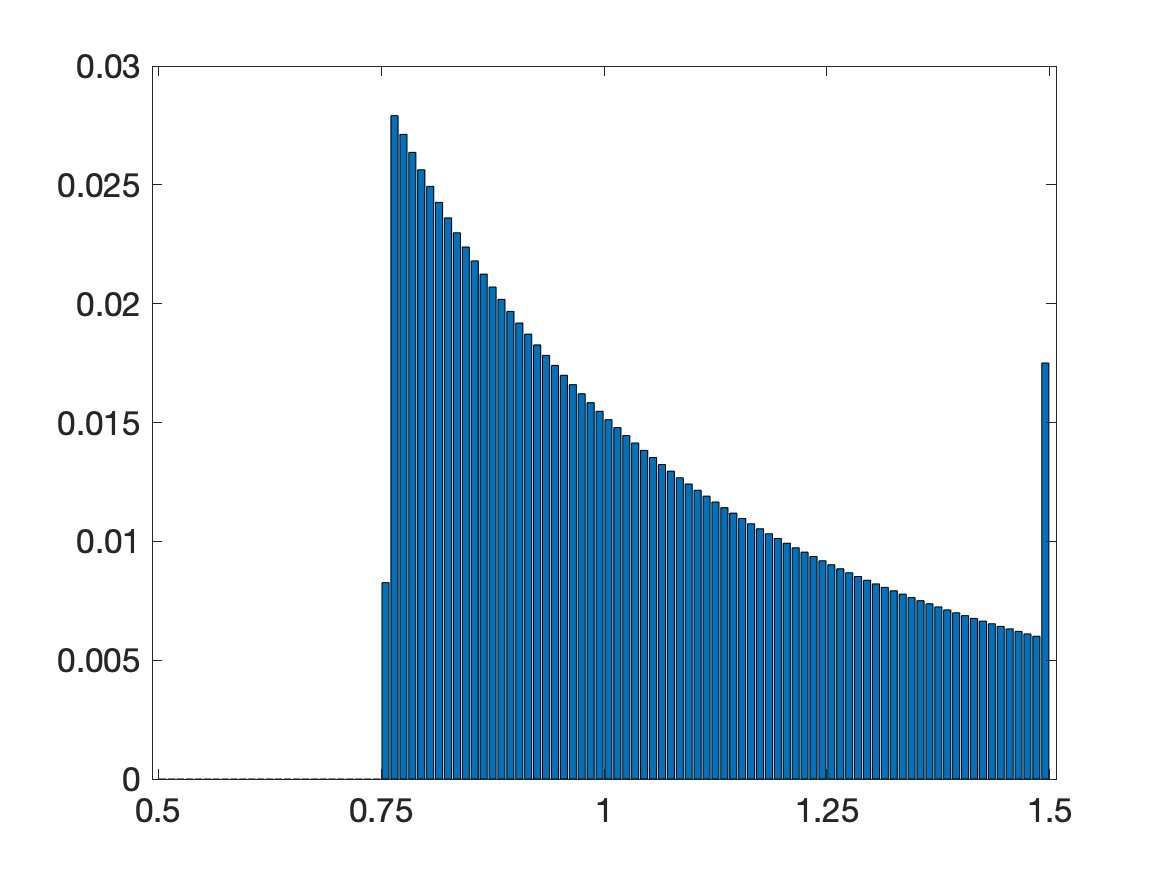}}\\
      \subfloat[$p=-1, q =  2, r = 0.5, \beta = 0.2$ \label{fg:dist_beta02}]{%
  \includegraphics[width=0.3\textwidth]{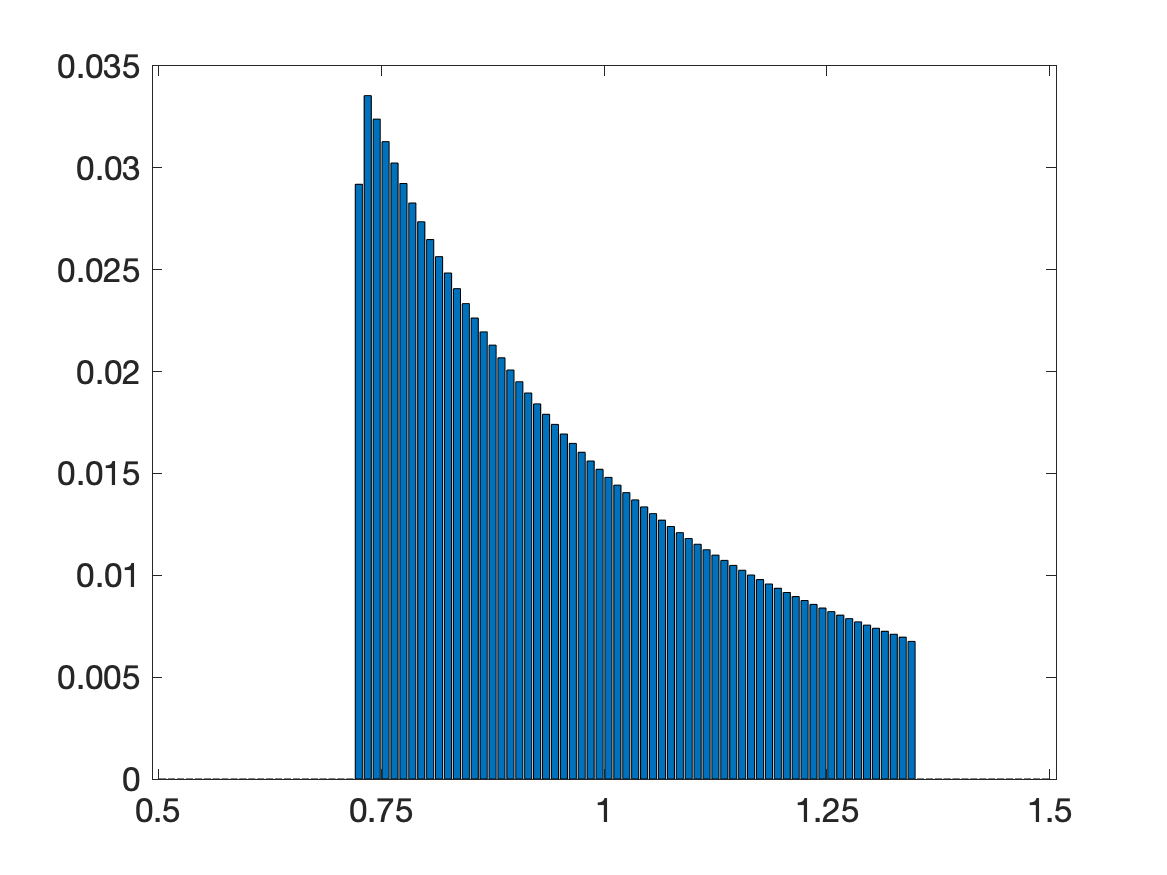}}&
  \subfloat[$p=-1, q =  2, r = 0.5, \beta = 0.6$ \label{fg:dist_beta06}]{%
  \includegraphics[width=0.3\textwidth]{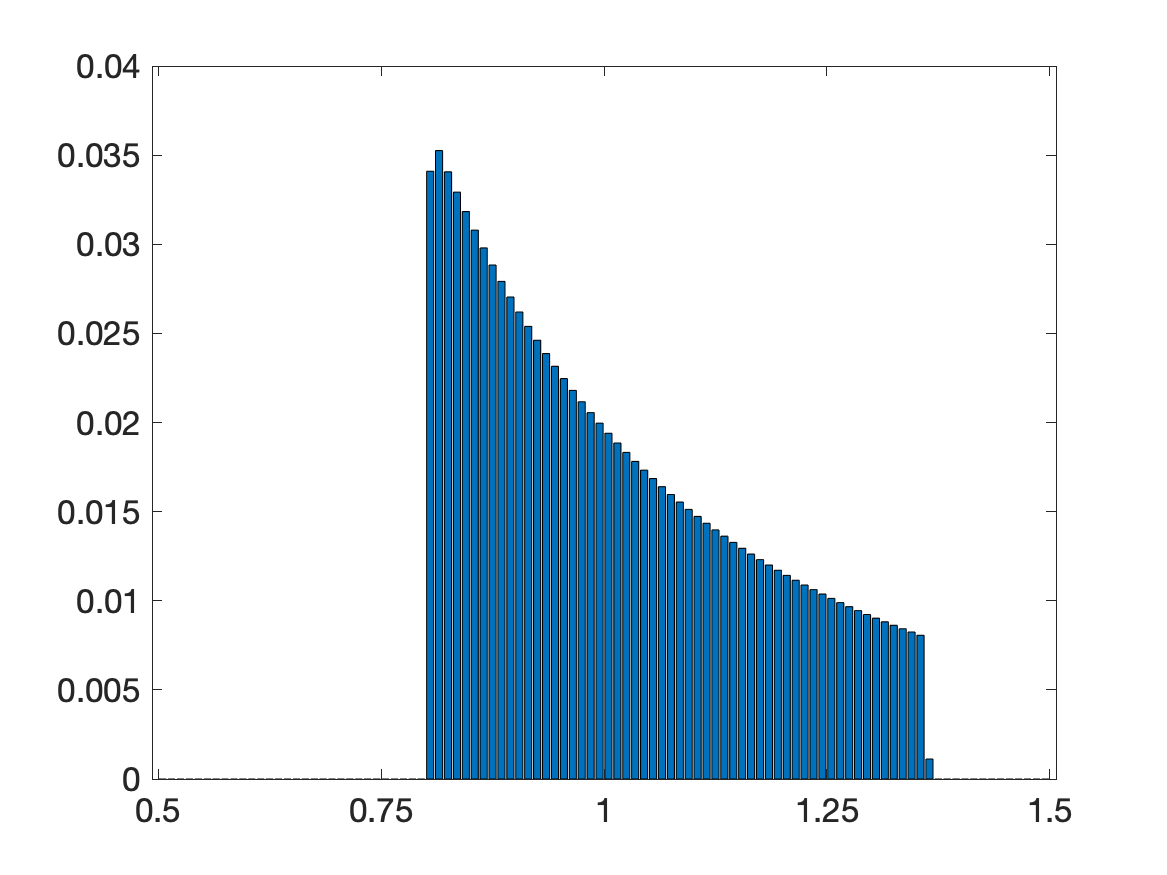}} &
   \subfloat[$p=-1, q =  2, r = 0.5, \beta = 0.8$ \label{fg:dist_beta08}]{%
  \includegraphics[width=0.3\textwidth]{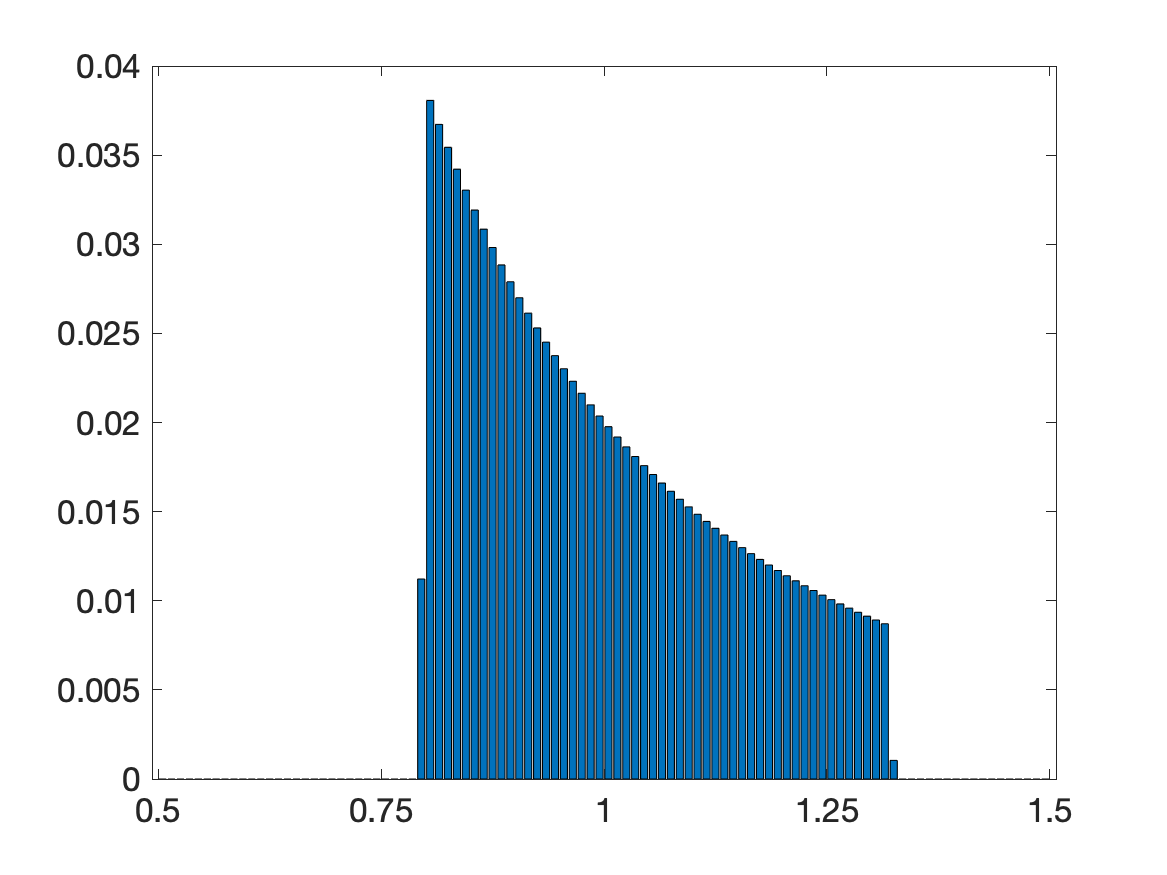}}
  \end{tabular}
  \label{fg:mu_distributions}
 \end{figure}

\section{Concluding Remarks}\label{sec:conclusion}

Servers in most service systems are humans and have inherently diverse preferences and capabilities. This work introduces a framework to analyze strategic server behavior in systems with heterogeneous abilities and preferences. Specifically, it allows concave utility functions of idleness, explicitly modeling extreme discomfort from insufficient idleness, and incorporates individual preferences in balancing the utility of idleness with the cost of working faster.

Unlike prior work, where the utility of idleness is modeled linearly and a purely quality-driven regime is proven to be asymptotically optimal, we demonstrate that when discomfort grows rapidly as idleness approaches zero, the quality-and-efficiency-driven regime becomes optimal. This is the first time this conclusion appears in the literature and indicates that square-root staffing performs well even when servers act strategically. 

We also provide the first detailed analysis of generalized random routing policies in a purely quality-driven regime, examining asymmetric equilibria in this context. Our results include broadly applicable sufficient conditions for the existence and uniqueness of equilibrium service rate distributions, supplemented by extensive numerical analysis of these equilibria.

This work focuses on characterizing asymptotically optimal staffing policies. Identifying optimal routing policies to improve system performance with heterogeneous strategic servers remains an open challenge. Furthermore, our analysis assumes routing policies with fairness processes that possess a density, excluding popular policies such as fastest-server-first and slowest-server-first, which require additional study. To date, research on queues with strategic servers, including this work, has largely addressed systems with a single queue and server pool. However, real-world service systems often involve multiple queues and pools, where customers and servers are categorized by needs and abilities. Investigating strategic server behavior under skill-based routing presents a particularly compelling and challenging direction for future research.

\appendix
\section*{Appendices}
\addcontentsline{toc}{section}{Appendices}
\section{Proofs for Theorems}

In this appendix, we present the proofs of the technical results presented in our manuscript  ``Many-Server Queueing Systems with Heterogeneous Strategic Servers in Heavy Traffic'' and additional numerical results to justify our conclusions. The proofs are organized following the sections in the main document. 

 % with working draft in \url{https://arxiv.org/abs/2211.04158}

\subsection{Additional Notation}
In addition to the notation described in Section~\ref{sec:notation}, we define the short-hand notation $x\wedge y:=\min\{x,y\}$. To have a compact notation, we denote $\vec{\mu} := (\tilde{\mu}_1^n, \ldots, \tilde{\mu}_{N_\alpha^n}^n)$. We define $d_S$ to be the metric for Skorokhod-$J_1$ topology, and for any function $x(\cdot)$ we define $|x(t)|_T^*:=\sup_{0\leq t\leq T}|x(t)|$. 
Lastly, we denote the optional quadratic variation of a stochastic process $Y(t)$ as $[Y]_t$.

\subsection{Formal Definition of the Limiting Fairness Process}\label{app:fairness_definition}

To be able to address strategic servers with heterogeneous service rates, the tool we use is the \emph{fairness process} introduced by \cite{bukeqin2019} and discussed in Section \ref{sec:fairnessprocess}. 
The fairness process is a probability measure-valued process showing how the idleness is distributed among servers with different service rates. The name is motivated by the fact that the distribution of idleness is generally viewed as  a measure of fairness towards servers see, e.g., \cites{armward10, wararm13}.  

Unfortunately, it can be shown that the processes $\eta_{\alpha}^{\pi,n}$ do not converge weakly as $n\to \infty$ in any of the four Skorokhod topologies due to the singularity around $t=\tau_0$. 
Hence, to properly define the limiting fairness process, \cite{bukeqin2019} suggest using shifted versions of the fairness process. For any $\epsilon>0$,  the $\epsilon$-shifted fairness process, $\cS_\epsilon\eta_{\alpha,t}^{\pi,n}$, is defined as 
\begin{equation}
    \cS_\epsilon\eta_{\alpha,t}^{\pi,n}(\AAA):=\left\{\begin{array}{ll}
    \eta_{\alpha,t}^{\pi,n}(\AAA) &, \mbox{if }t>\tau_\epsilon^n,
    \\
    \delta_0(\AAA) &, \mbox{if }t\leq \tau_\epsilon^n, 
    \end{array}\right.
    \label{eq:shifted_fairness}
\end{equation}
 for all $\AAA \in \cB(\RR_+)$ and $\epsilon>0$, where $\tau_\epsilon^n :=\inf\{t>0:\int_0^t\hat{I}_\alpha^n(s)ds>\epsilon\}$. 
 
The following lemma shows that the scaled idleness processes are stochastically bounded. This result is key in enabling us to study the convergence of fairness measures as well as derive fluid and diffusion limits in Section~\ref{sec:convergence_system}.
\begin{lemma}
For $1/2\leq\alpha\leq 1$, the sequence of processes  $\{(\hat{X}_\alpha^n(t))^+\}_{n\in \NN}$, $\{\hat{I}_\alpha^n(t)\}_{n\in\NN}$ are stochastically bounded. % and the sequence of stopping times $\{\tau_\epsilon^n\}_{n\in\NN}$ for $\epsilon>0$ are stochastically bounded. %Moreover, for $1/2<\alpha\leq 1$ there exists a time $t_M>0$ such that  $\sup_{t_M\leq t\leq T}(\hat{X}_\alpha^n(t))^+\toP 0$ for all $T>t_M$ as $n\to \infty$. 
\label{lem:pos_conv}
\end{lemma}

\begin{proof}
Lemma 1 in \cite{bukeqin2019} provides the proof for $\alpha = 1/2$, and hence, we concentrate on the cases where $1/2<\alpha\leq 1$. Let $T>0$, $n\in \NN$, and set $\theta^{n,0}:=\inf\{t\geq 0:(\hat{X}_\alpha^n(t))^+= 0\}$. Then, we have
\begin{align*}
    \PP\left(\left|(\hat{X}_\alpha^n(t))^+\right|_T^*>M\right) 
    & \leq  \PP\left(\left|(\hat{X}_\alpha^n(t))^+\right|_{T\wedge \theta^{n,0}}^*>M\right) 
    + \PP\left(\sup_{\theta^{n,0}\leq t \leq T}\left|(\hat{X}_\alpha^n(t))^+\right|>M, \theta^{n,0}<T\right).
\end{align*}

For all $0\leq t\leq T\wedge \theta^{n,0}$, 
\begin{align}
\nonumber(\hat{X}_\alpha^n(t))^+&\leq (\hat{X}_\alpha^n(0))^+ + n^{-\alpha}A^n(t)-n^{-\alpha}\sum_{k=1}^{N_\alpha^n}S_k(\tilde{\mu}_k^nt),\\
\nonumber&\leq (\hat{X}_\alpha^n(0))^++n^{-\alpha}\left(A^n(t)-\lambda^n t\right)-n^{-\alpha}\sum_{k=1}^{N_\alpha^n}\left(S_k(\tilde{\mu}_k^nt)-\tilde{\mu}_k^nt\right)\\\nonumber&\quad+n^{-\alpha}\left(\lambda^n-N_\alpha^n\bar{\mu}_F\right)t-n^{-\alpha}\sum_{k=1}^{N_\alpha^n}\left(\tilde{\mu}_k^n-\bar{\mu}_F\right)t\\
&\toP \xi_0 - \beta \bar{\lambda}^\alpha\bar{\mu}_F^{1-\alpha}t,\label{eq:conv_xplus}
\end{align}
where we use the martingale central limit theorem c.f.~\cite{ptw07} to prove convergence of the centered Poisson processes along with Assumption~\ref{asm:initialcondition}, the definition of $N_\alpha^n$ in \eqref{eq:staffing_level} and the central limit theorem. 
Defining
$\theta_\epsilon^{n,1}:=\inf\big\{s>\theta^{n,0}:\big(\hat{X}_\alpha^n(s)\big)^+>\epsilon\big\}$  and $\tilde{\theta}_\epsilon^{n,1}:=\sup\big\{\theta^{n,0}\leq s<\theta_\epsilon^{n,1}:\big(\hat{X}_\alpha^n(s)\big)^+<\epsilon/2\big\}$,
\begin{align}
 \nonumber\PP\left(\sup_{\theta^{n,0}\leq s\leq T} \big(\hat{X}_\alpha^n(s)\big)^+>\epsilon\right)
 &\leq\PP\left(\theta^{n,0}<\tilde{\theta}_\epsilon^{n,1}\leq \theta_\epsilon^{n,1}\leq T\right)
 \\
 \nonumber&\leq \PP\left(\sup_{\theta^{n,0}\leq s_1<s_2\leq T} \left\{\left|\frac{A^n(s_2)-A^n(s_1)-\lambda^n(s_2-s_1)}{n^\alpha}\right|\right.\right.
 \\
 \nonumber&\qquad\qquad\qquad+\left.\left
 |\frac{\sum_{k=1}^{N^n_\alpha} \big(S_k(\tilde{\mu}_ks_2)-S_k(\tilde{\mu}_ks_1)-\tilde{\mu}_k(s_2-s_1)\big)}{n^{\alpha}}\right|\right.
 \\
\nonumber&\qquad\qquad\qquad\qquad\qquad\qquad\quad\left.\left.+\left|\frac{(\lambda^n-\sum_{k=1}^{N_\alpha^n}\tilde{\mu}_k)(s_2-s_1)}{n^\alpha}\right|\right\}>\epsilon/2\right)
 \\
 \nonumber&\leq 2\PP\left(n^{-\alpha}|A^n(s)-\lambda^ns|_T^*>\epsilon/16\right)\\
 \nonumber&\qquad+2\PP\left(n^{-\alpha}\left|\sum_{k=1}^{N_\alpha^n}S_k^n(s)-\tilde{\mu}_k^ns\right|_T^*>\epsilon/16\right)\\
 &\qquad+\PP\left(n^{-\alpha}\left(\lambda^n-\sum_{k=1}^{N_\alpha^n}\tilde{\mu}_k\right)>0\right).\label{eq:conv_xplus2}
\end{align}
Again, using the martingale central limit theorem and the central limit theorem, the right-hand side of \eqref{eq:conv_xplus2} converges to $0$, and combining \eqref{eq:conv_xplus} and \eqref{eq:conv_xplus2} we obtain the stochastic boundedness of $\{(\hat{X}_\alpha^n(t))^+\}_{n\in \NN}$. 
%Now, setting $t_M=(M_0+1)/\bar{\lambda}^\alpha \bar{\mu}_F^{1-\alpha}$, we have 
% \begin{align*}
%     \PP\left(\sup_{t_M\leq s\leq T} \big(\hat{X}_\alpha^n(s)\big)^+>\epsilon\right) &\leq \PP\left(\sup_{\theta^{n,0}\leq s\leq T} \big(\hat{X}_\alpha^n(s)\big)^+>\epsilon, \theta^{n,0}\leq t_M\right) + \PP\big(\theta^{n,0}>t_M\big).
% \end{align*}
% Equation~\eqref{eq:conv_xplus2} implies that the first probability term on the right-hand side converges to $0$. For the second term, $\theta^{n,0}>t_M$ implies that $\big(\hat{X}_\alpha^n(t_M)\big)^+>0$. Equation \eqref{eq:conv_xplus} then implies that the probability of this event goes to $0$, which proves the second part of the lemma.

We now prove the stochastic boundedness of $\{\hat{I}_\alpha^n(t)\}_{n\in\NN}$. 
%The case when $\alpha=1/2$ was established in Lemma 1 in \citep{bukeqin2019}. The proof for $1/2<\alpha\leq 1$ is simpler. 
We have 
\begin{align*}
    \left|\hat{I}_\alpha^n(t)\right|_T^*&\leq \left|\hat{X}_\alpha^n(0)\right|+\left|\frac{A^n(t)-\lambda^nt}{n^{\alpha}}\right|_T^*+\left|\frac{\sum_{k=1}^{N_\alpha^n}S_k^n(t)-\mu_k^nt}{n^{\alpha}}\right|_T^*+\left|\frac{\sum_{k=1}^{N_\alpha^n}\mu_k^nt-\lambda^nt}{n^{\alpha}}\right|_T^*\\
    &\quad + \left|\frac{R^n\left(\gamma\int_0^t(X^n(s)-N_\alpha^n)^+ds\right)-\gamma\int_0^t(X^n(s)-N_\alpha^n)^+ds}{n^\alpha}\right|_T^*+\left|\gamma\int_0^t(\hat{X}_\alpha^n(s))^+ds\right|_T^*.
\end{align*}
The stochastic boundedness of $\{(\hat{X}_\alpha^n(t))^+\}_{n\in \NN}$ and the martingale central limit theorem implies the stochastic boundedness of the last two terms. The second and third terms are stochastically bounded again due to the martingale central limit theorem. Finally, the first and the fourth terms are stochastically bounded due to Assumption~\ref{asm:initialcondition} and the central limit theorem. This proves the lemma.
\end{proof}
\endproof

Using Lemma~\ref{lem:pos_conv}, it is straightforward to prove the tightness of the continuous processes $\left\{\int_0^t\hat{I}_\alpha^n(s)ds\right\}_{n\in \NN}$ and that $\{\tau_\epsilon^n\wedge T\}_{n\in \NN}$ are continuous functionals of these processes. 

\begin{definition}
Suppose $\tau_\epsilon^n \wedge T\Rightarrow \tau_\epsilon \wedge T$ for all $\epsilon,T>0$ as $n\to \infty$. A process $\{\eta_{\alpha,t}^\pi\}_{t\geq 0}$ is called the limiting fairness process if for all $\epsilon>0$, it holds that  $\cS_\epsilon\eta_{\alpha}^{\pi,n}\Rightarrow \cS_\epsilon\eta_{\alpha}^\pi$ as $n\to \infty$ in the Skorokhod-$J_1$ topology modified for left-continuous functions, and where $\cS_\epsilon\eta_{\alpha}^\pi$ is defined by replacing $\eta_{\alpha,t}^{\pi,n}$ and $\tau_\epsilon^n$ in \eqref{eq:shifted_fairness} by $\eta_{\alpha,t}^\pi$ and $\tau_\epsilon$. \label{def:limitingfairness}
\end{definition}
The following result generalizes Theorem 2 in \cite{bukeqin2019} from $\alpha=1/2$ to  $1/2\leq\alpha\leq 1$, and is a step towards establishing the existence of a limiting fairness process. The proof is a direct adaptation of that in \cite{bukeqin2019} and is omitted here. 
\begin{proposition}
For $\epsilon>0$ and $1/2\leq \alpha\leq 1$, the shifted fairness processes $\{\cS_\epsilon\eta_\alpha^{\pi,n}\}_{n\in\ZZ_+}$ are tight under any non-idling policy.\label{prop:tightness_fairness}
\end{proposition}

% \section{Proofs for Results Presented in Section~\ref{sec:fairnessprocess}}

% \begin{repeattheorem}[Lemma~\ref{lem:pos_conv}]
% For $1/2\leq\alpha\leq 1$ and $t\geq 0$, the sequence of processes  $\{(\hat{X}_\alpha^n(t))^+\}_{n\in \NN}$ and $\{\hat{I}_\alpha^n(t)\}_{n\in\NN}$ are stochastically bounded. Moreover, for $1/2<\alpha\leq 1$ there exists a time $t_M>0$ such that  $\sup_{t_M\leq t\leq T}(\hat{X}_\alpha^n(t))^+\toP 0$ for all $T>t_M$ as $n\to \infty$. 
% \end{repeattheorem}

\subsection{Proofs for Results Presented in Section~\ref{sec:convergence_system}}\label{app:convergence_system}

\begin{proof}[Proof of Theorem~\ref{thm:system_convergence}.]
Our proof follows the same steps as Theorem 5 in~\cite{bukeqin2019} with the additional modification on the number of servers. To simplify the notation, we suppress the $\pi$ superscript of the fairness process. Adding and subtracting appropriate terms to \eqref{eq:auxiliareqSystemLengthProcess} and normalizing with $n^\alpha$, we get
\begin{align}
\nonumber
    \hat{X}_\alpha^n(t)&=\hat{X}_\alpha^n(0)+\hat{M}_{\alpha,1}^n(t)-\hat{M}_{\alpha,2}^n(t) -\hat{M}_{\alpha,3}^n(t)+n^{-\alpha}(\lambda^n-N_\alpha^n\bar{\mu}_F)t-n^{-\alpha}\left(\sum_{k=1}^{N_\alpha^n}\tilde{\mu}_k^n-N_\alpha^n\bar{\mu}_F\right)t
    \\
    &\quad-\sum_{k=1}^{N_\alpha^n}\tilde{\mu}_{k}^n\int_0^t\hat{I}_{k,\alpha}^n(s)ds-\gamma\int_0^t(\hat{X}_\alpha^n(s))^+ds,
    \label{eq:scaled_decomposition}
\end{align}
where
\begin{align*}
    \displaystyle\hat{M}_{\alpha,1}^n(t)
    &:=\frac{A^n(t)-\lambda^nt}{n^\alpha},
    \\
    \hat{M}_{\alpha,2}^n(t)
    &:=\frac{S^n\left(\sum_{k=1}^{N_\alpha^n}\tilde{\mu}_{k}^n\left(t-\int_0^tI_k^n(s)ds\right)\right)-\sum_{k=1}^{N_\alpha^n}\tilde{\mu}_{k}^n\left(t-\int_0^tI_k^n(s)ds\right)}{n^\alpha},
    \\
    \displaystyle\hat{M}_{\alpha,3}^n(t)
    &:=\frac{R^n\left(\gamma\int_0^t(X^n(s)-N_\alpha^n)^+ds\right)-\gamma\int_0^t(X^n(s)-N_\alpha^n)^+ds}{n^\alpha}.
\end{align*}
Using the martingale central limit theorem, both $\hat{M}_{\alpha,1}^n(t)$ and $\hat{M}_{\alpha,2}^n(t)$ converge weakly to $0$ when $\alpha>1/2$ and to $\sqrt{\bar{\lambda}}W(t)$, where $W(t)$ is a standard Brownian motion when $\alpha=1/2$. To see this, we focus on $\hat{M}_{\alpha,2}^n$. The process $M_{\alpha,2}^n(t)$ is a compensated Poisson process and is a martingale with predictable quadratic variation
\[
\frac{\sum_{k=1}^{N_\alpha^n}\tilde{\mu}_{k}^n\left(t-\int_0^tI_k^n(s)ds\right)}{n^{2\alpha}}=\frac{N_\alpha^n}{n^{2\alpha}}\frac{\sum_{k=1}^{N_\alpha^n}\tilde{\mu}_{k}^nt}{N_{\alpha}^n}-\frac{\sum_{k=1}^{N_\alpha^n}\tilde{\mu}_k^n\int_0^t I_k^n(s)ds}{n^{2\alpha}}.
\]
The stochastic boundedness of $\{\hat{I}_\alpha^n\}_{n\in \NN}$ (Lemma~\ref{lem:pos_conv}) and the boundedness of $\tilde{\mu}_k^n$ imply that the second term on the right-hand side converges to 
$0$ for all $1/2\leq \alpha\leq 1$. Using the law of large numbers, the first term converges to $0$ if $\alpha>1/2$ and to $\bar{\lambda}$ if $\alpha=1/2$. 
The jumps of $M_{\alpha,2}^n(t)$ are bounded by $1/n^{\alpha}$, hence the martingale central limit theorem implies that $\hat{M}_{1/2,2}^n(t)\Rightarrow \sqrt{\bar{\lambda}}W(t)$  and $\hat{M}_{\alpha,2}^n(t)\toP 0$ when $\alpha>1/2$. The proof for $\hat{M}_{\alpha,1}^n(t)$ follows the same steps. Similarly, $\hat{M}_{\alpha,3}^n(t)\Rightarrow 0$ for all $\alpha\geq 1/2$ as a result of the stochastic boundedness of $\{(\hat{X}_\alpha^n)^+\}_{n\in \NN}$ in Lemma~\ref{lem:pos_conv}. 
Plugging in \eqref{eq:staffing_level}, we get $n^{-\alpha}(\lambda^n-N_\alpha^n\bar{\mu}_F)t\to -\beta\bar{\lambda}^\alpha\bar{\mu}_F^{1-\alpha}t$. Finally, using the central limit theorem, we have $n^{-\alpha}\left(\sum_{k=1}^{N_\alpha^n}\tilde{\mu}_k^n-N_\alpha^n\bar{\mu}_F\right)t\toP 0$ for $\alpha>1/2$ and $n^{-\alpha}\left(\sum_{k=1}^{N_\alpha^n}\tilde{\mu}_k^n-N_\alpha^n\bar{\mu}_F\right)t\Rightarrow \zeta\bar{\lambda}^\alpha\bar{\mu}_F^{-\alpha}t$ for $\alpha=1/2$, where $\zeta$ is a normal random variable with mean 0 and variance $\sigma_F^2$. 
As $\cS_\epsilon\eta_{\alpha,t}^n\Rightarrow \cS_\epsilon\eta_{\alpha,t}$ for any $\epsilon>0$, using Lemma 2 in \cite{bukeqin2019}, a modification of the Skorokhod representation theorem, we can assume that all the above processes converge w.p.~1 and to prove the theorem, we need only to prove that for any $\rho>0$, $\PP(d_S(\hat{X}_\alpha^n, \xi_\alpha)>\rho)\to 0$ as $n\to \infty$. 

For any $\varpi>0$, we can find a sequence of homeomorphisms $\Lambda^n(t):[0,T]\to[0,T]$ with derivative $\dot{\Lambda}^n(t)$ and $N_\varpi$ such that for any $n>N_\varpi$,
\[
\left|\hat{M}_{\alpha, 1}^n(t)+\hat{M}_{\alpha, 2}^n(t)+\hat{M}_{\alpha, 3}^n (t)-\sqrt{2\lambda}W(\Lambda^n(t))\right|_T^*\vee \left|\langle\iota, \cS_\epsilon\eta_{\alpha,t}^{n}\rangle- \langle\iota, \cS_\epsilon\eta_{\alpha,\Lambda^n(t)}\rangle\right|_T^*\vee \left|\dot{\Lambda}^n(t)-1\right|_T^*<\varpi,
\]
if $\alpha=1/2$ and
\[
\left|\hat{M}_{\alpha, 1}^n(t)+\hat{M}_{\alpha, 2}^n(t)+\hat{M}_{\alpha, 3}^n (t)\right|_T^*\vee \left|\langle\iota, \cS_\epsilon\eta_{\alpha,t}^{n}\rangle- \langle\iota, \cS_\epsilon\eta_{\alpha,\Lambda^n(t)}\rangle\right|_T^*\vee \left|\dot{\Lambda}^n(t)-1\right|_T^*<\varpi,
\]
if $1/2<\alpha\leq 1$.
We will prove our result by showing 
\begin{equation}
\sup_{0\leq t\leq T}\left|\hat{X}_\alpha^n(t)-\xi_\alpha(\Lambda^n(t))\right|\to 0, \mbox{w.p. }1.  \label{eq:conv_system}
\end{equation}  

Using the tightness of the processes, without loss of generality, we can assume that 
\[
 \sup_{n\in\NN}\left\{\left|\langle \iota, \eta_{\alpha,t}^{n}\rangle\right|_T^*\vee \left|\hat{X}_\alpha^n(t)\right|_T^*\vee\left|\xi_\alpha(t)\right|_T^*
 \right\}<K.
\]
Now, taking $\xi_\alpha(t)$ to be the solution of the appropriate equation in the statement of the theorem, plugging in the definition of fairness process for the seventh term on the right-hand side of \eqref{eq:scaled_decomposition}, for any $\varpi$ we have an $N_\varpi$ such that $n>N_\varpi$ implies  
\begin{align}
   \nonumber |\hat{X}_\alpha^n(t)-\xi(\Lambda^n(t))|&\leq \varpi + \gamma \left|\int_0^t(\hat{X}_\alpha^n(s))^+ds-\int_0^{\Lambda^n(t)}(\xi(s))^+ds\right|\\
    &\qquad + \left|\langle\iota, \eta_{\alpha,t}^{n}\rangle \int_0^t(\hat{X}_\alpha^n(s))^-ds - \langle\iota, \eta_{\alpha,\Lambda^n(t)}\rangle\int_0^{\Lambda^n(t)}(\xi(s))^-ds\right|.
    \label{eq:scaled_difference_decomp}
\end{align}
We can bound the second term on the right-hand side of~\eqref{eq:scaled_difference_decomp} as 
\begin{align*}
    \left|\int_0^t(\hat{X}_\alpha^n(s))^+ds-\int_0^{\Lambda^n(t)}(\xi(s))^+ds\right|&\leq \int_0^t\left|\hat{X}_\alpha^n(s)-\xi_\alpha(\Lambda^n(s))ds\right|+\int_0^t\left|(1-\dot{\Lambda}^n(t))\xi_\alpha(\Lambda^n(s))\right|ds\\
    &\leq \int_0^t\left|\hat{X}_\alpha^n(s)-\xi_\alpha(\Lambda^n(s))ds\right|+\varpi K t.
\end{align*}
To bound the third term on the right-hand side of \eqref{eq:scaled_difference_decomp}, 
\begin{align*}
&\left|\langle\iota, \eta^{n}_{\alpha,t}\rangle\int_0^t(\hat{X}_\alpha^n(s))^
-ds-\langle\iota, \eta_{\alpha, \Lambda^n(t)}\rangle\int_0^{\Lambda(t)}(\xi(s))^-ds\right|
\\
&
\leq \left|\left(\langle\iota, \eta_{\alpha,t}^n\rangle-\langle\iota, \mathcal{S}_\epsilon\eta_{\alpha,t}^n\rangle\right)\int_0^t(\hat{X}_\alpha^n(s))^-ds -\left(\langle\iota, \eta_{\alpha,\Lambda^n(t)}\rangle-\langle\iota, \mathcal{S}_\epsilon\eta_{\alpha,\Lambda^n(t)}\rangle\right)\int_0^{\Lambda(t)}(\xi_\alpha(s))^-ds\right|\\
 &\;+ \left|\left(\langle\iota, \mathcal{S}_\epsilon\eta_{\alpha,t}^n\rangle-\langle\iota, \mathcal{S}_\epsilon\eta_{\alpha,\Lambda^n(t)}\rangle\right)\int_0^t(\hat{X}_\alpha^n(s))^-ds-\langle\iota, \mathcal{S}_\epsilon\eta_{\alpha,\Lambda^n(t)}\rangle\int_0^{t}\left(\Big(\hat{X}_\alpha^n(s)\Big)^--\Big(\xi_\alpha(\Lambda(s))\Big)^-\right)ds\right|\\
 &\;+ \left|\langle\iota, \mathcal{S}_\epsilon\eta_{\alpha,\Lambda^n(t)}\rangle\int_0^{t}(1-\dot{\Lambda}(s))\big(\xi_\alpha(\Lambda(s))\big)^-ds\right|\\
& \leq (2\epsilon+\varpi(1+K)) Kt +K\int_0^{t}\left|\hat{X}^n(s)-\hat{\xi}(\Lambda(s))\right|ds.
 \end{align*}
Plugging these bounds in \eqref{eq:scaled_difference_decomp}, using Gr\"onwall's inequality, and choosing $\epsilon$ and $\varpi$ appropriately, our result follows. \end{proof}
\endproof
Now, we prove the interchangeability of many-server limit and limit as $t\to\infty$. To do so, we need the following uniform integrability result. 
 \begin{lemma}\label{lem:uniform_integrability_system}
For any $1/2\leq \alpha 
\leq 1$, the stationary scaled system lengths $\{\hat{X}^n(\infty)\}_{n\in \NN}$ are uniformly integrable and hence tight. 
\end{lemma}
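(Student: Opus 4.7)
The plan is to show $\sup_{n\in\NN}\EE[(\hat{X}_\alpha^n(\infty))^2] < \infty$, since, combined with the stochastic boundedness of $\{\hat{X}_\alpha^n(t)\}_{n\in\NN}$ established in Lemma~\ref{lem:pos_conv}, this yields uniform integrability (and hence tightness in $L^1$) via de la Vall\'ee Poussin's criterion.

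Decompose $\hat{X}_\alpha^n(\infty) = (\hat{X}_\alpha^n(\infty))^+ - \hat{I}_\alpha^n(\infty)$, using the non-idling property recorded after \eqref{eq:auxiliareqSystemLengthProcess}, and handle each piece via a Foster--Lyapunov drift identity. Since $X^n$ is positive recurrent (abandonment at rate $\gamma>0$ dissipates any overload), in stationarity $\EE[\cL^n V(X^n(\infty))] = 0$ for any bounded test function $V$, where $\cL^n$ is the infinitesimal generator of $X^n$. For the positive part, apply the quadratic Lyapunov function $V_+(x) = ((x-N_\alpha^n)^+)^2$. Writing $d = x-N_\alpha^n$ and $S(x) = \sum_{k:\text{busy}}\tilde{\mu}_k^n$, the abandonment term $\gamma d^+$ produces a dominant quadratic restoring drift $-2\gamma \EE[(d^+)^2]$ in the generator equation, which must balance the remaining $O(n)$ linear and constant contributions. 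Combining this with the first-moment bound $\EE[d^+] = O(n^\alpha)$ (itself derived from the rate-conservation identity $\gamma\EE[d^+] = \lambda^n - \EE[S(X^n(\infty))]$ together with the staffing formula \eqref{eq:staffing_level}), one extracts a uniform bound on $\EE[((\hat{X}_\alpha^n(\infty))^+)^2]$. A parallel argument using $V_-(x) = ((N_\alpha^n-x)^+)^2$ handles the negative part: whenever $d^- = (N_\alpha^n-X^n)^+$ is large, the total service rate $S(x)$ is small (being bounded above by $\mu_{\max}(N_\alpha^n-d^-)$), so arrivals outpace departures and push $x$ back up, producing an analogous quadratic restoring drift and yielding $\EE[(\hat{I}_\alpha^n(\infty))^2] = O(1)$.

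The main obstacle is that $S(x)$ depends not only on the number of busy servers but on which specific ones are busy (and therefore on the routing policy), so the drift cannot be computed in closed form. We circumvent this with the deterministic pointwise sandwich $\mu_{\min}(x\wedge N_\alpha^n) \leq S(x) \leq \mu_{\max}(x\wedge N_\alpha^n)$ provided by \eqref{eq:boundedrates}, which is enough for a second-moment bound but loses constants. The most delicate case is the QED boundary $\alpha=1/2$, where the staffing gap $\lambda^n - N_\alpha^n\bar{\mu}_F$ is only of order $\sqrt{n}$ and the crude sandwich on $S(x)$ gives a comparatively weak restoring drift; there we supplement the Lyapunov argument with a pathwise stochastic comparison against an $M/M/N_\alpha^n+M$ system in which every server works at rate $\mu_{\min}=1$ (coupled through the same Poisson arrival stream), and exploit the explicit product-form recursion for the stationary distribution of the latter to extract uniform exponential tail bounds. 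Together these estimates close the second-moment bound and deliver the claimed uniform integrability.
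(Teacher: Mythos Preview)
Your Lyapunov approach is reasonable in spirit, and the positive part is essentially correct: the abandonment term $-2\gamma d^2$ dominates, and combining the resulting drift identity with Cauchy--Schwarz closes a bound $\EE[(d^+)^2]=O(n^{2\alpha})$.

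The negative part, however, has a genuine gap. The bound you state, $S(x)\le \mu_{\max}(N_\alpha^n-d^-)$, is too crude. With it, the drift of $V_-$ only becomes restoring once $\lambda^n>\mu_{\max}(N_\alpha^n-d^-)$, i.e.\ once $d^- > N_\alpha^n-\lambda^n/\mu_{\max}=\lambda^n(\bar\mu_F^{-1}-\mu_{\max}^{-1})+O(n^\alpha)$. Unless the service-rate distribution is degenerate (so $\mu_{\max}=\bar\mu_F$), this threshold is of order $n$, not $n^\alpha$. The Lyapunov identity then yields only $\EE[(d^-)^2]=O(n^2)$, hence $\EE[(\hat I_\alpha^n(\infty))^2]=O(n^{2-2\alpha})$, which diverges for every $\alpha<1$. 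Your supplementary $M/M/N_\alpha^n+M$ comparison with rate $\mu_{\min}$ does not rescue this: replacing all servers by slower ones upper-bounds $(X^n)^+$ but lower-bounds $(X^n)^-$, the wrong direction; replacing them by rate-$\mu_{\max}$ servers upper-bounds $(X^n)^-$ but lands in a grossly overstaffed regime with $\Theta(n)$ idle servers, again the wrong scale.

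The fix is to use the sharper inequality $S(x)\le \sum_{k}\tilde\mu_k^n-\mu_{\min}d^-$, obtained by noting that each of the $d^-$ idle servers has rate at least $\mu_{\min}$. This makes the restoring drift kick in at the correct scale $d^-\gtrsim n^\alpha$. The paper encodes exactly this bound, but via an explicit pathwise coupling rather than a generator calculation: it dominates $(X^n)^-$ by a birth--death process $Y_1^n$ whose birth rate at level $i$ is $\sum_k\tilde\mu_k^n-\mu_{\min}i$ and whose death rate is $\lambda^n$, then further dominates $Y_1^n$ by $M_1+Y_3^n$ where $Y_3^n$ is an $M/M/1$ queue with drift of order $n^\alpha$, for which the stationary second moment is computed in closed form. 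With the corrected inequality your Lyapunov route also goes through and is arguably cleaner, but as written the argument fails for all $\alpha<1$.
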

\begin{proof}
For any $n\in \NN$, we decompose the scaled system length process into its negative and positive parts as
\[\hat{X}_\alpha^n(
\infty)=\left(\hat{X}_\alpha^n(\infty)\right)^+-\left(\hat{X}_\alpha^n(\infty)\right)^-,\]
and prove the uniform integrability of each part separately. 
We consider $\left(\hat{X}_\alpha^n(t)\right)^-$ and use a coupling argument. Now, suppose we fix $n>0$ and  let $\theta_{A,i}^n$ be the occurrence time of the $i$th event for Poisson process $A^n(t)$. Assume that we know $(\tilde{\mu}_1^n,\ldots, \tilde{\mu}_{N_\alpha^n}^n)$ and define $\tilde{S}^n(t)$ to be a Poisson process with rate $\sum_{k=1}^{N^n_\alpha}\tilde{\mu}_k^n$ and let $\theta_{S,i}^n$ be the occurrence time for the $i$th event. We also define a sequence $\{U_i^n\}_{i\in \NN}$ of independent uniform(0,1) random variables. For any given time $t$, we know that the system should be serving with a rate equal to the sum of service rates of busy servers, i.e., $\sum_{k=1}^{N^n_\alpha}\tilde{\mu}_k^n(1-I_k^n(t))$, hence we use the thinning property of Poisson process where event $i$ of $\tilde{S}^n(t)$ is accepted as an actual departure with probability $p_i^n(\theta_{S_i}^n-)=\frac{\sum_{k=1}^{N^n_\alpha}\tilde{\mu}_k^n(1-I_k^n(\theta_{S,i}^n-))}{\sum_{k=1}^{N^n_\alpha}\tilde{\mu}_k^n}$ by checking whether $U_i^n\leq p_i^n(\theta_{S_i}^n-)$ or not.
The processes $I_k^n(t)$ can be rigorously defined by using $U_i^n$s and a routing process based on our routing policy. As this does not play a major role in our proof, we refer the reader to ~\cite{bukeqin2019} for the detailed construction of idleness processes. Now, we can write 
\begin{align*}
    \left(\hat{X}_\alpha^n(t)\right)^-
    =
    \left(\hat{X}_\alpha^n(0)\right)^-
    & 
    +n^{-\alpha}\sum_{i=1}^{\tilde{S}^n(t)}\II\left(\left(\hat{X}_\alpha^n(\theta_{S,i}^n-)\right)^+=0\right)
    \II\Big(U_i^n\leq p_i^n\left(\theta_{S_i}^n-\right)\Big)
    \\
    &
    % \qquad \qquad
    -n^{-\alpha}\sum_{i=1}^{A^n(t)}\II\left( \left(\hat{X}_\alpha^n(\theta_{A,i}^n-)\right)^->0\right).
\end{align*}
 We define a birth-death process $\{Y_1^n(t)\}_{n\in\NN}$ with $Y_1^n(0)=\left(X^n(0)\right)^-$ w.p.1, whose birth rate at $Y^n(t)=i$ is $\sum_{k=1}\tilde{\mu}_k^n-\mu_{\min}i$ and death rate is $\lambda^n$ if $Y_1^n(t)>0$. Then, we can couple the scaled process $\hat{Y}_1^n(t)=n^{-\alpha}Y_1^n(t)$ with the system length process by writing it as 
\begin{align*}
\hat{Y}_1^n(t)
&= 
\hat{Y}_1^n(0)
+n^{-\alpha}\sum_{i=1}^{\tilde{S}^n(t)}\II\Big(U_i^n\leq \tilde{p}_i^n\left(\theta_{S,i}^n-\right)\Big)
-n^{-\alpha}\sum_{i=1}^{A^n(t)}\II\left( \hat{Y}_1^n(\theta_{A,i}^n-)>0\right),
\end{align*}
where $\tilde{p}_i^n\left(\theta_{S_i}^n-\right):=\frac{\sum_{k=1}^{N^n_\alpha}\tilde{\mu}_k^n-\mu_{\min}\hat{Y}_1\left(\theta_{S_i}^n-\right)}{\sum_{k=1}^{N^n}\tilde{\mu}_k^n}$. To see that $\left(\hat{X}_\alpha^n(t)\right)^-\leq \hat{Y}_1^n(t)$ for all $t\geq 0$ with probability 1, define $\vartheta^n=\left\{t:\left(\hat{X}_\alpha^n(t)\right)^->\hat{Y}_1^n(t)\right\}$. As at most one event occurs at any given time $t$ with probability 1, we have $\left(\hat{X}_\alpha^n(\vartheta^n-)\right)^-=\hat{Y}_1^n(\vartheta^n-)$. By definition we have 
\[
\tilde{p}_i^n\left(\vartheta^n-\right)\geq p_i^n(\vartheta^n-).
\]
Hence, if  $\vartheta^n$ is an event epoch for $\tilde{S}^n(t)$, we have $\left(\hat{X}_\alpha^n(\vartheta^n)\right)^-\leq \hat{Y}_1^n(\vartheta^n)$ and if $\vartheta^n$ is an event epoch for $A^n(t)$, we have $\left(\hat{X}_\alpha^n(\vartheta^n)\right)^-= \hat{Y}_1^n(\vartheta^n)$, both contradicts with the definition of $\vartheta^n$. Hence, we conclude that $\left(\hat{X}_\alpha^n(t)\right)^-\leq \hat{Y}_1^n(t)$ for all $t\geq 0$ with probability 1. 

Now, define $\zeta^n:=n^{-\alpha}\left(\sum_{k=1}^{N^n_\alpha}\tilde{\mu}_k^n-N^n_\alpha \bar{\mu}\right)$ and by re-arranging the terms we have
\[
\sum_{k=1}^{N^n_\alpha}\tilde{\mu}_k^n=\lambda^n+\beta\left(\lambda^n\right)^\alpha\bar{\mu}^{1-\alpha}+n^\alpha \zeta^n.
\]
Take $M_1:=(\beta\left(\lambda^n\right)^\alpha\bar{\mu}^{1-\alpha}+n^\alpha \zeta^n)^++2n^\alpha,$ and define a new birth-death process $Y_2^n(t)$ with $Y_2^n(0)=Y_1^n(0)$ with probability 1, whose birth rate at $Y_2^n(t)=i$ is $\lambda^n-\mu_{\min}\min\{i,M_1\}$ and death rate is $\lambda^n$ at $Y_2^n(t)\neq 0$. By definition, $Y_2(t)$ is stochastically greater than $Y_1^n(t)$. Using a similar argument as above, we can couple $Y_2(t)$ with a simple birth death process  $Y_3(t)$ where $Y_3^n(0)=(Y_2^n(0)-M_1)^+$ with birth rate $\lambda^n-\mu_{\min}M_1$, death rate $\lambda^n$ and $Y_2^n(t)\leq M_1+Y_3^n(t)$ for all $t\geq 0$. As birth and death rates are constants, $Y_3(t)$ is equivalent to an $M/M/1$ queue. Hence, we can prove that $\left(\hat{X}^n(\infty)\right)^-$ is uniformly integrable by showing that $\sup\EE\left[\left(n^{-\alpha}(M_1+Y_3^n(\infty))\right)^2\right]<\infty$. For $n\in \NN$,
\begin{align*}
    \EE\Big[\Big(n^{-\alpha}(M_1+
    &Y_3^n(\infty))\Big)^2\Big]
    \\    &=n^{-2\alpha}\left(\EE[M_1^2]+2\EE[M_1Y_3^n(\infty)]+\EE\left[\left(Y_3^n(\infty)\right)^2\right]\right)
    \\    &=n^{-2\alpha} \left(\EE\left[M_1^2\right]+2\EE\left[M_1\EE[Y_3^n(\infty)|\vec{\mu}]\right]+\EE\left[\EE\left[\left(Y_3^n(\infty)\right)^2|\vec{\mu}\right]\right]\right)\\
    &=n^{-2\alpha}\left(\EE\left[M_1^2\right]+2\EE\left[\frac{\lambda^n-\mu_{\min}M_1}{\mu_{\min}}\right]+\EE\left[\frac{(\lambda^n-\mu_{\min} M_1)(2\lambda^n-\mu_{\min}M_1)
    }{\mu_{\min}^2M_1^2}\right]\right)\\
    &=n^{-2\alpha}\left(\EE\left[M_1^2\right]+\frac{2\lambda^n}{\mu_{\min}}-2\EE\left[M_1\right]+2\EE\left[\frac{(\lambda^n)^2}{\mu_{\min}^2M_1^2}\right]-3\EE\left[\frac{\lambda^n}{\mu_{\min}M_1}\right]+1\right)\\
    &\leq n^{-2\alpha}\left(\EE\left[M_1^2\right]+\frac{2\lambda^n}{\mu_{\min}}+2\EE\left[M_1\right]+\frac{2(\lambda^n)^2}{\mu_{\min}^2n^{2\alpha}}+\frac{3\lambda^n}{\mu_{\min}n^\alpha}+1\right).
\end{align*}
Assumption~\ref{asm:lambdascaling} implies that the second, fourth and fifth terms converge to a finite number for any $\alpha\geq 1/2$ and hence can be bounded uniformly for any $n$. Also, using the identity $(a+b)^2\leq 2a^2+2b^2$, we have
\[
n^{-2\alpha}\EE\left[M_1^2\right]\leq 4n^{-2\alpha}\beta^2\left(\lambda^n\right)^{2\alpha}\bar{\mu}^{2-2\alpha}+4\EE[(\zeta^n)^2] +2.
\]
Again we can use Assumption~\ref{asm:lambdascaling} to show that the first term converges to a finite number. Using independence we have 
\[
\EE[(\zeta^n)^2]=n^{-2\alpha}\EE\left[\left(\sum_{k=1}^{N^n_\alpha}\tilde{\mu}_k^n-N^n_\alpha\bar{\mu}\right)^2\right]=n^{-2\alpha}\EE\left[\sum_{k=1}^{N^n_\alpha}\left(\tilde{\mu}_k^n-\bar{\mu}\right)^2\right]\leq n^{-2\alpha}N^n_\alpha(\mu_{\max}-\mu_{\min})^2,
\]
which also converges and hence can be uniformly bounded for all $n$. Similarly, 
\[
n^{-2\alpha}\EE[M_1]\leq n^{-2\alpha}\EE[\beta\left(\lambda^n\right)^\alpha\bar{\mu}^{1-\alpha} |\mu_{\max}-\mu_{\min}|+2n^{-\alpha}] -n^{-2\alpha}N^n_\alpha,
\]
which also converges and proves the uniform integrability of $\left\{\left(\hat{X}_\alpha^n(\infty)\right)^-\right\}_{n\in \NN}$. The proof of the uniform integrability of $\left\{\left(\hat{X}_\alpha^n(\infty)\right)^+\right\}_{n\in \NN}$ follows the same lines and hence is omitted. 
\end{proof}

\begin{proof}[Proof of Theorem~\ref{thm:interchangibility_stationary_n}.]
Suppose $\hat{X}_\alpha^n(0)$, $(\tilde{\mu}_1^n, \ldots, \tilde{\mu}_{N_{\alpha}^n}^n)$ are distributed according to the stationary measure of the $n$th system. Lemma~\ref{lem:uniform_integrability_system} ensures that $\hat{X}_\alpha^n(0)$ satisfies the uniform integrability in Assumption~\ref{asm:initialcondition}. Then, $\hat{X}_\alpha^n(t)$, $(\tilde{\mu}_1^n, \ldots, \tilde{\mu}_{N_{\alpha}^n}^n)$ are also distributed according to $\Pi^n$. As we know that the $\{\Pi^n\}$ are tight and $\hat{X}_\alpha^n(t)\Rightarrow \xi_\alpha(t)$, the first convergence holds and the second convergence holds as a result of the uniform integrability. To see the third one, we need to see that 
\[\lim_{T\to\infty}\frac{1}{T}\sum_{k=1}^{N^n_\alpha}\delta_{\tilde{\mu}_k^n}(\AAA)\int_0^T\hat{I}_k^n(s)ds=\sum_{k=1}^{N^n_\alpha}\delta_{\tilde{\mu}_k^n}(\AAA)\EE[I_k^n(\infty)|\vec{\mu}].\]
Again, starting with a stationary system and using the uniform integrability, the result follows. 
\end{proof}

\subsection{Proofs for Results Presented in Section~\ref{sec:grr_policy}}\label{app:grr_policy}
\begin{proof}[Proof of Lemma~\ref{lem:integral_equation}.]
We define the discrete-time measure-valued stochastic process $$
\cU_{A,i}^n(\AAA)=\delta_{\tilde{\mu}_k^n}(\AAA)\II(X^n(\theta_{A,i}^n-)\leq 0)$$ for all $\AAA\in \cB(\RR_+)$ if the $i$th incoming arrival in the $n$th system is immediately routed to server $k$ and $0$ (thought as a measure) otherwise. Similarly, we define $\cU_{S,i}^n(\AAA)=\delta_{\tilde{\mu}_k^n}(\AAA)\II(X^n(\theta_{S,i}^n-)\leq 0)$ for all $\AAA\in \cB(\RR_+)$, if the $i$th potential service completion is from server $k$, if it is not an actual service completion $\cU_{S,i}^n=0$.  Then, for any $f\in C_{[\mu_{\min},\mu_{\max}]}^b[0,\infty)$ we have the following balance 
 \begin{align}
    \langle f, \bar{\psi}^{n}_t\rangle = \langle f, \bar{\psi}_0^{n}\rangle + n^{-1}\sum_{i=1}^{S^n(t)} \langle f, \cU_{S,i}^n \rangle -n^{-1}\sum_{i=1}^{A^n(t)}\langle f, \cU_{A,i}^n \rangle
    \label{eq:idle_balance}.
 \end{align}
To prove tightness, we use the conditions introduced by~\cite{jak86} that we recall next.
 \begin{theorem}[\cite{jak86}]
   A sequence of stochastic processes $\{\bar{\psi}_t^{n}\}_{n\in\NN}$ taking values in $\DD_\cP[0,T]$ is tight if and only if:
   \begin{description}
   \item[J1. (Compact Containment Condition)] For each $\rho, T>0$, there exists a compact set $\mathcal{K}_\rho\subset \cP$ such that
   \[
 	\liminf_{n\to \infty}\PP(\bar{\psi}^{n}_t\in \mathcal{K}_\rho, \text{ for all }t\in[0,T])>1-\rho.  
   \]
   \item[J2.] There exists a family of functions $\mathbb{F}$ such that 
   \begin{enumerate}[i.]
   \item $H\in \FF: \cP\to \RR$, $\FF$ separates points in $\cP$ and $\FF$ is closed under addition.
   \item For any fixed $H\in\mathbb{F}$, the sequence of functions $\{h^n(t):=H(\bar{\psi}^{n}_t), \mbox{ for all }t \in \RR\}_{n\in \NN}$ is tight in $\DD_{\RR}[0,\infty)$ endowed with Skorokhod-$J_1$ topology. 
   \end{enumerate}
   \end{description}
   \end{theorem}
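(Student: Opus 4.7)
The plan is to establish the two implications separately, treating the ``only if'' direction as a straightforward consequence of Prokhorov's theorem and the continuous mapping theorem, and devoting the bulk of the work to the ``if'' direction which is the technical heart of Jakubowski's argument.

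For the necessity (tightness of $\{\eta^n\}$ implies J1 and J2), by Prokhorov's theorem applied in $\DD_\cP[0,T]$ I obtain, for each $\rho>0$, a compact set $\cC_\rho\subseteq\DD_\cP[0,T]$ with $\liminf_n\PP(\eta^n\in\cC_\rho)>1-\rho$. Standard characterizations of $J_1$-compact subsets (Arzel\`a–Ascoli together with uniform modulus control) give that the closure of $\{\omega(t):\omega\in\cC_\rho,\,t\in[0,T]\}\cup\{\omega(t-):\omega\in\cC_\rho,\,t\in[0,T]\}$ is compact in $\cP$, which yields J1. For J2, I take $\FF$ to be a countable family in $C_b(\cP)$ that separates points (available because $\cP$ is Polish under the weak topology) and close it under finite sums; continuity of each $H\in\FF$ together with the continuous mapping theorem transfers tightness of $\{\eta^n\}$ to tightness of $\{H(\eta^n_\cdot)\}_n$ in $\DD_\RR[0,T]$.

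For the sufficiency, fix $\rho>0$ and let $\cK_\rho\subset\cP$ be the compact set provided by J1. The crucial structural observation is that $\cK_\rho$ is a compact metrizable space on which $\FF$ separates points and is closed under addition; joining the constants, the generated algebra is point-separating, and Stone–Weierstrass makes it dense in $C(\cK_\rho)$. Extracting a countable subfamily $\{H_j\}_{j\in\NN}\subseteq\FF$ that still separates points on $\cK_\rho$, the evaluation map
\begin{equation*}
\Phi:\cK_\rho\to\RR^{\NN},\qquad \Phi(\eta)=\bigl(H_j(\eta)\bigr)_{j\in\NN},
\end{equation*}
is a homeomorphism onto its image. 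I then show tightness of $\{\eta^n\}$ restricted to $\DD_{\cK_\rho}[0,T]$ by pulling tightness back through $\Phi$. The coordinatewise tightness furnished by J2, combined with a diagonal extraction, delivers tightness of $\bigl(H_j(\eta^n_\cdot)\bigr)_j$ in the product space $\prod_j\DD_\RR[0,T]$; the point is to upgrade this to joint tightness in $\DD_{\RR^{\NN}}[0,T]$.

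The main obstacle is precisely this upgrade: coordinatewise $J_1$-tightness does not in general imply joint $J_1$-tightness because jumps in different coordinates may desynchronize in limits. The key to overcoming it is that all the coordinates are obtained from a single underlying process with values in the compact set $\cK_\rho$, so their jump times coincide by construction. Concretely, using the compactness of $\cK_\rho$ I can show that Aldous-type modulus-of-continuity estimates for each $H_j(\eta^n_\cdot)$ combine, via the uniform continuity of $\Phi^{-1}$ on $\Phi(\cK_\rho)$, into a modulus-of-continuity estimate for $\eta^n_\cdot$ itself. This yields tightness in $\DD_{\cK_\rho}[0,T]$; applying J1 once more and letting $\rho\downarrow 0$ then produces a sequence of compact subsets of $\DD_\cP[0,T]$ with probability bounded below by $1-\rho$, which is Prokhorov-tightness of $\{\eta^n\}$.
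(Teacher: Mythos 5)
First, a remark on the comparison itself: the paper does not prove this statement. It is quoted verbatim from Jakubowski (1986) inside the proof of Lemma~\ref{lem:tightness_allocations} and used as a black box, so there is no internal proof to measure your attempt against; I therefore assess your reconstruction on its own terms. Your necessity direction (Prokhorov, relative compactness of ranges of $J_1$-compact sets, and the continuous mapping theorem applied to $\omega\mapsto H\circ\omega$) is sound.

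The gap is in the sufficiency direction, at exactly the step you yourself flag as the main obstacle. Having reduced to the compact set $\cK_\rho$ and a countable separating subfamily $\{H_j\}$, you must pass from tightness of each $\{H_j(\eta^n_\cdot)\}_n$ in $\DD_\RR[0,T]$ to joint tightness of the vector process, and you justify this by asserting that the coordinates ``are obtained from a single underlying process, so their jump times coincide by construction.'' That is not true in the sense required: the jumps of $H_1(\eta^n_\cdot)$ and of $H_2(\eta^n_\cdot)$ are each contained in the jump set of $\eta^n$, but they need not occur at the \emph{same} jumps. If $\eta^n$ has one jump at $t=1/2$ detected only by $H_1$ and another at $t=1/2+1/n$ detected only by $H_2$, each coordinate sequence is tight while the pair is not (the modulus $w'$ of the pair stays bounded away from $0$ as $n\to\infty$) --- precisely the desynchronization you describe. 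The hypothesis that rules this out, and the entire reason J2 insists that $\FF$ be closed under addition, is that $H_1+H_2\in\FF$, so $\{(H_1+H_2)(\eta^n_\cdot)\}_n$ must also be tight in $\DD_\RR[0,T]$; in the scenario above that sum has two jumps at distance $1/n$ and is not tight. The step you need is the standard lemma (Jakubowski; see also Ethier--Kurtz, Ch.~3): if $\{X_n\}$, $\{Y_n\}$ and $\{X_n+Y_n\}$ are all tight in $\DD_\RR[0,T]$, then $\{(X_n,Y_n)\}$ is tight in $\DD_{\RR^2}[0,T]$, because the $w'$-modulus of the sum controls the separation of the jump times of the two coordinates. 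Your argument never invokes closure under addition at this point; it appears only in the Stone--Weierstrass aside, where it is in any case insufficient ($\FF$ closed under addition is not an algebra, so Stone--Weierstrass does not apply --- fortunately that aside is dispensable, since a continuous injection of the compact $\cK_\rho$ into $\RR^{\NN}$ is automatically a homeomorphism onto its image). Inserting the sum-tightness lemma for finite subfamilies, and only then running your diagonal and product-space argument, closes the hole.
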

   
   We now return to our proof. Using Lemma~\ref{lem:pos_conv}, we know that, for all $\epsilon>0$, there exists a $K_\epsilon$ such that 
   $\PP(|\hat{I}_1^n(t)|_T^*>K_\epsilon)<\epsilon$ for all $n\in \NN$. Define $\cK_\epsilon$ as the set of measures bounded by $K_\epsilon$ on the support $[\mu_{\min}, \mu_{\max}]$. The set $\cK_\epsilon$ is compact and Lemma~\ref{lem:pos_conv} implies J1. 
   
   To show that J2 holds, we define $$\FF=\{H:\cM_F[\mu_{\min}, \mu_{\max}]\to \RR:\exists f\in C_{[\mu_{\min},\mu_{\max}]}^b[0,\infty) \mbox{ such that } H(\psi)=\langle f, \psi\rangle \mbox{ for all }\psi\in\cM_F\}, $$
   where $\cM_F[\mu_{\min}, \mu_{\max}]$ is the set of finite measures on $[\mu_{\min}, \mu_{\max}]$. The set $\FF$ separates the points in $\cM_F$ and is closed under addition. Take $H(\psi)=\langle f, \psi \rangle$ that corresponds to $f(\mu)\leq K_f$ for all $\mu\in [\mu_{\min}, \mu_{\max}]$. To show tightness of $\{\langle f,\bar{\psi}_t^n\rangle \}_{n\in \NN}$, we need to show that for all $\epsilon,\rho>0$
   \begin{enumerate}
   \item there exists an $M_{f,\epsilon}$ such that $\PP\left(\sup_{0\leq t\leq T}|\langle f,\bar{\psi}_t^n\rangle|>M_{f,\epsilon}\right)<\epsilon$ and 
   \item there exists a $\rho$ and an $N_\rho$ such that for all $n>N_\rho$, $ \PP\left(w(\langle f, \bar{\psi}_t^n \rangle, \rho)\geq \epsilon\right)<\epsilon$, where $$w(\langle f, \bar{\psi}_t^n\rangle, \rho)=\inf_{\{t_i\}}\max_{i}\sup_{t_i\leq t,s\leq t_{i+1}}|\langle f, \bar{\psi}_t^n \rangle-\langle f, \bar{\psi}_s^n \rangle|$$ and $\{t_i\}_{0\leq i \leq \nu}$ is any $\rho$-sparse set, i.e., $0=t_0<t_1<\cdots<t_\nu=T$ with $\min_{i}|t_{i+1}-t_i|>\rho$. 
   \end{enumerate} 
   Again taking $M_{f,\rho}=K_\rho K_f$, we have \[\PP\left(\sup_{0\leq t\leq T}|\langle f,\bar{\psi}_t^n\rangle|>M_{f,\rho}\right)\leq \PP\left(K_f\sup_{0\leq t\leq T}\hat{I}^{n}(t)>M_{f,\rho}\right)\leq \rho,\]
   which implies the first condition. Now, we prove the second condition. Using \eqref{eq:idle_balance}, for any $0\leq s< t\leq T$ we have
   \begin{align}
       \nonumber|\langle f, \bar{\psi}^{n}_t\rangle - \langle f, \bar{\psi}^{n}_s\rangle| &= \left |n^{-1}\sum_{i=1}^{S^n(t)} \langle f, \cU_{S,i}^n \rangle -n^{-1}\sum_{i=1}^{A^n(t)}\langle f, \cU_{A,i}^n \rangle - n^{-1}\sum_{i=1}^{S^n(s)} \langle f, \cU_{S,i}^n \rangle -n^{-1}\sum_{i=1}^{A^n(s)}\langle f, \cU_{A,i}^n \rangle\right| \\
       \nonumber& \leq  n^{-1}K_f \left|S^n(t) - S^n(s)-\sum_{k=1}^{N^n_\alpha}\mu_kt +\sum_{k=1}^{N^n_\alpha}\mu_ks\right| + n^{-1}K_f\sum_{k=1}^{N^n_\alpha}\mu_k  \left|t-s\right|\\
       \nonumber&\quad+ n^{-1}K_f \left|A^n(t) - A^n(s)-\lambda^nt +\lambda^ns\right| + n^{-1}K_f\lambda^n  \left|t-s\right| \\
       \nonumber& \leq  n^{-1}K_f \left|S^n(t) - \sum_{k=1}^{N^n_\alpha}\mu_kt\right| +n^{-1}K_f\left|S^n(s) -\sum_{k=1}^{N^n_\alpha}\mu_ks\right|\\
       \nonumber&\quad + K_f\left|n^{-1}\sum_{k=1}^{N^n_\alpha}\mu_k-\bar{\lambda}(1+\beta)\right|  \left|t-s\right|+ n^{-1}K_f \left|A^n(t) - \lambda^nt\right| +\left|A^n(s)-\lambda^ns\right|\\
       \nonumber&\quad + n^{-1}K_f\lambda^n  \left|t-s\right| + \bar{\lambda}(1+\beta) \left|t-s\right|\\
       \nonumber& \leq  n^{-1}2K_f \left|S^n(t) - \sum_{k=1}^{N^n_\alpha}\mu_kt\right|_T^* +n^{-1}2K_f\left|A^n(t) - \lambda^nt\right|_T^*\\
       &\quad + K_f\left|n^{-1}\sum_{k=1}^{N^n_\alpha}\mu_k-\bar{\lambda}(1+\beta)\right|  \left|t-s\right| + n^{-1}K_f\lambda^n  \left|t-s\right| + \bar{\lambda}(1+\beta) \left|t-s\right|.
       \label{eq:equicontinuity_measures}
   \end{align}
   Using the martingale central limit theorem, the first and second terms on the right-hand side converges to 0 in probability. Similarly, using the law of large numbers, we can show that the third term also converges to 0 in probability. Finally, from Assumption~\ref{asm:lambdascaling}, we know that $n^{-1}\lambda^n\to \bar{\lambda}$. Hence, choosing $\rho<\epsilon/2(K_f\bar{\lambda}(2+\beta))$ and $N$ large enough the second condition follows. Moreover, by examining $\eqref{eq:equicontinuity_measures}$, one can conclude that any limit is continuous. 

Now, we are ready to prove that any convergent subsequence should satisfy the provided integral equations. 
The martingale central limit theorem (c.f., \cite{ptw07}) implies $n^{-1}|S^n(t)-\sum_{k=1}^{N^n_\alpha}\mu_kt|_T^*\toP 0$ and $n^{-1}|A^n(t)-\lambda^nt|_T^*\toP 0$. Using the Skorokhod representation theorem~(see, e.g., Theorem 6.7 in~\cite{bil99}), we can assume that these and the subsequence in the statement of the lemma converge almost surely. Also, as mentioned in the proof of tightness, for any $f\in C_{\RR_+}[0,T]$, the limit $\langle f, \bar{\psi}_t\rangle$ is continuous and hence, the convergence holds in the supremum norm as well as the Skorokhod $d_S$ metric. We define the processes
\begin{align*}
    M_1^n(t) &:= \sum_{i=1}^{S^n(t)}\langle f, \cU_{S,i}^n\rangle - \sum_{i=1}^{S^n(t)}\left\langle f, \frac{\sum_{k=1}^{N_\alpha^n}\mu_k(1-I_k^n(\theta_{S,i}^n-))\delta_{\mu_k}\II(X^n(\theta_{S,i}^n-)\leq 0)}{\sum_{k=1}^{N_\alpha^n}\mu_k}\right\rangle,
    \\
    M_2^n(t)&:=\sum_{i=1}^{A^n(t)}\langle f, \cU_{A,i}^n \rangle - \sum_{i=1}^{A^n(t)}\left\langle f, \frac{\sum_{k=1}^{N_\alpha^n} h(\mu_k)I_k^n(\theta_{A,i}^n-)\delta_{\mu_k}\II(X^n(\theta_{A,i}^n-)\leq 0)}{\sum_{k=1}^{N_\alpha^n}h(\mu_k)I_k^n(\theta_{A,i}^n-)}\right\rangle, \end{align*}
where 0/0 is assumed to be 0. It is easy to see that both $M_1^n$ and $M_2^n$ are $\cF_t$ martingales. After some algebraic manipulations, Equation \eqref{eq:idle_balance} becomes 
\begin{align}
    \nonumber \langle f, \bar{\psi}^{n}_t\rangle &= \langle f, \bar{\psi}_0^{n}\rangle + n^{-1}M_1^n(t) + n^{-1}\sum_{i=1}^{S^n(t)}\left\langle f, \frac{\sum_{k=1}^{N_\alpha^n}\mu_k(1-I_k^n(\theta_{S,i}^n-))\delta_{\mu_k}\II(X^n(\theta_{S,i}^n-)\leq 0)}{\sum_{k=1}^{N_\alpha^n}\mu_k}\right\rangle\\
    \nonumber&\quad - n^{-1}M_2^n(t) - n^{-1}\sum_{i=1}^{A^n(t)}\left\langle f, \frac{\sum_{k=1}^{N_\alpha^n} h(\mu_k)I_k^n(\theta_{A,i}^n-)\delta_{\mu_k}\II(X^n(\theta_{A,i}^n-)\leq 0)}{\sum_{k=1}^{N_\alpha^n}h(\mu_k)I_k^n(\theta_{S,i}^n-)}\right\rangle \\
    \nonumber &=\langle f, \bar{\psi}_0^{n}\rangle + n^{-1}M_1^n(t) + n^{-1}\frac{\langle f\times \iota, \sum_{k=1}^{N^n_\alpha}\delta_{\mu_k} \rangle}{\sum_{k=1}^{N^n_\alpha}\mu_k}\int_0^t\II(X^n(s-)\leq 0)dS^n(s) \\
  \nonumber &\quad- n^{-1}\int_0^t\frac{\langle f\times \iota, \bar{\psi}_{s-}^n\rangle}{n^{-1}\sum_{k=1}^{N^n_\alpha}\mu_k}\II(X^n(s-)\leq 0)dS^n(s) - n^{-1}M_2^n(t) \\
  \nonumber&\quad- n^{-1}\int_0^t \frac{\langle f\times h, \bar{\psi}_{s-}^n\rangle}{\langle h, \bar\psi_{s-}^n\rangle}\II(X^n(s-)\leq 0)dA^n(s)\\
%\nonumber &= \langle f, \bar{\psi}_0^{n}\rangle + n^{-1}M_1^n(t) + \frac{\langle f\times \iota, n^{-1}\sum_{k=1}^{N^n}\delta_{\mu_k} \rangle}{n^{-1}\sum_{k=1}^{N^n}\mu_k}\frac{S^n(t)}{n} - \int_0^t\frac{\langle f\times \iota, \psi_{s-} \rangle}{\sum_{k=1}^{N^n}\mu_k}dS^n(s)\\
    % %\nonumber &\quad  - M_2^n(t) - \int_0^t \frac{\langle f\times h, \psi_{s-}\rangle}{\langle h, \psi_{s-}\rangle}dA^n(s)\\
      \nonumber &=\langle f, \bar{\psi}_0^{n}\rangle + \frac{M_1^n(t)}{n} + \frac{\langle f\times \iota, n^{-1}\sum_{k=1}^{N^n_\alpha}\delta_{\mu_k}\rangle}{n^{-1}\sum_{k=1}^{N^n_\alpha}\mu_k}\int_0^t\II\left(\frac{X^n(s-)}{n}\leq 0\right)d\left(\frac{S^n(s)-\sum_{k=1}^{N^n_\alpha}\mu_ks}{n}\right)\\
      \nonumber &\quad+\langle f\times \iota, n^{-1}\sum_{k=1}^{N^n_\alpha}\delta_{\mu_k} \rangle \int_0^t\II\left(\frac{X^n(s-)}{n}\leq 0\right)ds 
      \\
      \nonumber&\quad-\int_0^t\frac{\langle f\times \iota, \bar{\psi}_{s-}^n \rangle}{n^{-1}\sum_{k=1}^{N^n_\alpha}\mu_k}\II\left(\frac{X^n(s-)}{n}\leq 0\right)d\left(\frac{S^n(s)-\sum_{k=1}^{N^n_\alpha}\mu_ks}{n}\right)
      \\
      \nonumber&\quad 
      -\int_0^t\langle f\times \iota, \bar{\psi}_{s-}^n \rangle \II\left(\frac{X^n(s-)}{n}\leq 0\right)ds
      \\
      \nonumber&\quad  
      - \frac{M_2^n(t)}{n} - \int_0^t \frac{\langle f\times h, 
      \bar{\psi}_{s-}^n\rangle}{\langle h, \bar{\psi}_{s-}^{n}\rangle}\II\left(\frac{X^n(s-)}{n}\leq 0\right)d\left(\frac{A^n(s)-\lambda^ns}{n}\right)\\
      &\quad-\int_0^t \frac{\langle f\times h, \bar{\psi}_{s-}^n\rangle}{\langle h, \bar{\psi}_{s-}^n\rangle}\frac{\lambda^n}{n}\II\left(\frac{X^n(s-)}{n}\leq 0\right)ds.
      \label{eq:idle_balance_decomp}
\end{align} 

Since $f\in C_{[\mu_{\min},\mu_{\max}]}^b[0,\infty)$, we assume that $f(\mu)\leq K_f$ for all $\mu\in [\mu_{\min},\mu_{\max}]$. By our assumption, we know that $\sup_{0\leq t\leq T}|\langle f, \bar{\psi}^{n_k}_t\rangle- \langle f, \bar{\psi}_t\rangle|\to 0$ almost surely, along the subsequence $\{\bar{\psi}_t^{n_k}\}_{k=1}^\infty$. The martingales $M_1^n(t)$ and $M_2^n(t)$ can be written as 
\begin{align*}
    n^{-1}M_1^n(t)&=n^{-1}\sum_{i=1}^{S^n(t)}(\langle f, \cU_{S,i}^n\rangle-\EE[\langle f, \cU_{S,i}^n\rangle|\cF_{\theta_{S,i}^n-}]),
    \\
    \textrm{ and}\quad n^{-1}M_2^n(t)&=n^{-1}\sum_{i=1}^{A^n(t)}(\langle f, \cU_{A,i}^n\rangle-\EE[\langle f, \cU_{A,i}^n\rangle|\cF_{\theta_{A,i}^n-}]).
\end{align*}
Since both are pure jump martingales, we can write the optional quadratic variation of these martingales as 
\begin{align*}
    [n^{-1}M_1^n(t)]&=n^{-2}\sum_{i=1}^{S^n(t)}(\langle f, \cU_{S,i}^n\rangle-\EE[\langle f, \cU_{S,i}^n\rangle|\cF_{\theta_{S,i}^n-}])^2\leq \frac{K_f^2}{n^2}\sup_{0\leq t\leq T}S^n(t),\\
    [n^{-1}M_2^n(t)]&=n^{-2}\sum_{i=1}^{A^n(t)}(\langle f, \cU_{A,i}^n\rangle-\EE[\langle f, \cU_{A,i}^n\rangle|\cF_{\theta_{A,i}^n-}])^2\leq \frac{K_f^2}{n^2}\sup_{0\leq t\leq T}A^n(t),
\end{align*}
which converges to 0 almost surely. We know that $h(\mu)>0$  and it is continuous on the closed interval $[\mu_{\min},\mu_{\max}]$ and hence, there exists $\epsilon_h$ and $K_h$ such that $0<\epsilon_h\leq h(\mu)\leq K_h$ for all $\mu\in[\mu_{\min},\mu_{\max}]$. Also, Assumption~\ref{asm:lambdascaling} implies the existence of $K_N$ and $k_n$ such that $0<\epsilon_N\leq n^{-1}N^n_\alpha\leq K_N<\infty$. Hence, we have the following bounds: 
\begin{align*}
   \langle f\times\iota, \bar{\psi}_t^n\rangle&\leq \langle f\times \iota, n^{-1}\sum_{k=1}^{N_\alpha^n}\delta_{\mu_k}\rangle\leq K_fK_N\mu_{\max}, \langle f\times h, 
    \bar{\psi}_{s-}^n\rangle \leq \langle f\times h, n^{-1}\sum_{k=1}^{N_\alpha^n}\delta_{\mu_k}\rangle\leq K_fK_hK_N,
    \\
    \frac{\langle f\times\iota, \bar{\psi}_t^n\rangle}{n^{-1}\sum_{k=1}^{N_\alpha^n}\mu_k}&\leq  \frac{\langle f\times \iota, n^{-1}\sum_{k=1}^{N_\alpha^n}\delta_{\mu_k}\rangle}{n^{-1}\sum_{k=1}^{N^n_\alpha}\mu_k}\leq \frac{K_fK_N\mu_{\max}}{\epsilon_N}\mbox{ and }
    \frac{\langle f\times h, 
    \bar{\psi}_{s-}^n\rangle}{\langle h, \bar{\psi}_{s-}^{n}\rangle}\leq \frac{K_fK_h}{\epsilon_h}.
\end{align*}
These bounds along with  $\sup_{0\leq t\leq T}n^{-1}|S^n(t)-\sum_{k=1}^{N^n_\alpha}\mu_kt|\to 0$ and $\sup_{0\leq t\leq T} n^{-1} |A^n(t)-\lambda^nt|\to 0$ almost surely as $n\to \infty$, the third, fifth and the eighth terms on the right-hand side of \eqref{eq:idle_balance_decomp} converges to 0 almost surely. Using the dominated convergence theorem, we deduce that 
\begin{align*}
    \int_0^t\langle f\times \iota, \bar{\psi}_{s-}^n \rangle \II\left(\frac{X^n(s-)}{n}\leq 0\right)ds &\to \int_0^t\langle f\times \iota, \bar{\psi}_{s-} \rangle \II\left(\xi(s-)\leq 0\right)ds,
    \\
    \int_0^t \frac{\langle f\times h, \bar{\psi}_{s-}^n\rangle}{\langle h, \bar{\psi}_{s-}^n\rangle}\frac{\lambda^n}{n}\II\left(\frac{X^n(s-)}{n}\leq 0\right)ds &\to \int_0^t \frac{\langle f\times h, \bar{\psi}_{s-}\rangle}{\langle h, \bar{\psi}_{s-}\rangle}\bar{\lambda}\II\left(\xi(s-)\leq 0\right)ds.
\end{align*}
Hence, our lemma follows. \end{proof}
\endproof

% \begin{repeattheorem}[Lemma~\ref{lem:boundedness_instantaneous_idleness}]
% For any $f\in \CC_{[\mu_{\min},\mu_{\max}]}^b[0,\infty)$, any solution to equation~\eqref{eq:limitrandom1} is bounded. 
% \end{repeattheorem}

% \begin{proof}
% By definition, $\langle f, \bar{\psi}^{n}_t\rangle\leq 0$ for all $t\geq 0$ and we only need to be concerned with being bounded from above. Taking the derivative, we have
% \[
% \frac{d\langle f, \bar{\psi}^{n}_t\rangle}{dt}\leq \frac{\bar{\lambda}}{\bar{\mu}}(1+\beta)\langle f\times \iota, F\rangle \II(\xi_{\alpha,t}\leq 0) -\mu_{\min} \langle f, \bar{\psi}_{1,t}\rangle \II(\xi_{\alpha,t}\leq 0)
% \]
% Hence, the derivative is negative for any $t$ such that $\langle f, \bar{\psi}_{1,t}\rangle\geq \frac{\bar{\lambda}}{\mu_{\min}\bar{\mu}_F}(1+\beta)\langle f\times \iota, F\rangle$ and hence, we conclude that $\langle f, \bar{\psi}_{1,t}\rangle\leq \frac{\bar{\lambda}}{\mu_{\min}\bar{\mu}_F}(1+\beta)\langle f\times \iota, F\rangle$.
% \end{proof}

\begin{proof}[Proof of Theorem~\ref{thm:stationary_fairness1}.]
We know that $\xi_{1,\infty}>0$, and hence any fixed point of~\eqref{eq:limitrandom1} satisfies 
\begin{align}
\frac{\bar{\lambda}}{\bar{\mu}}(1+\beta)\langle f\times \iota, F\rangle \II(\xi_{1}(t)\leq 0)&= \langle f\times \iota, \bar{\psi}_{1,t}\rangle \II(\xi_{1}(t)\leq 0) +\bar{\lambda}\frac{\langle f\times h, \bar{\psi}_{1,t}\rangle }{\langle h, \bar{\psi}_{1,t}\rangle}\II(\xi_{1}(t)\leq 0).
\label{eq:stationarypoint}
\end{align}
%\bar{g} is the density of the instantaneous and g is the density of fairness measures. 
As \eqref{eq:boundqdregime} implies that $
\bar{\psi}_{1,\infty}$ is absolutely continuous, it possesses a Radon-Nikodym derivative $\bar{g}(\mu)$ with respect to $F$. Setting $f(\mu)=1$ for all $\mu$, we have
\begin{equation}
\int_{\mu_{\min}}^{\mu_{\max}}\mu \bar{g}(\mu)dF(\mu)=\bar{\lambda}\beta.
\label{eq:gbarmean}
\end{equation}
As the equation holds for any $f\in \CC_{[\mu_{\min},\mu_{\max}]}^b[0,\infty)$, defining $c_g=\int_{\mu_{\min}}^{\mu_{\max}}h(\mu) \bar{g}(\mu)dF(\mu)$ to simplify the notation, for $F$-almost all $\mu$
\[
\mu \bar{g}(\mu) +\frac{\bar{\lambda}h(\mu)\bar{g}(\mu)}{c_g}=\frac{\bar{\lambda}}{\bar{\mu}_F}(1+\beta)\mu.
\]
Defining $L_F=\bar{\lambda}/c_g$ and re-organizing terms, we obtain 
\begin{equation}
    \bar{g}(\mu)=\frac{\bar{\lambda}}{\bar{\mu}_F}(1+\beta)(1+L_F\tilde{h}(\mu))^{-1}.\label{eq:gbar_equation}
\end{equation}
Plugging in \eqref{eq:gbarmean} we obtain~\eqref{eq:g_equation}, and since the integrand is decreasing in $L_F$ then the $L_F$ that satisfies this equation must be unique. This proves that for any limit point $\bar{\psi}_{1,t}$, we have $\langle f, \bar{\psi}_{1,t}\rangle \to \int_{\mu_{\min}}^{\mu_{\max}} f(\mu)\bar{g}(\mu)dF(\mu)$ as $t\to\infty$ for all $f\in \CC_{[\mu_{\min},\mu_{\max}]}^b[0,\infty)$. Hence, using the fact that the indicator function of any Borel set $\AAA$ can be approximated by functions in $\CC_{[\mu_{\min},\mu_{\max}]}^b[0,\infty)$, we have 
$\bar{\psi}_{1,t}(\AAA)\to \int_\AAA \bar{g}(\mu)dF(\mu)$. Using Theorem~\ref{thm:interchangibility_stationary_n}, this implies that $\bar{\psi}_\infty^n(\AAA)\Rightarrow \int_\AAA \bar{g}(\mu)dF(\mu)$ as $n\to \infty$. Plugging $
\bar{\psi}_{1,t}$ into the definition of the fairness process, the result follows. \end{proof}

\begin{proof}[Proof of Lemma~\ref{lem:L_bound}.]
Dividing both sides of \eqref{eq:gbarmean} by $\bar{\lambda}\beta$, this equation defines a probability measure $\bar{F}$ on $[\mu_{\min},\mu_{\max}]$ and $c_g$ in the proof of Theorem~\ref{thm:stationary_fairness1} can be written as $c_g=\bar{\lambda}\beta \int_{\mu_{\min}}^{\mu_{\max}}\tilde{h}(\mu) \frac{\mu\bar{g}(\mu)}{\bar{\lambda}\beta}dF(\mu)$. The integral is the expectation of $h(\tilde{\mu})$ where $\tilde{\mu}$ has distribution $\bar{F}$, and hence, is bounded from below by $h_{\min}$ and from above by $h_{\max}$. 
The result follows as $L_F=\bar{\lambda}/c_g$.
\end{proof}

\begin{proof}[Proof of Corollary~\ref{cor:attainability_g}.]
Let $\bar{g}(\mu)$ be the density function of $\bar{\psi}_{1,\infty}$ with respect to $F$. The definition of the limiting fairness measure and \eqref{eq:stationary_idle_length} implies that $\bar{g}(\mu)=\beta\bar{\lambda}\langle \iota,\eta_{1,\infty}\rangle^{-1} g(\mu)$. Plugging into the definition of $c_g$, we can see that for the proposed $h(\mu)$ we have
\begin{align*}
    c_g&=\int_{\mu_{\min}}^{\mu_{\max}}\left(\frac{1+\beta}{\bar{\mu}_F}-\beta\langle \iota,\eta_{1,\infty} \rangle^{-1}g(\mu)\right)\mu dF(\mu)\\
    &=\frac{1+\beta}{\bar{\mu}_F}\int_{\mu_{\min}}^{\mu_{\max}}\mu dF(\mu)-\beta\langle \iota,\eta_{1,\infty} \rangle^{-1}\int_{\mu_{\min}}^{\mu_{\max}}\mu g(\mu)dF(\mu)\\
    &=1.
\end{align*}
Hence, $L_F=\bar{\lambda}$. Now, plugging this in \eqref{eq:gbar_equation}, we get the desired result. \end{proof}

\subsection{Proofs for Results Presented in Section~\ref{sec:strategic}}\label{app:strategic}

\begin{proof}[Proof of Theorem~\ref{thm:best_response_scaling}.]
To simplify notation, define $C=\beta\bar{\mu}_F\bar{\lambda}^{-1}\EE[\xi_\alpha^{-}(\infty)]$. We prove part 2 first, as it is slightly more complicated and part 1 follows similar steps. 
The uniform convergence assumption implies that for any $\rho>0$, there exists an $N_\rho$ such that for all $\mu_{\min}\leq \mu\leq \mu_{\max}$
\begin{equation}\label{eq:approximation_idleness_uniform}
\left|\EE[I_k^n(\infty)|\tilde{\mu}_k^n=\mu]-n^{\alpha-1}Cg(\mu)\right|\leq n^{\alpha-1}\rho.
\end{equation}
First, we analyze the best response for server $k$ for all large $n$ for the approximating problem
\begin{equation}\label{eq:approximating_best_response}
 \max_{\tilde{\mu}_{\min,k}^n\leq \mu\leq \tilde{\mu}_{\max,k}^n}u_I(n^{\alpha-1}Cg(\mu))-a_k^nc(\mu).
\end{equation}
%Since $g(\mu)$ is assumed to be concave with probability 1, $f(n^{\alpha-1}Cg(\mu))$ is also concave in $\mu$. Hence, this is a maximization problem with concave objective function. 
Taking the derivative of the objective, we get
\begin{align}\label{eq:asymptotic_utility_derivative}
n^{\alpha-1}g'(\mu)u_I'(n^{\alpha-1}Cg(\mu))-\tilde{a}_k^nc'(\mu).
\end{align}

First, we analyze part 1, i.e., the situation when $n^{\alpha-1}u_I'(n^{\alpha-1}x)\to 0$ for all $x>0$. We know that $u_I(\cdot)$ is concave and thus its derivative is decreasing. Since $g(\mu)$ is continuous, it attains its minimum at $\mu_g^*\in[\mu_{\min}, \mu_{\max}]$. Then, we have $u_I'(n^{\alpha-1}Cg(\mu_g^*))\geq u_I'(n^{\alpha-1}Cg(\mu))$ for all $\mu_{\min}\leq \mu \leq \mu_{\max}$. Hence, there exists an $N_{1}$ such that $n^{\alpha-1}u_I'(n^{\alpha-1}Cg(\mu_g^*))<\frac{a_{\min}c_{\min}}{2Cg_{\max}'}$ for all $n>N_{1}$ and hence, the minimizer of \eqref{eq:approximating_best_response} is $\mu_k^{n,*}=\tilde{\mu}_{\min,k}^n$ for all $k$ and $n>N_1$. Using the gradient inequality for concave functions on $u_I(n^{\alpha-1}x)$, we get for any $\mu_{\min}\leq \mu\leq \mu_{\max}$ that  
\begin{align*}
u_I\big(n^{\alpha-1}Cg(\mu)\big)
-\tilde{a}_k^nc(\mu)
&
\leq 
u_I\big(n^{\alpha-1}Cg(\mu_{\min,k}^n)\big)
+n^{\alpha-1}Cu_I'\big(n^{\alpha-1}Cg(\mu_g^*)\big)\big(g(\mu)-g(\mu_{\min,k}^*)\big)
\\
&\qquad-\tilde{a}_k^nc(\mu_{\min,k}^n)-a_{\min}c_{\min}(\mu-\mu_{\min,k}^*).
\end{align*}
Hence, for any $n>N_1$, we have 
\begin{align*}
u_I\big(n^{\alpha-1}Cg(\mu)\big)-\tilde{a}_k^nc(\mu)
&
\leq u_I\big(n^{\alpha-1}Cg(\mu_{\min,k}^n)\big) -\tilde{a}_k^nc(\mu_{\min,k}^n)-\frac{a_{\min}c_{\min}}{2}(\mu-\mu_{\min,k}^*),
\end{align*}
and 
for any $\mu>\mu_{\min,k}^n + 6\epsilon(a_{\min}c_{\min})^{-1}$, 
\begin{align}\label{eq:bound_f}
    u_I\big(n^{\alpha-1}Cg(\mu)\big)
    -\tilde{a}_k^nc(\mu)
    \leq u_I\big(n^{\alpha-1}Cg(\mu_{\min,k}^n)\big) -\tilde{a}_k^nc(\mu_{\min,k}^n)-3\epsilon.
\end{align}
Again using \eqref{eq:approximation_idleness_uniform}, there exists an $N_2$ such that $n>N_2$ implies
\begin{align*}
    \EE[I_k^n(\infty)|\tilde{\mu}_k^n=\mu]\geq n^{\alpha-1}\frac{Cg(\mu_g^*)}{2}.
\end{align*}
Using the mean-value theorem and the concavity of $u_I(\cdot)$, for any $\mu_{\min}\leq \mu \leq \mu_{\max}$ and  $n>N_2\vee N_\rho$ 
\begin{align*}
    \Big|u_I\big(\EE[I_k^n(\infty)|\tilde{\mu}_k^n=\mu]\big)
    -u_I\big ( n^{\alpha-1}Cg(\mu)\big) \Big|
    \leq 
    n^{\alpha-1}u_I'\left(\frac{n^{\alpha-1}Cg(\mu_g^*)}{2}\right)\rho.
\end{align*}
Now, choosing $N_3$ such that $n>N_3$ implies
\[
n^{\alpha-1}u_I'\left(\frac{n^{\alpha-1}Cg(\mu_g^*)}{2}\right)\leq \frac{\epsilon}{\rho},
\]
and using \eqref{eq:bound_f}, we have 
\begin{align*}
    u_I\big(\EE[I_k^n(\infty)|\tilde{\mu}_k^n=\mu]\big)-\tilde{a}_k^nc(\mu)\leq u_I\big(\EE[I_k^n(\infty)|\tilde{\mu}_k^n=\tilde{\mu}_{\min,k}^n]\big) -\tilde{a}_k^nc(\tilde{\mu}_{\min,k}^n)-\epsilon,
\end{align*}
for all $\mu>\mu_{\min,k}^n + 6\epsilon(a_{\min}c_{\min})^{-1}$ and $n>\max\{N_1, N_2, N_3, N_\rho\}$.

Now, we consider the second case where $g(\mu)$ is decreasing. As \eqref{eq:asymptotic_utility_derivative} is negative for all $\mu$, the minimizer of \eqref{eq:approximating_best_response} is $\mu_{k}^{n,*}=\tilde{\mu}_{\min,k}^n$ for all $n$. 

Now, we analyze the part 3. As $u_I(\cdot)$ is concave, $u_I'(n^{\alpha-1}Cg(\mu))\geq u_I'(n^{\alpha-1}Cg(\mu_{\max}))$ for all $\mu_{\min}\leq \mu\leq \mu_{\max}$. Using this and the convexity of $c(\mu)$, there exists an $N_4$, such that for $n>N_4$ the derivative in \eqref{eq:asymptotic_utility_derivative} is positive for all $\mu_{\min}\leq \mu \leq \mu_{\max}$, which in turn implies the maximizer of \eqref{eq:approximating_best_response} is $\mu_{k}^{n,*}=\tilde{\mu}_{\max, k}^n$. Now, choosing $N_1$ such that $n>N_1$ implies $n^{\alpha-1}u_I'(n^{\alpha-1}Cg(\mu_{\max}))>\frac{2a_{\min}c_{\min}}{Cg_{\min}'}$ and following the same steps as in part 2, the theorem follows.
\end{proof}
\endproof

To prove Proposition~\ref{thm:interchange_generalized_rr}, we need the following uniform integrability result:
\begin{lemma}\label{lem:uniform_integrable_inverse}
For any $\alpha>1/2$, the collection of random variables $\{(\hat{I}_\alpha^n(\infty))^{-1}\II(I^n(\infty)>0)\}_{n\in \NN}$ is uniformly integrable. 
\end{lemma}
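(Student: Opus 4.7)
The plan is to establish a uniform moment bound of the form
\[
\sup_{n\in\NN}\EE\big[(\hat{I}_\alpha^n(\infty))^{-p}\II(I^n(\infty)>0)\big]<\infty
\]
for some $p>1$; by the de la Vall\'ee--Poussin criterion this immediately yields the required uniform integrability, with $x\mapsto x^{p}$ playing the role of the Orlicz function.

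On the event $\{I^n(\infty)>0\}$ the random variable $\hat{I}_\alpha^n(\infty)=I^n(\infty)/n^\alpha$ takes values in $\{k/n^\alpha:1\le k\le N_\alpha^n\}$, so the moment rewrites as
\[
\EE\big[(\hat{I}_\alpha^n(\infty))^{-p}\II(I^n(\infty)>0)\big]=\sum_{k=1}^{N_\alpha^n}(n^\alpha/k)^p\,\PP\big(I^n(\infty)=k\big).
\]
Because Theorem~\ref{thm:interchangibility_stationary_n} combined with Theorem~\ref{thm:system_convergence} gives, for $\alpha>1/2$, $\hat{I}_\alpha^n(\infty)\Rightarrow C_\alpha:=\beta\bar\lambda^\alpha\bar\mu_F^{1-\alpha}\langle\iota,\eta_\alpha\rangle^{-1}>0$, the stationary mass of $I^n(\infty)$ concentrates around $C_\alpha n^\alpha$. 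We therefore anticipate that the mass at small $k\ll n^\alpha$ decays fast enough in $n$ to offset the growth $(n^\alpha/k)^p$.

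To make this quantitative I would employ a Foster--Lyapunov argument on the Markov chain $(X^n(t),\vec{\tilde\mu}^n)$ with the test function $V(x)=(N_\alpha^n-x+1)^{-p}\II(x\le N_\alpha^n)$ (suitably extended for $x>N_\alpha^n$). Its generator satisfies
\[
\cG V(x)=\lambda^n\big(V(x+1)-V(x)\big)+r(x,\vec{\tilde\mu}^n)\big(V(x-1)-V(x)\big)+\gamma(x-N_\alpha^n)^+\big(V(x-1)-V(x)\big),
\]
where $r(x,\vec{\tilde\mu}^n)=\sum_{k:I_k^n(t)=0}\tilde\mu_k^n$ is the aggregate service rate of the currently busy servers. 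A Taylor expansion in $(N_\alpha^n-x)^{-1}$ gives $\cG V(x)\approx p(N_\alpha^n-x)^{-p-1}[\lambda^n-r(x,\vec{\tilde\mu}^n)]$ whenever $x$ is close to $N_\alpha^n$. The safety-staffing formula \eqref{eq:staffing_level} makes $\lambda^n-N_\alpha^n\bar\mu_F=-\beta\bar\lambda^\alpha\bar\mu_F^{1-\alpha}$ a quantity of order $-n^\alpha$, so whenever $k:=N_\alpha^n-x$ is small compared with $n^\alpha$, the drift is negative of order $n^\alpha\cdot k^{-p-1}$. Standard Foster--Lyapunov moment bounds in the spirit of Meyn--Tweedie (alternatively, the coupling construction already exploited in the proof of Lemma~\ref{lem:uniform_integrability_system}) then convert this geometric drift inequality into $\EE[V(X^n(\infty))^{(p+1)/p}]\le C$ uniformly in $n$, which by rescaling is exactly the desired $\EE[(\hat I_\alpha^n(\infty))^{-(p+1)}\II(I^n(\infty)>0)]\le C'$.

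The principal obstacle is the drift estimate itself: $r(x,\vec{\tilde\mu}^n)$ depends on which specific servers are idle at time $t$, and under unfavourable (e.g.~fastest-server-first) routing the idle servers are the slowest, in which case $r(x,\vec{\tilde\mu}^n)$ could be as small as $(N_\alpha^n-k)\mu_{\min}$. Provided the safety-staffing coefficient $\beta$ is large enough to ensure $\lambda^n<r(x,\vec{\tilde\mu}^n)$ by a margin of order $n^\alpha$ for small $k$, the drift is manifestly negative. Otherwise one must invoke the limiting fairness measure $\eta_{\alpha,\infty}$ to control $\EE[r(x,\vec{\tilde\mu}^n)\mid I^n(\infty)=k]$ and recover the negative drift on average; either way, the detailed bookkeeping of the Lyapunov constants, the exceptional compact set off which the drift inequality holds, and the behaviour of $V$ above $N_\alpha^n$ (where abandonment contributes additional negative drift) is where the technical effort concentrates.
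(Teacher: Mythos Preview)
Your Foster--Lyapunov outline identifies the correct obstacle but does not resolve it, and the two fallbacks you propose are inadequate. Requiring $\beta$ to be large enough that $\lambda^n<(N_\alpha^n-k)\mu_{\min}$ for small $k$ would restrict the lemma to a special case; the statement is meant to hold for every $\beta>0$. Invoking the limiting fairness measure to control $\EE[r(x,\vec{\tilde\mu}^n)\mid I^n(\infty)=k]$ is circular at best and in any case does not yield a pathwise drift inequality: the whole difficulty is precisely that the chain is not a function of $x$ alone, so an averaged drift over the routing-dependent state is not sufficient to run the Lyapunov argument for $V(x)$.

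The paper sidesteps the Lyapunov machinery entirely. It first conditions on the realized service rates being typical, namely $\big|\sum_{k}\tilde\mu_k^n-N_\alpha^n\bar\mu_F\big|\le\tfrac{\beta}{2}(\lambda^n)^\alpha\bar\mu_F^{1-\alpha}$, and handles the complementary event by a crude moment bound exploiting $\alpha>1/2$. On the typical event, the paper uses the \emph{routing-independent} lower bound $r(x,\vec{\tilde\mu}^n)\ge\sum_k\tilde\mu_k^n-(\text{idle count})\cdot\mu_{\max}$; choosing a threshold $K_\alpha n^\alpha$ for the idle count with $K_\alpha\approx \beta\bar\lambda^\alpha\bar\mu_F^{1-\alpha}/(4\mu_{\max})$ guarantees that whenever fewer than $K_\alpha n^\alpha$ servers are idle, the departure rate exceeds $\lambda^n$ by at least $\tfrac{\beta}{4}(\lambda^n)^\alpha\bar\mu_F^{1-\alpha}$, regardless of which servers are idle or how the policy routes. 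This yields a stochastic domination by an explicit birth--death chain whose stationary distribution has geometric tails above $N_\alpha^n-K_\alpha n^\alpha$, from which $n^\alpha\PP\big(I^n(\infty)<\tfrac12K_\alpha n^\alpha\big)\to 0$ follows directly. The $\mu_{\max}$ bound on the idle servers' contribution is the key observation you are missing; once you have it, there is no need for Lyapunov functions, fairness measures, or restrictions on $\beta$.
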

\begin{proof}
We need to prove that for any $\rho>0$, there exists an $M>0$ such that 
\begin{align}
\sup_n \EE\left[\frac{n^{\alpha}}{I^n(\infty)}\II(I^n(\infty)>0)\II\left(\frac{n^{\alpha}}{I^n(\infty)}>M\right)\right]<\rho.
\label{eq:uniform_integrability_def}
\end{align}
For any $n\in \NN$, we have 
\begin{align}
    \nonumber&\EE\left[\frac{n^{\alpha}}{I^n(\infty)}\II(I^n(\infty)>0)\II\left(\frac{n^{\alpha}}{I^n(\infty)}>M\right)\right]\\
    \nonumber&\qquad\qquad=\EE\left[\frac{n^{\alpha}}{I^n(\infty)}\II(I^n(\infty)>0)\II\left(\frac{n^{\alpha}}{I^n(\infty)}>M\right)\II\left(\left|\sum_{k=1}^{N^n_\alpha}\mu_k^n-N^n_\alpha\bar{\mu}_F\right|\leq \frac{\beta}{2}(\lambda^n)^\alpha(\bar{\mu}_F)^{1-\alpha}\right)\right]\\
    \nonumber&\qquad\qquad\quad + \EE\left[\frac{n^{\alpha}}{I^n(\infty)}\II(I^n(\infty)>0)\II\left(\frac{n^{\alpha}}{I^n(\infty)}>M\right)\II\left(\left|\sum_{k=1}^{N^n_\alpha}\mu_k^n-N^n_\alpha\bar{\mu}_F\right|> \frac{\beta}{2}(\lambda^n)^\alpha(\bar{\mu}_F)^{1-\alpha}\right)\right]\\
    \nonumber&\qquad\qquad\leq \EE\left[\frac{n^{\alpha}}{I^n(\infty)}\II(I^n(\infty)>0)\II\left(\frac{n^{\alpha}}{I^n(\infty)}>M\right)|\left|\sum_{k=1}^{N^n_\alpha}\mu_k^n-N^n_\alpha\bar{\mu}_F\right|\leq \frac{\beta}{2}(\lambda^n)^\alpha(\bar{\mu}_F)^{1-\alpha}\right]
    \\&\qquad\qquad\quad + n^{\alpha}\PP\left(\left|\sum_{k=1}^{N^n_\alpha}\mu_k^n-N^n_\alpha\bar{\mu}_F\right|> \frac{\beta}{2}(\lambda^n)^\alpha(\bar{\mu}_F)^{1-\alpha}\right).
    \label{eq:bound_uniform_integrability_I}
\end{align}
First, we concentrate on the first term of the right-hand side. From Assumption~\ref{asm:lambdascaling}, for any $\epsilon_1>0$, there exists an $N_1$ such that for any $n>N_1$, $n^{-1}\lambda^n\leq \bar{\lambda}+\epsilon_1$. Hence, with  $n>N_1$, we define 
\[
K_\alpha=\left\lceil \frac{\beta(\bar{\lambda}+\epsilon_1)^\alpha\bar{\mu}_F^{1-\alpha}}{4\mu_{\max}} \right\rceil. 
\]
Now, we consider a sequence of birth-death processes $\{Y^n(t)\}_{n\in \NN}$. The birth rate in the $n$th system is uniformly equal to $\lambda^n$ for any state. When the system is at state $Y^n(t)=i$, the death rate is given by
\begin{equation*}
\nu_i =\left\{\begin{array}{ll} i\mu_{\min}&\mbox{if }i\leq N^n_\alpha-n^{\alpha}K_\alpha\\
\lambda^n+\frac{\beta}{4}(\lambda^n)^{\alpha}\bar{\mu}_F^{1-\alpha} &\mbox{if }i>N^n_\alpha-n^{\alpha}K_\alpha
\end{array}\right..
\end{equation*}
Now, we show that the process $N^n_\alpha-Y^n(t)$ is stochastically smaller than $\hat{I}^n(t)$. Remembering that due to non-idling property, we have  $\hat{I}^n(t)=(\hat{X}_\alpha^n(t))^-$, we can write
\begin{align*}
    \hat{I}_\alpha^n&=  \left(\hat{X}^n(0)\right)^-+n^{-\alpha}\sum_{i=1}^{S^n(t)}\II\left(\left(\hat{X}^n(\theta_{S,i}^n-)\right)^+=0\right)\II\left(U_i^n\leq p_i^n\left(\theta_{S,i}^n-\right)\right)\\&\qquad\qquad\qquad-n^{-\alpha}\sum_{i=1}^{A^n(t)}\II\left( \left(\hat{X}^n(\theta_{A,i}^n-)\right)^->0\right).
\end{align*}
Similarly, we can couple the birth-death process with the idleness process as
\begin{align*}
Y^n(t)&= Y^n(0)-n^{-\alpha}\sum_{i=1}^{S^n(t)}\II\left(U_i^n\leq \tilde{p}_i^n\left(\theta_{S,i}^n-\right)\right)%\\&\qquad\qquad\qquad
+n^{-\alpha}\sum_{i=1}^{A^n(t)}\II\left( \left(\hat{Y}_1^n(\theta_{A,i}^n-)\right)^->0\right),
\end{align*}
where $\tilde{p}_i^n\left(\theta_{S,i}^n-\right)=\frac{\nu_i}{\sum_{k=1}^{N^n_\alpha}\tilde{\mu}_k^n}$. 
Suppose $N^n_\alpha-Y^n(0)=I^n(0)$ and define $\vartheta^n=\inf\{t: N^n_\alpha-Y^n(t)>I^n(t)\}$. 
Then, with probability 1, $N^n_\alpha Y^n(\vartheta-)=I^n(\vartheta-)$. 
As for any state where $I^n(t)=i$, $\frac{\nu_i}{\sum_{k=1}^{N^n_\alpha}\tilde{\mu}_k^n}\leq \frac{\sum_{k=1}^{N^n_\alpha}\mu_k(1-I_k^n(t))}{\sum_{k=1}^{N^n_\alpha}\mu_k}$, if $\vartheta$ is an event epoch for $S^n(t)$, we have $N^n_\alpha-Y^n(\vartheta-)\leq N^n_\alpha-Y^n(\vartheta-)$. If $\vartheta$ is an event epoch for $A^n$, then $N^n_\alpha-Y^n(\vartheta-)\leq N^n_\alpha-Y^n(\vartheta-)$. This leads to a contradiction and we conclude that $N^n_\alpha-Y^n(t)$ is stochastically less than $I^n(t)$. 
Now, 
\begin{align*}
   \EE\left[\frac{n^{\alpha}}{I^n(\infty)}\II(I^n(\infty)>0)\II\left(\frac{n^{\alpha}}{I^n(\infty)}>\frac{2}{K_\alpha}\right)\right]
   & 
   \leq n^\alpha \PP\left(I^n(\infty)<\frac{K_\alpha n^{\alpha}}{2}\right)
  \\
  &
  \leq n^\alpha \PP\left(Y^n(\infty)\geq N^n_\alpha-\frac{K_\alpha n^{\alpha}}{2}\right).
\end{align*}
Now, as $\nu_i> \lambda^n$ for all $i\geq N^n_\alpha-n^{\alpha}K_\alpha$, for each $n$, the birth-death process $Y^n(t)$ is positive recurrent and for all $i\geq N^n_\alpha-n^{\alpha}K_\alpha$, we have
\begin{align*}
\PP(Y^n(\infty)=i)
&=\left(\frac{\lambda^n}{\lambda^n+\frac{\beta}{4}(\lambda^n)^{\alpha}\bar{\mu}_F^{1-\alpha}}\right)^{i-N^n_\alpha+n^\alpha K_\alpha}
\times \PP(Y^n(\infty)=N^n_\alpha-n^\alpha K_\alpha)
\\
&\leq \left(\frac{\lambda^n}{\lambda^n+\frac{\beta}{4}(\lambda^n)^{\alpha}\bar{\mu}_F^{1-\alpha}}\right)^{i-N^n_\alpha+n^\alpha K_\alpha}.
\end{align*}
Hence, 
\[
n^\alpha\PP\left(Y^n(\infty)
\geq 
N^n_\alpha-\frac{ n^\alpha K}{2}\right)
\leq \left(\frac{\lambda^n}{\lambda^n+\frac{\beta}{4}(\lambda^n)^{\alpha}\bar{\mu}_F^{1-\alpha}}\right)^{n^\alpha K_\alpha/2}\frac{(\lambda^n+\frac{\beta}{4}(\lambda^n)^{\alpha}\bar{\mu}_F^{1-\alpha})n^\alpha}{\frac{\beta}{4}(\lambda^n)^{\alpha}\bar{\mu}_F^{1-\alpha}}.
\]
When $\alpha>1/2$, we see that the first-term on the right-hand side approaches 0 exponentially fast, whereas the second term increases linearly as $n\to \infty$. This implies that there exists an $N_2$ where the first term on the right-hand side of~\eqref{eq:bound_uniform_integrability_I} is less than $\rho/2$ for $n>N_2$.

To address the second term on the right-hand side of~\eqref{eq:bound_uniform_integrability_I}, choose $p$ to be the smallest even number such that $p\alpha>p/2+\alpha$. Then, using Markov's inequality
\begin{align*}
    n^{\alpha}\PP\left(\left|\sum_{k=1}^{N^n_\alpha}\mu_k^n-N^n_\alpha\bar{\mu}_F\right|> \frac{\beta}{2}(\lambda^n)^\alpha(\bar{\mu}_F)^{1-\alpha}\right)&\leq \frac{n^\alpha \sum_{j=1}^{p/2}
    \left(\begin{array}{c}N^n_\alpha\\2j\end{array}\right)(\mu_{\max}-\mu_{\min})^p}{\left(\frac{\beta}{2}(\lambda^n)^\alpha(\bar{\mu}_F)^{1-\alpha}\right)^p}.
\end{align*}
It is now easy to see that the numerator scales with $p/2+\alpha$ where as the denominator scales with $p\alpha$. Hence, the right-hand side approaches 0 as $n\to \infty$ and there exists an $N_3$ such that  $n>N_3$ implies 
\[
n^{\alpha}\PP\left(\left|\sum_{k=1}^{N^n_\alpha}\mu_k^n-N^n_\alpha\bar{\mu}_F\right|> \frac{\beta}{2}(\lambda^n)^\alpha(\bar{\mu}_F)^{1-\alpha}\right)<\rho/2.
\]
 Choosing $M=\max\{2/K_\alpha, N_1, N_2, N_3\}$,~\eqref{eq:uniform_integrability_def} holds and our result follows. \end{proof}

\begin{proof}[Proof of Proposition~\ref{thm:interchange_generalized_rr}.]
We define $U_{A,k,i}^n=1$ if the $i$th event epoch of $A^n(t)$ corresponds to an arrival directly route to server $k$ and 0 otherwise. Similarly, we define $U_{S,k,i}^n=1$ if the event epoch of $S^n(t)$ correpondes to an actual service completion at server $k$. Then, we have the following balance equation:
\begin{align*}
    I_k^n(t) &= I_k^n(0) + \sum_{i=1}^{S^n(t)}U_{S,k,i}^n- \sum_{i=1}^{A^n(t)}U_{A,k,i}^n\\
    &=I_k^n(0) + \left(\sum_{i=1}^{S^n(t)}U_{S,k,i}^n-\mu_k\int_0^t(1-I_k^n(s-))ds\right) + \mu_k\int_0^t(1-I_k^n(s-))ds\\ &\quad -\left(\sum_{i=1}^{A^n(t)}U_{A,k,i}^n - \frac{\lambda^n}{n}\int_0^t\frac{h(\tilde{\mu}_k^n)I_k^n(s-)}{\langle h,\bar{\psi}_{1,s-}^n \rangle }ds\right)-\frac{\lambda^n}{n}\int_0^t\frac{h(\tilde{\mu}_k^n)I_k^n(s-)}{\langle h,\bar{\psi}_{1,s-}^n \rangle }ds.
\end{align*}
The second and fourth terms on the right-hand side are Poisson martingales with initial value 0. Assuming that the system is in stationarity, taking the expectation of both sides and using Fubini's theorem, we have
\begin{align*}
 \frac{\lambda^nh(\mu)}{n}\EE\left[\frac{I_k^n(\infty)}{\langle h,\bar{\psi}_{1,\infty}^n \rangle }|\mu_k=\mu\right]+\mu\EE[I_k^n(\infty)|\mu_k=\mu])=\mu.
\end{align*}
Theorem~\ref{thm:stationary_fairness1} implies that $\langle h,\bar{\psi}_\infty^n\rangle\toP \langle h,\bar{\psi}_\infty\rangle$ where the limit is deterministic. Using Lemma~\ref{lem:uniform_integrable_inverse} and the notation $L_F=\bar{\lambda}\langle h,\bar{\psi}_\infty\rangle^{-1}$, we can find an $N_\epsilon$ such that $n>N_\epsilon$ implies
\[
\left|L_Fh(\mu)\EE[I_k^n(\infty)|\mu_k=\mu]+\mu\EE[I_k^n(\infty)|\mu_k=\mu])-\mu\right|=\epsilon.
\]
Re-arranging the equation, the desired uniform convergence result holds.  
\end{proof}

\begin{proof}[Proof of Proposition~\ref{thm:interchange_idle_time_order}.]
Theorem 9 in \cite{Gopalakrishnan_etal2016} ensures that all the idle-time order based policies have the same stationary distribution. Hence, it is enough to prove the theorem only for one idle-time order based policy for each $\alpha$. The case when $\alpha=1$ follows from our Proposition~\ref{thm:interchange_generalized_rr} when taking $h(\mu)=1$ for all $\mu_{\min}\leq \mu\leq \mu_{\max}$. To prove the result for $1/2\leq \alpha<1$, we concentrate on the longest-idle-server-first policy and follow a similar approach to the proofs of Lemma 4 and Theorem 6 in \cite{bukeqin2019}. 

As in the proof of Lemma~\ref{lem:pos_conv}, we define the $i$th event epoch of the arrival process $A^n(t)$ as $\theta_{A,i}^n$ and the inter-arrival time between arrival $i-1$ and $i$ as $u_{i}^n=\theta_{A, i}^n-\theta_{A,i-1}^n$. As the arrival process is a Poisson process, $u_i^n$s are independent exponential random variables with rate $\lambda^n$. Similarly, we define the $i$th epoch of the potential service completion process, $S_k^n(t)$, of server $k$ as $\theta_{S,k,i}^n$ and the inter-event time between $(i-1)$st and $i$th epoch as $v_{k,i}^n=\theta_{S,k,i}^n-\theta_{S,k,i-1}^n$ for all $i\in \NN$, where $v_{k,i}^n$ are independent exponential random variables with rate $\tilde{\mu}_{k}^n$.  We also denote the $i$th epoch of the actual service completion process, $D_k^n(t)$, of server $k$ as $\bar{\theta}_{S,k,i}^n$. We define $\bar{\phi}_{k,i}^n$ the idling time of server $k$ after the $i$th server completion as $\bar{\phi}_{k,i}^n:=\inf\{t-\bar{\theta}_{S,k,i}^n:I_k^n(t)=0, t>\bar{\theta}_{S,k,i}^n\}$. For longest-idle-server-first, we have 
\begin{align*}
    \bar{\phi}_{k,i}^n=\sum_{j=2}^{I^n(\bar{\theta}_{S,k,i}^n)}u_{A(\bar{\theta}_{S,k,i}^n)+j}^n+(u_{A(\bar{\theta}_{S,k,i}^n)+1}-\bar{\theta}_{S, k,i}^n).
\end{align*}
Using the expression as the start point, for the $i$th event epoch of the potential service completion process $S_k^n(t)$, we associate the potential idling time of server $k$, $\phi_{k,i}^n$, defined as 
\begin{align*}
    \phi_{k,i}^n=\sum_{j=2}^{I^n(\theta_{S,k,i}^n)}u_{A(\theta_{S,k,i}^n)+j}^n+(u_{A(\theta_{S,k,i}^n)+1}-\theta_{S,k,i}^n).
\end{align*}
We also define $\phi_{-k}^n:=\inf\{t\geq 0:I_{k}^n(t)=0\}$ as the first time server $k$ is busy and 
\begin{align*}
    \phi_{-k}^n\leq \sum_{j=1}^{I^n(0)}u_{j}^n.
\end{align*} 
Similar to equation (14) in \cite{bukeqin2019}, we can write 
\begin{align}
    \nonumber\int_0^t I_k^n(s)ds&=(\phi_{-k}^n\wedge t)+\sum_{i=1}^{D_k^n(t)}(\bar{\phi}_{k,i}^n \wedge (t-\bar{\theta}_{S,k,i}^n))\\
    &=(\phi_{-k}^n\wedge t)+\sum_{i=1}^{D_k^n(t)}\bar{\phi}_{k,i}^n - \sum_{i=1}^{D_k^n(t)}(\bar{\phi}_{k,i}^n - t+\bar{\theta}_{S,k,i}^n)^+.\label{eq:idle_expansion}
\end{align}

For the proofs in this section, we assume that the system starts in stationarity. Hence, for all $t\geq 0$, we have
\[
\EE[I_k^n(\infty)|\tilde{\mu}_k^n=\mu] = \EE[I_k^n(t)|\tilde{\mu}_k^n=\mu].
\]
We need the following lemma to prove our theorem. 
\begin{lemma}\label{lem:idleness_conv_zero}
For any $k$, $I_k^n(\infty)\toP 0$ and $\EE[I_k^n(\infty)|\mu_k^n=\mu]\to 0$ as $n\to \infty$.
\end{lemma}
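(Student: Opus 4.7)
The plan is to exploit the alternating renewal structure of server $k$'s busy/idle cycles in stationarity and reduce the claim to an $o(1)$ upper bound on the expected stationary idle period. Fix $1/2\le\alpha<1$ and start the system in its stationary distribution (which exists due to the exponential-$\gamma$ reneging). Conditional on $\tilde{\mu}_k^n=\mu$, server $k$ alternates between i.i.d.~$\exp(\mu)$ busy periods and, by the strong Markov property of the underlying CTMC, i.i.d.~idle periods $\{\bar\phi_{k,i}^n\}_{i\ge 1}$, so the renewal--reward theorem yields
\[
\EE[I_k^n(\infty)\mid \tilde\mu_k^n=\mu]=\frac{\mu\,\EE[\bar\phi_{k,1}^n\mid \tilde\mu_k^n=\mu]}{1+\mu\,\EE[\bar\phi_{k,1}^n\mid \tilde\mu_k^n=\mu]}.
\]
Using the explicit LISF formula for $\bar\phi_{k,i}^n$ stated immediately before the lemma and the independence of the rate-$\lambda^n$ Poisson arrival process from the history prior to $\bar\theta_{S,k,i}^n$, one conditions on $I^n(\bar\theta_{S,k,i}^n)$ and invokes the memoryless property of the exponential inter-arrivals to obtain
\[
\EE[\bar\phi_{k,1}^n\mid \tilde\mu_k^n=\mu]=(\lambda^n)^{-1}\EE[I^n(\bar\theta_{S,k,1}^n)\mid \tilde\mu_k^n=\mu].
\]
The task is thus reduced to showing that the Palm expectation on the right-hand side is $O(n^{\alpha})$.

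To bound this Palm expectation, sample the stationary Markov process along the point process of $k$-completions, whose stochastic intensity is $\mu\,\II(I_k^n(t^-)=0)$, and apply the Papangelou-type inversion:
\[
\EE[I^n(\bar\theta_{S,k,1}^n)\mid \tilde\mu_k^n=\mu]\;\le\;\frac{\EE[I^n(\infty)\mid \tilde\mu_k^n=\mu]+1}{\PP(I_k^n(\infty)=0\mid \tilde\mu_k^n=\mu)}.
\]
The numerator is $O(n^\alpha)$ by Lemma~\ref{lem:pos_conv} and the uniform integrability established in Lemma~\ref{lem:uniform_integrability_system}, together with a one-server coupling: the system conditioned on $\tilde\mu_k^n=\mu$ is stochastically dominated, up to a $+1$, by the reduced system obtained by deleting server~$k$, which still operates in the quality-driven regime and has expected total idleness of order $n^\alpha$. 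The denominator is bounded away from $0$ by a short bootstrap: if $\PP(I_k^n(\infty)=0\mid \tilde\mu_k^n=\mu)\to 0$ along a subsequence, the renewal identity above would force $\EE[\bar\phi_{k,1}^n\mid \tilde\mu_k^n=\mu]\to\infty$, which contradicts the trivial almost-sure bound $\bar\phi_{k,1}^n\le (\sup_{[0,T]}I^n)/\lambda^n$ paired with the stochastic boundedness of $n^{-\alpha}\sup_{[0,T]}I^n$ from Lemma~\ref{lem:pos_conv} and $\lambda^n=\Theta(n)$.

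Combining the bounds yields $\EE[\bar\phi_{k,1}^n\mid \tilde\mu_k^n=\mu]=O(n^{\alpha-1})\to 0$ for $\alpha<1$, and hence $\EE[I_k^n(\infty)\mid \tilde\mu_k^n=\mu]\to 0$. Since $I_k^n(\infty)\in\{0,1\}$, this conditional expectation coincides with the conditional probability; integrating against $F$ then delivers $I_k^n(\infty)\toP 0$ unconditionally. The main technical obstacle is the coupling/Palm-inversion step that transfers the aggregate bound $\sum_k\EE[I_k^n(\infty)]=O(n^\alpha)$ to a per-server bound uniform in $\mu$ under the conditioning $\tilde\mu_k^n=\mu$; all other steps are standard renewal-theoretic and Markov-chain manipulations.
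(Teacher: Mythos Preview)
Your overall strategy---relate $\EE[I_k^n(\infty)\mid\mu]$ to the expected idle period, and bound the latter by $(\lambda^n)^{-1}$ times the number of idle servers at a service completion---is exactly the idea the paper uses, but two of your supporting steps are incorrect as written.

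First, the idle periods $\{\bar\phi_{k,i}^n\}$ are \emph{not} i.i.d.: the strong Markov property only yields that the post-$\bar\theta_{S,k,i}^n$ evolution depends on the full system state at that instant, and this state is random and correlated across $i$. The renewal identity you write is still valid, but it holds because the alternating sequence is stationary and ergodic, with $\EE[\bar\phi_{k,1}^n\mid\mu]$ interpreted as the \emph{Palm} (cycle-stationary) expectation; the ordinary renewal--reward theorem does not apply.

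Second, the ``trivial almost-sure bound'' $\bar\phi_{k,1}^n\le(\sup_{[0,T]}I^n)/\lambda^n$ is false: $\bar\phi_{k,1}^n$ is a sum of $I^n(\bar\theta_{S,k,1}^n)$ random inter-arrival times, not a deterministic multiple of $I^n$. What is true is the conditional-expectation bound $\EE[\bar\phi_{k,1}^n\mid\cF_{\bar\theta_{S,k,1}^n}]=I^n(\bar\theta_{S,k,1}^n)/\lambda^n\le N_\alpha^n/\lambda^n=O(1)$, which suffices for your bootstrap (it gives $\PP(I_k^n(\infty)=0\mid\mu)\ge(1+\mu_{\max}N_\alpha^n/\lambda^n)^{-1}$, bounded away from~$0$). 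With these two fixes your argument does go through.

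The paper avoids both complications by working with the \emph{potential} service-completion process $S_k^n$, which is an honest rate-$\mu$ Poisson process, rather than the actual departure process $D_k^n$. This lets one write $\EE\bigl[\sum_{i\le S_k^n(t)}(I^n(\theta_{S,k,i}^n-)+1)/\lambda^n\bigr]=\mu t\,\EE[I^n(\infty)+1]/\lambda^n$ directly via the compensator formula and stationarity, bypassing the Palm inversion and the need to bound $\PP(I_k^n(\infty)=0\mid\mu)$ from below. So the paper's route is shorter and avoids the two technical pitfalls above, but after correction your argument reaches the same destination.
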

\begin{proof}
We have 
\begin{align*}
 \EE\Big[\int_0^t & I_k^n(s)  ds|\tilde{\mu}_k^n=\mu\Big]
=\int_0^t\EE[I_k^n(s)|\tilde{\mu}_k^n=\mu]ds
\\
&
=\EE[(\phi_{-k}^n\wedge t)|\tilde{\mu}_k^n=\mu]+\EE\left[\sum_{i=1}^{D_k^n(t)}\bar{\phi}_{k,i}^n\wedge (t-\bar{\theta}_{S,k,i}^n)|\tilde{\mu}_k^n=\mu\right]\\
&\leq \EE[\phi_{-k}^n|\tilde{\mu}_k^n=\mu]+ \EE\left[\sum_{i=1}^{S_k^n(t)}\phi_{k,i}^n|\tilde{\mu}_k^n=\mu\right]\\
&\leq \frac{\EE[I^n(\infty)]}{\lambda^n}\\&\quad+\EE\left[\sum_{i=1}^{\infty}\II(\theta_{S,k,i}^n\leq t)\left(\sum_{j=2}^{I^n(\theta_{S,k,i}^n)}u_{A(\theta_{S,k,i}^n)+j}^n+(u_{A(\theta_{S,k,i}^n)+1}-\theta_{S,k,i}^n)\right)|\tilde{\mu}_k^n=\mu\right]\\
&\leq \frac{\EE[I^n(\infty)]}{\lambda^n}\\&
\quad+\sum_{i=1}^{\infty}\EE\left[\II(\theta_{S,k,i}^n\leq t)\EE\left[\left(\sum_{j=2}^{I^n(\theta_{S,k,i}^n)}u_{A(\theta_{S,k,i}^n)+j}^n+(u_{A(\theta_{S,k,i}^n)+1}-\theta_{S,k,i}^n)\right)|\cF_{\theta_{S,k,i}^n}-\right]|\tilde{\mu}_k^n=\mu\right]\\
&\leq \frac{\EE[I^n(\infty)]}{\lambda^n}+\EE\left[\sum_{i=1}^{S_k^n(t)}\frac{(I^n(\theta_{S,k,i}^n-)+1)}{\lambda^n}|\tilde{\mu}_k^n=\mu\right]\\
&\leq \frac{\EE[I^n(\infty)]}{\lambda^n}+\frac{\mu t\EE[I^n(\infty)+1|\tilde{\mu}_k^n=\mu]}{\lambda^n}\to 0.
\end{align*}
As $\mu\leq \mu_{\max}$, the convergence is uniform. The convergence in probability follows using Markov's inequality, which concludes the proof.\end{proof}

Our stationarity assumption combined with \eqref{eq:idle_expansion} implies that
\begin{align*}
    n^{1-\alpha}\EE[I_k^n(\infty)|\tilde{\mu}_k^n=\mu]&=\frac{n^{1-\alpha}}{t}\int_0^t\EE[I_k^n(s)|\tilde{\mu}_k^n=\mu]ds\\
    &=\frac{n^{1-\alpha}}{t}\EE[(\phi_{-k}^n\wedge t)|\tilde{\mu}_k^n=\mu]+\frac{n^{1-\alpha}}{t}\EE\left[\sum_{i=1}^{D_k^n(t)}\bar{\phi}_{k,i}^n|\tilde{\mu}_k^n=\mu\right]\\
    &\qquad \qquad-\frac{n^{1-\alpha}}{t}\EE\left[\sum_{i=1}^{D_k^n(t)}(\bar{\phi}_{k,i}^n - t+\bar{\theta}_{S,k,i}^n)^+|\tilde{\mu}_k^n=\mu\right].
\end{align*}
Hence, for all $t\geq 0$, we can bound $n^{1-\alpha}\EE[I_k^n(\infty)|\tilde{\mu}_k^n=\mu]$ as
\begin{align}
    \label{eq:individual_idle_LISF_lb}n^{1-\alpha}\EE[I_k^n(\infty)|\tilde{\mu}_k^n=\mu]&\geq \frac{n^{1-\alpha}}{t}\EE\left[\sum_{i=1}^{D_k^n(t)}\phi_i^n|\tilde{\mu}_k^n=\mu\right]-\frac{n^{1-\alpha}}{t}\EE\left[\sum_{i=1}^{D_k^n(t)}(\phi_i^n-(t-\bar{\theta}_{S,k,i}^n))^+|\tilde{\mu}_k^n=\mu\right],
    \\
    \label{eq:individual_idle_LISF_ub}n^{1-\alpha}\EE[I_k^n(\infty)|\tilde{\mu}_k^n=\mu]&\leq\frac{n^{1-\alpha}}{t}\EE\left[\sum_{i=1}^{D_k^n(t)}\phi_i^n|\tilde{\mu}_k^n=\mu\right] + \frac{n^{1-\alpha}}{t}\EE[\phi_{-k}^n|\tilde{\mu}_k^n=\mu].
\end{align}
 The first terms on the right-hand sides of \eqref{eq:individual_idle_LISF_lb} and \eqref{eq:individual_idle_LISF_ub} are the same and we first concentrate on this. We have then 
 \begin{align*}
 \frac{n^{1-\alpha}}{t}\EE\left[\sum_{i=1}^{D_k^n(t)}\phi_i^n|\tilde{\mu}_k^n=\mu\right] &= \frac{n^{1-\alpha}}{t}\EE\left[\sum_{i=1}^{S_k^n(t)}\sum_{j=1}^{I^n(\bar{\theta}_{S,k,i}^n)}u_{A(\bar{\theta}_{S,k,i}^n)+j}^n|\tilde{\mu}_k^n=\mu\right]\\
 &= \frac{n^{1-\alpha}}{t}\EE\left[\sum_{i=1}^{S_k^n(t)}(1-I_k^n(\theta_{S,k,i}^n-))\sum_{j=1}^{I^n(\theta_{S,k,i}^n)}u_{A(\theta_{S,k,i}^n)+j}^n|\tilde{\mu}_k^n=\mu\right]\\
 &=\frac{n^{1-\alpha}}{t}\EE\left[\sum_{i=1}^{S_k^n(t)}\sum_{j=1}^{I^n(\theta_{S,k,i}^n)}u_{A(\theta_{S,k,i}^n)+j}^n|\tilde{\mu}_k^n=\mu\right]\\&\qquad-\frac{n^{1-\alpha}}{t}\EE\left[\sum_{i=1}^{S_k^n(t)}I_k^n(\theta_{S,k,i}^n-)\sum_{j=1}^{I^n(\theta_{S,k,i}^n)}u_{A(\theta_{S,k,i}^n)+j}^n|\tilde{\mu}_k^n=\mu\right]\\
 &=\frac{n^{1-\alpha}}{t}\EE\left[\int_0^t\sum_{j=1}^{I^n(s-)+1}u_{A(s-)+j}^ndS_k^n(s)|\tilde{\mu}_k^n=\mu\right]\\
 &\qquad- \frac{n^{1-\alpha}}{t}\EE\left[\int_0^t\left(I_k^n(s-)\sum_{j=1}^{I^n(s-)}u_{A(s-)+j}^n\right)dS_k^n(s)|\tilde{\mu}_k^n=\mu\right]\\
 &=\frac{n\mu}{\lambda^n t}\EE\left[\int_0^t(\hat{I}_\alpha^n(s-)+n^{-\alpha})ds|\tilde{\mu}_k^n=\mu\right]\\
 &\qquad -\frac{n\mu}{\lambda^n t}\EE\left[\int_0^tI_k^n(s-)(\hat{I}_\alpha^n(s-)+n^{-\alpha})ds|\tilde{\mu}_k^n=\mu\right].
 \end{align*}
Using the stochastic boundedness of $\hat{I}_\alpha^n$ and Lemma~\ref{lem:idleness_conv_zero}, the second term converges to 0. Using the stationarity assumption, 
\begin{equation*}
    \sup_{\mu_{\min}\leq \mu\leq \mu_{\max}}\left|\frac{n\mu}{\lambda^n t}\EE\left[\int_0^t(\hat{I}_\alpha^n(s-)+n^{-\alpha})ds|\tilde{\mu}_k^n=\mu\right]- \frac{\mu}{\bar{\lambda}}\EE[\hat{I}_\alpha^n(\infty)|\tilde{\mu}_k^n=\mu]\right|\to 0.
\end{equation*}
 Next, we concentrate on the second term on the right-hand side of \eqref{eq:individual_idle_LISF_lb}. Each summand corresponds to the remaining idling time from the service completion at $\bar{\theta}_{S,k,i}^n$. This implies only the summand corresponding to the last service completion is positive and 
 \[
 (\phi_i^n-(t-\bar{\theta}_{S,k,i}^n))^+\leq \sum_{j=1}^{I^n(t)}u_{A^n(t)+j}.
 \]
 Hence,
 \begin{align*}
     \frac{n^{1-\alpha}}{t}\EE\left[\sum_{i=1}^{D_k^n(t)}(\phi_i^n-(t-\bar{\theta}_{S,k,i}^n))^+|\mu_k^n=\mu\right]&\leq \frac{n^{1-\alpha}}{t}\EE\left[\sum_{j=1}^{I^n(t)}u_{A^n(t)+j}|\mu_k^n=\mu\right]\\
     &=\frac{n^{1-\alpha}}{t}\EE\left[\EE\left[\sum_{j=1}^{I^n(t)}u_{A^n(t)+j}|\cF_t\right]|\mu_k^n=\mu\right]\\
     &=\frac{n}{\lambda^nt}\EE\left[\hat{I}_\alpha^n(t)|\mu_k^n=\mu\right]\\
     &=\frac{n}{\lambda^nt}\EE\left[\hat{I}_\alpha^n(\infty)|\mu_k^n=\mu\right].
 \end{align*}
 Similarly, we can bound the second term on the right-hand side of \eqref{eq:individual_idle_LISF_ub} as 
 \begin{align*}
     \frac{n^{1-\alpha}}{t}\EE[\phi_{-k}^n|\tilde{\mu}_k^n=\mu]&\leq \frac{n^{1-\alpha}}{t}\EE\left[\sum_{j=1}^{I^n(\infty)}u_{j}^n|\tilde{\mu}_k^n=\mu\right]
    %  \\
    %  &
     =\frac{n}{\lambda^nt}\EE\left[\hat{I}_\alpha^n(\infty)|\mu_k^n=\mu\right].
 \end{align*}
Taking $t$ large enough, both terms can be made arbitrarily small and hence \eqref{eq:individual_idle_LISF_lb} and \eqref{eq:individual_idle_LISF_ub} implies
\[
 \sup_{\mu_{\min}\leq \mu\leq \mu_{\max}}\left|n^{1-\alpha}\EE[I_k^n(\infty)|\tilde{\mu}_k^n=\mu]- \frac{\mu}{\bar{\lambda}}\EE[\hat{I}_\alpha(\infty)|\tilde{\mu}_k^n=\mu]\right|\to 0.
\]
A careful investigation of the proof of Theorem~\ref{thm:system_convergence}, i.e., repeating the same proof with replacing $\tilde{\mu}_k^n=\mu$ for a specific $k$, reveals that $\EE[\hat{I}_\alpha(\infty)|\tilde{\mu}_k^n=\mu] = \EE[\hat{I}_\alpha(\infty)]$, which implies the result for $\alpha=1/2$. Proposition~\ref{thm:specialpolicies} implies that $\langle \iota, \eta_{LISF,\infty}\rangle =\frac{\sigma_F^2 + \mu_F^2}{\mu_F}$ for $\alpha<1$. Replacing this to equation~\eqref{eq:stationary_idle_length}, we get
\[
\sup_{\mu_{\min}\leq \mu\leq \mu_{\max}}\left|n^{1-\alpha}\EE[I_k^n(\infty)|\tilde{\mu}_k^n=\mu]- \mu\beta\bar{\lambda}^{\alpha-1}(\bar{\mu}_F^{-\alpha}\sigma_F^2+\bar{\mu}_F^{2-\alpha})\right|\to 0,\quad \mbox{ for all }1/2< \alpha<1
\]
Hence, the result follows. \end{proof}

\begin{proof}[Proof of Lemma~\ref{lem:utility_concavity}.]
The utility of idleness function $u_I(\cdot)$ is increasing and concave.  Hence, to prove the concavity of the first part, it is enough to prove that the idleness experienced by a server with rate $\mu$, $(1+L_{F^{(0)}}\tilde{h}(\mu))^{-1}$, is concave see, e.g., page 86 in \cite{boydvandenberghe2004}. Taking the second derivative yields 
\begin{align*}
    \frac{d^2(1+L_{F^{(0)}}\tilde{h}(\mu))^{-1}}{d\mu^2}=\frac{L_{F^{(0)}}(L_{F^{(0)}}(2\tilde{h}'(\mu)^2-\tilde{h}(\mu)\tilde{h}''(\mu))-\tilde{h}''(\mu))}{(1+L_{F^{(0)}}\tilde{h}(\mu))^3}.
\end{align*}
Using the convexity of $\tilde{h}(\mu)$, we can conclude that the second derivative is negative for the idleness experienced by a server with rate $\mu$ is concave under \eqref{eq:necessary_concavity} and hence, the result follows.   \end{proof}

\begin{proof}[Proof of Theorem~\ref{thm:fixed_point_equation}.]
As $L_F$ uniquely characterizes the distribution of idleness among servers, our result will follow if the distribution of service rates resulting from $L_F$ also yields the same $L_F$ for the $h$-random policy under consideration. Plugging in \eqref{eq:g_equation}, this can be written as
\[
\int_{\mu_{\min}}^{\mu_{\max}}\mu\frac{1+\beta}{\bar{\mu}_F(1+L_F\tilde{h}(\mu))}dF(\mu|L_F)=\beta.
\]
Taking $\bar{\mu}_F$ to the right-hand side and writing it as an integral, we get
\[
\int_{\mu_{\min}}^{\mu_{\max}}\mu\frac{1+\beta}{1+L_F\tilde{h}(\mu)}dF(\mu)=\beta\int_{\mu_{\min}}^{\mu_{\max}}\mu dF(\mu|L_F).
\]
Re-arranging the terms, we get the desired result.
\end{proof}

We now provide an explicit expression for \eqref{eq:g_equilibrium_equation} when the random trade-off parameter $\tilde{a}_k^n$ is a continuous random variable with density $f_a(\cdot)$ and cumulative distribution $F_a(\cdot)$ and  all servers have identical lower and upper bounds, i.e., $\tilde{\mu}_{\min,k}^n = \mu_{\min}$ and $\tilde{\mu}_{\max,k}^n = \mu_{\max}$ w.p.~1 for any $k,n\in\NN$. 
Using~\eqref{eq:distribution_function} along with the change of variables formula, the equilibrium equation \eqref{eq:g_equilibrium_equation} for this special case reduces to 
\begin{align*}
&\mu_{\min}\frac{1-\beta L_F\tilde{h}(\mu_{\min})}{1+L_F\tilde{h}(\mu_{\min})}\left(1-F_a\left(\frac{L_F}{(\mu_{\min} +L_F)^2c'(\mu_{\min})}\right)\right)\\&\qquad + 
\int_{\mu_{\min}}^{\mu_{\max}}\mu\frac{(1-\beta L_F\tilde{h}(\mu))(c'(\mu)+(\mu+ L_F)c''(\mu))}{(1+L_F\tilde{h}(\mu))^4(c'(\mu))^2}f_a\left(\frac{L_F}{(\mu +L_F)^2c'(\mu)}\right)d\mu\\
&\qquad\qquad\qquad\qquad\qquad\qquad\qquad\qquad + \mu_{\max}\frac{1-\beta L_F\tilde{h}(\mu_{\max})}{1+L_F\tilde{h}(\mu_{\max})}F_a\left(\frac{L_F}{(\mu_{\max} +L_F)^2c'(\mu_{\max})}\right) = 0.
\end{align*}

\begin{proof}[Proof of Proposition~\ref{prop:existence}.]
As $a_k^n$ is a continuous random variable, using the continuity of $C(\mu,L_{F^{(0)}})$ we can conclude that if $L_{i}\to L_{F^{(0)}}$, we have $F(\mu|L_{i})\to F(\mu|L_{F^{(0)}})$ for all $\mu\in[\mu_{\min}, \mu_{\max}]$. Hence, 
\begin{align*}
&\left|\int_{\mu_{\min}}^{\mu_{\max}}\mu\frac{1-\beta L_i\tilde{h}(\mu)}{1+L_i\tilde{h}(\mu)}dF(\mu|L_i)-\int_{\mu_{\min}}^{\mu_{\max}}\mu\frac{1-\beta L_{F^{(0)}}\tilde{h}(\mu)}{1+L_{F^{(0)}}\tilde{h}(\mu)}dF(\mu|L_{F^{(0)}})\right|\\
&\qquad\qquad\leq \left|\int_{\mu_{\min}}^{\mu_{\max}}\mu\frac{1-\beta L_i\tilde{h}(\mu)}{1+L_i\tilde{h}(\mu)}dF(\mu|L_i)-\int_{\mu_{\min}}^{\mu_{\max}}\mu\frac{1-\beta L_{F^{(0)}}\tilde{h}(\mu)}{1+L_{F^{(0)}}\tilde{h}(\mu)}dF(\mu|L_{F_i})\right|\\
&\qquad\qquad\quad + \left|\int_{\mu_{\min}}^{\mu_{\max}}\mu\frac{1-\beta L_{F^{(0)}}\tilde{h}(\mu)}{1+L_{F^{(0)}}\tilde{h}(\mu)}dF(\mu|L_i)-\int_{\mu_{\min}}^{\mu_{\max}}\mu\frac{1-\beta L_{F^{(0)}}\tilde{h}(\mu)}{1+L_{F^{(0)}}\tilde{h}(\mu)}dF(\mu|L_{F^{(0)}})\right|.
\end{align*}
As $i\to \infty$, the first term on the right-hand side converges due to continuity of the integrand, and the second term converges using the definition of weak convergence, which implies the left-hand side of \eqref{eq:g_equilibrium_equation} is continuous with respect to $L_F$. Now, if $L_F=1/(\beta \tilde{h}(\mu_{\min}))$, the integrand of \eqref{eq:g_equilibrium_equation} is non-negative for all $\mu\in[\mu_{\min}, \mu_{\max}]$ and $\tilde{h}(\mu)$ being strictly decreasing, implies that the integral is positive for all $L_F\leq 1/(\beta \tilde{h}(\mu_{\min}))$. Similarly, if $L_F=1/(\beta \tilde{h}(\mu_{\max}))$, the integrand is non-positive and the integral is negative. Using the intermediate value theorem, the result follows. \end{proof} 

\begin{proof}[Proof of Proposition~\ref{prop:uniqueness}.]
The proof of Proposition~\ref{prop:existence} relies on left-hand side of \eqref{eq:g_equilibrium_equation} being continuous and positive when $L_F = 1/(\beta \tilde{h}(\mu_{\min}))$ and negative when $L_F = 1/(\beta \tilde{h}(\mu_{\max}))$. The uniqueness follows if we can show that the left-hand side is strictly decreasing with respect to $L_F$. We first show that the integrand is strictly decreasing with respect to $L_F$ for any given $\mu\in [\mu_{\min},\mu_{\max}]$. Consider $L_1$ and $L_2$ such that $1/(\beta \tilde{h}(\mu_{\min}))\leq L_1<L_2\leq 1/(\beta \tilde{h}(\mu_{\max}))$. 
\begin{align}
    \nonumber \mu\frac{1-\beta L_2\tilde{h}(\mu)}{1+L_2\tilde{h}(\mu)} - \mu\frac{1-\beta L_1\tilde{h}(\mu)}{1+L_1\tilde{h}(\mu)} &= \mu \frac{\tilde{h}(\mu)(L_1 - L_2)(1 + \beta)}{\big(1 + L_2\tilde{h}(\mu)\big)\big(1 + L_1\tilde{h}(\mu)\big)}\\
     &\leq \mu_{\min} \frac{\tilde{h}(\mu_{\max})(L_1 - L_2)(1 + \beta)}{\big(1 + L_2\tilde{h}(\mu_{\min})\big)\big(1 + L_1\tilde{h}(\mu_{\min})\big)}<0. \label{eq:monotonicity_L}
\end{align}
Now, consider two random variables $\tilde{\mu}^1$ and $\tilde{\mu}^2$ distributed with $F(\mu|L_1)$ and $F(\mu|L_2)$, respectively. The definition of $F(\mu|L_2)$ in \eqref{eq:distribution_function} and $C(\mu,L_F)$ being non-increasing with respect to $L_F$ imply that $\tilde{\mu}^1\geq_{st} \tilde{\mu}^2$. Defining
\[
\tilde{Y}(\mu, L_F) = \mu\frac{1-\beta L_F\tilde{h}(\mu)}{1+L_F\tilde{h}(\mu)},
\]
the integrand being non-decreasing imply $Y(\tilde{\mu}^1, L_F)\geq_{st} Y(\tilde{\mu}^2, L_F)$ for any $L_F$. Hence,
\begin{align*}
       \int_{\mu_{\min}}^{\mu_{\max}}\mu\frac{1-\beta L_1\tilde{h}(\mu)}{1+L_1\tilde{h}(\mu)}dF(\mu|L_1) = \EE[\tilde{Y}(\tilde{\mu}^1, L_1)]&>\EE[\tilde{Y}(\tilde{\mu}^1, L_2)]\\&\geq \EE[\tilde{Y}(\tilde{\mu}^2, L_2)] =  \int_{\mu_{\min}}^{\mu_{\max}}\mu\frac{1-\beta L_2\tilde{h}(\mu)}{1+L_2\tilde{h}(\mu)}dF(\mu|L_2),
\end{align*}
where the strict inequality follows from \eqref{eq:monotonicity_L} and this proves that there exists a unique solution. 
\end{proof}

One can easily check the condition in Part 1 of Proposition~\ref{prop:uniqueness} by taking derivative and checking
\begin{align}
    \frac{\partial}{\partial \mu}\mu\frac{1-\beta L_F\tilde{h}(\mu)}{1+L_F\tilde{h}(\mu)} &= \frac{ 1 + (1 - \beta) L_F \tilde{h}(\mu) - \beta (L_F \tilde{h}(\mu))^2 - \mu (\beta + 1) L_F \tilde{h}'(\mu)
}{(1+L_F\tilde{h}(\mu))^2} \geq 0.
\label{eq:uniqueness_c1_modified}
\end{align}
The denominator is always positive and hence, it is enough to ensure that the numerator is non-negative. Now, we check when \eqref{eq:uniqueness_c1_modified} holds when $\tilde{h}(\mu)=\mu^{-p}$ for $0<p\leq 1$, which satisfies Assumption~\ref{asm:h_convexity}. Replacing in \eqref{eq:uniqueness_c1_modified} and multiplying both sides with $\mu^{2p}/(L_F^2)$, we get 
\[
 \frac{\mu^{2p}}{L_F^2} + \Big( (1 - \beta) + (\beta + 1)p \Big) \frac{\mu^p}{L_F} - \beta \geq 0.
\]
For any fixed $L_F$, the left-hand side is a quadratic function of $\mu^p/L_F$ increasing to $\infty$ as $\mu\to\infty$ with two real roots. Hence, if both roots of this quadratic function is less than the lowest possible value of $\mu^p/L_F$ which is $\beta(\mu_{\min}/\mu_{\max})^p$, i.e.,
\[
 \beta\left(\frac{\mu_{\min}}{\mu_{\max}}\right)^p \geq  \frac{- \Big( (1 - \beta) + (\beta + 1)p \Big) + \sqrt{\Big( (1 - \beta) + (\beta + 1)p \Big)^2 + 4\beta}}{2}.
\]
For example, for $p=1$, which corresponds to the uniformly random routing, the condition reduces to 
\[
\frac{\mu_{\min}}{\mu_{\max}}\geq \frac{-1 + \sqrt{1+\beta}}{\beta}.
\]

The term $C(\mu,L_F)$ involves the utility of idleness function $u_I(I)$ and, we now demonstrate how Part 2 of Proposition~\ref{prop:uniqueness} can be verified for $u_I(I) = I$, which is the case in ~\cite{Gopalakrishnan_etal2016}. Replacing these in $C(\mu,L_F)$ and noting that $\tilde{h}(\mu)$ and $c(\mu)$ are strictly decreasing and increasing functions with respect to $\mu$, respectively, we need to show that 
\[
\frac{\partial}{\partial L_F}\frac{L_F}{(1 + L_F \tilde{h}(\mu))^2} = \frac{1 - L_F \tilde{h}(\mu)}{(1 + L_F \tilde{h}(\mu))^3}\leq 0,
\]
for all $\mu\in[\mu_{\min}, \mu_{\max}]$ and $L_F\in[1/\beta \tilde{h}(\mu_{\min}), 1/\beta \tilde{h}(\mu_{\max})]$, which is satisfied if
\[
\frac{h(\mu_{\max})}{h(\mu_{\min})}\geq \beta.
\]

\section{Additional Numerical Results}\label{sec:additional_numerics}

In this section, we provide further numerical results to demonstrate robustness of the results presented in Section~\ref{sec:numerical}. We again use the base-case scenario presented therein and state the changing parameter.

As our first experiment, we study the existence of uniqueness of a solution for \eqref{eq:g_equilibrium_equation}. Proposition~\ref{prop:existence} shows that Equation \eqref{eq:g_equilibrium_equation} has a solution, possibly non-unique,  in the interval suggested by Lemma~\ref{lem:L_bound}.
Figure~\ref{fg:L_var} illustrates the value of the integral in~\eqref{eq:g_equilibrium_equation} as a function of $L_F$, and shows all these functions to be equal to $0$ for a unique value of $L_F$ for various values of $p,q,r$ and $\beta$. 
% \begin{figure}
%     \caption{Integral on the left-hand side of \eqref{eq:g_equilibrium_equation} as function of $L_F$ for various values of $r$ and $\beta$.} 
%     "Fig1a=\ref{fg:integral-r}" and "Fig1=\ref{fg:L_var}"}\\
%     \subfloat[\label{fg:integral-r}Integral vs. $L_F$ varying $r$ values in the base-case]{%
%   \includegraphics[width=0.5\textwidth]{L_integral_var_q.png}}
%   \subfloat[\label{fg:L_var}Integral vs. $L_F$ varying $\beta$ values in the base-case]{%
%   \includegraphics[width=0.5\textwidth]{L_integral_var_beta.png}}
% \end{figure}

 \begin{figure}
\caption{Integral on the left-hand side of \eqref{eq:g_equilibrium_equation} as function of $L_F$ for various values of $r$ and $\beta$.}
    \centering
    \begin{tabular}{cc}
    \subfloat[Integral vs. $L_F$ varying $\beta$ \label{fg:integral-beta}]{%
  \includegraphics[width=0.35\textwidth]{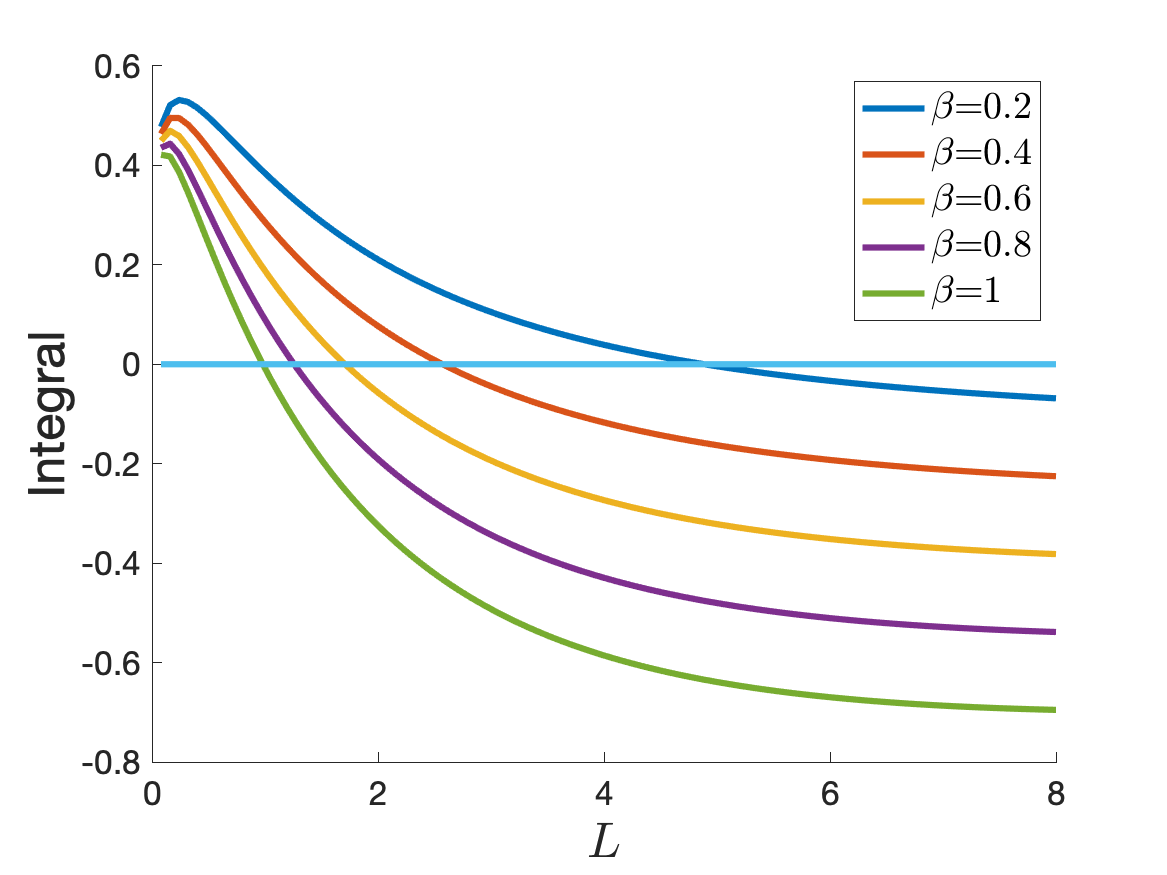}}&
  \subfloat[Integral vs. $L_F$ varying $p$ \label{fg:integral-p}]{%
  \includegraphics[width=0.35\textwidth]{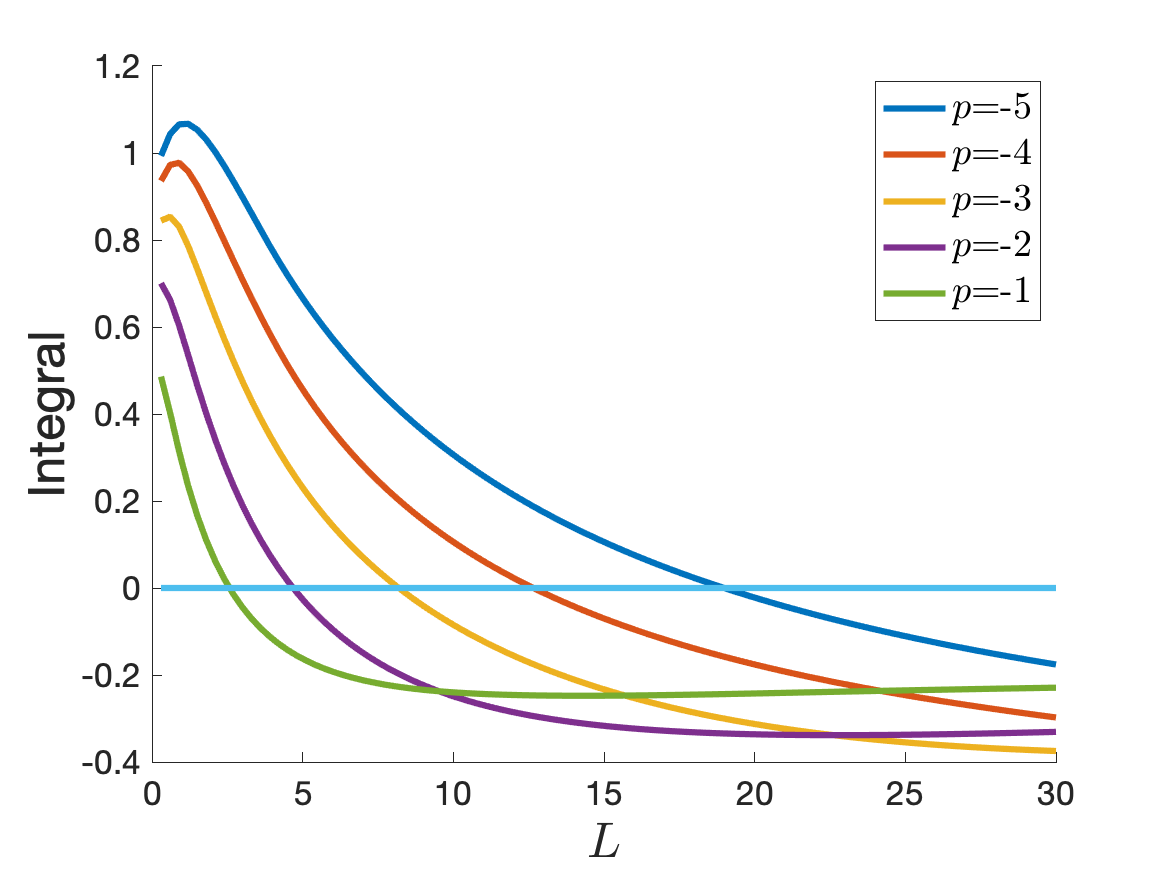}}\\
    \subfloat[Integral vs. $L_F$ varying $q$ \label{fg:integral-q}]{%
  \includegraphics[width=0.35\textwidth]{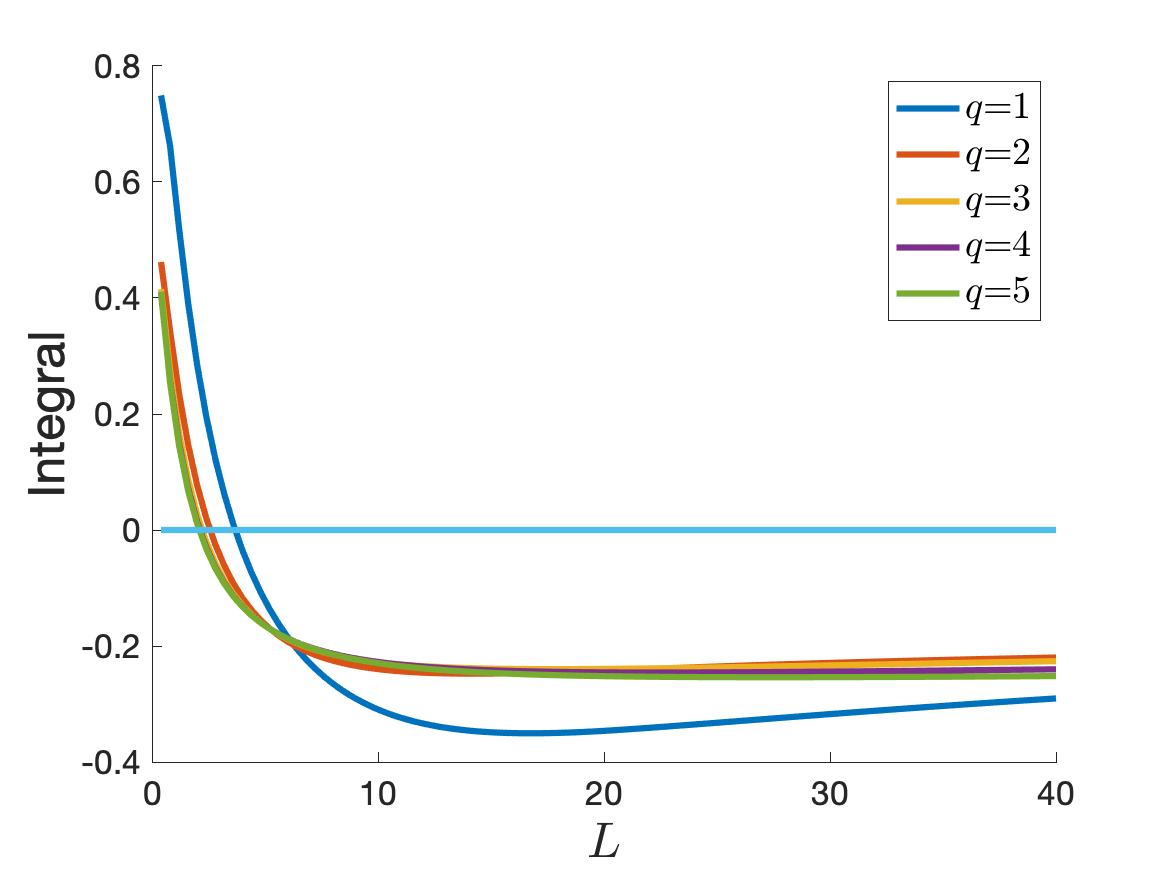}}&
   \subfloat[Integral vs. $L_F$ varying $r$ \label{fg:integral-r}]{%
  \includegraphics[width=0.35\textwidth]{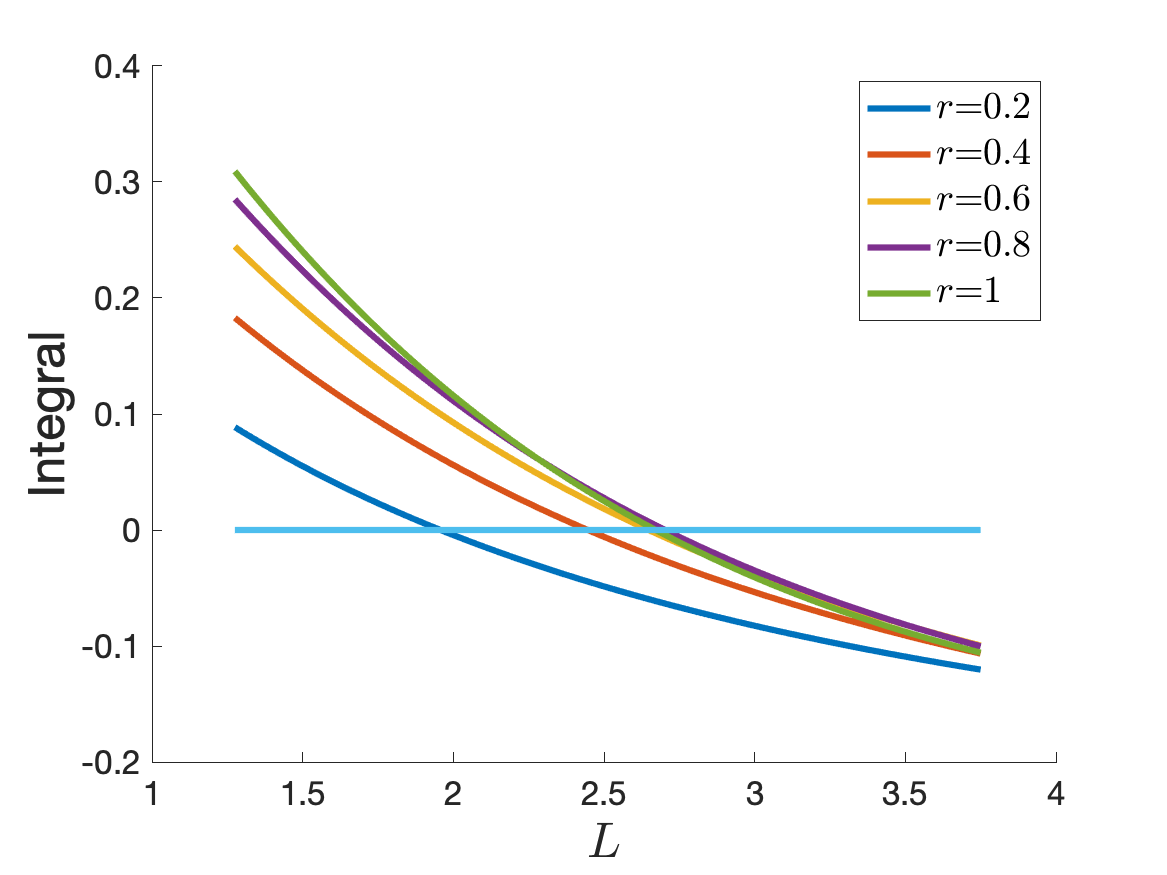}}
  \end{tabular}
  \label{fg:L_var}
 \end{figure}

We now show that the results presented in Figure~\ref{fg:L_var} is robust for other distributions of the trade-off parameter $\tilde{a}_k^n$. We consider three triangular distributions: (a) a symmetric triangular distribution (b) a triangular distribution with increasing density and (c) a triangular distribution with a decreasing density as presented in Figure~\ref{fg:triangular_distributions}. Figures~\ref{fg:tri_L_var}-\ref{fg:tri2_beta_vs_N_mu} are very similar in nature to Figure~\ref{fg:L_var}-\ref{fg:beta_vs_N_mu} and shows that uniform distribution assumption is not restrictive. On the other hand, the distributions illustrated in Figures~\ref{fg:tri_mu_distributions}-\ref{fg:tri2_mu_distributions} differ in shape compared to Figure~\ref{fg:mu_distributions}. However, the effect of parameters on the scale and location of the equilibrium distributions do not exhibit sensitivity to the underlying distribution. 
\begin{figure}
    \caption{A symmetric triangular distribution.}
    \centering
    \begin{tabular}{ccc}
    \subfloat[A symmetric triangular distribution\label{fg:sym_triangular}]{%
  \includegraphics[width=0.3\textwidth]{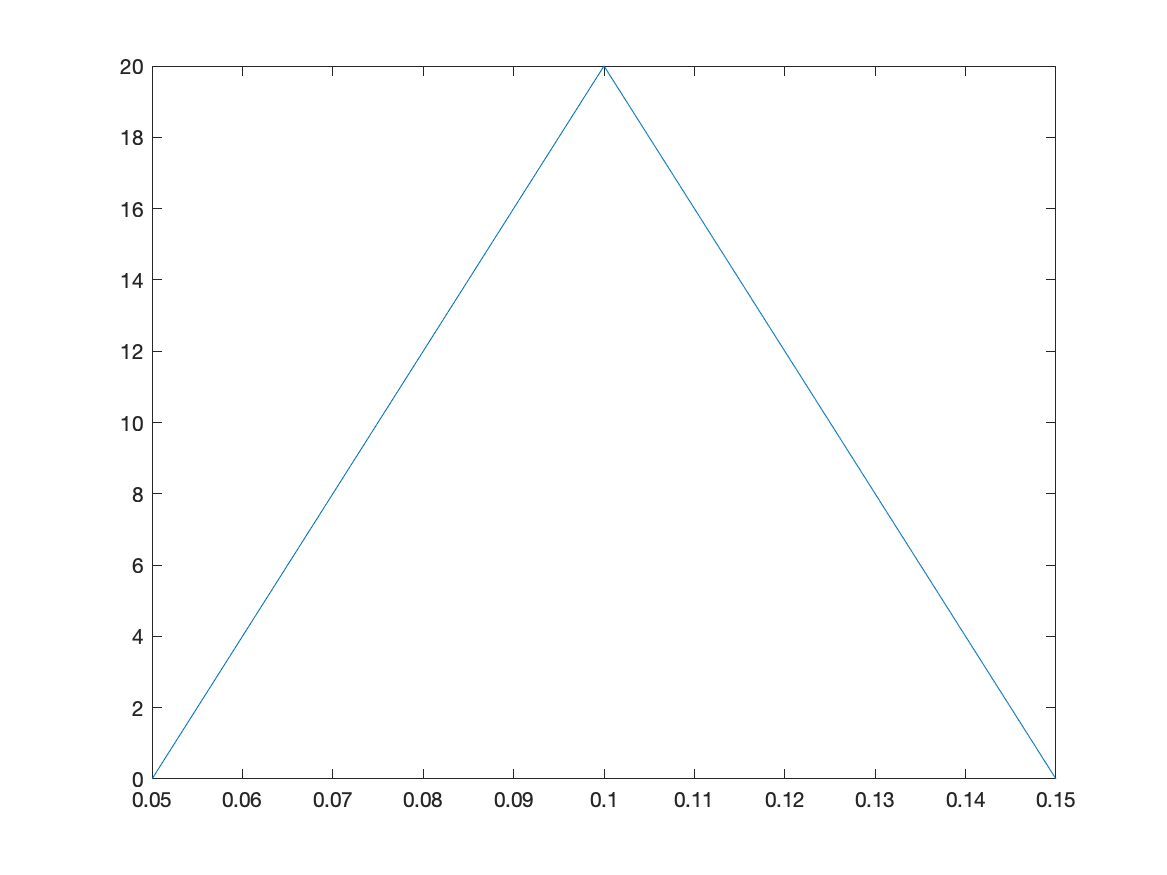}} &  \subfloat[An increasing triangular distribution\label{fg:triangular}]{%
  \includegraphics[width=0.3\textwidth]{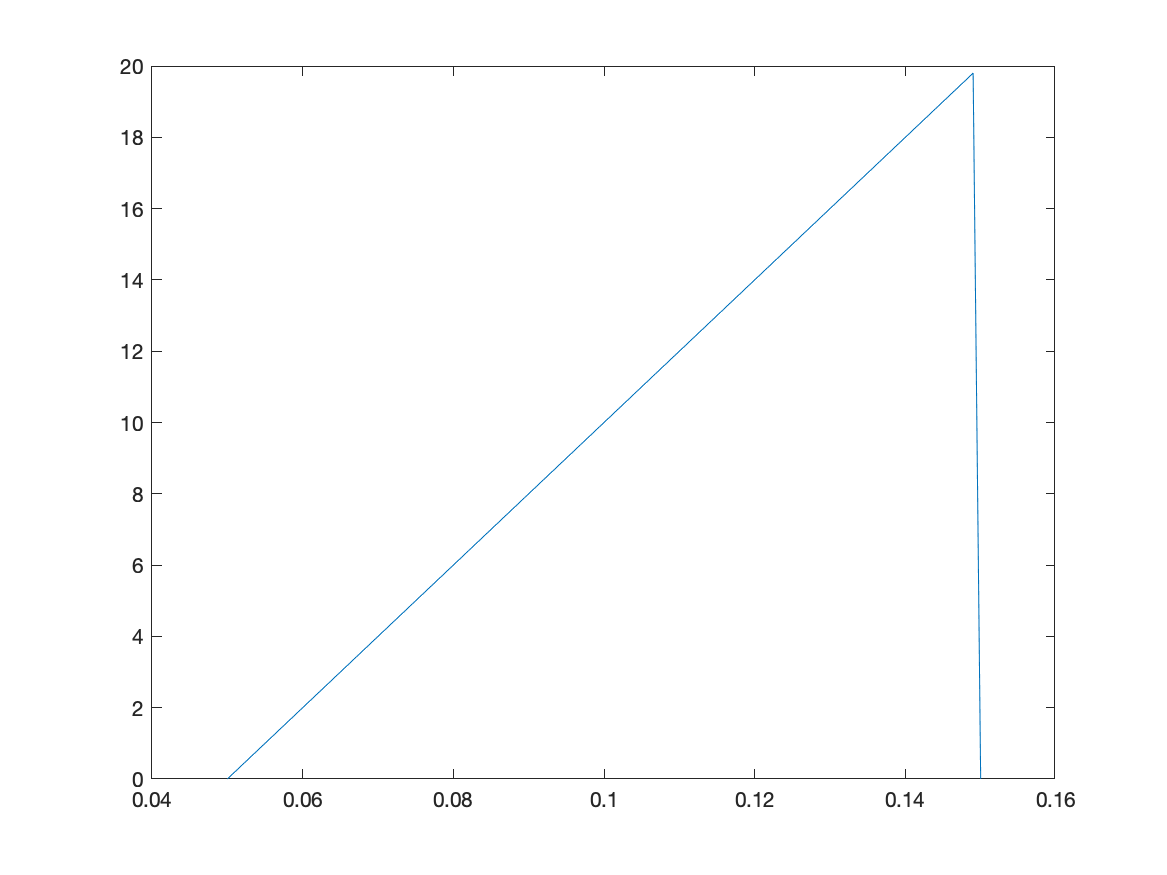}} &\subfloat[A decreasing triangular distribution\label{fg:dec_triangular}]{%
  \includegraphics[width=0.3\textwidth]{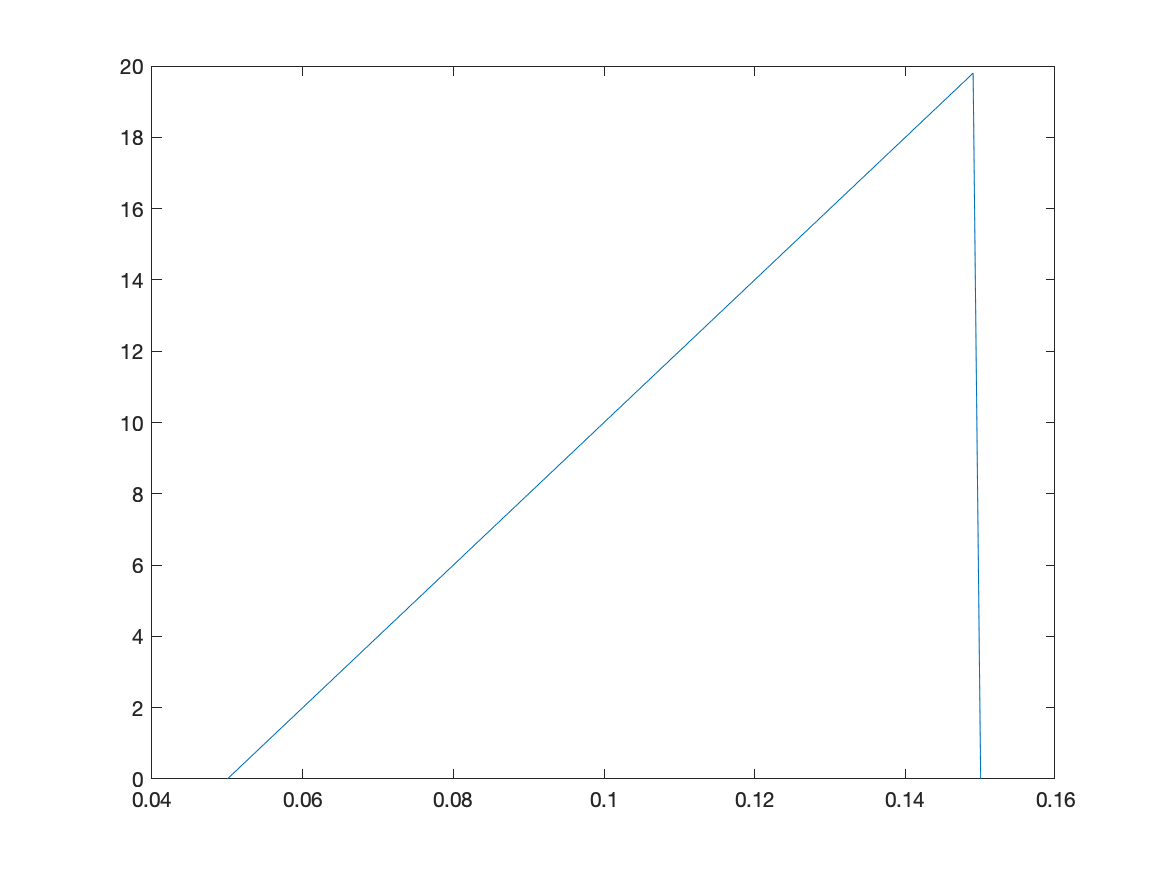}}
  \end{tabular}
  \label{fg:triangular_distributions}
 \end{figure}

  \begin{figure}
\caption{Integral on the left-hand side of \eqref{eq:g_equilibrium_equation} as function of $L_F$ for various values of $r$ and $\beta$ for the symmetric triangular distribution in Figure~\ref{fg:sym_triangular}}
    \centering
    \begin{tabular}{cc}
    \subfloat[Integral vs. $L_F$ varying $\beta$ in the base-case\label{fg:tri_integral-beta}]{%
  \includegraphics[width=0.4\textwidth]{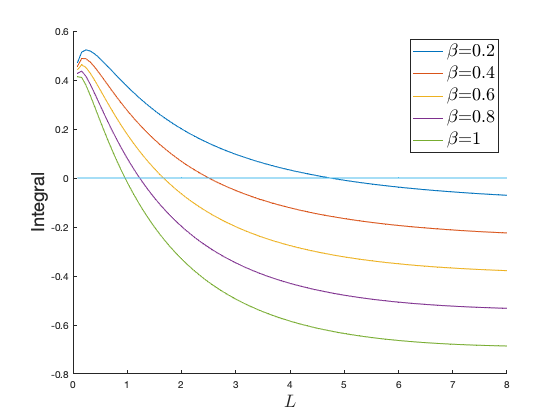}}&
  \subfloat[Integral vs. $L_F$ varying $p$ in the base-case\label{fg:tri_integral-p}]{%
  \includegraphics[width=0.4\textwidth]{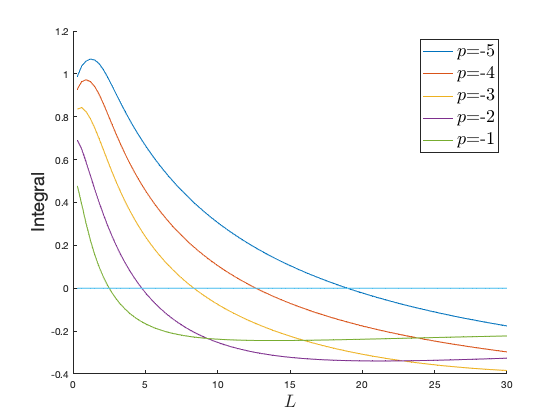}}\\
    \subfloat[Integral vs. $L_F$ varying $q$ in the base-case\label{fg:tri_integral-q}]{%
  \includegraphics[width=0.4\textwidth]{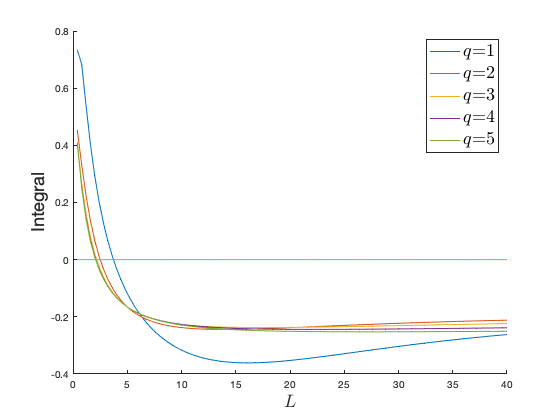}}&
   \subfloat[Integral vs. $L_F$ varying $r$ in the base-case\label{fg:tri_integral-r}]{%
  \includegraphics[width=0.4\textwidth]{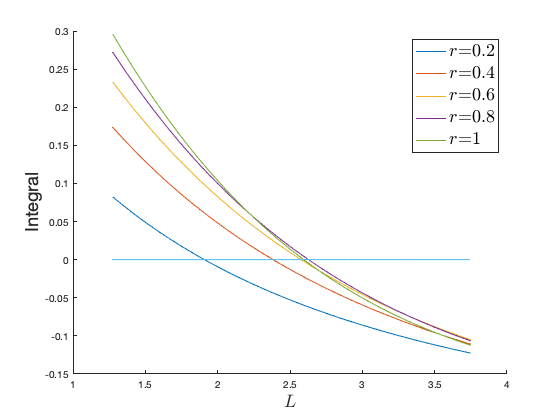}}
  \end{tabular}
  \label{fg:tri_L_var}
 \end{figure}

  \begin{figure}
\caption{Integral on the left-hand side of \eqref{eq:g_equilibrium_equation} as function of $L_F$ for various values of $r$ and $\beta$ for the triangular distribution with increasing density in Figure~\ref{fg:triangular}}
    \centering
    \begin{tabular}{cc}
    \subfloat[Integral vs. $L_F$ varying $\beta$ in the base-case\label{fg:tri1_integral-beta}]{%
  \includegraphics[width=0.4\textwidth]{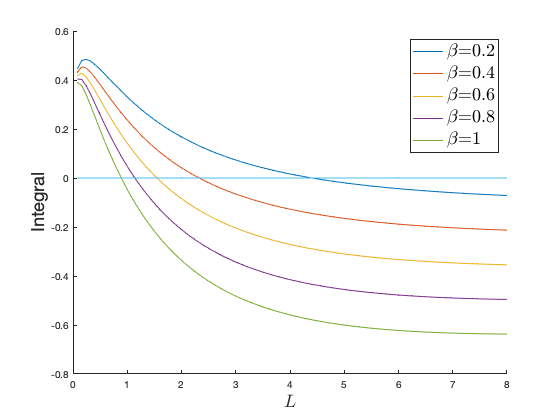}}&
  \subfloat[Integral vs. $L_F$ varying $p$ in the base-case\label{fg:tri1_integral-p}]{%
  \includegraphics[width=0.4\textwidth]{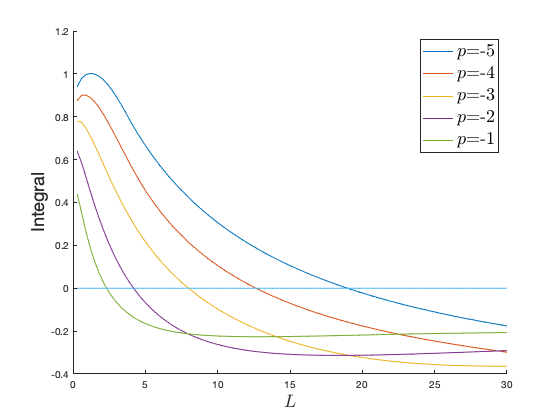}}\\
    \subfloat[Integral vs. $L_F$ varying $q$ in the base-case\label{fg:tri1_integral-q}]{%
  \includegraphics[width=0.4\textwidth]{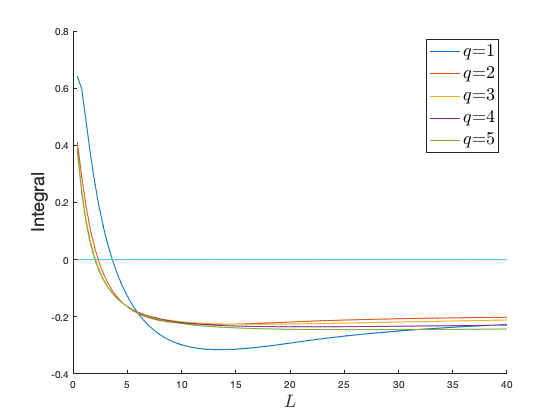}}&
   \subfloat[Integral vs. $L_F$ varying $r$ in the base-case\label{fg:tri1_integral-r}]{%
  \includegraphics[width=0.4\textwidth]{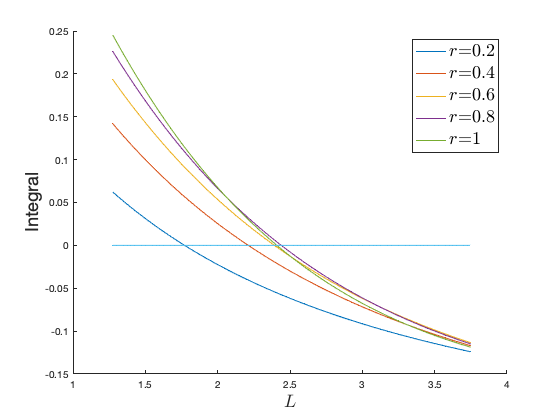}}
  \end{tabular}
  \label{fg:tri1_L_var}
 \end{figure}

  \begin{figure}
\caption{Integral on the left-hand side of \eqref{eq:g_equilibrium_equation} as function of $L_F$ for various values of $r$ and $\beta$ for the triangular distribution with decreasing density in Figure~\ref{fg:dec_triangular}}
    \centering
    \begin{tabular}{cc}
    \subfloat[Integral vs. $L_F$ varying $\beta$ in the base-case\label{fg:tri2_integral-beta}]{%
  \includegraphics[width=0.4\textwidth]{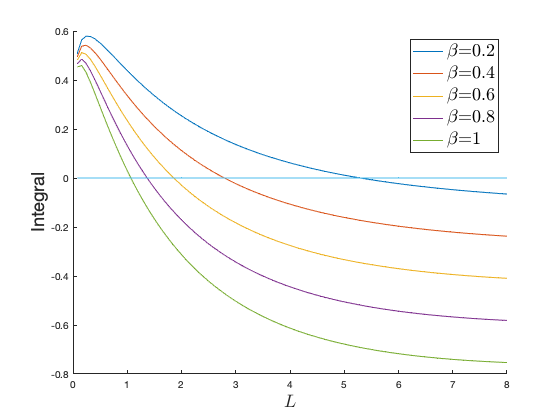}}&
  \subfloat[Integral vs. $L_F$ varying $p$ in the base-case\label{fg:tri2_integral-p}]{%
  \includegraphics[width=0.4\textwidth]{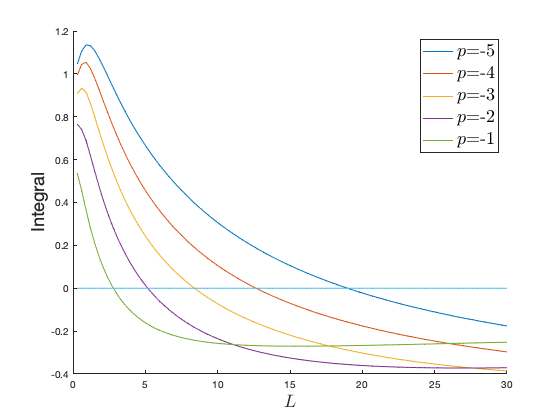}}\\
    \subfloat[Integral vs. $L_F$ varying $q$ in the base-case\label{fg:tri2_integral-q}]{%
  \includegraphics[width=0.4\textwidth]{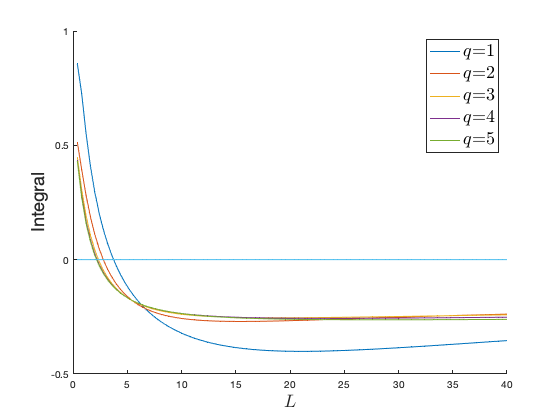}}&
   \subfloat[Integral vs. $L_F$ varying $r$ in the base-case\label{fg:tri2_integral-r}]{%
  \includegraphics[width=0.4\textwidth]{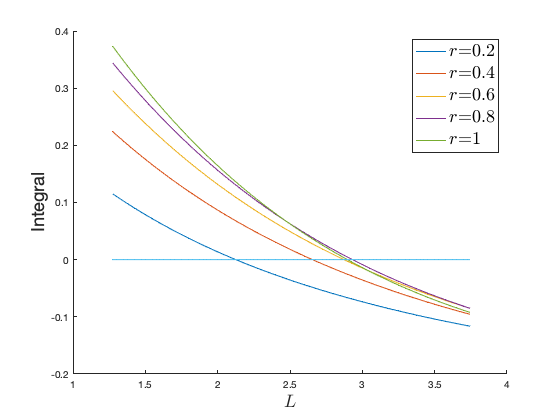}}
  \end{tabular}
  \label{fg:tri2_L_var}
 \end{figure}

 \begin{figure}
\caption{The sensitivity of the equilibrium mean service rate and the number of servers to various parameters for a symmetric triangular distribution}
    \centering
    \begin{tabular}{ccc}
    \subfloat[$p$ vs $\bar{\mu}$ \label{fg:tri_p_vs_mu}]{%
  \includegraphics[width=0.3\textwidth]{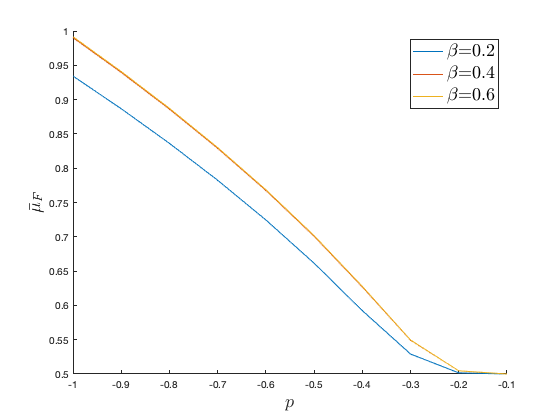}}&
  \subfloat[$q$ vs $\bar{\mu}$ \label{fg:tri_q_vs_mu}]{%
  \includegraphics[width=0.3\textwidth]{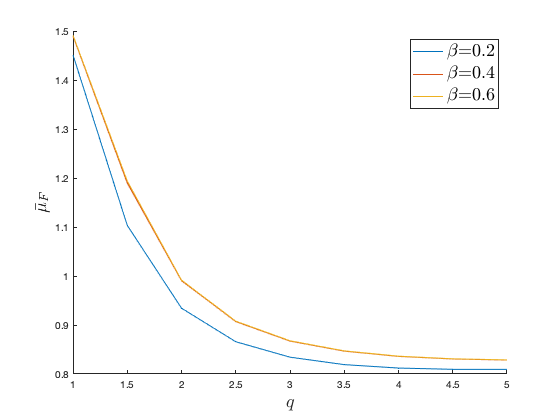}} &
   \subfloat[$r$ vs $\bar{\mu}$ \label{fg:tri_r_vs_mu}]{%
  \includegraphics[width=0.3\textwidth]{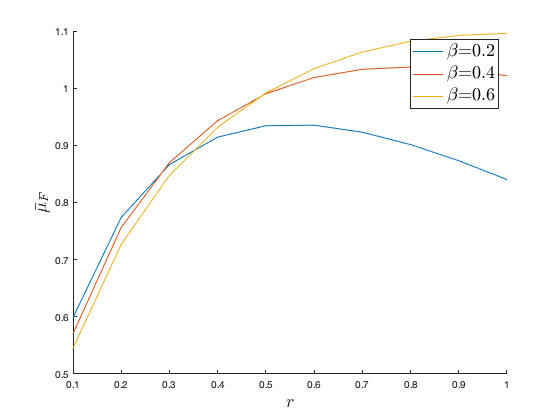}}\\
    \subfloat[$p$ vs $N$ \label{fg:tri_p_vs_N}]{%
  \includegraphics[width=0.3\textwidth]{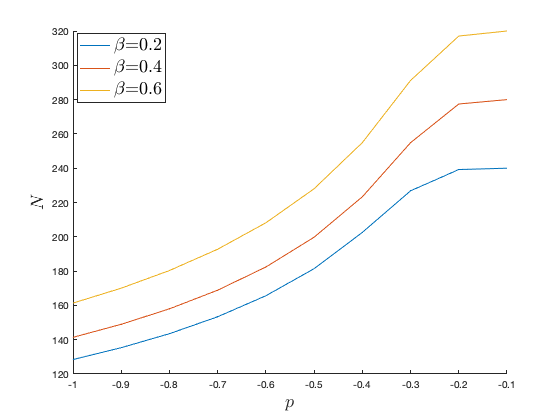}}&
  \subfloat[$q$ vs $N$ \label{fg:tri_q_vs_N}]{%
  \includegraphics[width=0.3\textwidth]{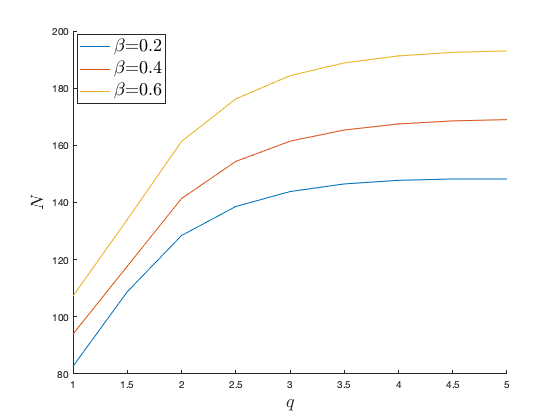}} &
   \subfloat[$r$ vs $N$ \label{fg:tri_r_vs_N}]{%
  \includegraphics[width=0.3\textwidth]{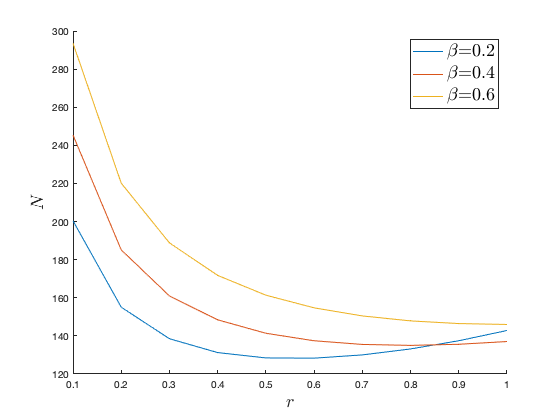}}
  \end{tabular}
  \label{fg:tri_param_vs_N_mu}
 \end{figure}

 \begin{figure}
\caption{The sensitivity of the equilibrium mean service rate and the number of servers to various parameters for a triangular distribution with increasing density}
    \centering
    \begin{tabular}{ccc}
    \subfloat[$p$ vs $\bar{\mu}$ \label{fg:tri1_p_vs_mu}]{%
  \includegraphics[width=0.3\textwidth]{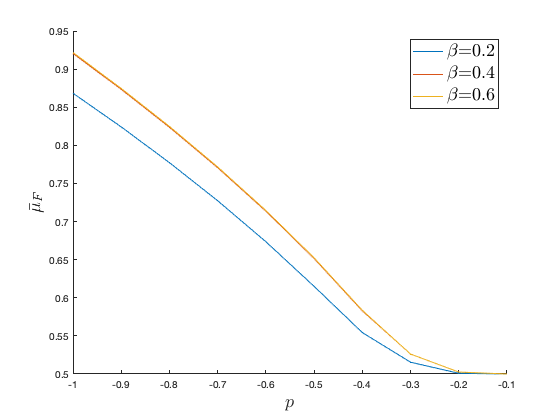}}&
  \subfloat[$q$ vs $\bar{\mu}$ \label{fg:tri1_q_vs_mu}]{%
  \includegraphics[width=0.3\textwidth]{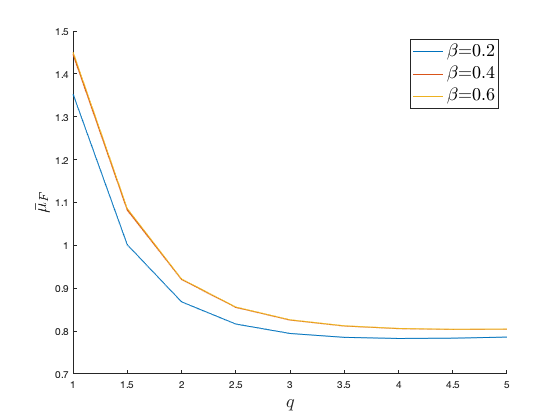}} &
   \subfloat[$r$ vs $\bar{\mu}$ \label{fg:tri1_r_vs_mu}]{%
  \includegraphics[width=0.3\textwidth]{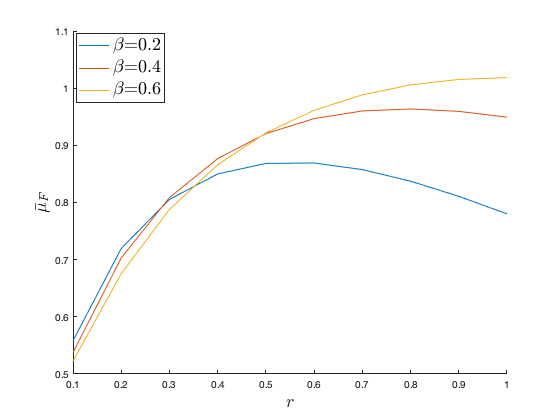}}\\
    \subfloat[$p$ vs $N$ \label{fg:tri1_p_vs_N}]{%
  \includegraphics[width=0.3\textwidth]{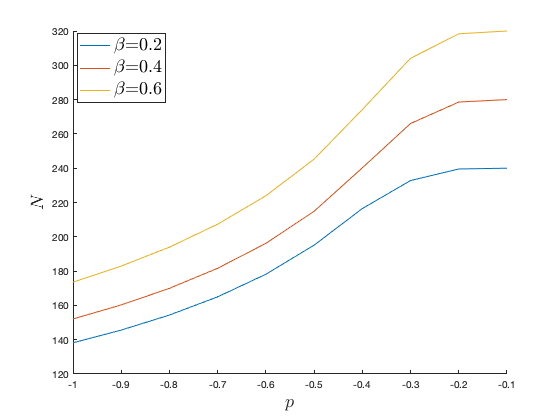}}&
  \subfloat[$q$ vs $N$ \label{fg:tri1_q_vs_N}]{%
  \includegraphics[width=0.3\textwidth]{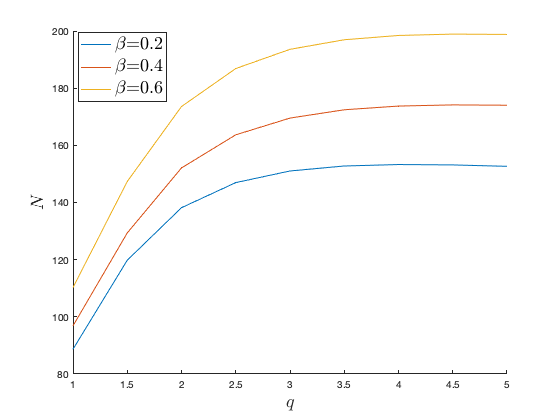}} &
   \subfloat[$r$ vs $N$ \label{fg:tri1_r_vs_N}]{%
  \includegraphics[width=0.3\textwidth]{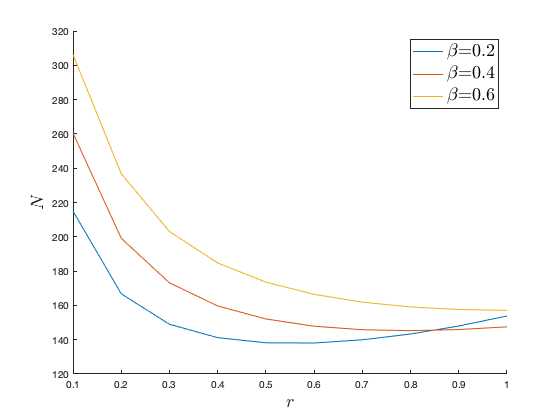}}
  \end{tabular}
  \label{fg:tri1_param_vs_N_mu}
 \end{figure}

  \begin{figure}
\caption{The sensitivity of the equilibrium mean service rate and the number of servers to various parameters for a triangular distribution with decreasing density}
    \centering
    \begin{tabular}{ccc}
    \subfloat[$p$ vs $\bar{\mu}$ \label{fg:tri2_p_vs_mu}]{%
  \includegraphics[width=0.3\textwidth]{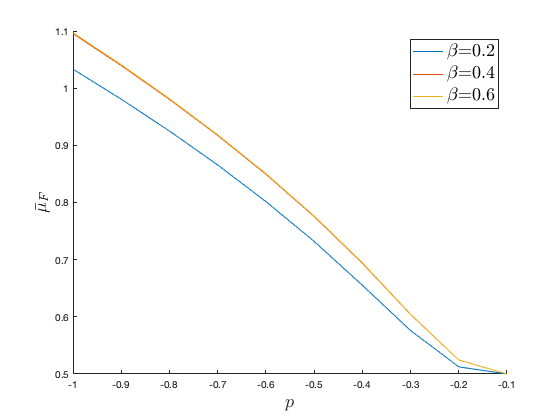}}&
  \subfloat[$q$ vs $\bar{\mu}$ \label{fg:tri2_q_vs_mu}]{%
  \includegraphics[width=0.3\textwidth]{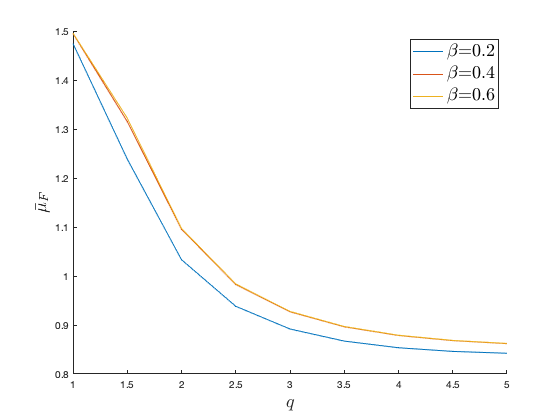}} &
   \subfloat[$r$ vs $\bar{\mu}$ \label{fg:tri2_r_vs_mu}]{%
  \includegraphics[width=0.3\textwidth]{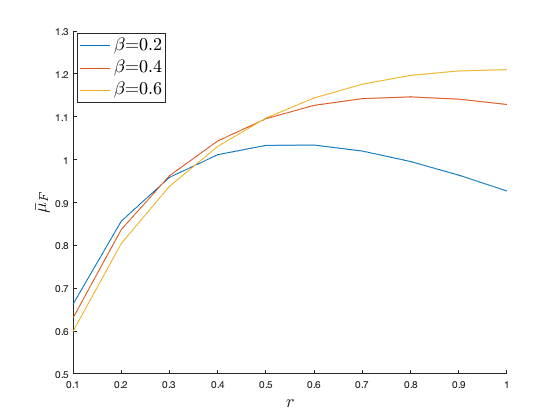}}\\
    \subfloat[$p$ vs $N$ \label{fg:tri2_p_vs_N}]{%
  \includegraphics[width=0.3\textwidth]{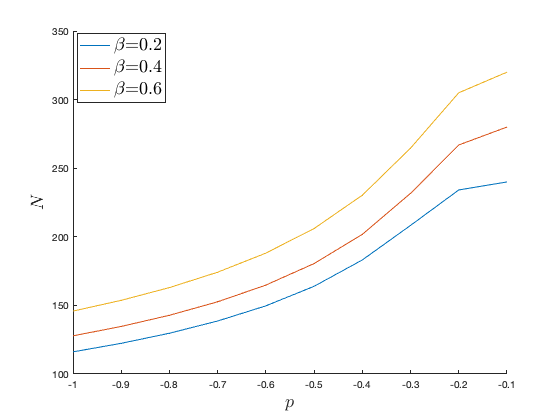}}&
  \subfloat[$q$ vs $N$ \label{fg:tri2_q_vs_N}]{%
  \includegraphics[width=0.3\textwidth]{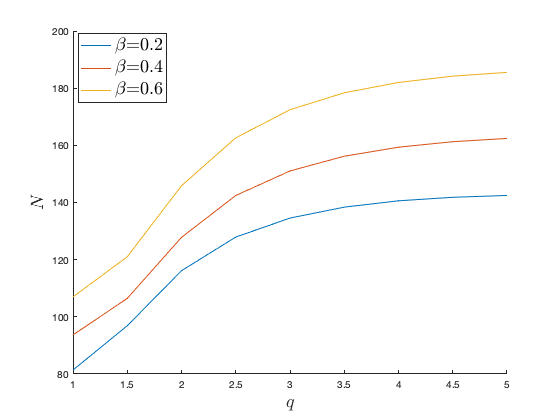}} &
   \subfloat[$r$ vs $N$ \label{fg:tri2_r_vs_N}]{%
  \includegraphics[width=0.3\textwidth]{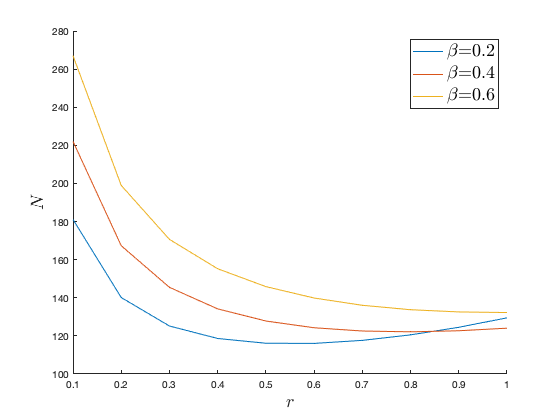}}
  \end{tabular}
  \label{fg:tri2_param_vs_N_mu}
 \end{figure}

  \begin{figure}
\caption{The sensitivity of the equilibrium mean service rate and the number of servers to staffing level $\beta$ for a symmetric triangular distribution}
    \centering
    \begin{tabular}{cc}
    \subfloat[$\beta$ vs $\bar{\mu}$ \label{fg:tri_beta_vs_mu}]{%
  \includegraphics[width=0.3\textwidth]{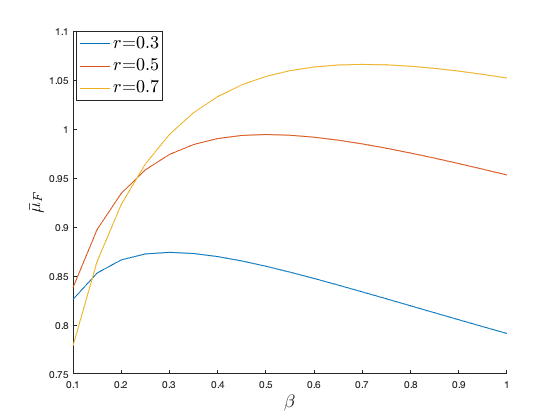}}&
  \subfloat[$\beta$ vs $N$ \label{fg:tri_beta_vs_N}]{%
  \includegraphics[width=0.3\textwidth]{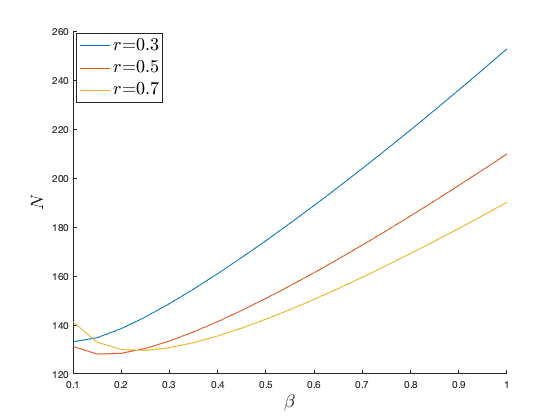}}
  \end{tabular}
  \label{fg:tri_beta_vs_N_mu}
 \end{figure}

  \begin{figure}
\caption{The sensitivity of the equilibrium mean service rate and the number of servers to staffing level $\beta$ for a triangular distribution with increasing density}
    \centering
    \begin{tabular}{cc}
    \subfloat[$\beta$ vs $\bar{\mu}$ \label{fg:tri1_beta_vs_mu}]{%
  \includegraphics[width=0.3\textwidth]{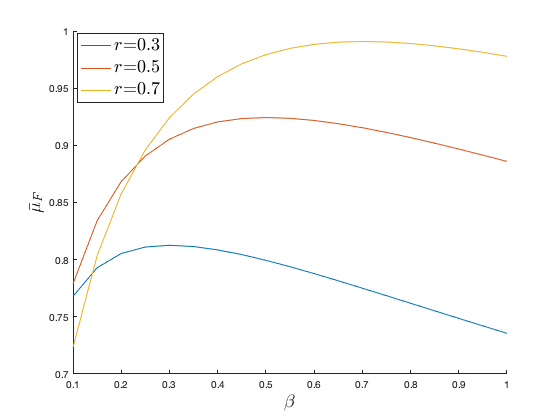}}&
  \subfloat[$\beta$ vs $N$ \label{fg:tri1_beta_vs_N}]{%
  \includegraphics[width=0.3\textwidth]{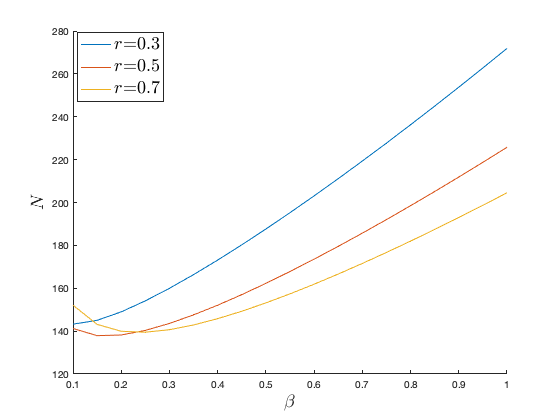}}
  \end{tabular}
  \label{fg:tri1_beta_vs_N_mu}
 \end{figure}

   \begin{figure}
\caption{The sensitivity of the equilibrium mean service rate and the number of servers to staffing level $\beta$ for a triangular distribution with decreasing density}
    \centering
    \begin{tabular}{cc}
    \subfloat[$\beta$ vs $\bar{\mu}$ \label{fg:tri2_beta_vs_mu}]{%
  \includegraphics[width=0.3\textwidth]{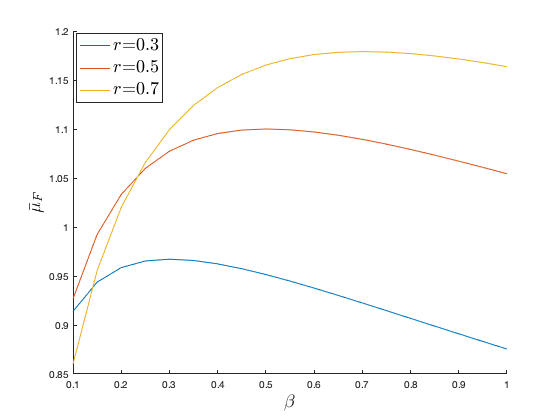}}&
  \subfloat[$\beta$ vs $N$ \label{fg:tri2_beta_vs_N}]{%
  \includegraphics[width=0.3\textwidth]{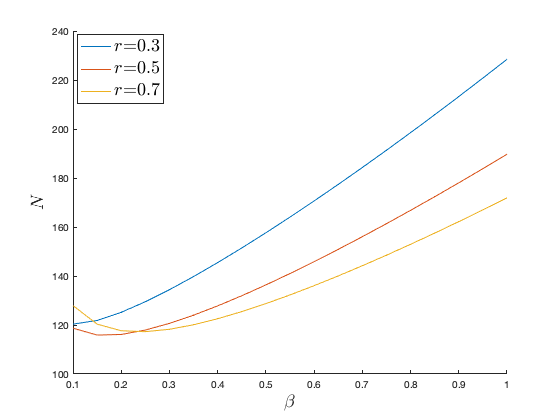}}
  \end{tabular}
  \label{fg:tri2_beta_vs_N_mu}
 \end{figure}

 \begin{figure}
\caption{The equilibrium distributions for different parametric setups with a symmetric triangular distribution}
    \centering
    \begin{tabular}{ccc}
       \subfloat[$p=-1, q =  2, r = 0.5, \beta = 0.4$ \label{fg:tri_p100q200r050}]{%
  \includegraphics[width=0.3\textwidth]{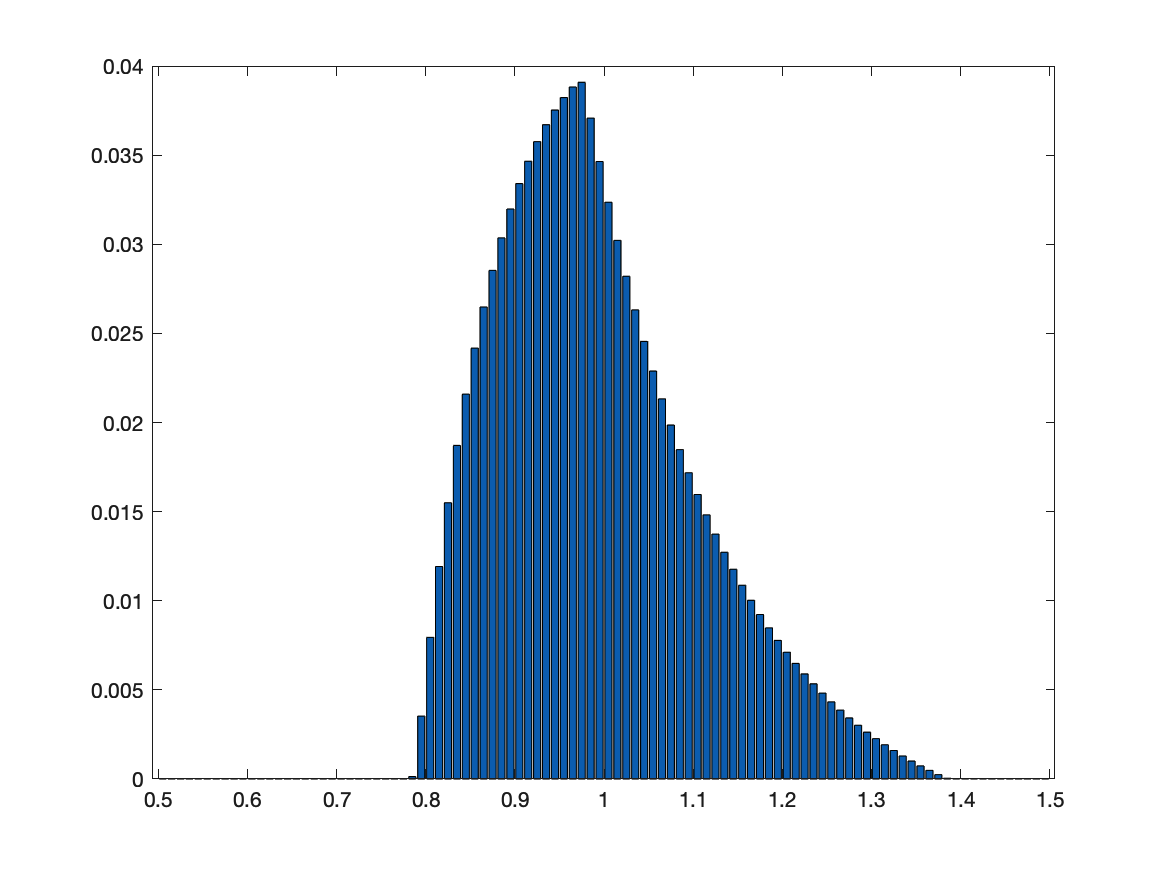}}&
  \subfloat[$p=-0.6, q =  2,r = 0.5, \beta = 0.4$ \label{fg:tri_p060q200r050}]{%
  \includegraphics[width=0.3\textwidth]{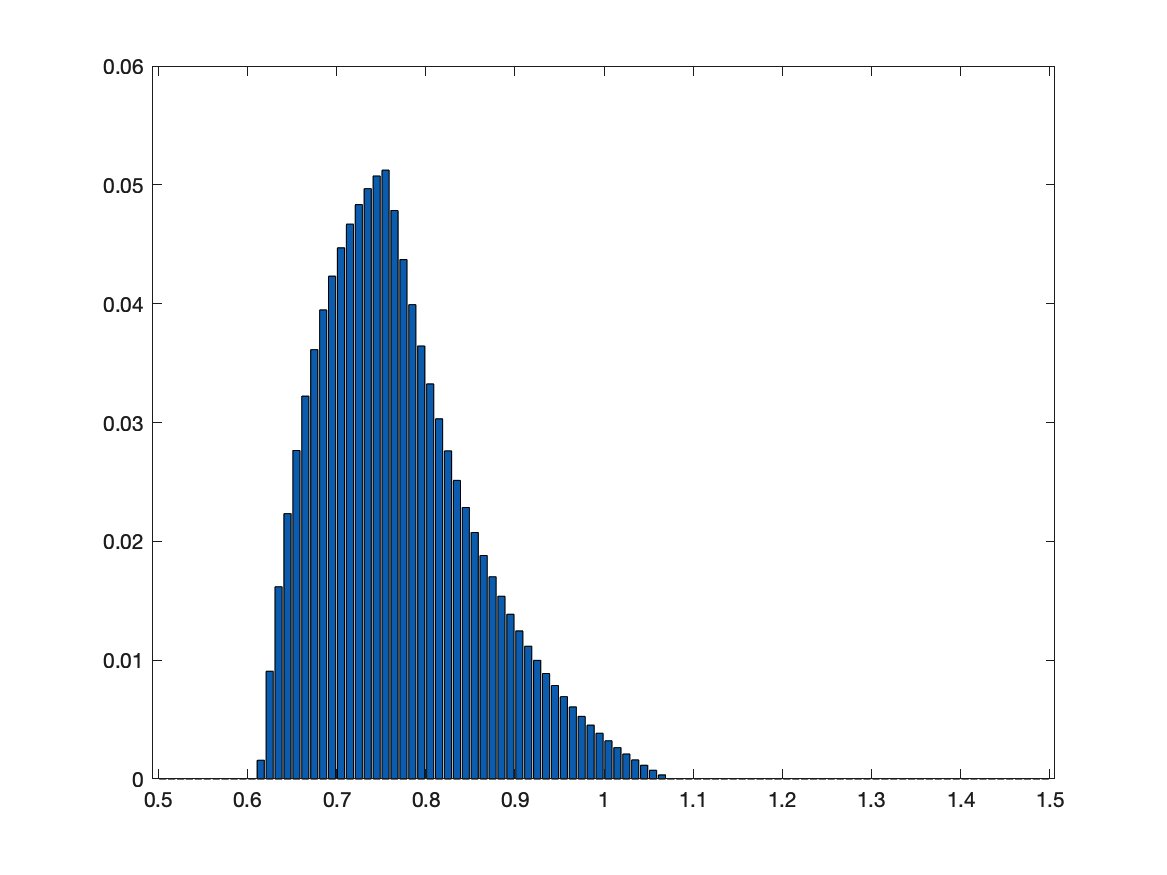}} &
  \subfloat[$p=-0.2, q =  2,r = 0.5, \beta = 0.4$ \label{fg:tri_p020q200r050}]{%
  \includegraphics[width=0.3\textwidth]{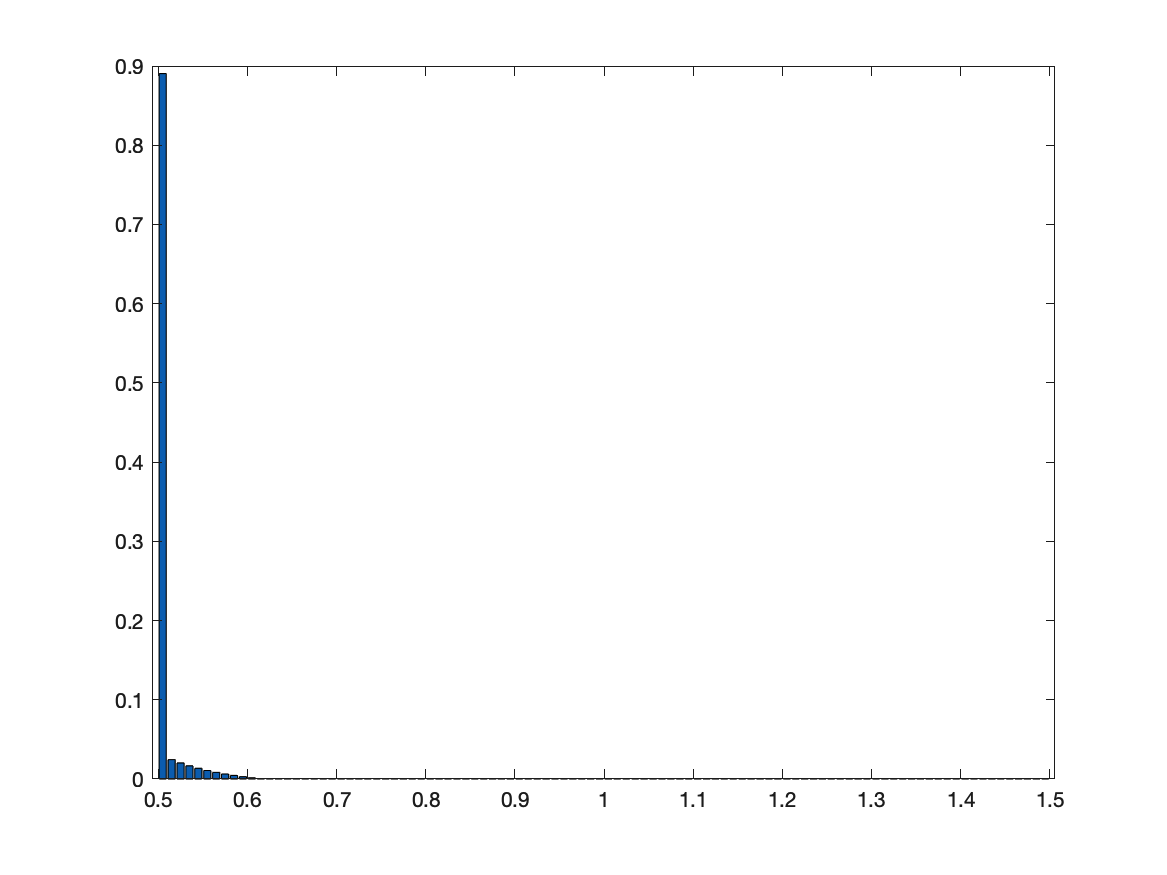}}\\
    \subfloat[$p=-1, q =  3, r = 0.5, \beta = 0.4$\label{fg:tri_p100q300r050}]{%
  \includegraphics[width=0.3\textwidth]{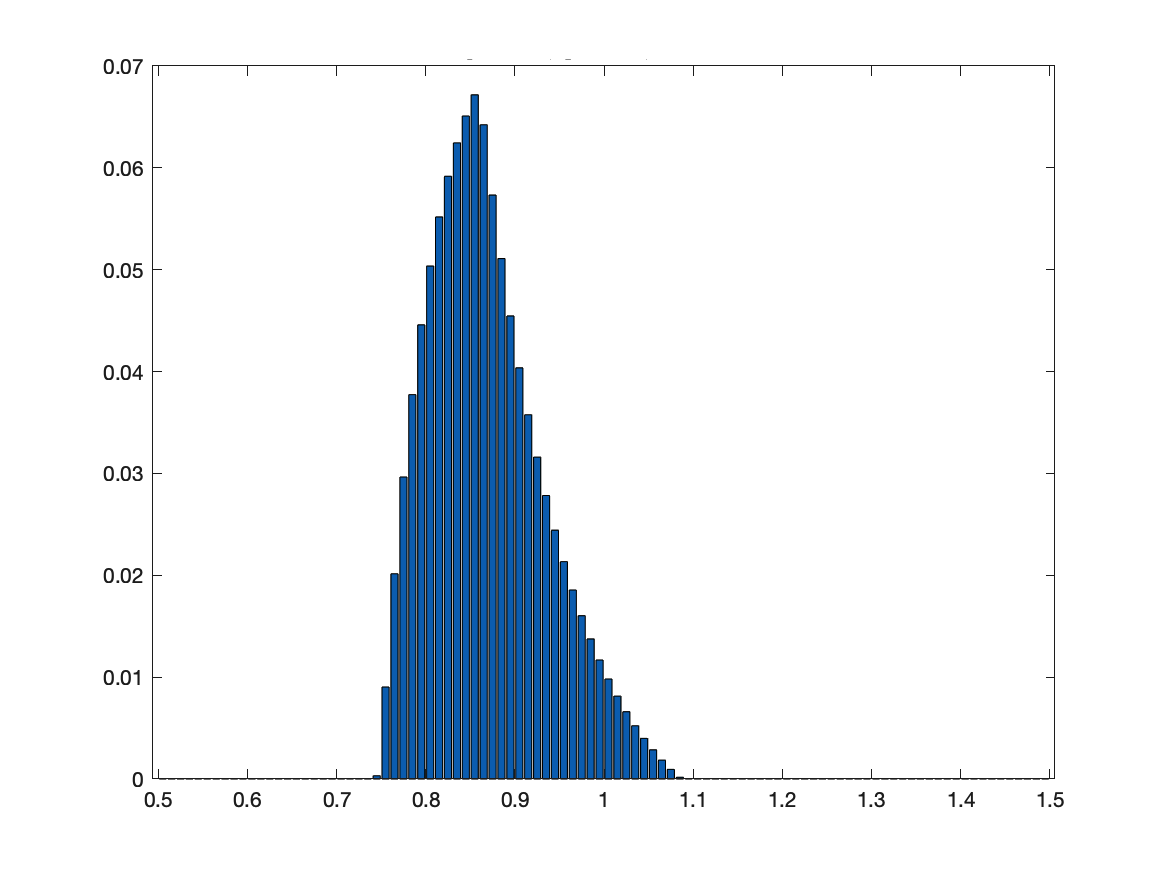}}&
  \subfloat[$p=-1, q =  4, r = 0.5, \beta = 0.4$ \label{fg:tri_p100q400r050}]{%
  \includegraphics[width=0.3\textwidth]{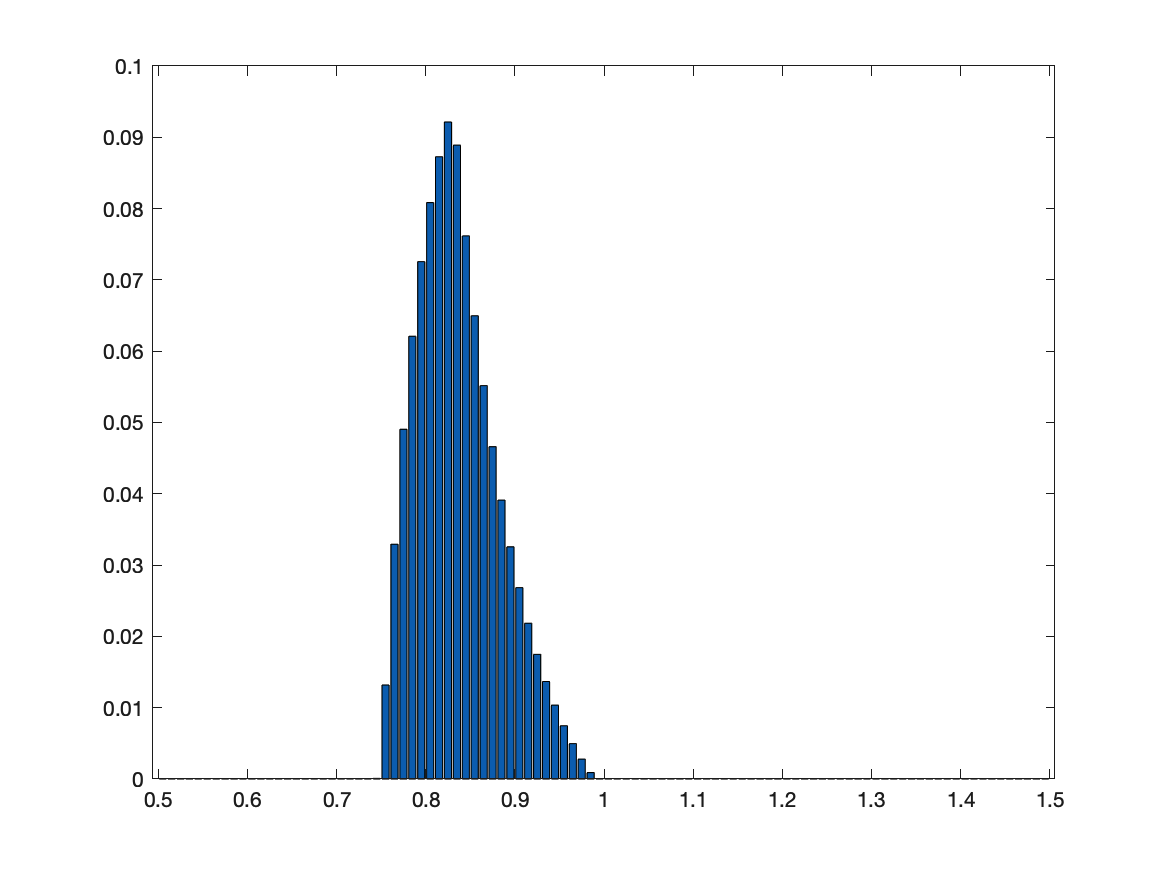}} &
   \subfloat[$p=-1, q =  5, r = 0.5, \beta = 0.4$ \label{fg:tri_p100q500r050}]{%
  \includegraphics[width=0.3\textwidth]{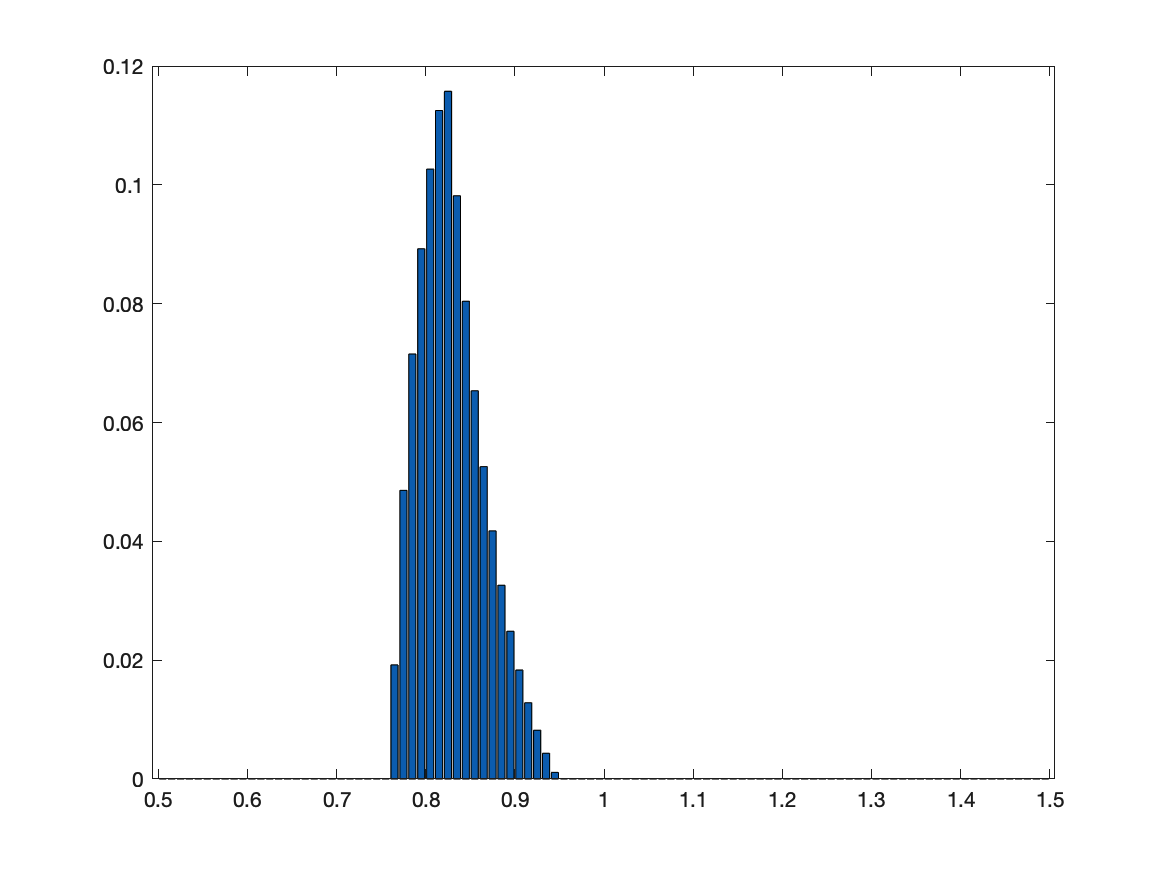}}\\
    \subfloat[$p=-1, q =  2, r = 0.25, \beta = 0.4$ \label{fg:tri_p100q200r025}]{%
  \includegraphics[width=0.3\textwidth]{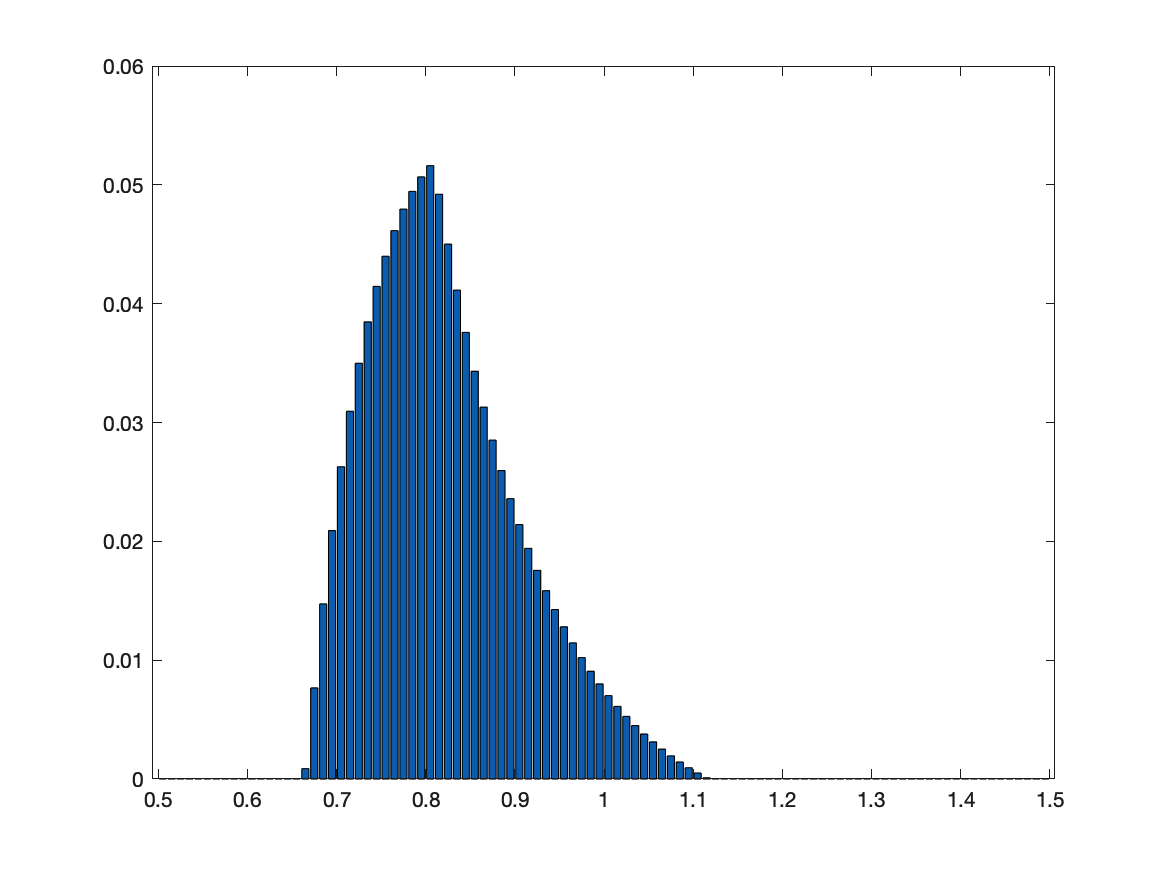}}&
  \subfloat[$p=-1, q =  2, r = 0.75, \beta = 0.4$ \label{fg:tri_p100q200r075}]{%
  \includegraphics[width=0.3\textwidth]{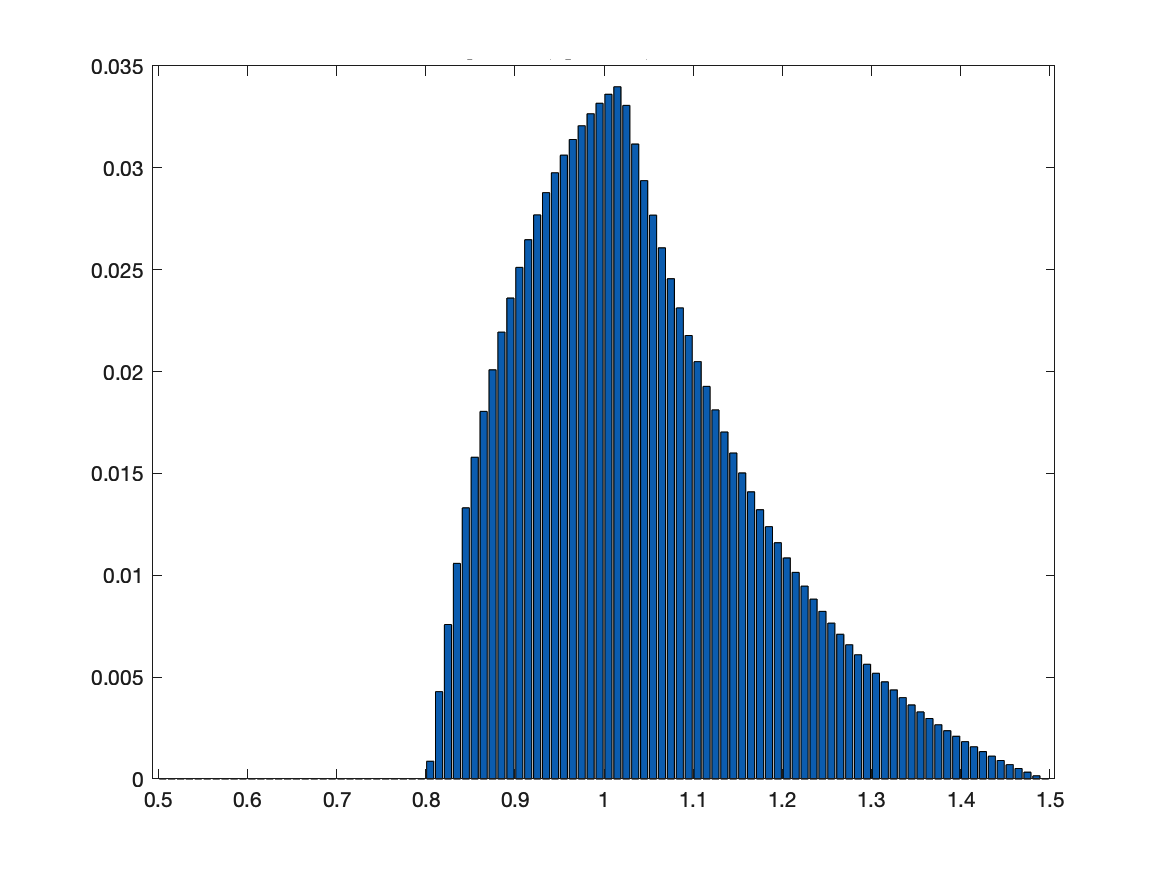}} &
   \subfloat[$p=-1, q =  2, r = 1, \beta = 0.4$ \label{fg:tri_p100q200r100}]{%
  \includegraphics[width=0.3\textwidth]{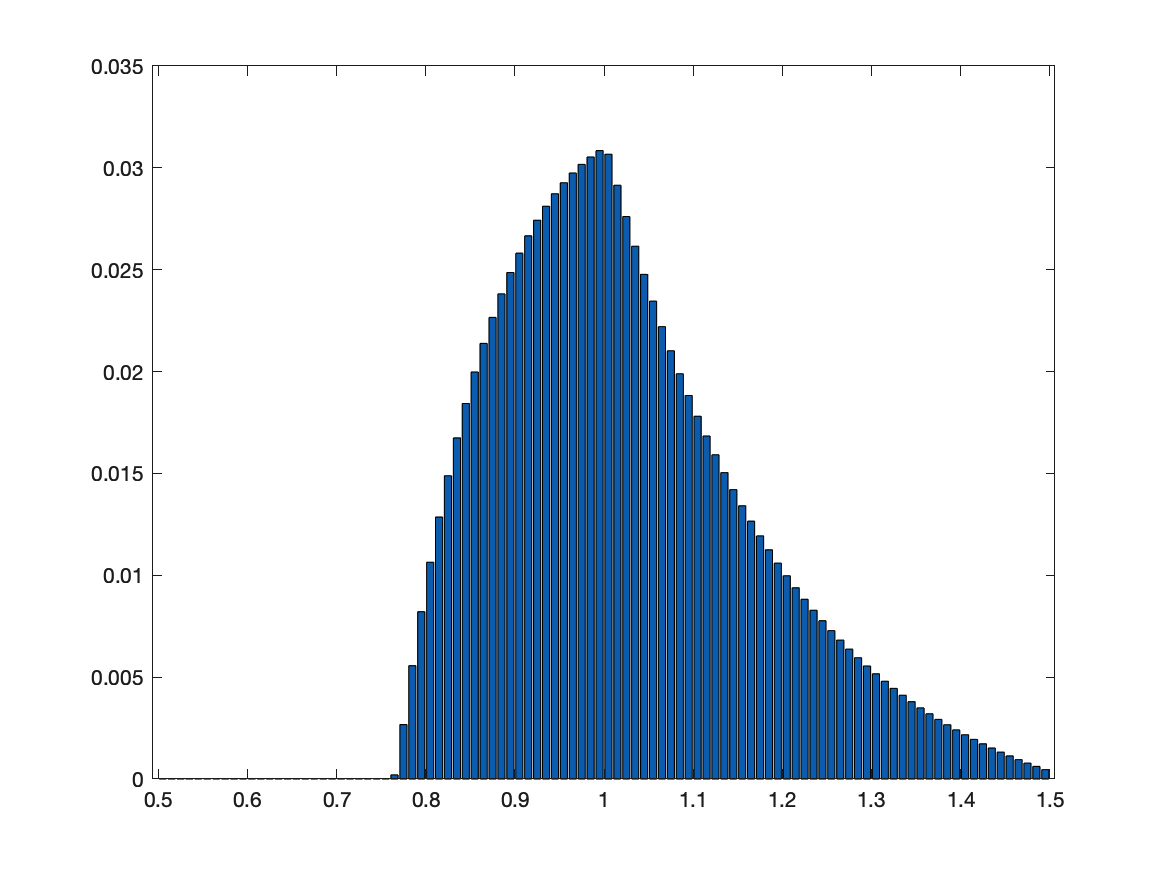}}\\
      \subfloat[$p=-1, q =  2, r = 0.5, \beta = 0.2$ \label{fg:tri_dist_beta02}]{%
  \includegraphics[width=0.3\textwidth]{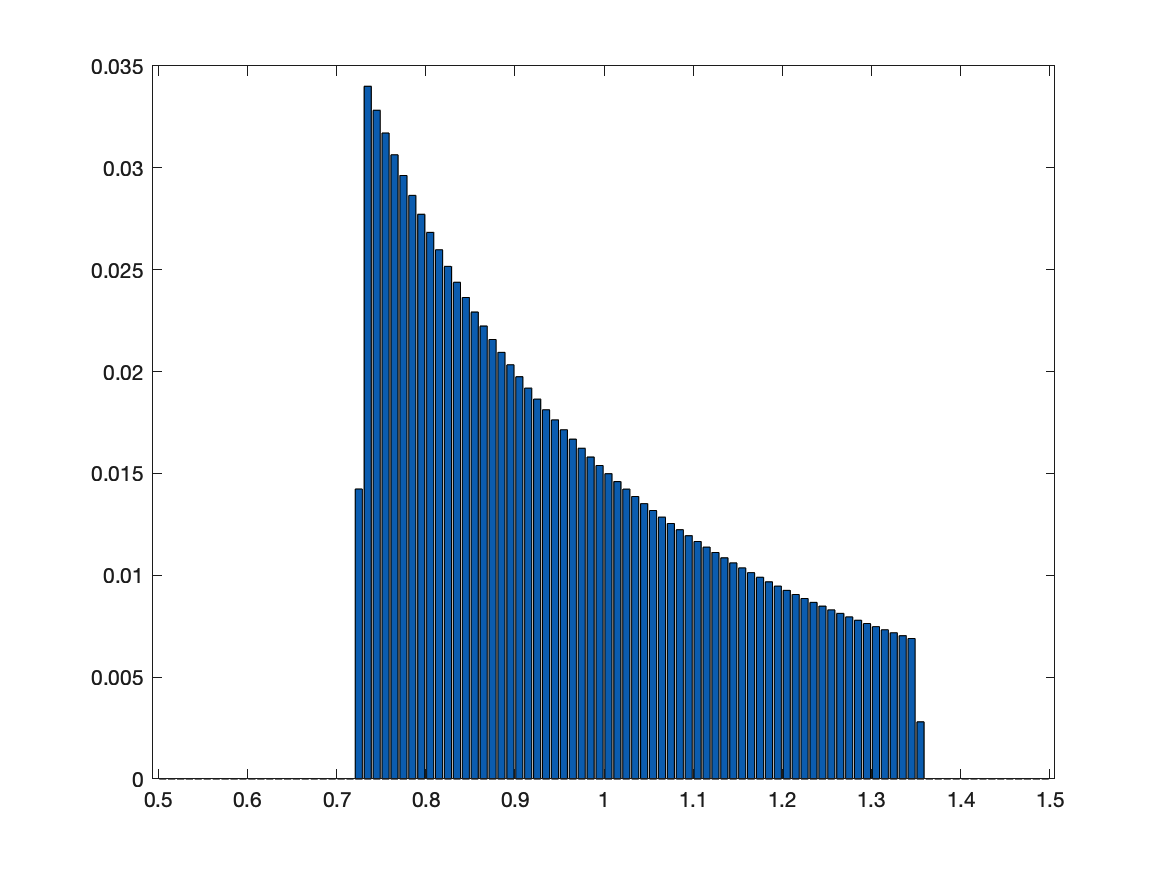}}&
  \subfloat[$p=-1, q =  2, r = 0.5, \beta = 0.6$ \label{fg:tri_dist_beta06}]{%
  \includegraphics[width=0.3\textwidth]{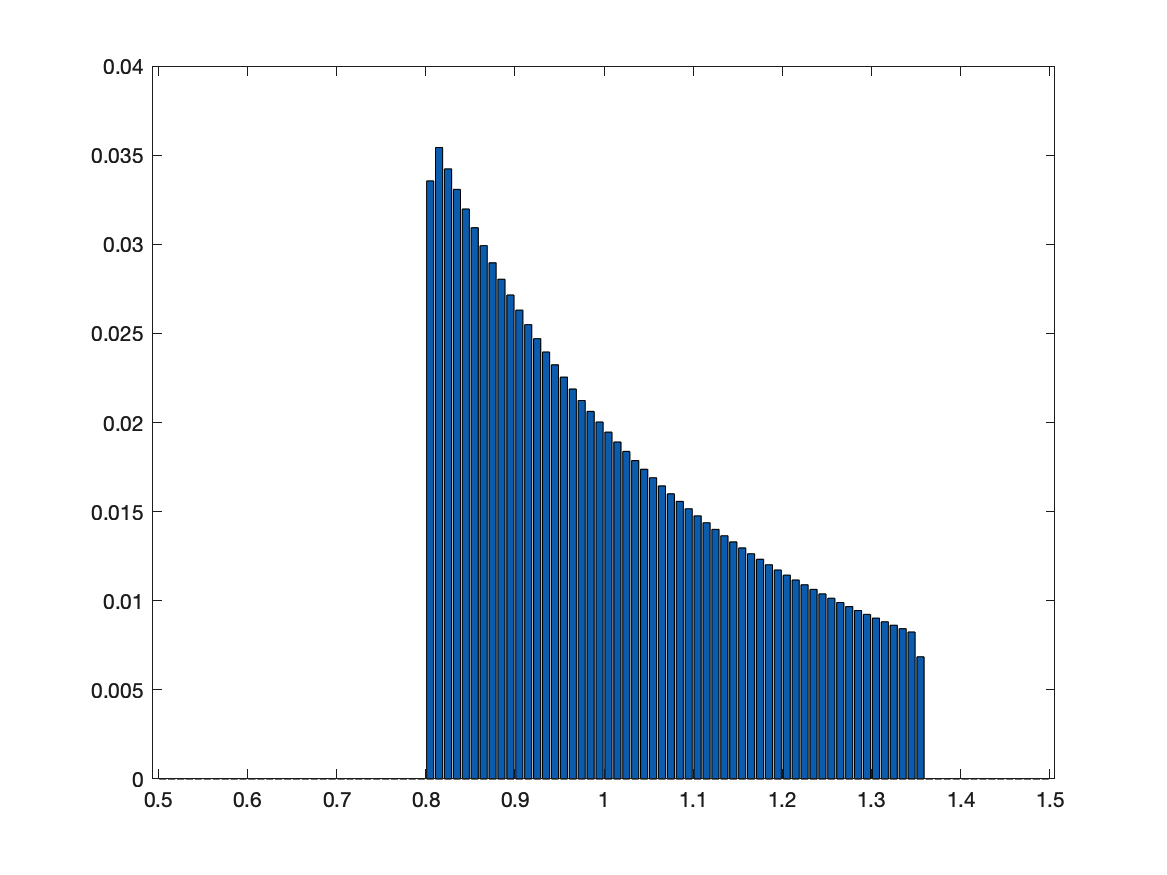}} &
   \subfloat[$p=-1, q =  2, r = 0.5, \beta = 0.8$ \label{fg:tri_dist_beta08}]{%
  \includegraphics[width=0.3\textwidth]{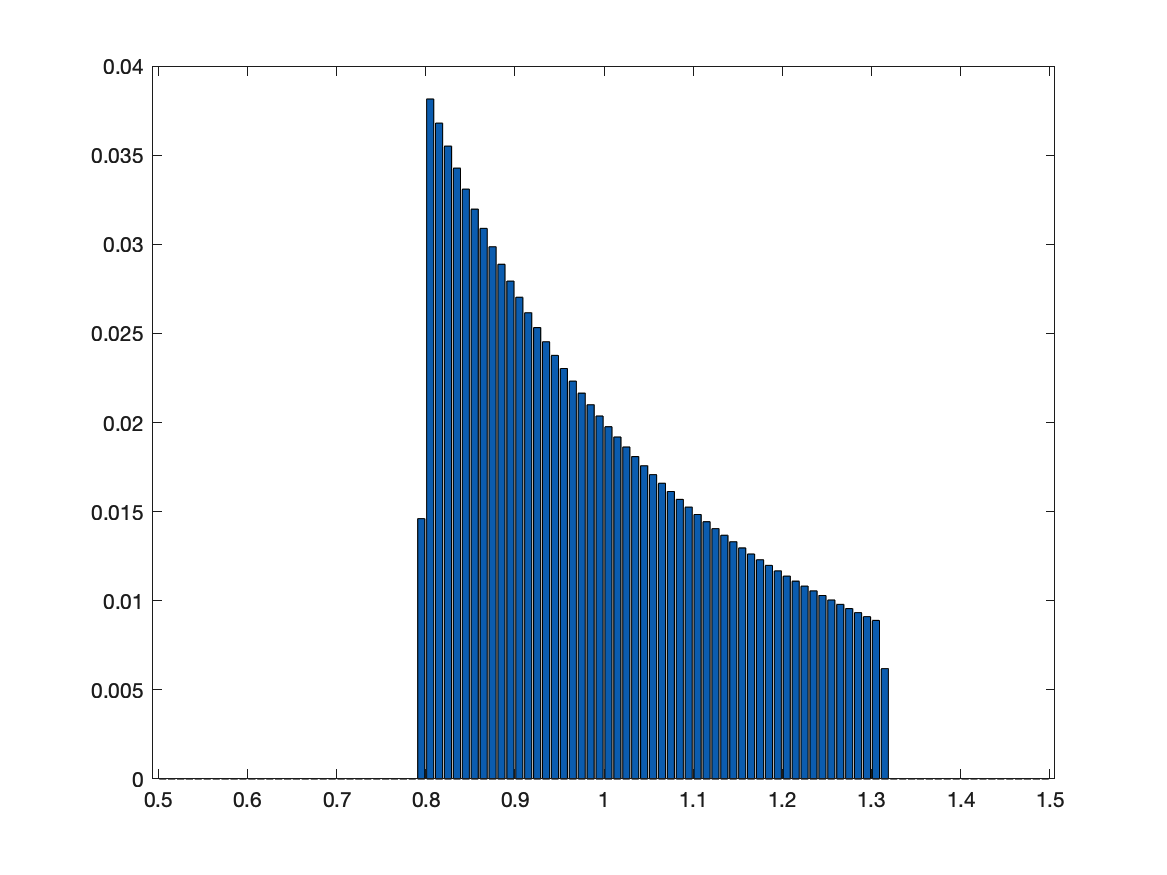}}
  \end{tabular}
  \label{fg:tri_mu_distributions}
 \end{figure}

 \begin{figure}
\caption{The equilibrium distributions for different parametric setups with a  triangular distribution with an increasing density}
    \centering
    \begin{tabular}{ccc}
       \subfloat[$p=-1, q =  2, r = 0.5, \beta = 0.4$ \label{fg:tri1_p100q200r050}]{%
  \includegraphics[width=0.3\textwidth]{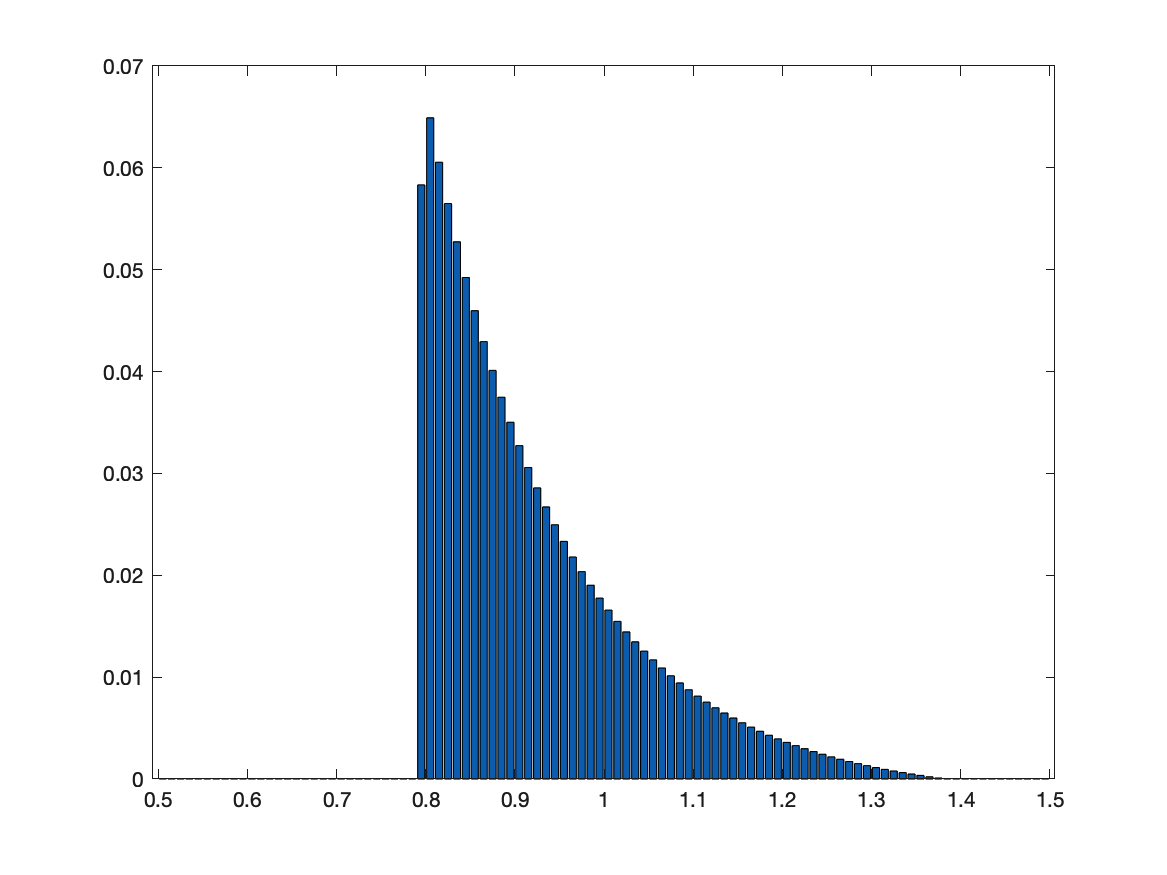}}&
  \subfloat[$p=-0.6, q =  2,r = 0.5, \beta = 0.4$ \label{fg:tri1_p060q200r050}]{%
  \includegraphics[width=0.3\textwidth]{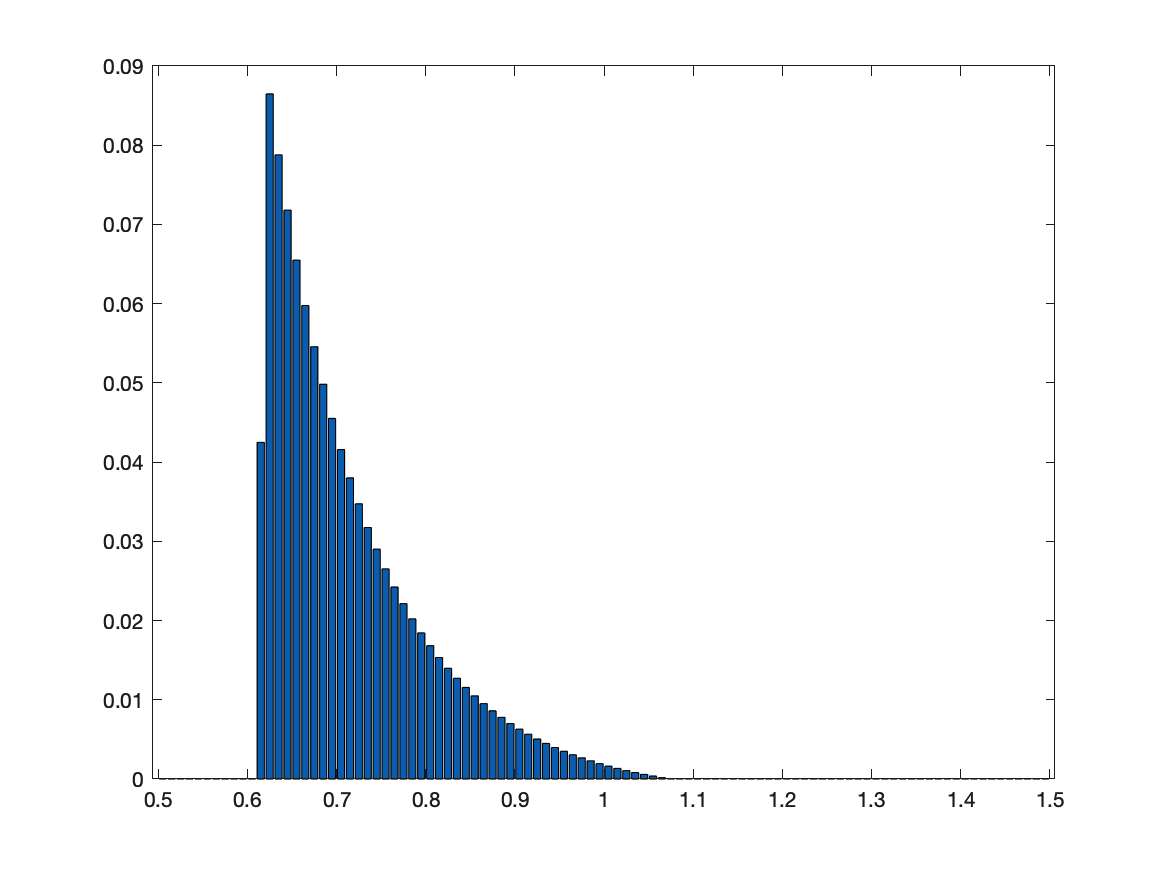}} &
  \subfloat[$p=-0.2, q =  2,r = 0.5, \beta = 0.4$ \label{fg:tri1_p020q200r050}]{%
  \includegraphics[width=0.3\textwidth]{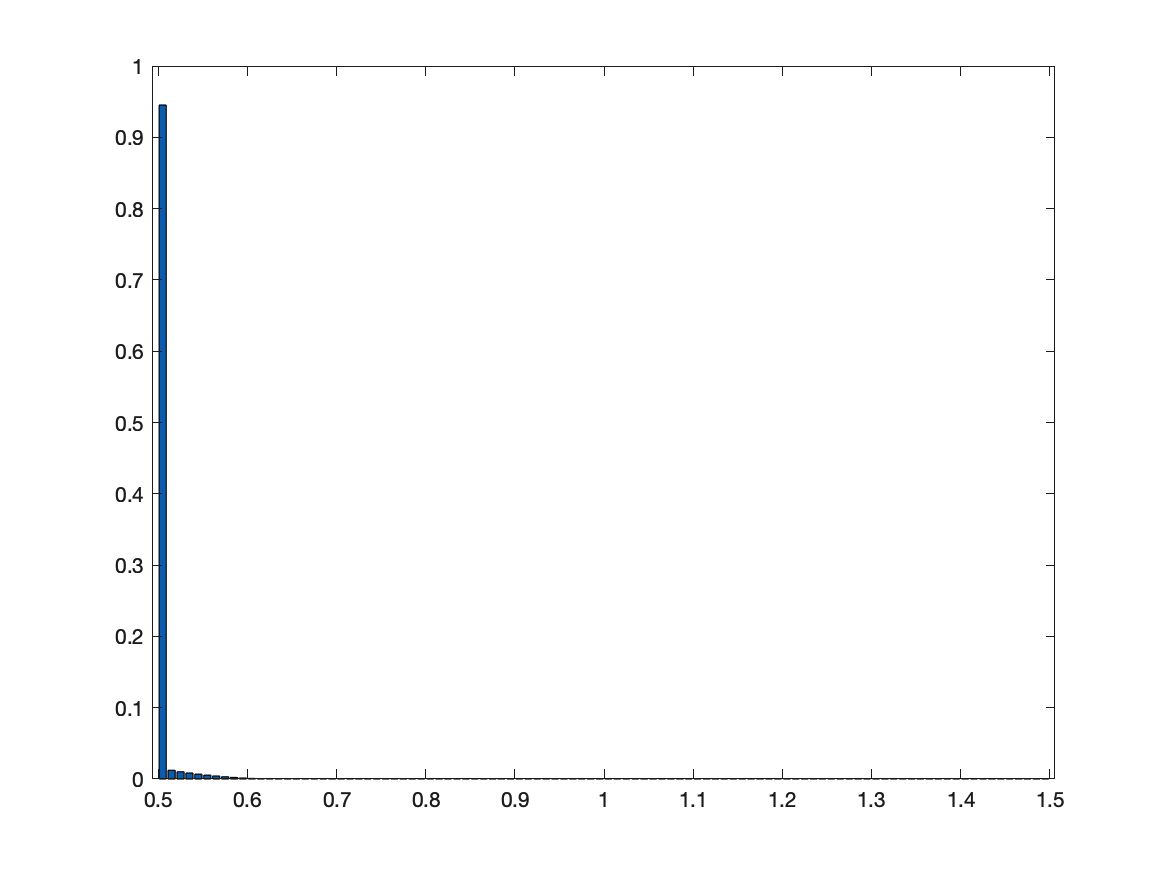}}\\
    \subfloat[$p=-1, q =  3, r = 0.5, \beta = 0.4$\label{fg:tri1_p100q300r050}]{%
  \includegraphics[width=0.3\textwidth]{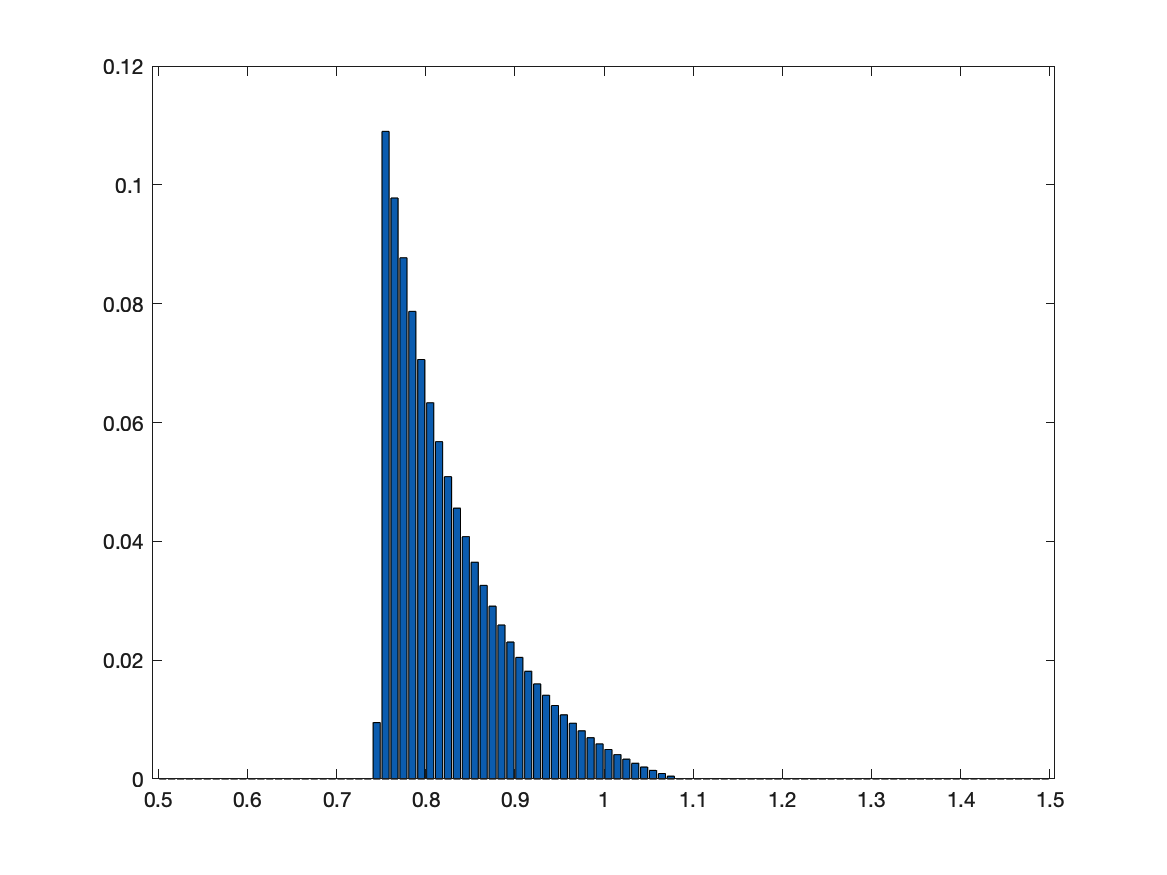}}&
  \subfloat[$p=-1, q =  4, r = 0.5, \beta = 0.4$ \label{fg:tri1_p100q400r050}]{%
  \includegraphics[width=0.3\textwidth]{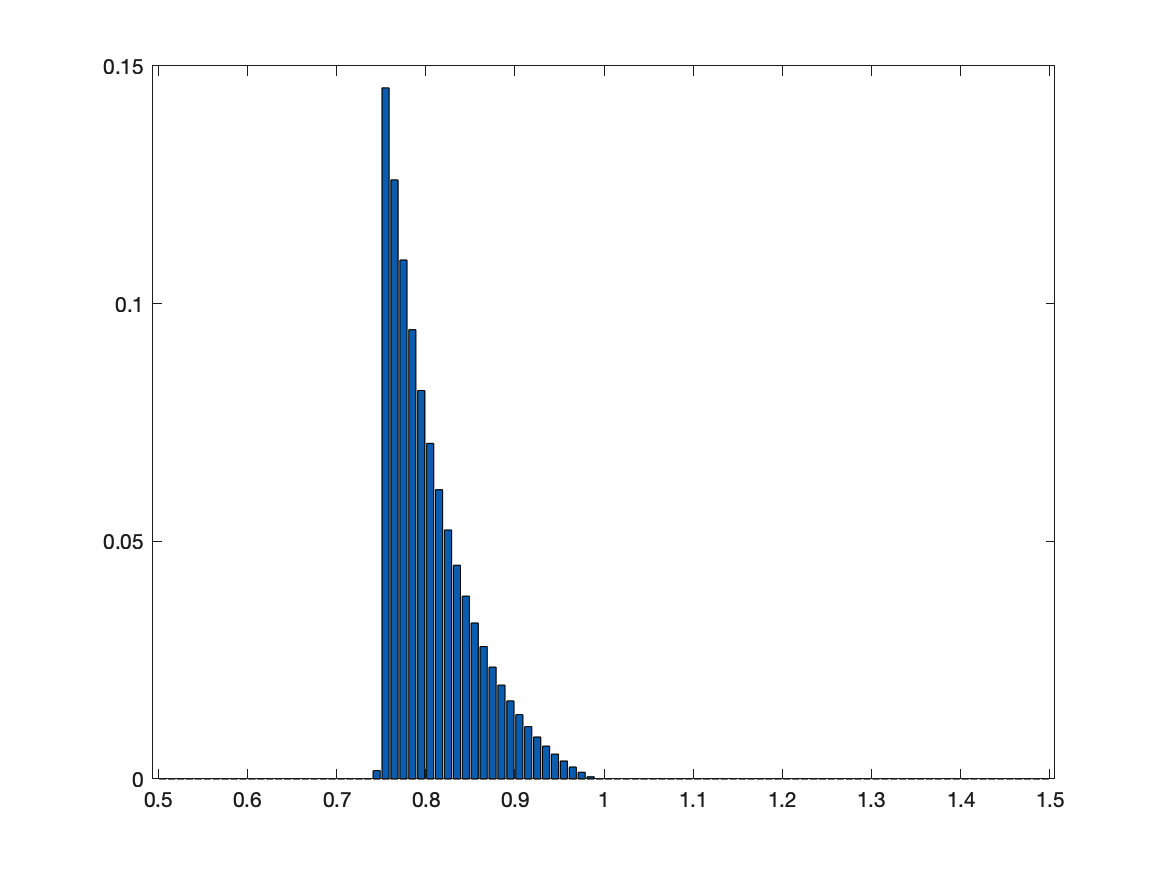}} &
   \subfloat[$p=-1, q =  5, r = 0.5, \beta = 0.4$ \label{fg:tri1_p100q500r050}]{%
  \includegraphics[width=0.3\textwidth]{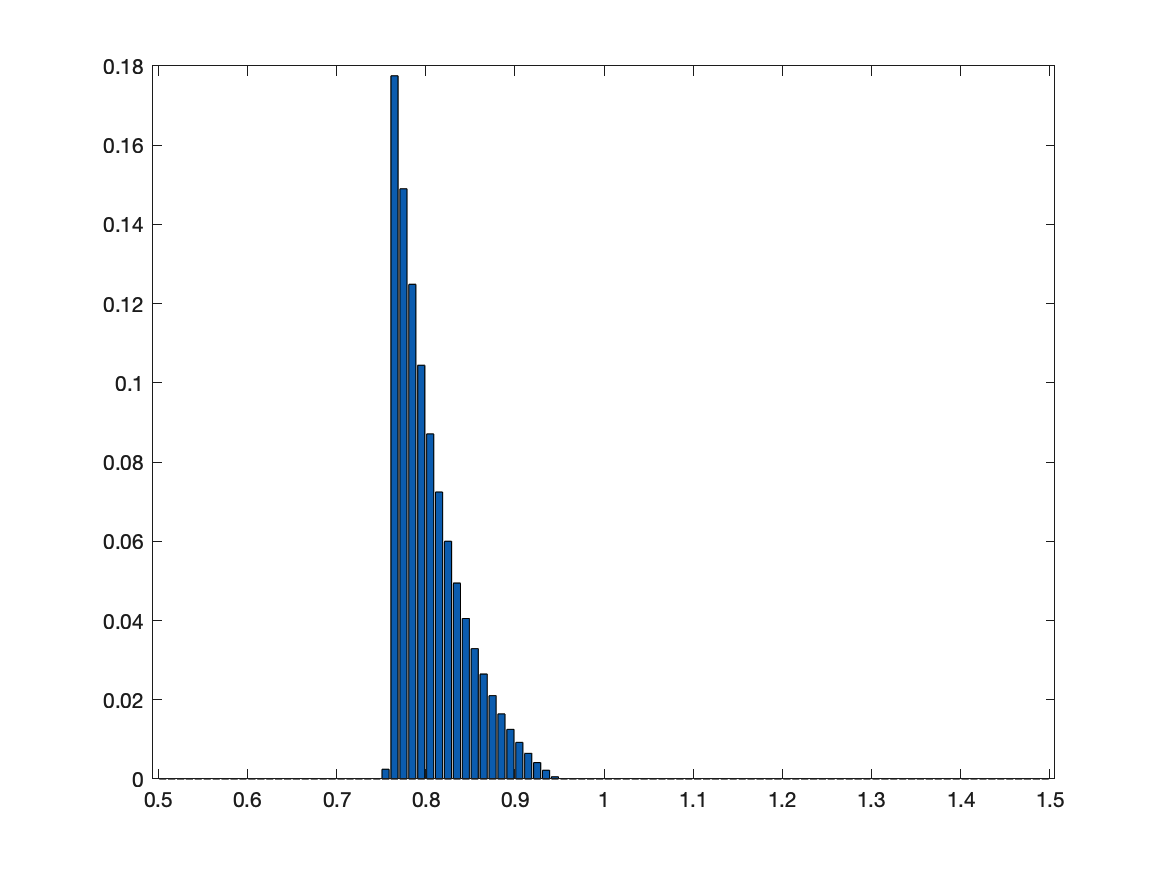}}\\
    \subfloat[$p=-1, q =  2, r = 0.25, \beta = 0.4$ \label{fg:tri1_p100q200r025}]{%
  \includegraphics[width=0.3\textwidth]{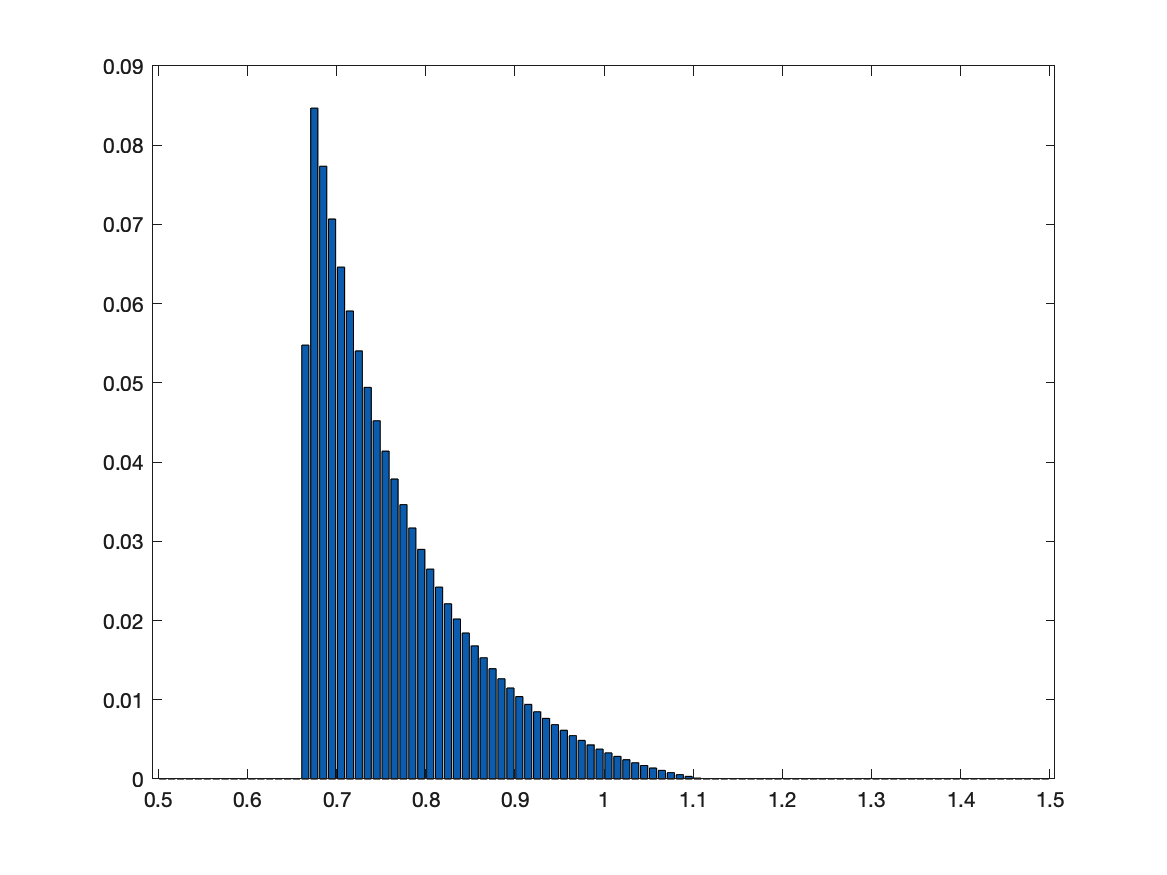}}&
  \subfloat[$p=-1, q =  2, r = 0.75, \beta = 0.4$ \label{fg:tri1_p100q200r075}]{%
  \includegraphics[width=0.3\textwidth]{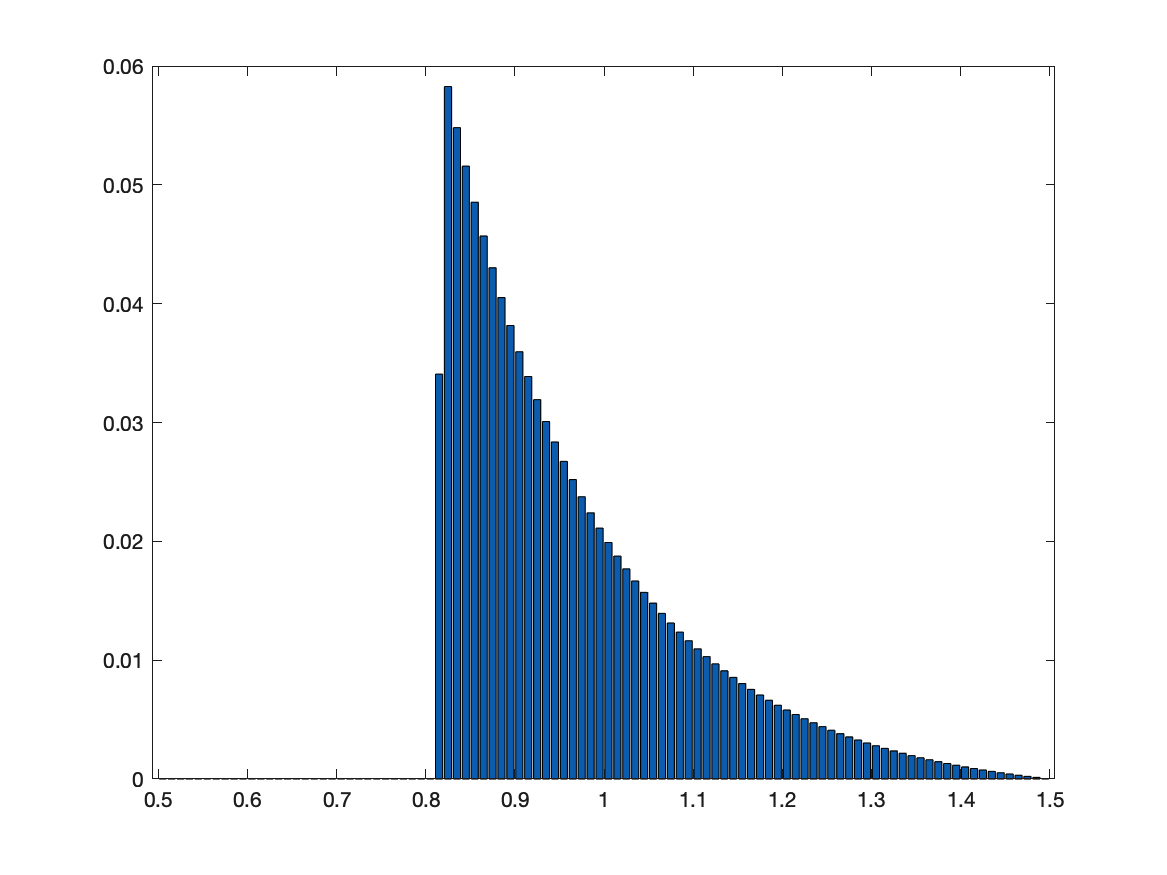}} &
   \subfloat[$p=-1, q =  2, r = 1, \beta = 0.4$ \label{fg:tri1_p100q200r100}]{%
  \includegraphics[width=0.3\textwidth]{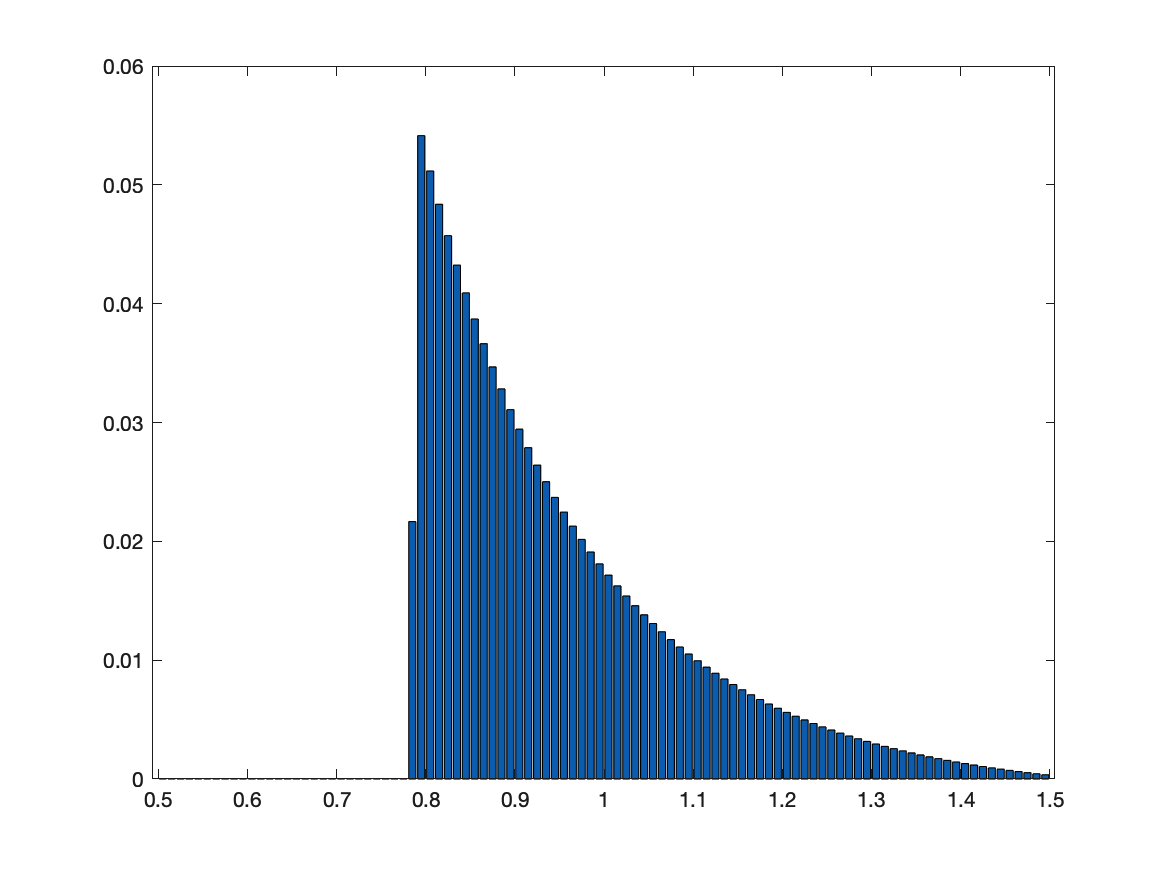}}\\
      \subfloat[$p=-1, q =  2, r = 0.5, \beta = 0.2$ \label{fg:tri1_dist_beta02}]{%
  \includegraphics[width=0.3\textwidth]{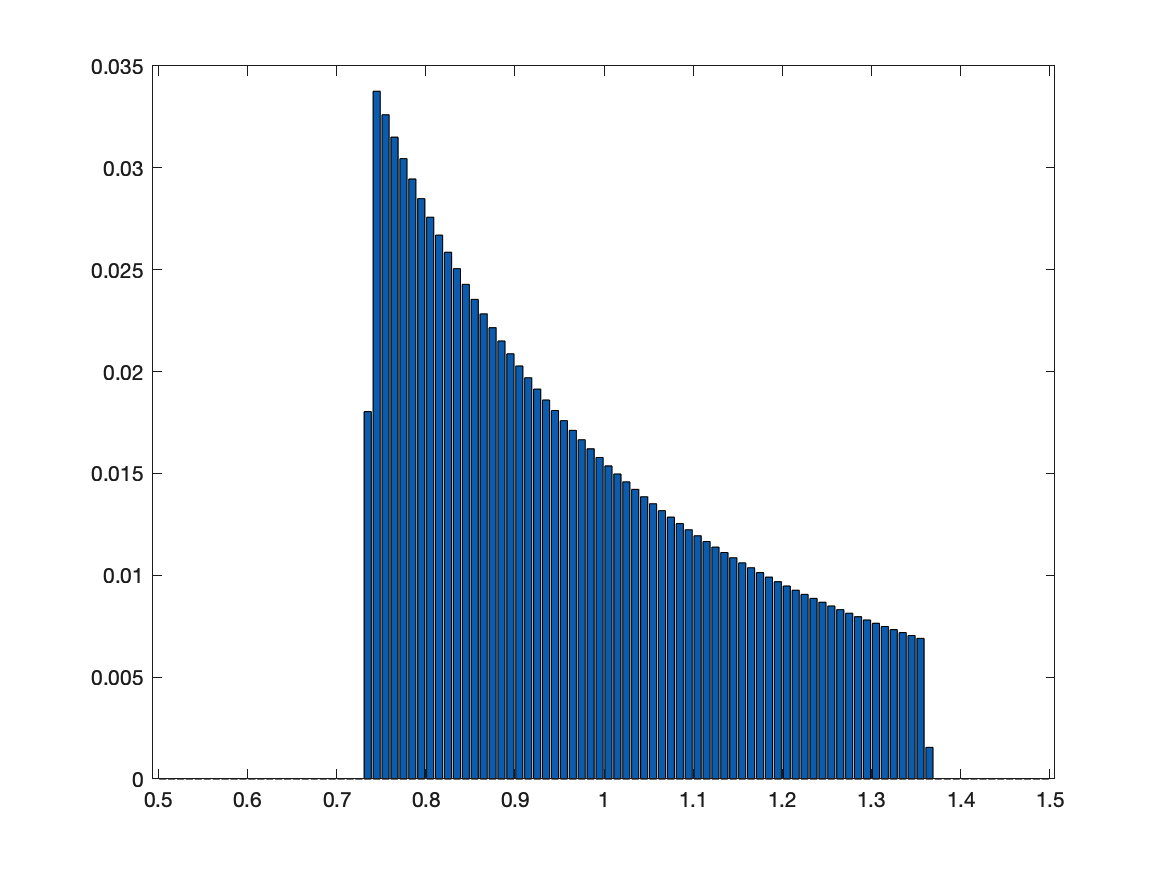}}&
  \subfloat[$p=-1, q =  2, r = 0.5, \beta = 0.6$ \label{fg:tri1_dist_beta06}]{%
  \includegraphics[width=0.3\textwidth]{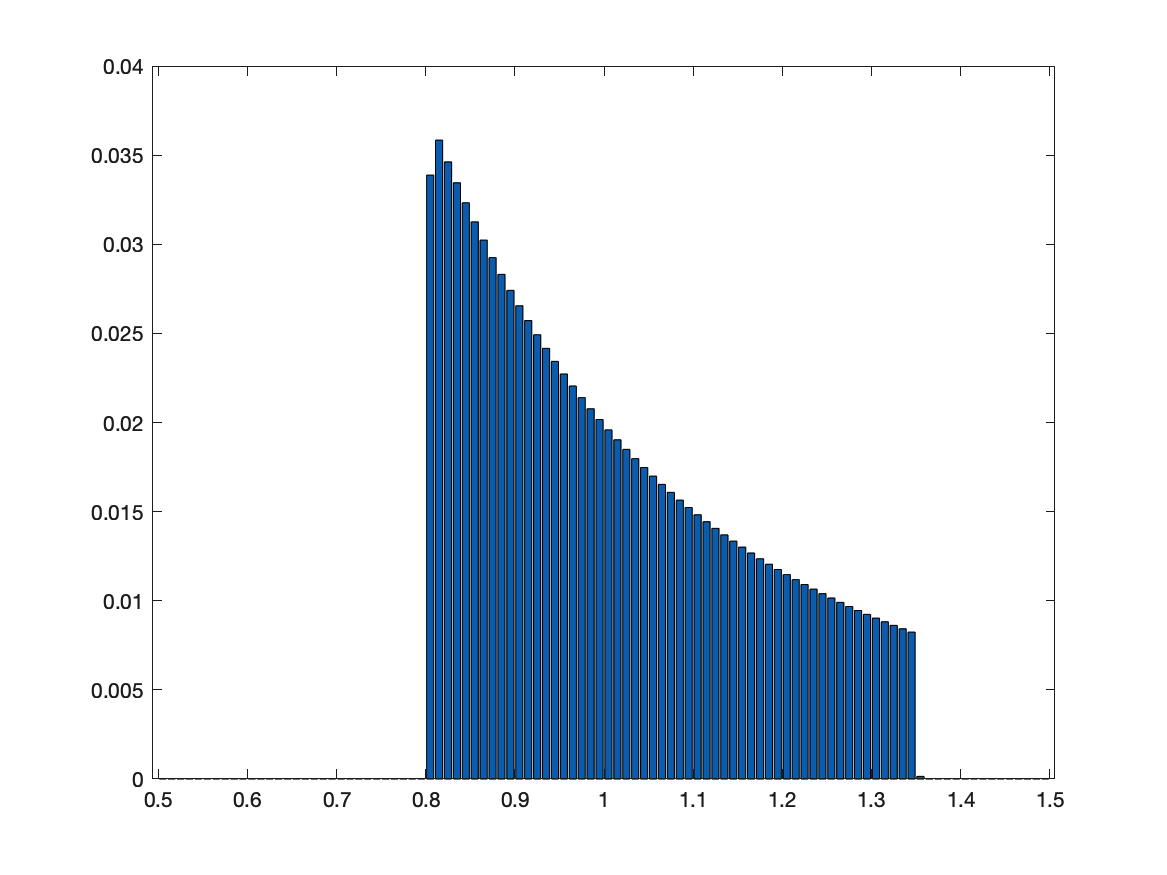}} &
   \subfloat[$p=-1, q =  2, r = 0.5, \beta = 0.8$ \label{fg:tri1_dist_beta08}]{%
  \includegraphics[width=0.3\textwidth]{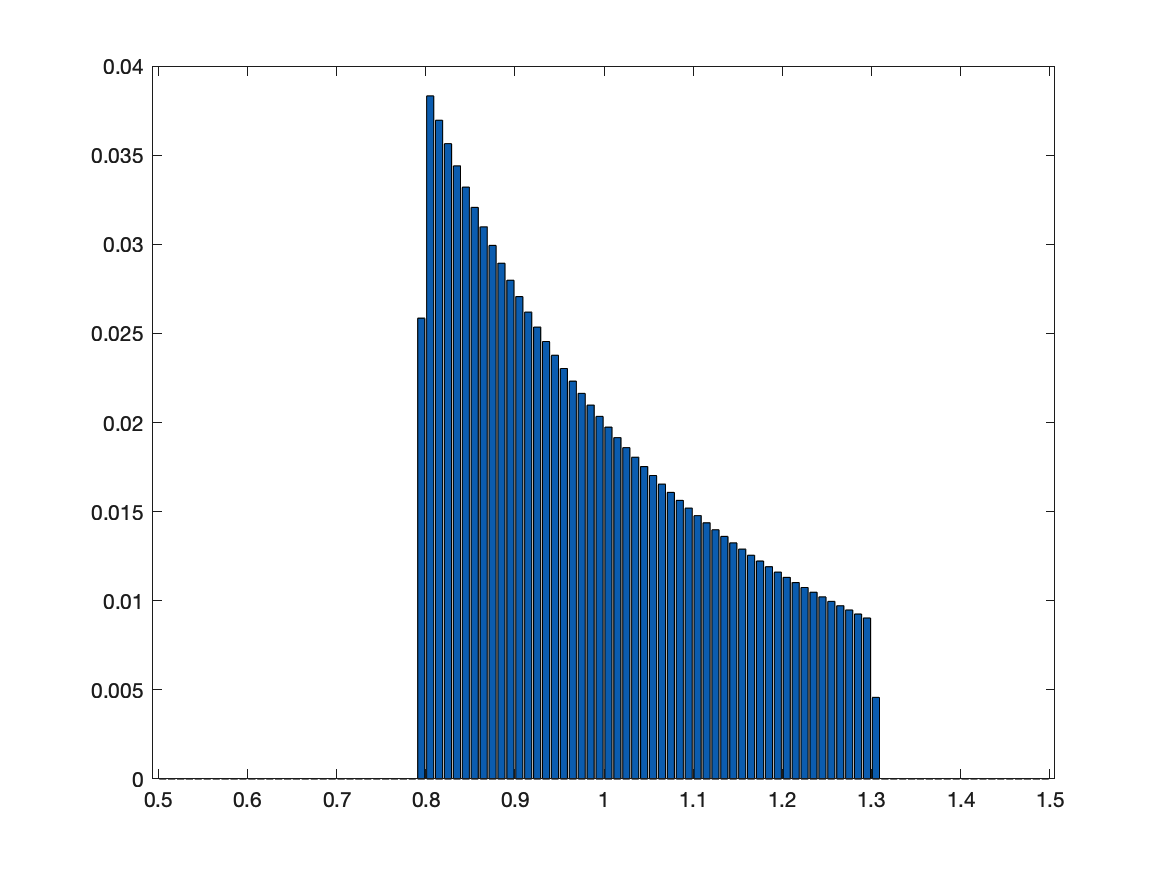}}
  \end{tabular}
  \label{fg:tri1_mu_distributions}
 \end{figure}

  \begin{figure}
\caption{The equilibrium distributions for different parametric setups with a  triangular distribution with an increasing density}
    \centering
    \begin{tabular}{ccc}
       \subfloat[$p=-1, q =  2, r = 0.5, \beta = 0.4$ \label{fg:tri2_p100q200r050}]{%
  \includegraphics[width=0.3\textwidth]{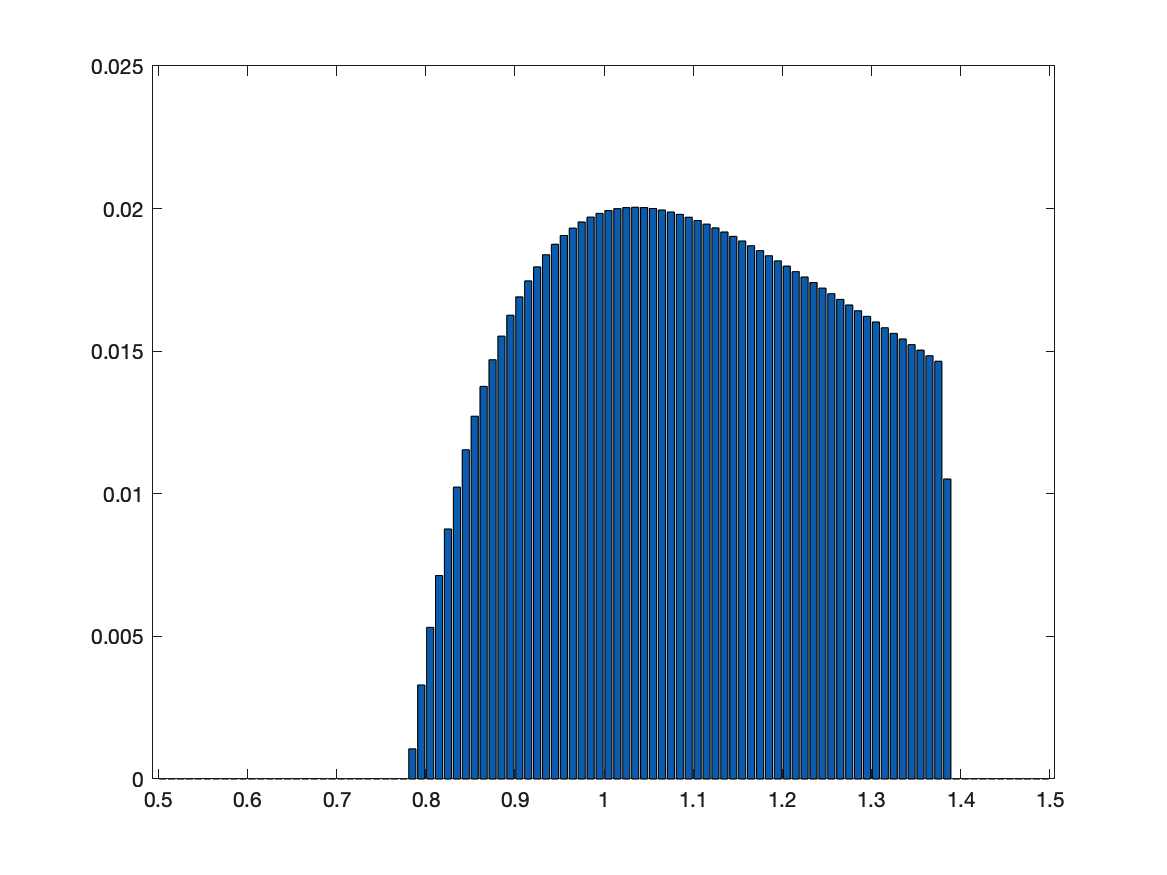}}&
  \subfloat[$p=-0.6, q =  2,r = 0.5, \beta = 0.4$ \label{fg:tri2_p060q200r050}]{%
  \includegraphics[width=0.3\textwidth]{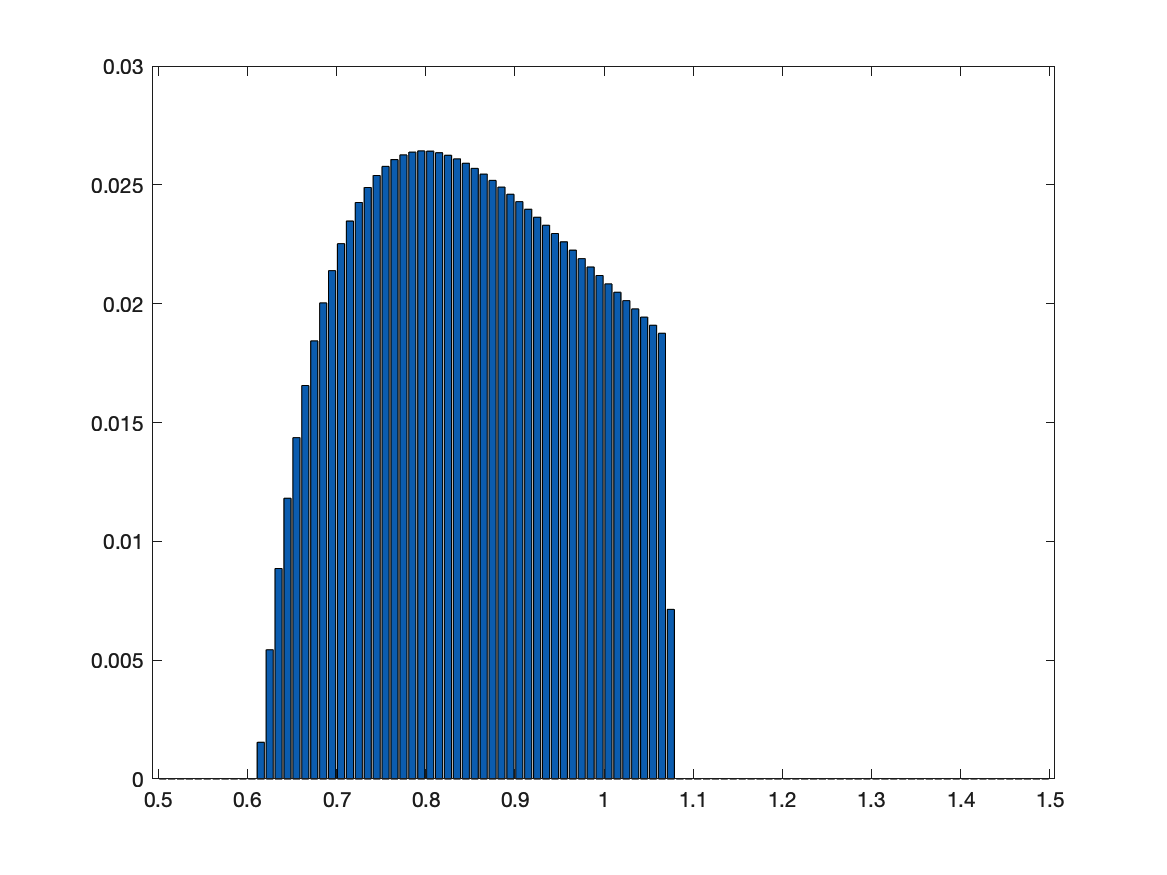}} &
  \subfloat[$p=-0.2, q =  2,r = 0.5, \beta = 0.4$ \label{fg:tri2_p020q200r050}]{%
  \includegraphics[width=0.3\textwidth]{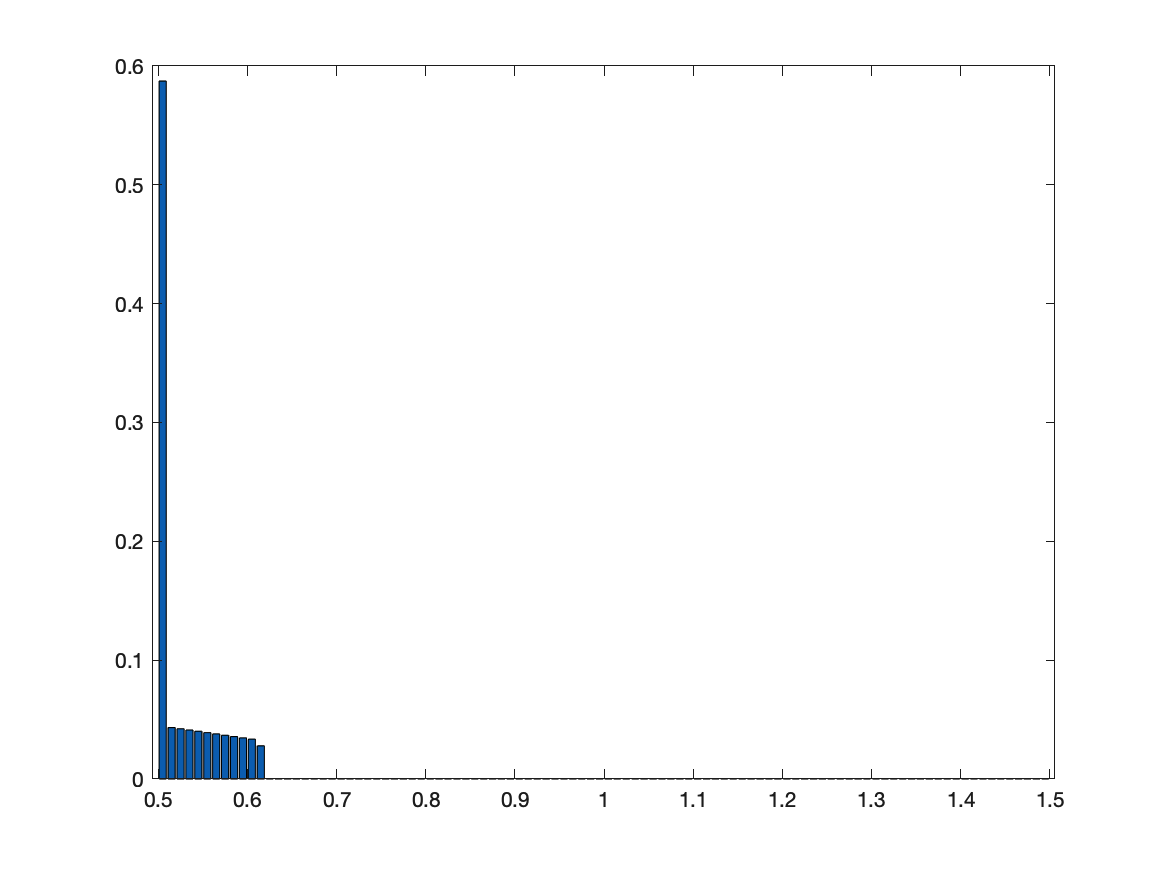}}\\
    \subfloat[$p=-1, q =  3, r = 0.5, \beta = 0.4$\label{fg:tri2_p100q300r050}]{%
  \includegraphics[width=0.3\textwidth]{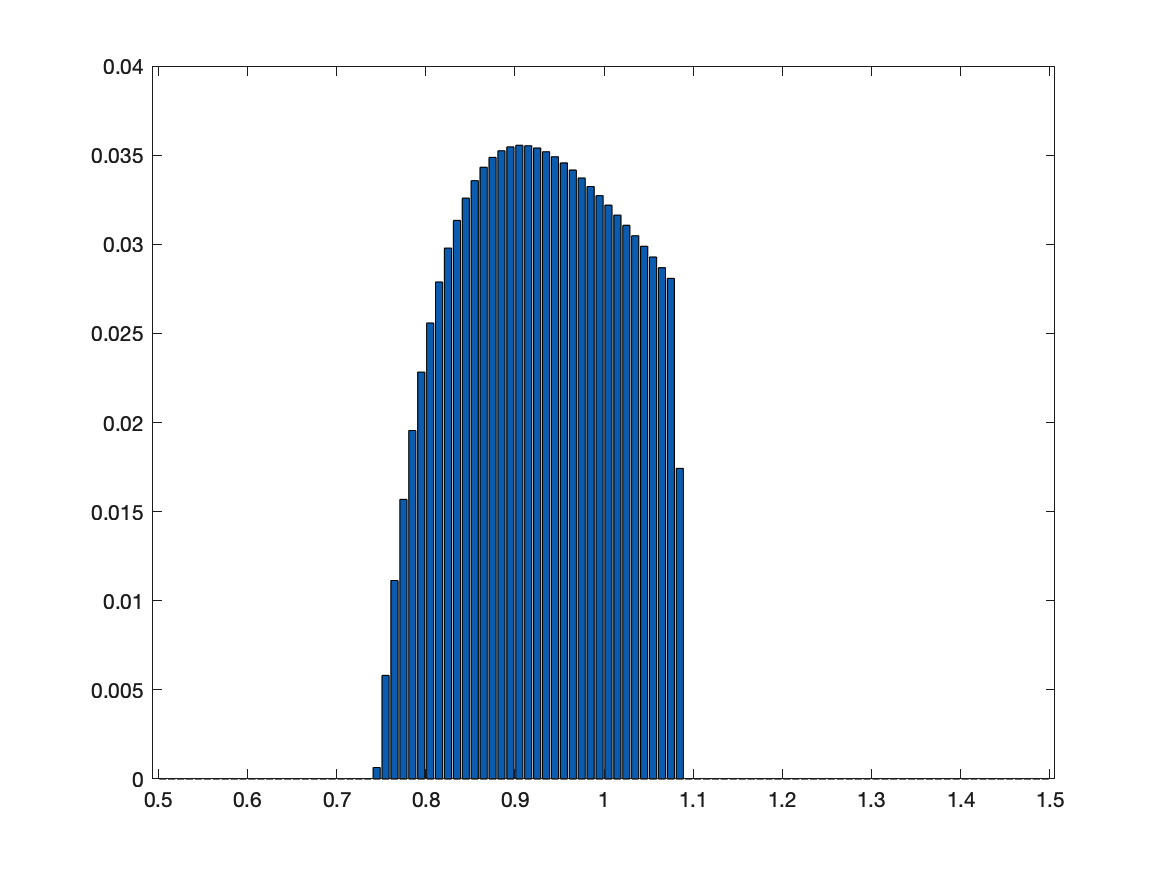}}&
  \subfloat[$p=-1, q =  4, r = 0.5, \beta = 0.4$ \label{fg:tri2_p100q400r050}]{%
  \includegraphics[width=0.3\textwidth]{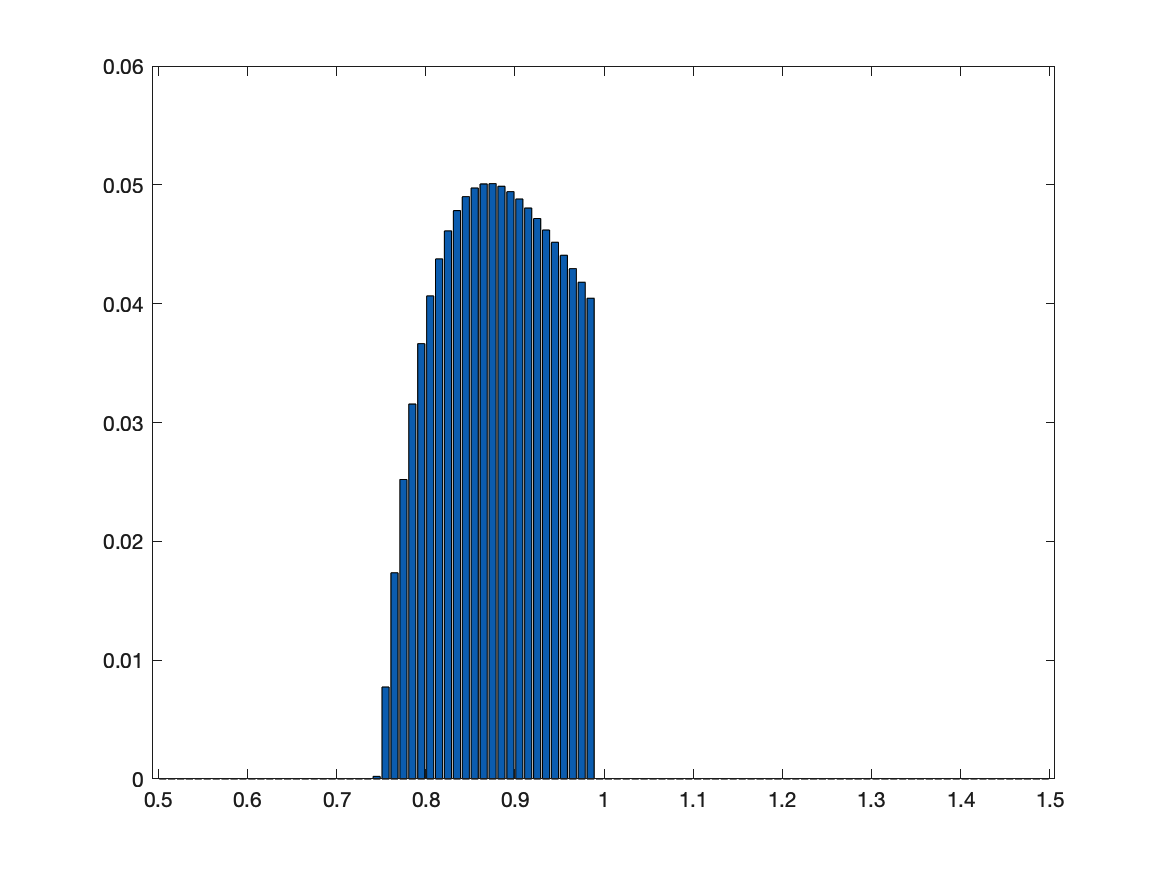}} &
   \subfloat[$p=-1, q =  5, r = 0.5, \beta = 0.4$ \label{fg:tri2_p100q500r050}]{%
  \includegraphics[width=0.3\textwidth]{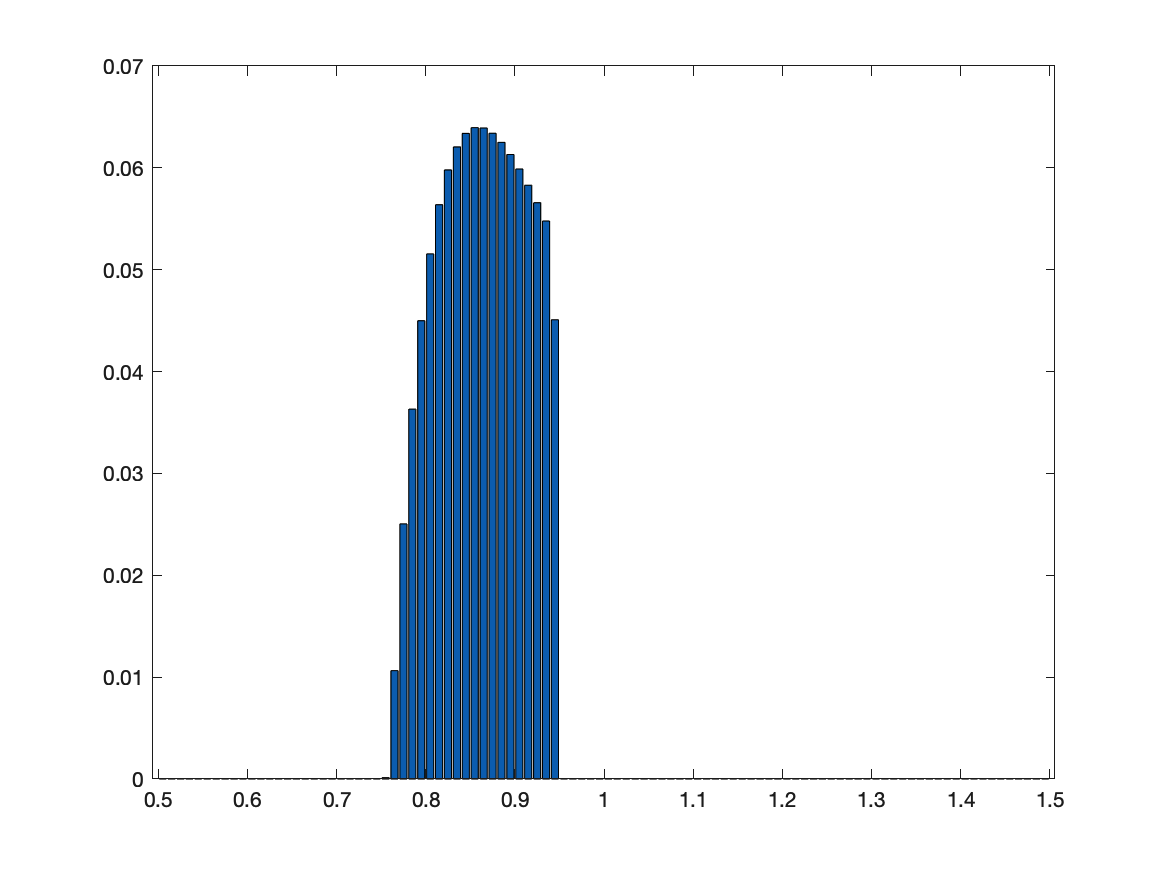}}\\
    \subfloat[$p=-1, q =  2, r = 0.25, \beta = 0.4$ \label{fg:tri2_p100q200r025}]{%
  \includegraphics[width=0.3\textwidth]{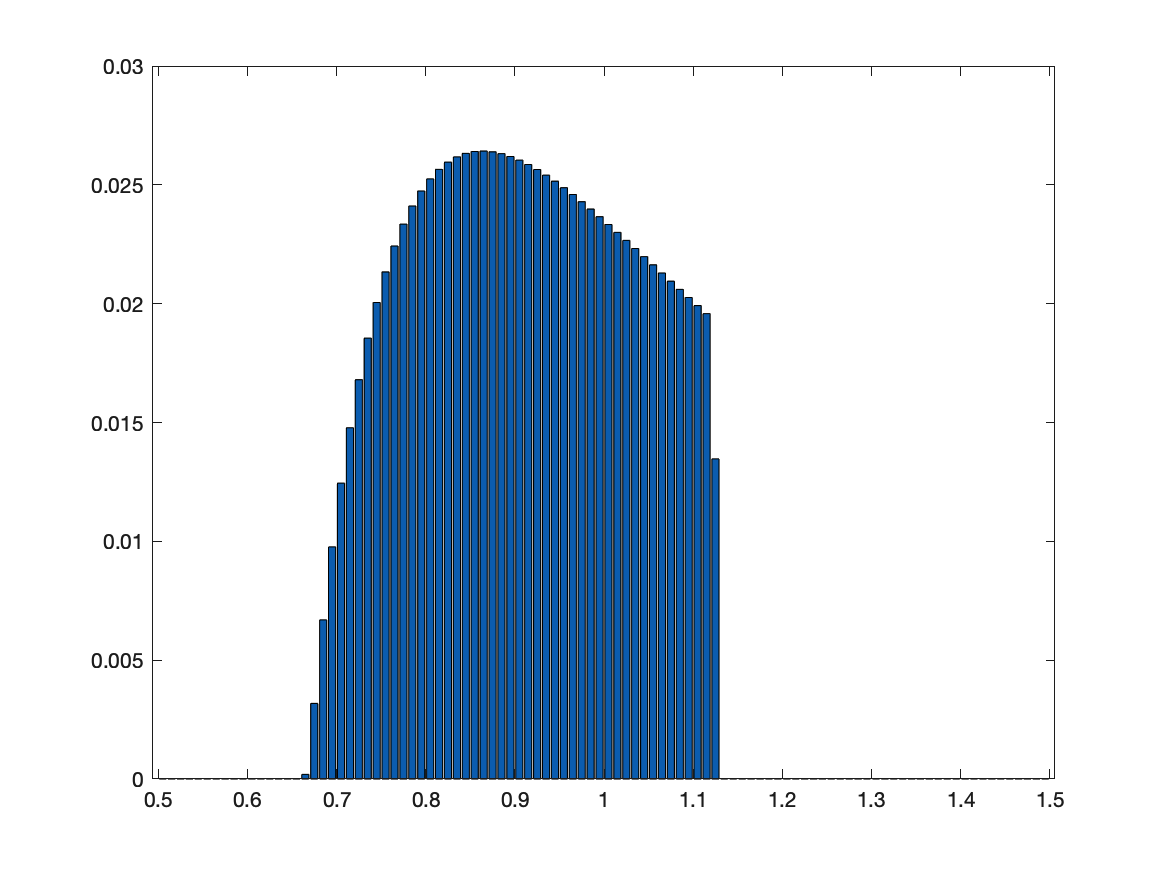}}&
  \subfloat[$p=-1, q =  2, r = 0.75, \beta = 0.4$ \label{fg:tri2_p100q200r075}]{%
  \includegraphics[width=0.3\textwidth]{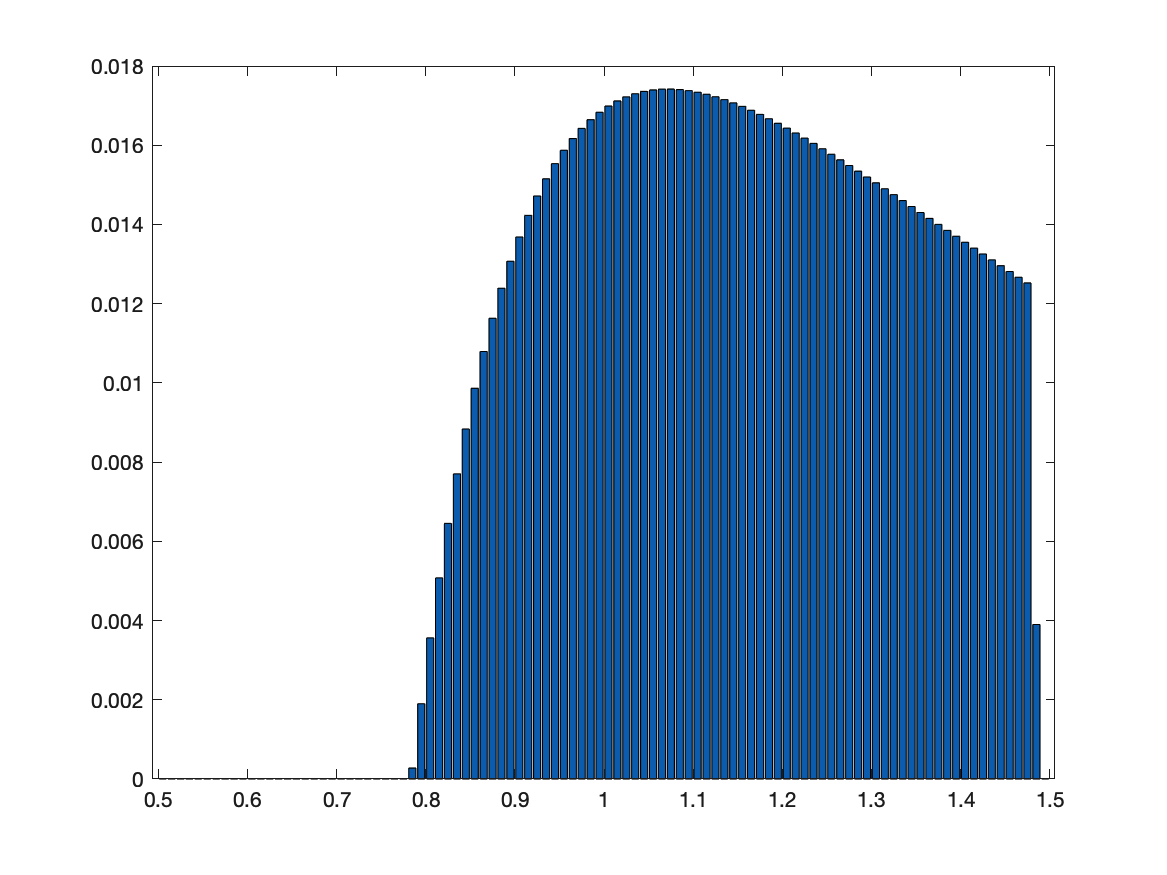}} &
   \subfloat[$p=-1, q =  2, r = 1, \beta = 0.4$ \label{fg:tri2_p100q200r100}]{%
  \includegraphics[width=0.3\textwidth]{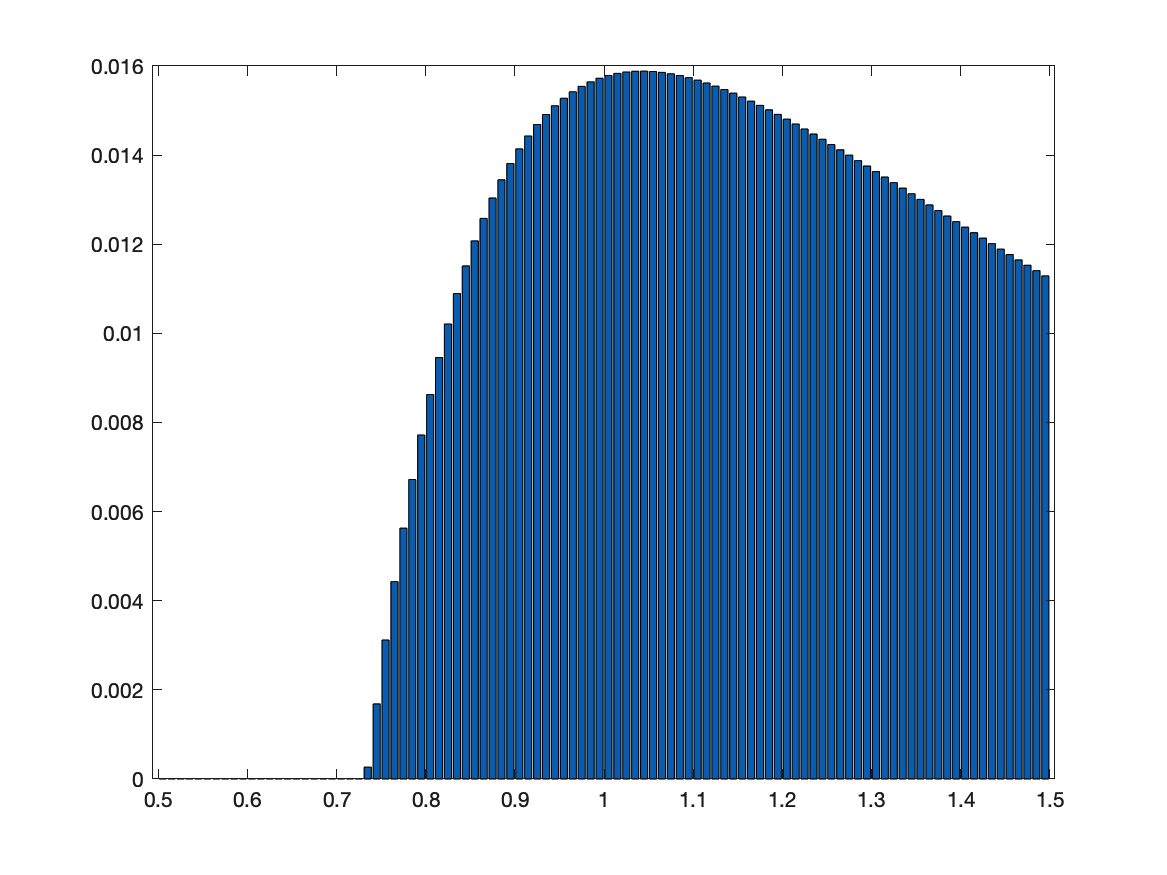}}\\
      \subfloat[$p=-1, q =  2, r = 0.5, \beta = 0.2$ \label{fg:tri2_dist_beta02}]{%
  \includegraphics[width=0.3\textwidth]{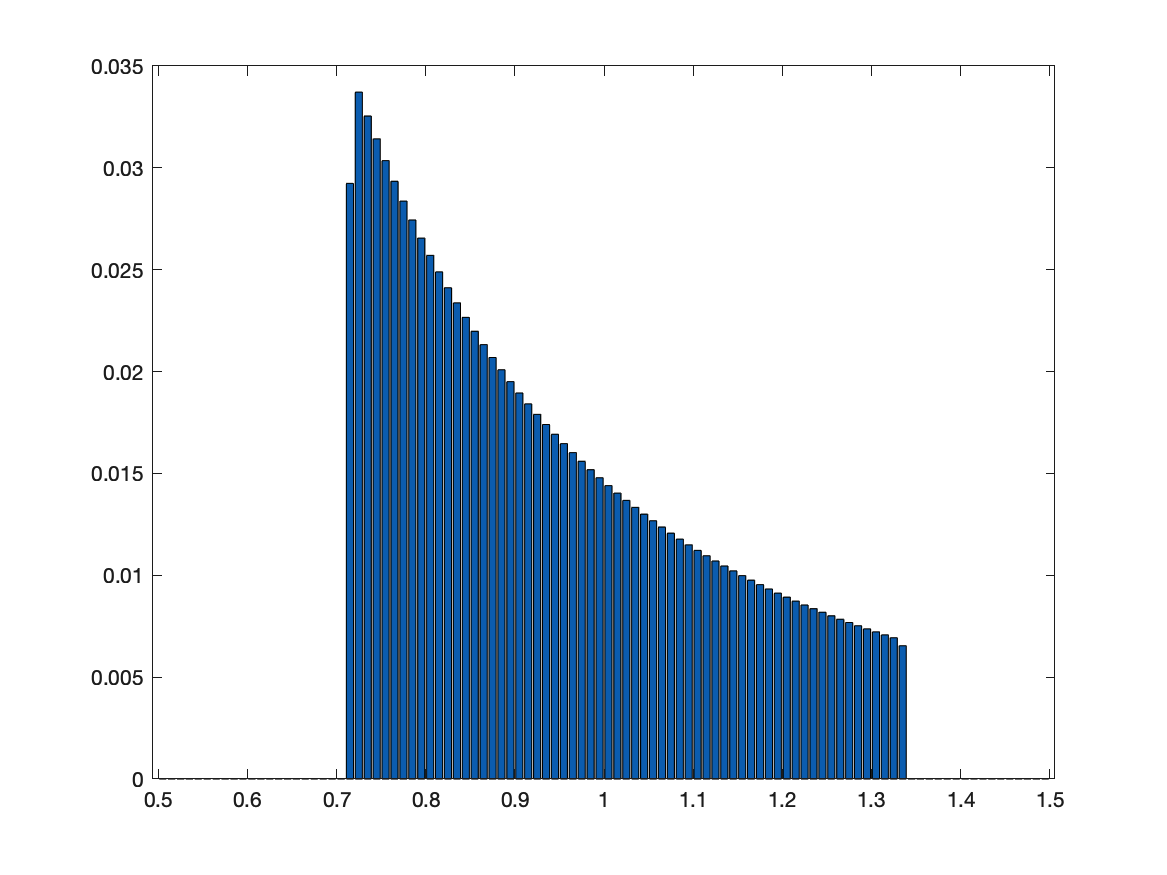}}&
  \subfloat[$p=-1, q =  2, r = 0.5, \beta = 0.6$ \label{fg:tri2_dist_beta06}]{%
  \includegraphics[width=0.3\textwidth]{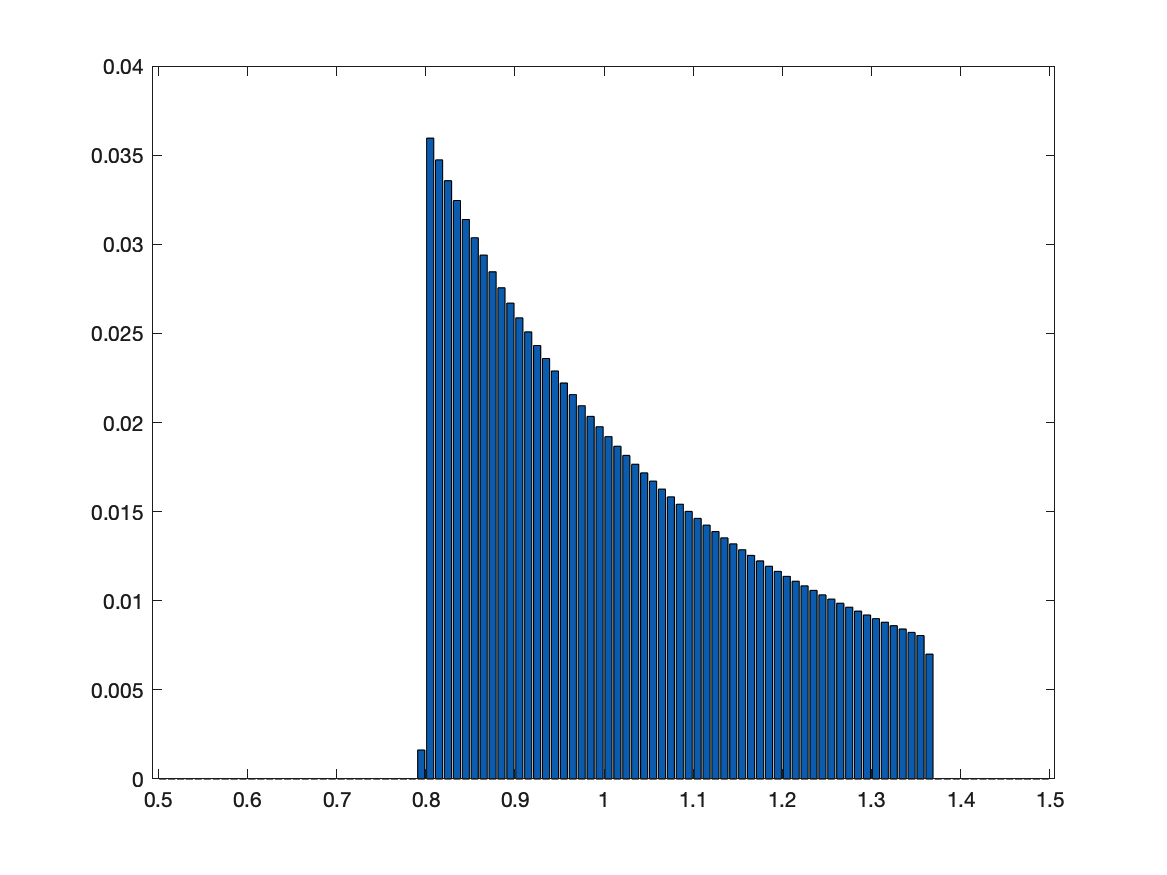}} &
   \subfloat[$p=-1, q =  2, r = 0.5, \beta = 0.8$ \label{fg:tri2_dist_beta08}]{%
  \includegraphics[width=0.3\textwidth]{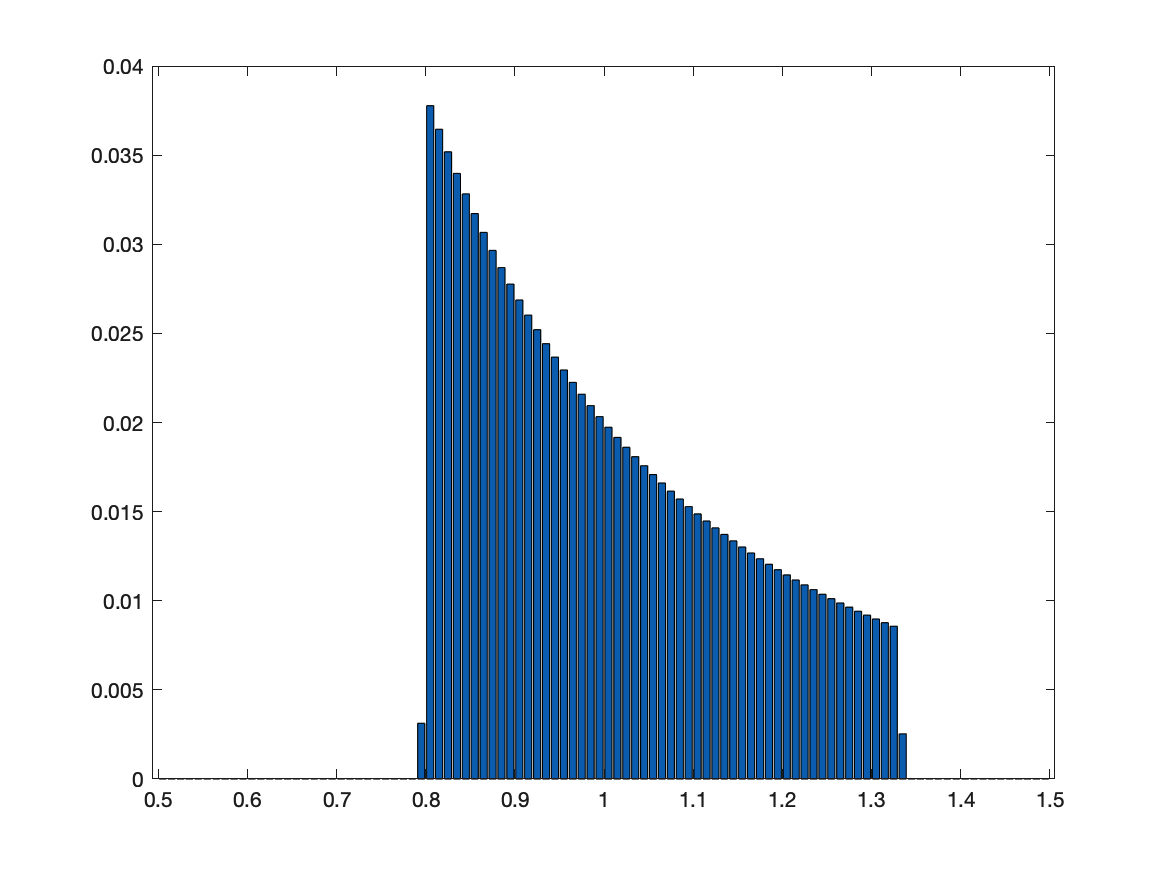}}
  \end{tabular}
  \label{fg:tri2_mu_distributions}
 \end{figure}

  \bibliographystyle{abbrv}           
  \bibliography{final_bibliography}

\end{document}